\theoremstyle{plain}
\newtheorem{theor10}{Theorem}
\newenvironment{theor1}
  {\pushQED{\qed}\begin{theor10}}
  {\popQED\end{theor10}}
\newtheorem{prop10}{Proposition}
\newtheorem{cor10}{Corollary}
\newenvironment{cor1}
  {\pushQED{\qed}\begin{cor10}}
  {\popQED\end{cor10}}
\newtheorem{theor0}{Theorem}[section]
\newenvironment{theor}
  {\pushQED{\qed}\begin{theor0}}
  {\popQED\end{theor0}}
\newtheorem{lem0}[theor0]{Lemma}
\newenvironment{lem}
  {\pushQED{\qed}\begin{lem0}}
  {\popQED\end{lem0}}
\newtheorem{prop0}[theor0]{Proposition}
\newenvironment{prop}
  {\pushQED{\qed}\begin{prop0}}
  {\popQED\end{prop0}}
\newtheorem{cor0}[theor0]{Corollary}
\newenvironment{cor}
  {\pushQED{\qed}\begin{cor0}}
  {\popQED\end{cor0}}
\newtheorem{propr0}[theor0]{Property}
\newtheorem{hyp0}[theor0]{Hypothesis}
\newtheorem{result0}[theor0]{Result}
\newtheorem{conj0}[theor0]{Conjecture}
\newtheorem{heur0}[theor0]{Heuristics}
\theoremstyle{definition}
\newtheorem{defin0}[theor0]{Definition}
\newenvironment{defin}
  {\pushQED{\qed}\begin{defin0}}
  {\popQED\end{defin0}}
\newtheorem{rems0}[theor0]{Remarks}
\newtheorem{ex0}[theor0]{Example}
\newtheorem{exs0}[theor0]{Examples}
\newtheorem{rem0}[theor0]{Remark}
\newenvironment{rem}
  {\pushQED{\qed}\begin{rem0}}
  {\popQED\end{rem0}}
\newtheorem{qu0}[theor0]{Question}
\newtheorem{qus0}[theor0]{Questions}
  \newtheorem{as0}[theor0]{Assumption}
\mathchardef\emptyset="001F
\numberwithin{equation}{section}
\newcommand{\N}{\mathbb N}
\newcommand{\e}{\varepsilon}
\newcommand{\Log}{|\!\log\e|}
\newcommand{\sym}{\operatorname{Sym}}
\newcommand{\calQ}{\mathcal{Q}}
\newcommand{\Lc}{\mathcal{L}}
\newcommand{\Dm}{\mathbb{D}}
\newcommand{\Sc}{\mathcal S}
\newcommand{\Rc}{\mathcal R}
\newcommand{\Hf}{\mathfrak H}
\newcommand{\R}{\mathbb R}
\newcommand{\Z}{\mathbb Z}
\newcommand{\C}{\mathbb C}
\newcommand{\F}{\mathcal F}
\newcommand{\Nc}{\mathcal N}
\newcommand{\op}{{\operatorname{op}}}
\newcommand{\E}{\mathbb{E}}
\newcommand{\ee}{e}
\newcommand{\Aa}{\boldsymbol a}
\newcommand{\dom}{{\operatorname{dom}}}
\newcommand{\Ld}{\operatorname{L}}
\newcommand{\step}[1]{\noindent \textit{Step} #1.}
\newcommand{\Pm}{\mathbb{P}}
\newcommand{\pr}[1]{\mathbb{P}\left[ #1 \right]}
\newcommand{\expec}[1]{\mathbb{E}\left[ #1 \right]}
\newcommand{\expecm}[1]{\mathbb{E}\big[ #1 \big]}
\newcommand{\var}[1]{\mathrm{Var}\left[#1\right]}
\newcommand{\varm}[1]{\mathrm{Var}\big[#1\big]}
\newcommand{\cov}[2]{\operatorname{Cov}\left[{#1};{#2}\right]}
\newcommand{\ent}[1]{\mathrm{Ent}\!\left[#1\right]}
\newcommand{\expeC}[2]{\mathbb{E}\left[\left. #1 \,\right|\,#2\right]}
\newcommand{\dTV}[2]{\operatorname{d}_{\operatorname{TV}}\left({#1};{#2}\right)}
\newcommand{\dWW}[2]{\operatorname{W}_2\left({#1};{#2}\right)}
\newcommand{\dG}[2]{\operatorname{d}\left({#1};{#2}\right)}
\title[Higher-order pathwise theory of fluctuations]{Higher-order pathwise theory of fluctuations\\in stochastic homogenization}
\author[M. Duerinckx]{Mitia Duerinckx}
\author[F. Otto]{Felix Otto}
\address[Mitia Duerinckx]{Laboratoire de Mathématique d'Orsay, UMR 8628, Université Paris-Sud, F-91405 Orsay, France \& Universit\'e Libre de Bruxelles, Département de Mathématique, Brussels, Belgium}
\email{mduerinc@ulb.ac.be}
\address[Felix Otto]{Max Planck Institute for Mathematics in the Sciences, Leipzig, Germany}
\email{otto@mis.mpg.de}
\begin{document}
\selectlanguage{english}

\maketitle

\begin{abstract}
We consider linear elliptic equations in divergence form with stationary random coefficients of integrable correlations. We characterize the fluctuations of a macroscopic observable of a solution to relative order~$\frac{d}{2}$, where~$d$ is the spatial dimension; the fluctuations turn out to be Gaussian. As for previous work on the leading order, this higher-order characterization relies on a pathwise proximity of the macroscopic fluctuations of a general solution to those of the (higher-order) correctors, via a (higher-order) two-scale expansion injected into the ``homogenization commutator'', thus confirming the scope of this notion.
This higher-order generalization sheds a clearer light on the algebraic structure of the higher-order versions of correctors, flux correctors, two-scale expansions, and homogenization commutators. It reveals that in the same way as this algebra provides a higher-order theory for microscopic spatial oscillations, it also provides a higher-order theory for macroscopic random fluctuations, although both phenomena are not directly related.
We focus on the model framework of an underlying Gaussian ensemble, which allows for an efficient use of (second-order) Malliavin calculus for stochastic estimates. On the technical side, we introduce annealed Calder\'on-Zygmund estimates for the elliptic operator with random coefficients, which conveniently upgrade the known quenched large-scale estimates.
\end{abstract}

\setcounter{tocdepth}{1}
\tableofcontents

\section{Introduction}

\subsection{General overview}
Let $\Aa$ be a random coefficient field on $\R^d$ that is stationary and ergodic and satisfies
the boundedness and ellipticity properties 
\begin{equation}\label{eq:elliptic}
|\Aa(x)\xi|\le|\xi|,\qquad\xi\cdot\Aa(x)\xi\ge\lambda|\xi|^2,\qquad\text{for all $x,\xi\in\mathbb{R}^d$},
\end{equation}
for some $\lambda>0$, and denote by $(\Omega,\F,\Pm)$ the underlying probability space.
Given a deterministic vector field $f\in C^\infty_c(\R^d)^d$, we consider the random family $(\nabla u_\e)_{\e>0}$ of unique Lax-Milgram solutions to the rescaled problems
\begin{align}\label{eq:first-def-ups}
-\nabla\cdot \Aa(\tfrac\cdot\e)\nabla u_\e\,=\, \nabla\cdot f\qquad\text{in $\R^d$},
\end{align}
where the rescaled coefficients $\Aa(\tfrac\cdot\e)$ vary on the ``microscopic'' scale $\e$,
and we study ``macroscopic'' observables of the form $\int_{\R^d}g\cdot\nabla u_\e$ with $g\in C_c^\infty(\R^d)^d$ deterministic. Qualitative homogenization theory~\cite{PapaVara,Kozlov-79} states that almost surely $\int_{\R^d}g\cdot\nabla u_\e\to\int_{\R^d}g\cdot\nabla\bar u$ as $\e\downarrow0$, where $\nabla\bar u$ solves the (deterministic) homogenized equation
\[-\nabla\cdot\bar\Aa\nabla\bar u=\nabla\cdot f\qquad\text{in $\R^d$},\]
and the homogenized coefficients $\bar\Aa\in\R^{d\times d}$ are given by $\bar\Aa\ee_i=\expec{\Aa(\nabla\varphi_i+\ee_i)}$ for $1\le i\le d$, in terms of the corrector $\varphi_i$, that is, the unique (up to a random additive constant) almost sure solution of the corrector equation,
\begin{equation}\label{eq:corr}
-\nabla\cdot\Aa(\nabla\varphi_i+\ee_i)=0\qquad\text{in $\R^d$,}
\end{equation}
in the class of functions the gradient of which is stationary, centered (that is, $\expec{\nabla\varphi_i}=0$), and has finite second moment.
In other words, the field-flux constitutive relation $\nabla w_\e\mapsto\Aa(\tfrac\cdot\e)\nabla w_\e$ is replaced on the macroscopic scale by the effective relation $\nabla\bar w\mapsto\bar\Aa\nabla\bar w$.

\smallskip
A well-travelled question in homogenization is to further characterize the oscillations of the solution $u_\e$.
As expected from formal asymptotics, the so-called two-scale expansion\footnote{We systematically use Einstein's summation rule on repeated indices.} $(1+\e\varphi_i(\tfrac\cdot\e)\nabla_i)\bar u$ precisely captures the oscillations of $u_\e$ to order $O(\e)$ (in dimension $d>2$), in the sense of
\[\big\|\nabla\big(u_\e-(1+\e\varphi_i(\tfrac\cdot\e)\nabla_i)\bar u\big)\big\|_{\Ld^2(\R^d\times\Omega)}=O(\e).\]
This expansion has a natural geometric interpretation: by definition~\eqref{eq:corr}, the corrector $\varphi_i$ defines the correction of Euclidean coordinates $x\mapsto x_i$ into $\Aa$-harmonic coordinates $x\mapsto x_i+\varphi_i(x)$,
\[-\nabla\cdot\Aa\nabla(x_i+\varphi_i)=0,\]
so that the expansion $(1+\e\varphi_i(\tfrac\cdot\e)\nabla_i)\bar u$ amounts to locally correcting the homogenized solution $\bar u$ in terms of oscillating $\Aa$-harmonic coordinates.
Such expansions are naturally pursued to higher order:
while the (centered) first-order corrector $\varphi=\varphi^1$ is characterized by the property that $(1+\varphi_i\nabla_i)\bar\ell$ is $\Aa$-harmonic for all affine functions~$\bar\ell$, the (centered) second-order corrector $\varphi^2$ is characterized by the property that for all quadratic polynomials $\bar q$ the expansion $(1+\varphi_i\nabla_i+\varphi_{ij}^2\nabla_{ij}^2)\bar q$ fully captures the oscillations imposed by the heterogeneous elliptic operator $-\nabla\cdot\Aa\nabla$ in the sense of $-\nabla\cdot\Aa\nabla(1+\varphi_i\nabla_i+\varphi_{ij}^2\nabla_{ij}^2)\bar q$ being deterministic (see Section~\ref{sec:homog-err} for a precise statement).
The second-order two-scale expansion $(1+\e\varphi_i(\tfrac\cdot\e)\nabla_i+\e^2\varphi^2_{ij}(\tfrac\cdot\e)\nabla^2_{ij})\bar u^2_\e$ then captures the oscillations of $u_\e$ to order $O(\e^2)$ (in dimension $d>4$), where $\bar u^2_\e$
is a proxy for a solution to the generically ill-posed higher-order homogenized equation
\[-\nabla\cdot(\bar\Aa+\e\bar\Aa^2_i\nabla_i)\nabla\bar U^2_\e=\nabla\cdot f\qquad\text{in $\R^d$},\]
in terms of the second-order homogenized coefficients $\bar\Aa^2_i\ee_j:=\expecm{\Aa(\nabla\varphi^2_{ij}+\varphi_i\ee_j)}$.
In other words, the effective field-flux constitutive relation $\nabla\bar w\mapsto\bar\Aa\nabla\bar w$ is refined to its second-order version $\nabla\bar w\mapsto(\bar\Aa+\e\bar\Aa^2_i\nabla_i)\nabla\bar w$.
The proxy $\bar u_\e^2$ for a second-order homogenized solution can e.g.\@ be chosen in form of $\bar u^2_\e:=\bar u+\e\tilde u^2$ with $\tilde u^2$ solving
\[-\nabla\cdot(\bar\Aa\nabla\tilde u^2+\bar\Aa_i^2\nabla\nabla_i\bar u)=0\qquad\text{in $\R^d$}.\]
Such a description of oscillations of $u_\e$ via higher-order two-scale expansions is classical in the periodic setting~\cite{BLP-78}. It also holds in the random setting for large enough dimension and for decaying enough correlations~\cite{Gu-17,BFFO-17} (cf.~\cite{GNO2,GNO-quant,AKM-book} for the first-order level):
under appropriate mixing conditions, higher-order expansions allow to capture oscillations up to the critical order $O(\e^{d/2})$ (with a logarithmic correction in even dimensions).
The limitation is due to the fact that, in the random setting, not all higher-order correctors are well-behaved: $\varphi^n$ can be chosen stationary with finite moments only for $n<\frac d2$.
Precise statements and short proofs are included in Section~\ref{sec:homog-err} below in the Gaussian setting.

\smallskip
While periodic homogenization boils down to the description of oscillations of $u_\e$, stochastic homogenization in addition means studying the random fluctuations of the macroscopic observables $\int_{\R^d}g\cdot\nabla u_\e$.
It was recently shown~\cite{GuM} that the centered rescaled observables $\e^{-d/2}\int_{\R^d}g\cdot(\nabla u_\e-\expec{\nabla u_\e})$ converge in law to a Gaussian.
Note that this does not enter the above theory of oscillations since accuracy there is precisely limited to order $O(\e^{d/2})$.
We may naturally look for a finer description of this convergence by means of a two-scale expansion.
As observed in~\cite{GuM}, however, the limiting variance of $\e^{-d/2}\int_{\R^d}g\cdot\nabla u_\e$ generically differs from that of $\e^{-d/2}\int_{\R^d}g\cdot\nabla(1+\e\varphi_i(\tfrac\cdot\e)\nabla_i)\bar u$, which shows that two-scale expansion cannot be applied naïvely when it comes to fluctuations. In~\cite{DGO1}, we unravelled the full mechanism behind this observation by means of the so-called homogenization commutator, which led to a new pathwise theory of fluctuations in stochastic homogenization (in agreement with the pathwise heuristics in~\cite{GuM}). More precisely, fluctuations of~$\nabla u_\e$ were shown to coincide with fluctuations of a suitable deterministic (Helmholtz) projection of the corresponding homogenization commutator, which behaves like a Gaussian white noise on large scales and for which the two-scale expansion is accurate at leading order.
In the present article we explain how this result naturally extends to higher order in parallel to the known theory of oscillations: we obtain a full characterization of fluctuations of $\e^{-d/2}\int_{\R^d}g\cdot\nabla u_\e$ to order $O(\e^{d/2})$ (with again a logarithmic correction in even dimensions). The main emphasis is on unravelling the suitable higher-order commutator structure.

\smallskip
The homogenization commutator can be viewed as a commutator between large-scale averaging and the field-flux relation; on first-order level it takes the form
\[\Xi_\e^1[\nabla w_\e]:=(\Aa(\tfrac\cdot\e)-\bar\Aa)\nabla w_\e.\]
This expression first appeared in~\cite{AKM2} (see also~\cite{AS}) and is particularly natural since $H$-convergence for~\eqref{eq:first-def-ups} (cf.~\cite{MuratTartar}) is precisely equivalent to weak convergence of $\Xi_\e[\nabla u_\e]$ to $0$ (cf.~\cite{DGO1}), thus summarizing the whole qualitative homogenization theory.
This convergence property is the mathematical formulation of the Hill-Mandel relation in mechanics~\cite{H63,H72} and can further be made quantitative~\cite{DGO1}.
For a higher-order theory, we need a higher-order extension of the commutator.
In view of the second-order effective field-flux relation $\nabla\bar w\mapsto(\bar\Aa+\e\bar\Aa^2_i\nabla_i)\nabla\bar w$, the second-order commutator is naturally defined as
\[\Xi_\e^2[\nabla w_\e]:=\big(\Aa(\tfrac\cdot\e)-\bar\Aa-\e\bar\Aa^2_i\nabla_i\big)\nabla w_\e,\]
and a higher-order Hill-Mandel relation can indeed be formulated in these terms (cf.\@ Lemma~\ref{lem:higher-Hill} below).
Note that $w_\e\mapsto\Xi_\e^1[\nabla w_\e]$ and $\Xi^2_\e[\nabla w_\e]$ are viewed as first- and second-order differential operators, respectively.
Next, we define suitable two-scale expansions of these homogenization commutators.
For the first order, we inject the first-order two-scale expansion of $w_\e$ into $\Xi_\e^1[\nabla\cdot]$ and truncate the obtained differential operator at first order, thus defining the following first-order standard commutator,
\[\Xi_\e^{\circ,1}[\nabla\bar w]:=\nabla_i\bar w\,(\Aa(\tfrac\cdot\e)-\bar\Aa)(\nabla\varphi_i(\tfrac\cdot\e)+\ee_i).\]
Similarly, injecting the second-order two-scale expansion of $w_\e$ into $\Xi_\e^2[\nabla\cdot]$ and truncating the obtained differential operator at second order, we define the following second-order standard commutator,
\begin{multline*}
\Xi^{\circ,2}_\e[\nabla\bar w]:=\nabla_i\bar w\,\big(\Aa(\tfrac\cdot\e)-\bar\Aa-\e\bar\Aa^2_j\nabla_j\big)(\nabla\varphi_i(\tfrac\cdot\e)+\ee_i)\\
+\e\nabla^2_{ij}\bar w\,\Big(\big(\Aa(\tfrac\cdot\e)-\bar\Aa-\e\bar\Aa^2_l\nabla_l\big)\big(\nabla\varphi_{ij}^2(\tfrac\cdot\e)+\varphi_i(\tfrac\cdot\e)\ee_j\big)-\bar\Aa^2_j\big(\nabla\varphi_i(\tfrac\cdot\e)+\ee_i\big)\Big).
\end{multline*}
The standard commutators $\bar w\mapsto\Xi_\e^{\circ,1}[\nabla\bar w]$ and $\Xi_\e^{\circ,2}[\nabla\bar w]$ are first- and second-order differential operators, respectively, with $\e$-rescaled (distributional) stationary random coefficients. These stationary differential operators are viewed as intrinsic quantities, with coefficients given by suitable combinations of correctors and of their gradients.
The above definitions are efficiently summarized in the form
\begin{eqnarray*}
\Xi^{\circ,1}_\e[\nabla\bar w](x)&=&\Xi_\e^1\big[\nabla(1+\e\varphi_i(\tfrac\cdot\e)\nabla_i)T_x^1\bar w\big](x),\\
\Xi^{\circ,2}_\e[\nabla\bar w](x)&=&\Xi_\e^2\big[\nabla\big(1+\e\varphi_i(\tfrac\cdot\e)\nabla_i+\e^2\varphi_{ij}^2(\tfrac\cdot\e)\nabla^2_{ij}\big)T_x^2\bar w\big](x),
\end{eqnarray*}
where $T_x^1\bar w$ and $T_x^2\bar w$ denote the first- and second-order Taylor polynomials of $\bar w$ at the basepoint $x$, respectively. Note that these quantities are centered: $\expecm{\Xi_\e^{\circ,n}[\nabla\bar w]}=0$ for $n=1,2$ and any smooth deterministic field $\bar w$.

\smallskip
Pursuing these constructions to higher order, our main result is threefold and is summarized as follows, under appropriate mixing conditions:
\begin{enumerate}[(i)]
\item Up to order $O(\e^{d/2})$ in the fluctuation scaling, the fluctuations of $\nabla u_\e$ are determined by those of the corresponding higher-order commutators; cf.\@ Theorem~\ref{th:main}(i) below.
\smallskip\item Up to order $O(\e^{d/2})$ in the fluctuation scaling (with a logarithmic correction in even dimensions), the two-scale expansions of the commutators are accurate: the fluctuations of the commutators are equivalent to those of the corresponding standard commutators; cf.\@ Theorem~\ref{th:main}(ii). This fully extends the pathwise theory of~\cite{DGO1} to higher order.
\smallskip\item The standard commutators are approximately local functions of the coefficients~$\Aa$. This allows to infer a quantitative characterization of their limiting covariance structure, as well as a quantitative central limit theorem (CLT); cf.\@ Theorem~\ref{th:main}(iii)--(iv). Similarly as for the coefficient field $\Aa$ itself, while at first order the scaling limit is given by a Gaussian white noise, corrections converge to derivatives of white noise.
\end{enumerate}

\subsection{Main results}
For simplicity, we focus on the model setting of a Gaussian coefficient field, in which case a powerful Malliavin calculus is available on the probability space, substantially simplifying the analysis.
More precisely, we set
\begin{equation}\label{eq:def-A}
\Aa(x):=a_0(G(x)),
\end{equation}
where $G$ is some $\R^\kappa$-valued centered stationary Gaussian random field on $\R^d$ constructed on a probability space $(\Omega,\F,\Pm)$ and where $a_0\in C^2_b(\R^\kappa)^{d\times d}$ is such that the ellipticity and boundedness assumptions~\eqref{eq:elliptic} are satisfied.
We use scalar notation $a_0',a_0''$ for derivatives of~$a_0$.
In addition, we assume that the Gaussian field $G$ has integrable correlations: more precisely, it has the form $G=c_0\ast\xi$, where $\xi$ denotes $\kappa$-dimensional white noise on $\R^d$ and where the kernel $c_0:\R^d\to\R^{\kappa\times\kappa}$ is chosen even (that is, $c_0(x)=c_0(-x)$ for all $x$) and satisfies the integrability condition
\begin{equation}\label{eq:cov-L1}
\int_{\R^d}\Big(\sup_{B(x)}|c_0|\Big)\,dx\,\le\,1,
\end{equation}
where $B(x)$ is the unit ball centered at $x$.
The covariance function of $G$ is then given by $c:=c_0\ast c_0$ and satisfies $\int_{\R^d}(\sup_{B(x)}|c|)\,dx\,\le\,1$.
The case of non-integrable correlations could be treated similarly, but it would lead to different scalings, cf.~\cite{DFG1}. Our main result is as follows.

\begin{theor1}\label{th:main}
Consider the above Gaussian setting with integrable correlations~\eqref{eq:cov-L1} and set $\ell:=\lceil\frac d2\rceil$, that is, the smallest integer $\ge\frac d2$. For all $1\le n\le\ell$, let $\bar u_\e^{n}$ and $\bar v_\e^{n}$ be the $n$th-order homogenized solutions of the primal and dual equations, respectively, as in Definition~\ref{def:homog-eqn} below, and let the $n$th-order homogenization commutator $\Xi_\e^n$ and its standard version $\Xi_\e^{\circ,n}$ be as in Definitions~\ref{def:Xin} and~\ref{def:Xin0}.
Then the following hold for all $0<\e\le\frac12$.
\begin{enumerate}[(i)]
\item \emph{Reduction to commutators:} For all $f,g\in C^\infty_c(\R^d)^d$, $1\le n\le\ell$, and $p<\infty$,
\begin{equation*}
\qquad N_p\bigg(\e^{-\frac d2}\int_{\R^d}g\cdot\nabla u_\e-\e^{-\frac d2}\int_{\R^d}\nabla\bar v_\e^{n}\cdot\Xi_\e^n[\nabla u_\e]\bigg)
\,\lesssim_{p,f,g}\,\e^{n},
\end{equation*}
where $N_p(X):=\expec{|X-\expec{X}\!|^p}^\frac1p$.
\smallskip\item \emph{Two-scale expansion of commutators:} For all $f,g\in C^\infty_c(\R^d)^d$, $1\le n\le\ell$, and $p<\infty$,
\[\qquad N_p\bigg(\e^{-\frac d2}\int_{\R^d} g\cdot\big(\Xi_\e^{n}[\nabla u_\e]-\Xi_\e^{\circ,n}[\nabla\bar u_\e^{n}]\big)\bigg)
\,\lesssim_{p,f,g}\,\e^n\mu_{d,n}(\tfrac1\e),\]
where
\begin{equation}\label{eq:pre-def-mudn}
\qquad\e^n\mu_{d,n}(\tfrac1\e)\,:=\,\left\{\begin{array}{lll}
\e^n&:&\text{$n<\ell$},\\
\e^{\frac d2}\Log^\frac12&:&\text{$n=\ell$ and $d$ even},\\
\e^{\frac d2}&:&\text{$n=\ell$ and $d$ odd}.
\end{array}\right.
\end{equation}
\item \emph{Local covariance structure:}
Further assume that $\int_{\R^d}|y|^\ell|c(y)|dy\le1$.
There exists an $(m+4)$th-order tensor $\calQ_{l'k'lk}^m$ for all $l',k',l,k\ge0$ with $l'+k'+l+k=m\le\ell-1$, such that for all $g,\nabla\bar w\in C^\infty_c(\R^d)^d$ and $1\le n\le\ell$, setting
\[\qquad\calQ_\e^n[g,\nabla\bar w]:=\sum_{m=0}^{n-1}\e^{m}\sum_{l',k',l,k\ge0\atop l'+k'+l+k=m}\calQ_{l'k'lk}^m\odot\int_{\R^d}\nabla^{l'}g\otimes\nabla^{k'}\nabla\bar w\otimes\nabla^{l}g\otimes\nabla^{k}\nabla\bar w,\]
where $\odot$ denotes the complete contraction of tensors of the same order,
there holds
\[\qquad\bigg|\var{\e^{-\frac d2}\int_{\R^d}g\cdot\Xi_{\e}^{\circ,n}[\nabla\bar w]}-\calQ_\e^n[g,\nabla\bar w]\bigg|
\lesssim_{g,\bar w} \e^n\mu_{d,n}(\tfrac1\e).\]
\item \emph{Normal approximation:}
For all $g,\nabla\bar w\in C^\infty_c(\R^d)^d$ and $1\le n\le\ell$,
\[\qquad\operatorname{d}_{\Nc}\Big({\e^{-\frac d2}\int_{\R^d}g\cdot\Xi_\e^{\circ,n}[\nabla\bar w]}\Big)
\,\lesssim_{g,\bar w}\,\frac{\e^{\frac d2}\Log\big(\Log\log^2\!\Log\big)^{\frac1{2d}n(n-1)}}{\varm{\e^{-\frac d2}\int_{\R^d}g\cdot\Xi_\e^{\circ,n}[\nabla\bar w]}},\]
where we have set
\[\operatorname{d}_{\Nc}(X)\,:=\,\dWW{\frac{X-\expec{X}}{\var{X}^{\frac12}}}{\Nc}+\dTV{\frac{X-\expec{X}}{\var{X}^{\frac12}}}{\Nc},\]
where $\dWW\cdot\Nc$ and $\dTV\cdot\Nc$ denote the $2$-Wasserstein and the total variation distances to a standard Gaussian law, respectively.
\end{enumerate}
In the estimates above, the notation $\lesssim_\gamma$ (with parameters $\gamma$) stands for $\le$ up to a multiplicative constant $C_\gamma\ge1$ that only depends on $d$, $\lambda$, $\|a_0\|_{W^{2,\infty}}$, and on the parameters $\gamma$, through an upper bound on suitable (weighted) Sobolev norms of $\gamma$ if $\gamma$ is a function.
\end{theor1}

Choosing $n=\ell$, we are thus led to an intrinsic description of the fluctuations of macroscopic observables $\int_{\R^d} g\cdot(\nabla u_\e-\expec{\nabla u_\e})$ with accuracy $O(\e^d)$ (with a logarithmic correction in even dimensions), that is, up to the {square of the CLT order}. In particular, non-Gaussian corrections are only expected beyond that order.

\begin{cor1}
With the assumptions and notation of Theorem~\ref{th:main}, for all $f,g\in C^\infty_c(\R^d)^d$, in terms of the intrinsic quantity
\[X_\e^\circ(g,f):=\e^{-\frac d2}\int_{\R^d}\nabla\bar v_\e^\ell\cdot\big(\Xi_\e^{\circ,\ell}[\nabla\bar u_\e^\ell]-\expecm{\Xi_\e^{\circ,\ell}[\nabla\bar u_\e^\ell]}\big),\]
there holds
for all $p<\infty$,
\begin{equation*}
\expec{\Big(\e^{-\frac d2}\int_{\R^d}g\cdot(\nabla u_\e-\expec{\nabla u_\e})-X_\e^\circ(g,f)\Big)^p}^\frac1p
\,\lesssim_{p,f,g}\,\e^\frac d2\times\left\{\begin{array}{lll}
\Log^{\frac12}&:&\text{$d$ even},\\
1&:&\text{$d$ odd},
\end{array}\right.
\end{equation*}
and, under the non-degeneracy condition $\calQ_{0000}\odot\int_{\R^d}(\nabla\bar v\otimes\nabla\bar u)^{\otimes2}\ne0$,
\begin{align}\label{eq:CLT-sq}
\dG{X_\e^\circ(g,f)~}{~\calQ_\e^\ell[\nabla\bar v_\e^\ell,\nabla\bar u_\e^\ell]^\frac12\,\Nc}
\,\lesssim_{f,g}\,\e^\frac d2\Log^{1+\rho}(\log\Log)^{2\rho},
\end{align}
where $\dG\cdot\cdot$ stands for the sum of the $2$-Wasserstein and the total variation distances, where $\Nc$ denotes a standard Gaussian random variable, and where
\[\rho\,:=\,\frac1{8d}\times\left\{\begin{array}{lll}
d(d-2)&:&\text{$d$ even},\\
(d+1)(d-1)&:&\text{$d$ odd}.
\end{array}\right.\qedhere\]
\end{cor1}

Another question that one may formulate concerns the description of fluctuations of higher-order correctors --- although this is somewhat artificial since only combinations of correctors in form of two-scale expansions are intrinsically relevant. As briefly described in Remark~\ref{rem:fluc-cor+}, the proof of the above main result can be adapted in a straightforward way to also describe the fluctuations of $\e^{n-\frac d2}\varphi^{n}(\tfrac\cdot\e)$ and $\e^{n-\frac d2}\nabla\varphi^{n+1}(\tfrac\cdot\e)$ with accuracy $O(\e^{\frac d2})$ for all $n\le\ell-1$ (with a logarithmic correction in even dimensions).

\subsection*{Outline of the article}
In Section~\ref{sec:homog-err}, we start by recalling the theory of oscillations in stochastic homogenization in terms of higher-order correctors and two-scale expansions.
Section~\ref{sec:commut} is devoted to the definition and elementary properties of higher-order homogenization commutators, including the (easy) proof of item~(i) of Theorem~\ref{th:main} as well as the statement of quantitative higher-order Hill-Mandel relations. Next, we turn to the rest of the proof of Theorem~\ref{th:main}, which is based on the following three main ingredients.
\begin{enumerate}[$\bullet$]
\item\emph{Malliavin calculus:}
In the present Gaussian setting, given a random variable $X$, a Poincaré inequality holds in the form $\var{X}\le C\,\expec{\|DX\|_\Hf^2}$ (cf.~\cite{Chernoff-81,Houdre-PerezAbreu-95}), where the Malliavin derivative $DX$ encodes the infinitesimal variation of $X$ with respect to changes in the underlying Gaussian field $G$ and where $\|\cdot\|_\Hf$ is the Hilbert norm associated with the covariance structure of $G$. Similarly, the approximate normality of~$X$ can be estimated by the size of $D^2X$ with help of a so-called second-order Poincaré inequality~\cite{C2,NPR-09}.
With these functional analytic tools at hand, we are thus reduced to estimating infinitesimal variations of the quantities of interest with respect to $G$, thus somehow linearizing the dependence on $G$, which is then particularly amenable to PDE methods. On a more technical level, rather than using ad hoc derivatives with respect to the coefficient field as in previous works in stochastic homogenization, we use here the full power of the Gaussian Malliavin calculus. Relevant definitions and results are recalled in Section~\ref{chap:Mall}.
\smallskip\item\emph{Representation formulas:}
In view of applying the above tools from Malliavin calculus, we consider the infinitesimal variations of the quantities of interest, for which we establish suitable representation formulas. This requires a good understanding of the algebra behind higher-order correctors and commutators, and is performed in Section~\ref{sec:rep-form}.
\smallskip\item\emph{PDE ingredients:}
It remains to estimate the infinitesimal variations of interest from their representation formulas.
Based on the large-scale Lipschitz regularity theory for $\Aa$-harmonic functions as developed in~\cite{AS,Armstrong-Mourrat-16,AKM2,GNO-reg}, we introduce in Section~\ref{sec:Lpreg} a new annealed Calder\'on-Zygmund theory for linear elliptic equations in divergence form with random coefficients.
This constitutes an upgrade of the quenched large-scale Calder\'on-Zygmund theory of~\cite{Armstrong-Daniel-16,AKM-book,GNO-reg}
(see also~\cite{JO-19} for a direct approach);
it happens to be particularly well suited for our purposes in this article and is of independent interest.
\end{enumerate}
Finally, with these three ingredients at hand, we turn to the proof of items~(ii), (iii), and~(iv) of Theorem~\ref{th:main} in Sections~\ref{sec:pathw}, \ref{sec:cov-conv}, and~\ref{sec:normal}, respectively.

\subsection*{Notation}
\begin{enumerate}[$\bullet$]
\item We denote by $C\ge1$ any constant that only depends on $d$, $\lambda$, and $\|a_0\|_{W^{2,\infty}}$. We use the notation $\lesssim$ (resp.~$\gtrsim$) for $\le C\times$ (resp.\@ $\ge\frac1C\times$) up to such a multiplicative constant $C$. We write $\simeq$ when both $\lesssim$ and $\gtrsim$ hold. We add subscripts to $C$, $\lesssim$, $\gtrsim$, $\simeq$ in order to indicate dependence on other parameters. If the subscript is a function, then it is understood as dependence on an upper bound on suitable (weighted) Sobolev norms of the function.
\smallskip\item The ball centered at $x$ of radius $r$ in $\R^d$ is denoted by $B_r(x)$, and we simply write $B(x)$ if $r=1$.
\smallskip\item For a function $f$ and $1\le p<\infty$, we write $[f]_p(x):=(\fint_{B(x)}|f|^p)^{1/p}$ for the local moving $\Ld^p$ average, and similarly $[f]_\infty(x):=\sup_{B(x)}|f|$.
\smallskip\item For a $k$th-order tensor $T$ we define for any fixed indices $i_1,\ldots,i_k$,
\[\sym_{i_1\ldots i_k}T_{i_1\ldots i_k}\,:=\,\frac1{k!}\sum_{\sigma\in\mathcal S_k}T_{i_{\sigma(1)}\ldots i_{\sigma(k)}},\]
with $\mathcal S_k$ denoting the set of permutations of the set $\{1,\ldots,k\}$. Although this notation may seem redundant at this point, it will prove very useful in the sequel.
\smallskip\item For $r\in\R$ we denote by $\lceil r\rceil$ the smallest integer~$\ge r$ and by $\lfloor r\rfloor$ the largest integer $\le r$. For $r,s\in\R$ we write $r\vee s:=\max\{r,s\}$ and $r\wedge s:=\min\{r,s\}$.
We set $\langle x\rangle:=(1+|x|^2)^{1/2}$ for $x\in\R^d$, and $\langle\nabla\rangle$ denotes the corresponding pseudo-differential operator.
\end{enumerate}

\section{Higher-order theory of oscillations}\label{sec:homog-err}
In this section, as a prelude to the study of higher-order fluctuations, we describe the classical higher-order theory of oscillations in stochastic homogenization in terms of two-scale expansions, including some new results and shorter arguments.
We start by recalling the definition of higher-order correctors, homogenized coefficients, fluxes, and flux correctors as motivated by formal two-scale expansions (e.g.~\cite{BLP-78,JKO94}).
A simple iterative way to view this definition is as follows: for $n\ge1$, assuming that the previous correctors $\varphi^1,\ldots,\varphi^{n-1}$ are well-defined as centered stationary objects, the next corrector gradient $\nabla\varphi^n$ is the unique centered stationary object that is characterized by the property that for all $n$th-order polynomials $\bar q$ the corrected polynomial $F^n[\bar q]:=\bar q+\sum_{k=1}^n\varphi^k_{i_1\ldots i_k}\nabla_{i_1\ldots i_k}^k\bar q$ captures oscillations of the heterogeneous operator in the sense that $\nabla\cdot\Aa\nabla F^n[\bar q]$ is deterministic. In that case, there must hold (cf.~\eqref{eq:pre-dec-homog})
\[\nabla\cdot\Aa\nabla F^n[\bar q]=\nabla\cdot\Big(\sum_{k=1}^{n-1}\bar\Aa^k_{i_1\ldots i_{k-1}}\nabla^{k-1}_{i_1\ldots i_{k-1}}\Big)\nabla\bar q,\]
which in addition characterizes the higher-order homogenized coefficients $\bar\Aa^2,\ldots,\bar\Aa^{n-1}$ next to the first-order one $\bar\Aa^1:=\bar\Aa$.
In other words, the higher-order corrector expansion $F^n[\cdot]$ precisely intertwines the heterogeneous elliptic operator with its higher-order homogenized version.
Note that the above only characterizes the symmetric part $\sym_{i_1\ldots i_n}\varphi_{i_1\ldots i_n}$ of correctors, which is indeed the only relevant quantity in view of two-scale expansions. It is however convenient to define correctors with a suitable non-symmetric part in such a way that the corresponding fluxes can be written in terms of skew-symmetric flux correctors, which is a well-known crucial tool in view of error estimates (e.g.~\cite[p.27]{JKO94}).
As stated below, in contrast with the periodic case, only $\ell:=\lceil\frac d2\rceil$ correctors can be defined, with the $\ell-1$ first of them being stationary; this number would be further reduced in the case of coefficient fields with stronger correlations (e.g.~\cite{GNO-quant,BFFO-17}).

\begin{defin}[Higher-order correctors]\label{def:cor}
Given $\ell:=\lceil\frac d2\rceil$, we inductively define the correctors $(\varphi^n)_{0\le n\le\ell}$, the homogenized coefficients $(\bar\Aa^n)_{1\le n\le\ell}$, the fluxes $(q^n)_{1\le n\le\ell}$, and the flux correctors $(\sigma^n)_{0\le n\le\ell}$ as follows:
\begin{enumerate}[$\bullet$]
\item $\varphi^0:=1$ and for all $1\le n\le\ell$ we define $\varphi^n:=(\varphi^n_{i_1\ldots i_n})_{1\le i_1,\ldots,i_n\le d}$ with $\varphi^n_{i_1\ldots i_n}$ a scalar field that satisfies
\begin{gather*}
-\nabla\cdot\Aa\nabla\varphi^n_{i_1\ldots i_n}=\nabla\cdot\big((\Aa\varphi^{n-1}_{i_1\ldots i_{n-1}}-\sigma^{n-1}_{i_1\ldots i_{n-1}})\,\ee_{i_n}\big),\\
\nabla\varphi^n_{i_1\ldots i_n}~\text{stationary},\quad\expec{|\nabla\varphi^n_{i_1\ldots i_n}|^2}<\infty,\quad \expec{\nabla\varphi^n_{i_1\ldots i_n}}=0.
\end{gather*}
For $n<\ell$ we can choose $\varphi^n$ itself stationary with $\expec{\varphi^n}=0$, while for $n=\ell$ we choose the anchoring $\varphi^\ell(0)=0$.
\smallskip\item For all $1\le n\le \ell$ we define $\bar\Aa^n:=(\bar\Aa^n_{i_1\ldots i_{n-1}})_{1\le i_1,\ldots,i_{n-1}\le d}$ with $\bar\Aa^n_{i_1\ldots i_{n-1}}$ the matrix given by
\begin{equation}\label{eq:def-baran}
\bar\Aa^n_{i_1\ldots i_{n-1}}\ee_{i_n}:=\expecm{\Aa\big(\nabla\varphi^{n}_{i_1\ldots i_n}+\varphi^{n-1}_{i_1\ldots i_{n-1}}\ee_{i_n}\big)}.
\end{equation}
\item For all $1\le n\le \ell$ we define $q^n:=(q^n_{i_1\ldots i_n})_{1\le i_1,\ldots,i_n\le d}$ with $q^n_{i_1\ldots i_n}$ the vector field given by
\[q^n_{i_1\ldots i_n}:=\Aa\nabla\varphi^{n}_{i_1\ldots i_n}+(\Aa\varphi^{n-1}_{i_1\ldots i_{n-1}}-\sigma^{n-1}_{i_1\ldots i_{n-1}})\,\ee_{i_n}-\bar\Aa_{i_1\ldots i_{n-1}}^{n}\ee_{i_n},\]
where the definition of $\bar\Aa^n$ and the normalization of $\sigma^{n-1}$ below ensure $\expec{q^n}=0$.
\smallskip\item $\sigma^0:=0$ and for all $1\le n\le\ell$ we define $\sigma^n:=(\sigma^n_{i_1\ldots i_n})_{1\le i_1,\ldots,i_n\le d}$ with $\sigma^n_{i_1\ldots i_n}$ a skew-symmetric matrix field that satisfies
\begin{gather*}
-\triangle\sigma^n_{i_1\ldots i_n}=\nabla\times q^n_{i_1\ldots i_n},\qquad\nabla\cdot\sigma^n_{i_1\ldots i_n}=q^n_{i_1\ldots i_n},\\
\nabla\sigma^n_{i_1\ldots i_n}~\text{stationary},\quad\expec{|\nabla\sigma^n_{i_1\ldots i_n}|^2}<\infty,\quad \expec{\nabla\sigma^n_{i_1\ldots i_n}}=0,
\end{gather*}
with the notation $(\nabla\times X)_{ij}:=\nabla_iX_j-\nabla_jX_i$ for a vector field $X$ and with the notation $(\nabla\cdot Y)_i:=\nabla_jY_{ij}$ for a matrix field $Y$. For $n<\ell$ we can choose $\sigma^n$ itself stationary with $\expec{\sigma^n}=0$, while for $n=\ell$ we choose the anchoring $\sigma^\ell(0)=0$.\qedhere
\end{enumerate}
\end{defin}

We first state that the above definition of higher-order correctors indeed makes sense.
This easily follows from~\cite[proof of Proposition~9 and Lemma~12]{BFFO-17} together with Lemma~\ref{lem:key-estimates-decomp} below in the considered Gaussian setting (see also~\cite{GNO-reg,GNO-quant,AKM-book}),
and the proof is omitted. Note that the optimal stochastic integrability (that is, the optimal constants $c_n$'s below) is not required in our analysis, hence is not addressed here.

\begin{prop}[Higher-order correctors~\cite{GNO-reg,GNO-quant,BFFO-17,AKM-book}]\label{prop:cor}
All the quantities in Definition~\ref{def:cor} exist and are uniquely defined. In addition, there exists a sequence $(c_n)_{0\le n\le\ell}$ of positive numbers with $c_0=\frac12$ such that for all $0\le n\le\ell$ and $1\le p<\infty$,
\[|\bar\Aa^n|\lesssim1,\qquad\expec{[\nabla\varphi^{n}]_2^p}^\frac1p\lesssim p^{c_{n-1}},\qquad\expec{[\varphi^n]_2^p(x)}^\frac1p+\expec{[\sigma^n]_2^p(x)}^\frac1p\,\lesssim\,p^{c_n}\mu_{d,n}(x),\]
where in line with~\eqref{eq:pre-def-mudn} we have set
\[\mu_{d,n}(x)\,:=\,\left\{\begin{array}{lll}
1&:&\text{$n<\ell$},\\
\log^\frac12(2+|x|)&:&\text{$n=\ell$ and $d$ even},\\
1+|x|^\frac12&:&\text{$n=\ell$ and $d$ odd}.
\end{array}\right.\qedhere\]
\end{prop}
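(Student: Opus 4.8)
The plan is to argue by induction on $n$, building $\varphi^n$, $\bar\Aa^n$, $q^n$, $\sigma^n$ and their bounds together. The base case $n=0$ is trivial: $\varphi^0\equiv1$, $\sigma^0\equiv0$ are deterministic, and $[\varphi^0]_2\equiv1\le p^{1/2}$, which is what fixes $c_0=\tfrac12$. For the inductive step one assumes the statement at all orders $\le n-1$; since $n-1\le\ell-1<\ell$, the fields $\varphi^{n-1}$ and $\sigma^{n-1}$ are then stationary and satisfy $\expecm{[\varphi^{n-1}]_2^p}^{1/p}+\expecm{[\sigma^{n-1}]_2^p}^{1/p}\lesssim p^{c_{n-1}}$, so the right-hand side $h^n:=(\Aa\varphi^{n-1}_{i_1\ldots i_{n-1}}-\sigma^{n-1}_{i_1\ldots i_{n-1}})\ee_{i_n}$ of the corrector equation is stationary with finite second moment. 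I would then invoke the classical stationary-corrector theory (\cite{PapaVara,Kozlov-79}; Lax--Milgram in the Hilbert space of curl-free stationary $\Ld^2(\Omega)$ fields) to get a unique stationary, centered, finite-second-moment $\nabla\varphi^n$ solving $-\nabla\cdot\Aa\nabla\varphi^n=\nabla\cdot h^n$, uniqueness coming from the fact that a stationary centered $\nabla\psi$ with $-\nabla\cdot\Aa\nabla\psi=0$ obeys $0=\expecm{\nabla\psi\cdot\Aa\nabla\psi}\ge\lambda\,\expecm{|\nabla\psi|^2}$. This determines $\varphi^n$ up to a random constant, which I fix by $\expec{\varphi^n}=0$ when $n<\ell$ and by the anchoring $\varphi^n(0)=0$ when $n=\ell$ (using elliptic regularity for a continuous representative). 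The quantities $\bar\Aa^n$ and $q^n$ are defined by their formulas, with $\expec{q^n}=0$ following from the choice of $\bar\Aa^n$ and the centering of $\sigma^{n-1}$, and $|\bar\Aa^n|\lesssim\expecm{[\nabla\varphi^n]_2^2}^{1/2}+\expecm{[\varphi^{n-1}]_2^2}^{1/2}\lesssim1$; taking the divergence of the corrector equation gives $\nabla\cdot q^n=\nabla\cdot(\Aa\nabla\varphi^n+h^n)=0$, so the flux corrector $\sigma^n$ is produced by the standard construction ($-\triangle\sigma^n=\nabla\times q^n$, $\nabla\cdot\sigma^n=q^n$, compatible since $\nabla\cdot q^n=0$), with $\nabla\sigma^n$ stationary, centered, square-integrable, and unique.

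The substance is in the quantitative bounds. For the gradients, I would apply the large-scale regularity / annealed Calder\'on-Zygmund estimates of Lemma~\ref{lem:key-estimates-decomp} below to the equation for $\varphi^n$, obtaining $\expecm{[\nabla\varphi^n]_2^p}^{1/p}\lesssim\expecm{[h^n]_2^p}^{1/p}\lesssim p^{c_{n-1}}$ by the inductive hypothesis; since $\nabla\sigma^n$ is a bounded annealed singular-integral transform of $q^n\sim\Aa\nabla\varphi^n+h^n$, the same estimates give $\expecm{[\nabla\sigma^n]_2^p}^{1/p}\lesssim p^{c_{n-1}}$. For the potentials, the task is to upgrade these stationary gradient bounds to $\expecm{[\varphi^n]_2^p(x)}^{1/p}+\expecm{[\sigma^n]_2^p(x)}^{1/p}\lesssim p^{c_n}\mu_{d,n}(x)$. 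The governing heuristic, which the rigorous argument makes precise, is that because $\Aa$ has integrable correlations the averages of $\nabla\varphi^n$ over a ball of radius $R$ satisfy $\expecm{|(\nabla\varphi^n)_R|^p}^{1/p}\lesssim p^{c_{n-1}}R^{n-1-d/2}$ --- the CLT rate $R^{-d/2}$ valid at $n=1$ degrading by one power of $R$ per order through the low-frequency amplification built into the iteration $\varphi^n\sim(-\triangle)^{-1}\nabla\cdot(\varphi^{n-1}\,\cdot)$ --- this itself being proved by feeding the concentration (spectral-gap / second-order Poincar\'e) input of Lemma~\ref{lem:key-estimates-decomp} along the whole hierarchy $\varphi^1,\ldots,\varphi^n$. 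Recovering $\varphi^n$ from $\nabla\varphi^n$ by a dyadic telescoping, with the scale-$2^k$ increments essentially independent so that they add quadratically, then yields $\expecm{[\varphi^n]_2^p(x)}^{1/p}\lesssim p^{c_n}\big(\sum_{2^k\lesssim\langle x\rangle}2^{2k(n-d/2)}\big)^{1/2}$, which is $O(1)$ for $n<\ell$, $\simeq\log^{1/2}\langle x\rangle$ for $n=\ell$ and $d$ even, and $\simeq\langle x\rangle^{1/2}$ for $n=\ell$ and $d$ odd --- precisely $\mu_{d,n}(x)$ --- the exponent worsening from $c_{n-1}$ to some $c_n>c_{n-1}$ because of this summation over scales. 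The flux corrector $\sigma^n$ is handled the same way via its Helmholtz-type representation in terms of $q^n$. Altogether this is essentially \cite[proof of Proposition~9 and Lemma~12]{BFFO-17}, whose argument transposes to the present Gaussian framework through Lemma~\ref{lem:key-estimates-decomp}.

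The hard part will be the growth estimate at the critical order $n=\ell$: one must propagate a \emph{near-optimal} quantitative theory for $\varphi^{\ell-1}$ (sharp CLT scaling of its spatial averages, sharp stochastic integrability) into the sharp $\log^{1/2}\langle x\rangle$ (even $d$), resp.\ $\langle x\rangle^{1/2}$ (odd $d$), growth of $\varphi^\ell$ and $\sigma^\ell$, with tight control of the dyadic summation and of the loss of stochastic integrability. The well-posedness, the bound $|\bar\Aa^n|\lesssim1$, and the gradient bounds $\expecm{[\nabla\varphi^n]_2^p}^{1/p}\lesssim p^{c_{n-1}}$ are, by contrast, routine once the annealed regularity theory of Lemma~\ref{lem:key-estimates-decomp} is available.
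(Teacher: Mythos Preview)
Your proposal is correct and matches the paper's approach: the paper itself omits the proof entirely, merely pointing to \cite[proof of Proposition~9 and Lemma~12]{BFFO-17} together with Lemma~\ref{lem:key-estimates-decomp} as the Gaussian-setting input, which is precisely the reference you arrive at. Your sketch is a faithful elaboration of that cited argument --- induction on $n$, qualitative existence via stationary Lax--Milgram, gradient moments via sensitivity/concentration, and potential bounds via the CLT scaling of averages $\expecm{|(\nabla\varphi^n)_R|^p}^{1/p}\lesssim R^{n-1-d/2}$ summed over dyadic scales to produce $\mu_{d,n}$.

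One minor clarification: you invoke Lemma~\ref{lem:key-estimates-decomp} as an ``annealed Calder\'on-Zygmund estimate'' for the gradient moments, but that lemma is really a Malliavin-derivative bound on the correctors (and its own proof uses Proposition~\ref{prop:cor} at lower order). This is harmless in your inductive scheme --- at step $n$ you only need the lemma at levels $s\le n-1$, where the inductive hypothesis supplies the required corrector estimates --- but the gradient moment bound $\expecm{[\nabla\varphi^n]_2^p}^{1/p}\lesssim p^{c_{n-1}}$ itself comes more directly from the logarithmic Sobolev inequality (Proposition~\ref{prop:Mall}(iii)) applied to a local average of $\nabla\varphi^n$, with the Malliavin derivative controlled via the corrector equation and the annealed Calder\'on-Zygmund theory of Theorem~\ref{th:CZ-ann}, rather than from Lemma~\ref{lem:key-estimates-decomp} per se.
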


For later reference we also state the following result on the fluctuation scaling of correctors, showing that higher-order correctors have stronger correlations hence worse fluctuation scalings, as first emphasized in~\cite{Gu-17}.

\begin{lem}[Fluctuation scaling of higher-order correctors]\label{lem:average-cor}
For all $g\in C^\infty_c(\R^d)$, $0\le n\le\ell-1$, and $1\le p<\infty$,
\begin{equation*}
\expec{\Big|\int_{\R^d}g(x)\big(\nabla\varphi^{n+1},\varphi^{n},\sigma^{n}\big)(\tfrac x\e)\,dx\Big|^p}^\frac1p
\lesssim_p\e^{\frac{d}{2}-n}\|[g]_2\|_{\Ld^\frac{2d}{d+2n}(\R^d)}.\qedhere
\end{equation*}
\end{lem}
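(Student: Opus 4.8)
The plan is to estimate the $p$th moment of the random average $\int_{\R^d}g(x)\,(\nabla\varphi^{n+1},\varphi^n,\sigma^n)(\tfrac x\e)\,dx$ by controlling its variance via the Malliavin/spectral-gap (Poincar\'e) inequality recalled in the introduction, $\var{X}\lesssim\expec{\|DX\|_\Hf^2}$, and then upgrading to arbitrary moments by a standard higher-order logarithmic-Sobolev or $L^p$-version of the spectral gap (as in \cite{GNO-quant,GNO2}). Write $Y_\e:=\int_{\R^d}g(x)\,h^n(\tfrac x\e)\,dx$, where $h^n$ stands for one of $\nabla\varphi^{n+1}$, $\varphi^n$, $\sigma^n$. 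Since $\expec{\nabla\varphi^{n+1}}=0$ and (for $n\le\ell-1$) $\varphi^n,\sigma^n$ are stationary and centered, $\expec{Y_\e}=0$, so it suffices to bound $\expec{|Y_\e|^p}$.

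First I would compute the Malliavin derivative $D_zY_\e=\int_{\R^d}g(x)\,(D_zh^n)(\tfrac x\e)\,dx$. For the corrector the key point is the sensitivity estimate: $D_z\nabla\varphi^{n+1}$ solves a linear elliptic equation whose right-hand side is localized near $z$ and involves the lower-order objects $\nabla\varphi^{\le n},\varphi^{\le n},\sigma^{\le n}$ (this is exactly the algebra alluded to in Section~\ref{sec:rep-form}); using the annealed Calder\'on--Zygmund/large-scale regularity theory this yields, schematically, $[D_\cdot h^n]_2(z)\lesssim$ (a stationary random field $r_n(z)$ with all moments bounded by Proposition~\ref{prop:cor}) $\times$ a deterministic elliptic Green-function decay in the variable dual to $z$. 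Plugging this into $\|D Y_\e\|_\Hf^2=\int\int c_0(z-z')\,D_zY_\e\,D_{z'}Y_\e$ and using \eqref{eq:cov-L1} to absorb the convolution, one is left with a deterministic convolution estimate of the form $\big\|(\text{Green decay})\ast(g(\e\,\cdot))\big\|_{L^2}$, which by Hardy--Littlewood--Sobolev (the same computation underlying the weight $\mu_{d,n}$ and the exponent $\tfrac{2d}{d+2n}$) produces the claimed factor $\e^{d/2-n}\|[g]_2\|_{L^{2d/(d+2n)}}$. The sharp exponent $\tfrac{2d}{d+2n}$ is precisely the one for which the $n$-fold iterated Green's function (coming from the $n$-step corrector hierarchy) is borderline integrable against $g(\e\cdot)$; this is where the threshold $n\le\ell-1$ enters, keeping us strictly below the logarithmically-corrected critical case $n=\ell$.

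The main obstacle is the inductive bookkeeping of the sensitivity estimate: one must show that $D_zh^n$ genuinely gains $n$ powers of the large-scale Green's function (equivalently $n$ factors of scale) relative to $D_z\Aa$, rather than only a bounded number. Concretely, differentiating the corrector equation in Definition~\ref{def:cor} produces a source term $\nabla\cdot\big((D_z\Aa)(\nabla\varphi^n+\varphi^{n-1}\ee)\big)$ plus corrections $\nabla\cdot\big(\Aa\,D_z\varphi^{n-1}\ee-D_z\sigma^{n-1}\ee\big)$; handling the latter requires the inductive hypothesis on $D_z\varphi^{n-1},D_z\sigma^{n-1}$ together with the skew-symmetric flux-corrector representation (so that each step contributes a genuine extra power of $\langle\nabla\rangle^{-1}$), and one must track that the accompanying stochastic prefactors $r_n(z)$ stay in every $L^p$ --- this is exactly where Proposition~\ref{prop:cor} and the annealed $L^p$ theory of Section~\ref{sec:Lpreg} do the work. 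Once the sensitivity estimate is in place, the variance bound is a one-line convolution, and the passage $\var{}\rightsquigarrow N_p$ is routine; in fact, since the statement only asks for an upper bound (not optimal stochastic integrability), one can afford to be generous and invoke the crude moment version of the spectral gap directly.
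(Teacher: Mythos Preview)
Your proposal is correct and follows essentially the same route as the paper: reduce to the Malliavin derivative via the $\Ld^p$ Poincar\'e inequality (Proposition~\ref{prop:Mall}(iii) together with~\eqref{eq:cov-L1-H}), then control $D(\nabla\varphi^{n+1},\varphi^n,\sigma^n)$ by induction on the corrector hierarchy, with each step contributing one inverse elliptic operator and hence one power of the Riesz potential, and conclude by Hardy--Littlewood--Sobolev. The paper packages the inductive sensitivity estimate as Lemma~\ref{lem:key-estimates-decomp}, working in dual form with the operators $T,U_\ee,U_{\ee,\circ}$ rather than via pointwise Green's function decay, but this is only a cosmetic difference from what you sketch.
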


If $\Aa$ is replaced by the pointwise transpose field $\Aa^*$, we write $(\varphi^{*,n})_{0\le n\le\ell}$, $(\bar\Aa^{*,n})_{1\le n\le\ell}$, $(q^{*,n})_{1\le n\le\ell}$, and $(\sigma^{*,n})_{0\le n\le\ell}$ for the corresponding objects.
While it is well-known that $\bar\Aa^{*,1}=(\bar\Aa^1)^*$, the following lemma extends this relation to higher order.

\begin{lem}[Symmetries of homogenized coefficients]\label{lem:sym-baran}
For all $1\le n\le\ell$,\footnote{For $n=1$ the notation $\bar\Aa^{*,n}_{ji_1\ldots i_{n-2}}\ee_{i_{n-1}}$ stands for $\bar\Aa^{*,1}\ee_{j}$, and we use a similar unifying notation throughout in the sequel.}
\begin{align}\label{eq:total-migration}
\sym_{i_1\ldots i_n}\big(\ee_j\cdot\bar\Aa^n_{i_1\ldots i_{n-1}}\ee_{i_n}\big)\,=\,(-1)^{n+1}\sym_{i_1\ldots i_n}\big(\ee_{i_n}\cdot\bar\Aa^{*,n}_{ji_1\ldots i_{n-2}}\ee_{i_{n-1}}\big).
\end{align}
In particular, if $\Aa$ is symmetric and if $n$ is even, there holds
\[\sym_{i_1\ldots i_{n+1}}\big(\ee_{i_{n+1}}\cdot\bar\Aa^n_{i_1\ldots i_{n-1}}\ee_{i_n}\big)=0.\qedhere\]
\end{lem}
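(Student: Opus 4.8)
The plan is to prove the symmetry relation \eqref{eq:total-migration} by induction on $n$, reducing it at each step to an identity between quadratic-type ``energy'' pairings of correctors and flux correctors. The natural bilinear form to use is the following: for two scalar stationary gradient fields $\nabla v, \nabla w$ (arising as corrector gradients), consider $\expec{\nabla w \cdot \Aa \nabla v}$; the key structural fact is that by the corrector equation \eqref{eq:corr} (and its higher-order analogue in Definition~\ref{def:cor}) together with stationarity, such pairings can be ``integrated by parts'' and transferred between the primal and dual correctors, picking up exactly the combinatorial sign $(-1)^{n+1}$ after symmetrization. Concretely, first I would rewrite \eqref{eq:def-baran} as $\ee_j\cdot\bar\Aa^n_{i_1\ldots i_{n-1}}\ee_{i_n}=\expecm{(\nabla\varphi^{*,1}_j+\ee_j)\cdot\Aa\big(\nabla\varphi^n_{i_1\ldots i_n}+\varphi^{n-1}_{i_1\ldots i_{n-1}}\ee_{i_n}\big)}$, using that $\expecm{\nabla\varphi^{*,1}_j\cdot\Aa(\nabla\varphi^n+\varphi^{n-1}\ee_{i_n})}=-\expecm{\nabla\varphi^{*,1}_j\cdot q^n}=0$ since $\nabla\cdot q^n$ (up to the homogenized contribution, which pairs to zero against the centered stationary field $\nabla\varphi^{*,1}_j$) vanishes and $\nabla\varphi^{*,1}_j$ is a stationary curl-free field of mean zero — this is the standard trick that both the primal and dual test functions may be inserted simultaneously into the energy.

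Second, with this symmetric-looking representation in hand, I would expand the two factors using the recursive structure. Each $\nabla\varphi^n_{i_1\ldots i_n}+\varphi^{n-1}_{i_1\ldots i_{n-1}}\ee_{i_n}$ satisfies $-\nabla\cdot\Aa(\nabla\varphi^n_{i_1\ldots i_n}+\varphi^{n-1}_{i_1\ldots i_{n-1}}\ee_{i_n})=\nabla\cdot(\sigma^{n-1}_{i_1\ldots i_{n-1}}\ee_{i_n})$, so that pairing against a dual corrected object $\nabla\varphi^{*,m}_{j_1\ldots j_m}+\varphi^{*,m-1}_{j_1\ldots j_{m-1}}\ee_{j_m}$ produces a term of the form $\expecm{\sigma^{n-1}_{i_1\ldots i_{n-1}}\ee_{i_n}\cdot(\nabla\varphi^{*,m}+\ldots)}$, i.e.\ the skew-symmetry of $\sigma$ is what converts a pairing at ``order $n$ against order $m$'' into one at ``order $n-1$ against order $m$'' with a sign. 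Iterating this in a Leibniz-type fashion down to the base case $\varphi^0=1,\sigma^0=0$ produces, after the outer symmetrization $\sym_{i_1\ldots i_n}$, exactly the reflected pairing with the total sign $(-1)^{n+1}$; the $(-1)$ per step comes from the skew-symmetry $\sigma^{k}=-(\sigma^{k})^{T}$ combined with an integration by parts, and the count of steps is $n$ modulo the fact that one of the two ``endpoints'' contributes trivially — hence the sign $(-1)^{n+1}$ rather than $(-1)^n$. I would organize this as a clean induction: assume \eqref{eq:total-migration} for all indices $<n$ (for both $\Aa$ and $\Aa^*$, which are symmetric to each other under the involution), then do one transfer step.

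Third, the ``in particular'' statement is a short corollary: if $\Aa=\Aa^*$ then $\varphi^{*,n}=\varphi^n$ and $\bar\Aa^{*,n}=\bar\Aa^n$, so \eqref{eq:total-migration} with $n\rightsquigarrow n+1$ reads $\sym_{i_1\ldots i_{n+1}}(\ee_j\cdot\bar\Aa^{n+1}_{i_1\ldots i_n}\ee_{i_{n+1}})=(-1)^{n+2}\sym(\ldots)$ — wait, one needs rather to apply it at level $n$: the claim $\sym_{i_1\ldots i_{n+1}}(\ee_{i_{n+1}}\cdot\bar\Aa^n_{i_1\ldots i_{n-1}}\ee_{i_n})=0$ follows because, upon fully symmetrizing over all $n+1$ indices $i_1,\ldots,i_{n+1}$ (treating the former ``$j$'' index $i_{n+1}$ on equal footing), relation \eqref{eq:total-migration} with $\Aa^{*}=\Aa$ gives that this fully symmetric tensor equals $(-1)^{n+1}$ times itself, and $(-1)^{n+1}=-1$ for $n$ even, forcing it to vanish. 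I would spell out that the right-hand side of \eqref{eq:total-migration}, after the extra symmetrization in $j=i_{n+1}$, becomes the same fully symmetric object as the left-hand side.

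I expect the main obstacle to be \emph{bookkeeping the sign and the symmetrization simultaneously}: the transfer identity naturally produces a sum of terms (one for each way the Leibniz rule distributes derivatives among the corrector indices), and one must check that all but the ``fully reflected'' term cancel after $\sym_{i_1\ldots i_n}$, using the skew-symmetry of the $\sigma^k$ and the inductive hypothesis at lower order. This is essentially the same mechanism as in the proof that $\bar\Aa^{*,1}=(\bar\Aa^1)^{T}$, but the higher-order Leibniz combinatorics require care; the cleanest route is probably to first establish the one-step transfer lemma $\sym_{i_1\ldots i_n}\expecm{(\nabla\varphi^{*,m}_{\cdot}+\ldots)\cdot\Aa(\nabla\varphi^n_{i_1\ldots i_n}+\varphi^{n-1}\ee_{i_n})}=-\sym_{i_1\ldots i_n}\expecm{(\nabla\varphi^{*,m+1}_{\cdot i_n}+\ldots)\cdot\Aa(\nabla\varphi^{n-1}_{i_1\ldots i_{n-1}}+\varphi^{n-2}\ee_{i_{n-1}})}$ in isolation, and then iterate it $n$ times.
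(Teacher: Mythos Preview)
Your overall strategy---the ``migration'' argument transferring one corrector index at a time from the primal side to the dual side, picking up a sign at each step, with $\sigma$-terms that disappear after symmetrization---is exactly the paper's approach. The paper's proof is organized just as you suggest at the end: a one-step transfer identity (its Step~1, equation~\eqref{eq:main-ident-coeff0}) is iterated $n-1$ times (Step~2), and the accumulated $\sigma^{*,l}$- and $\sigma^{n-l-2}$-terms are then killed in one stroke by $\sym_{i_1\ldots i_n}$ thanks to skew-symmetry. So the plan is sound.

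There is, however, a concrete error in your first step. The identity
\[
\expecm{\nabla\varphi^{*,1}_j\cdot\Aa\big(\nabla\varphi^n_{i_1\ldots i_n}+\varphi^{n-1}_{i_1\ldots i_{n-1}}\ee_{i_n}\big)}=0
\]
is \emph{false} for $n\ge2$. From Definition~\ref{def:cor} one has
\[
\Aa\big(\nabla\varphi^n_{i_1\ldots i_n}+\varphi^{n-1}_{i_1\ldots i_{n-1}}\ee_{i_n}\big)
= q^n_{i_1\ldots i_n} + \sigma^{n-1}_{i_1\ldots i_{n-1}}\ee_{i_n} + \bar\Aa^n_{i_1\ldots i_{n-1}}\ee_{i_n},
\]
and while the $q^n$-contribution indeed vanishes (divergence-free against curl-free, stationary mean-zero) and the $\bar\Aa^n$-contribution vanishes (constant against centered $\nabla\varphi^{*,1}_j$), the term $\expecm{\nabla\varphi^{*,1}_j\cdot\sigma^{n-1}_{i_1\ldots i_{n-1}}\ee_{i_n}}$ has no reason to vanish before symmetrization. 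So your ``symmetric-looking representation'' $\expecm{(\nabla\varphi^{*,1}_j+\ee_j)\cdot\Aa(\nabla\varphi^n+\varphi^{n-1}\ee_{i_n})}$ does \emph{not} equal $\ee_j\cdot\bar\Aa^n\ee_{i_n}$ for $n\ge 2$. The paper instead starts from the (correct) identity
\[
\ee_j\cdot\bar\Aa^n_{i_1\ldots i_{n-1}}\ee_{i_n}
= -\expecm{\nabla\varphi^{*,1}_j\cdot\Aa\nabla\varphi^n_{i_1\ldots i_n}
- \ee_j\cdot\Aa\varphi^{n-1}_{i_1\ldots i_{n-1}}\ee_{i_n}},
\]
obtained by using only the $\varphi^{*,1}$-equation to rewrite $\expec{\ee_j\cdot\Aa\nabla\varphi^n}=-\expec{\nabla\varphi^{*,1}_j\cdot\Aa\nabla\varphi^n}$. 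This asymmetric pairing (difference of two products, not a single product of sums) is exactly the quantity that transfers cleanly under the one-step identity~\eqref{eq:main-ident-coeff0}. Once you adopt this starting point, the rest of your plan goes through: iterate, collect the $\sigma$-terms, and symmetrize at the end. Your corollary argument for the ``in particular'' statement is correct.
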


\begin{rem}\label{rem:migration-BS}
The proof of Lemma~\ref{lem:sym-baran} proceeds by letting the successive inverse operators $(\nabla\cdot\Aa\nabla)^{-1}\nabla\cdot$ defining $(\nabla\varphi^n,\varphi^{n-1})$ ``migrate'' to the test function $\Aa$ in the definition~\eqref{eq:def-baran} of $\bar\Aa^n$.
Using the algebra of correctors, a full migration precisely leads to the dual expression $\bar\Aa^{*,n}$ and proves the relation~\eqref{eq:total-migration}.
Rather stopping this migration argument at an intermediate step, we obtain for all $1\le m< n\le\ell$,
\begin{multline*}
\sym_{i_1\ldots i_n}\big(\ee_j\cdot\bar\Aa^n_{i_1\ldots i_{n-1}}\ee_{i_n}\big)\,=\,(-1)^{m+1}\,\sym_{i_1\ldots i_n}\E\Big[\nabla\varphi^{*,m+1}_{ji_n\ldots i_{n-m+1}}\cdot\Aa\nabla\varphi^{n-m}_{i_1\ldots i_{n-m}}\\
-\varphi^{*,m}_{ji_n\ldots i_{n-m+2}}\ee_{i_{n-m+1}}\cdot\Aa\varphi^{n-m-1}_{i_1\ldots i_{n-m-1}}\ee_{i_{n-m}}\Big].
\end{multline*}
Note that this turns into~\eqref{eq:total-migration} for $m=n-1$.
We deduce for $n=2m+1$ odd,
\begin{multline*}
\sym_{i_1\ldots i_n}\big(\ee_j\cdot\bar\Aa^n_{i_1\ldots i_{n-1}}\ee_{i_n}\big)
\,=\,(-1)^{m+1}\,\sym_{i_1\ldots i_n}\E\Big[\nabla\varphi^{*,m+1}_{ji_n\ldots i_{m+2}}\cdot\Aa\nabla\varphi^{m+1}_{i_1\ldots i_{m+1}}\\
-\varphi^{*,m}_{ji_n\ldots i_{m+3}}\ee_{i_{m+2}}\cdot\Aa\varphi^{m}_{i_1\ldots i_{m}}\ee_{i_{m+1}}\Big],
\end{multline*}
and for $n=2m$ even, further using the algebra of correctors to symmetrize the expression,
\begin{multline*}
\sym_{i_1\ldots i_n}\big(\ee_j\cdot\bar\Aa^n_{i_1\ldots i_{n-1}}\ee_{i_n}\big)\\
\,=\,(-1)^{m+1}\,\sym_{i_1\ldots i_n}\E\Big[\big(\nabla\varphi^{*,m}_{ji_n\ldots i_{m+2}}+\varphi^{*,m-1}_{ji_n\ldots i_{m+3}}\ee_{i_{m+2}}\big)\cdot \Aa \varphi^{m}_{i_1\ldots i_{m}}\ee_{i_{m+1}}\\
-\varphi^{*,m}_{ji_n\ldots i_{m+2}}\ee_{i_{m+1}}\cdot \Aa\big(\nabla\varphi^{m}_{i_1\ldots i_{m}}+\varphi^{m-1}_{i_1\ldots i_{m-1}}\ee_{i_{m}}\big)\Big].
\end{multline*}
This shows that $\bar\Aa^n$ can be defined in terms of correctors $(\nabla\varphi^{s+1},\varphi^s)$ with $s\le \lfloor\frac n2\rfloor$ only.
This simple observation was independently made in~\cite[Theorem~3.5]{Pouch-19} (see also~\cite{Pouch-17}) and has the following striking consequence:
Choosing a periodization in law $\Aa_L$ of the coefficient field $\Aa$ and recalling that the periodic Poincaré inequality ensures that all the correctors $\varphi_L^n$ are defined as stationary objects so that all the corresponding homogenized coefficients $\bar\Aa_L^n$ are well-defined, the above computation entails that $\sym_{i_1\ldots i_n}\big(\bar\Aa^n_{L;i_1\ldots i_{n-1}}\ee_{i_n}\big)$ has a limit as $L\uparrow\infty$ for all $n\le d$ if $d$ is odd and for all $n< d$ if $d$ is even.
This partially solves a weak version of the Bourgain-Spencer conjecture~\cite{Bourgain-18,Lemm-18} as stated in~\cite[Section~4.2]{DGL}. Refinements in this direction are postponed to a future work.
\end{rem}

We turn to the accuracy of two-scale expansions in the homogenization regime.
Given $f\in\Ld^2(\R^d)^d$, we consider the unique Lax-Milgram solution $\nabla u_\e$ of the rescaled elliptic PDE~\eqref{eq:first-def-ups},
\[-\nabla\cdot\Aa(\tfrac\cdot\e)\nabla u_\e=\nabla\cdot f\qquad\text{in $\R^d$}.\]
Standard two-scale expansion techniques~\cite{BLP-78} formally suggest
\[u_{\e}=\sum_{k=0}^n\e^k\varphi_{i_1\ldots i_k}^k(\tfrac\cdot\e)\nabla^k_{i_1\ldots i_k}\bar U_{\e}^n+O(\e^{n+1}),\]
where $\bar U_{\e}^n$ satisfies the $n$th-order homogenized equation
\begin{align}\label{eq:eff-eqn-formal}
-\nabla\cdot\Big(\sum_{k=1}^n\e^{k-1}\bar\Aa_{i_1\ldots i_{k-1}}^k\nabla_{i_1\ldots i_{k-1}}^{k-1}\Big)\nabla\bar U_{\e}^n=\nabla\cdot f\qquad\text{in $\R^d$}.
\end{align}
The effective field-flux constitutive relation $\nabla\bar w\mapsto\bar\Aa\nabla\bar w$ is thus refined into the higher-order relation
\begin{align}\label{eq:higher-constitutive-rel}
\nabla\bar w\mapsto\sum_{k=1}^n\e^{k-1}\bar\Aa^k_{i_1\ldots i_{k-1}}\nabla\nabla^{k-1}_{i_1\ldots i_{k-1}}\bar w,
\end{align}
which includes dispersive corrections.
However, equation~\eqref{eq:eff-eqn-formal} is ill-posed in general for $n$ even and a suitable proxy needs to be devised (cf.\@ also~\cite{KMS-06}).
We start by introducing a notation for two-scale expansions.

\begin{defin}[Higher-order two-scale expansions]\label{def:EnFn}
For $0\le n\le\ell$, given a deterministic smooth function $\bar w$, its {$n$th-order two-scale expansion} $F^n_\e[\bar w]$ is defined as
\[F^n_\e[\bar w]:=\sum_{k=0}^n\e^k\varphi_{i_1\ldots i_k}^k(\tfrac\cdot\e)\,\nabla_{i_1\ldots i_k}^k\bar w.\]
At the level of gradients, we similarly define for $0\le n\le\ell$,
\[E^n_\e[\nabla\bar w]\,:=\,\sum_{k=0}^{n-1}\e^k\,\big(\nabla\varphi_{i_1\ldots i_{k+1}}^{k+1}+\varphi_{i_1\ldots i_{k}}^{k}\ee_{i_{k+1}}\big)(\tfrac\cdot\e)\,\nabla_{i_1\ldots i_{k+1}}^{k+1}\bar w.\qedhere\]
\end{defin}

In particular, note that the quantities $\nabla F^n_\e[\bar w]$ and $E^n_\e[\nabla\bar w]$ are related as follows: for all $0\le n\le\ell$,
\begin{align}\label{eq:link-E0E}
\nabla F^n_\e[\bar w]
\,=\,E^n_\e[\nabla\bar w]+\e^n\varphi_{i_1\ldots i_n}^n\nabla\nabla_{i_1\ldots i_{n}}^{n}\bar w,
\end{align}
where all coefficients of $E^n_\e[\nabla\bar w]$ are stationary.
We turn to the definition of a suitable proxy for the ill-posed higher-order homogenized equation~\eqref{eq:eff-eqn-formal}. For later reference we simultaneously consider the dual equations.

\begin{defin}[Higher-order homogenized equations]\label{def:homog-eqn}
Given $f,g\in C^\infty_c(\R^d)^d$, let $\nabla u_\e$ and $\nabla v_\e$ be the unique Lax-Milgram solutions of the rescaled elliptic PDEs
\begin{align}\label{eq:ueps-veps}
-\nabla\cdot\Aa(\tfrac\cdot\e)\nabla u_\e=\nabla\cdot f,\qquad-\nabla\cdot\Aa^*(\tfrac\cdot\e)\nabla v_\e=\nabla\cdot g\qquad\text{in $\R^d$}.
\end{align}
For $1\le n\le\ell$, we define the corresponding {$n$th-order homogenized solutions}
\[\bar u^n_\e:=\sum_{k=1}^n\e^{k-1}\tilde u^k,\qquad\bar v^n_\e:=\sum_{k=1}^n\e^{k-1}\tilde v^k,\]
where $\nabla\tilde u^1=\nabla\bar u$ and $\nabla\tilde v^1=\nabla\bar v$ are the unique Lax-Milgram solutions in $\R^d$ of the first-order homogenized equations,
\[-\nabla\cdot\bar\Aa\nabla\bar u=\nabla\cdot f,\qquad -\nabla\cdot\bar\Aa^{*}\nabla\bar v=\nabla\cdot g,\]
and where for $2\le n\le\ell$ we inductively define the $n$th-order corrections $\nabla\tilde u^n$ and $\nabla\tilde v^n$ as the unique Lax-Milgram solutions in $\R^d$ of
\begin{gather*}
-\nabla\cdot\bar\Aa\nabla\tilde u^n=\nabla\cdot\sum_{k=2}^n\bar\Aa^k_{i_1\ldots i_{k-1}}\nabla\nabla^{k-1}_{i_1\ldots i_{k-1}}\tilde u^{n+1-k},\\
-\nabla\cdot\bar\Aa^{*}\nabla\tilde v^n=\nabla\cdot\sum_{k=2}^n\bar\Aa^{*,k}_{i_1\ldots i_{k-1}}\nabla\nabla^{k-1}_{i_1\ldots i_{k-1}}\tilde v^{n+1-k}.\qedhere
\end{gather*}
\end{defin}

With the above notation, the main result from the higher-order corrector theory in stochastic homogenization takes the following guise.
For $q=2$, it is a straightforward consequence of the definition of higher-order correctors and of the corrector estimates of Proposition~\ref{prop:cor}. It extends~\cite{GNO2,GNO-quant,BFFO-17,AKM-book} to higher order and completes~\cite[Theorem~1.6]{Gu-17} up to the optimal order with finer norms. For $n=\ell$, the obtained maximal accuracy is $O(\e^{d/2})$ (with a logarithmic correction in even dimensions), indicating that the study of fluctuations goes beyond the theory of oscillations.

\begin{prop}[Accuracy of higher-order two-scale expansions]\label{prop:homog-err}
For all $f\in C^\infty_c(\R^d)^d$, $1\le n\le\ell$, and $1< q\le p<\infty$,
\begin{equation}\label{eq:concl-homog-err}
\expec{\big\|\big[\nabla(u_\e-F^n_\e[\bar u_\e^{n}])\big]_2\big\|_{\Ld^q(\R^d)}^p}^\frac1p\,\lesssim_{q,p}\,\e^n\mu_{d,n}(\tfrac1\e)\,\big\|\mu_{d,n}[\langle\nabla\rangle^{2n-1}f]_\infty\big\|_{\Ld^q(\R^d)}.\qedhere
\end{equation}
\end{prop}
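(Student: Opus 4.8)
The plan is to derive the exact PDE governing the two-scale expansion error $w_\e:=u_\e-F^n_\e[\bar u^n_\e]$, of the form $-\nabla\cdot\Aa(\tfrac\cdot\e)\nabla w_\e=\nabla\cdot h_\e$ for an explicit vector field $h_\e$ of order $\e^n$, and then to combine a deterministic $\Ld^q$ elliptic estimate for $w_\e$ with the corrector bounds of Proposition~\ref{prop:cor} for $h_\e$. For the error equation I would start from~\eqref{eq:link-E0E}, writing $\Aa(\tfrac\cdot\e)\nabla F^n_\e[\bar u^n_\e]=\Aa(\tfrac\cdot\e)E^n_\e[\nabla\bar u^n_\e]+\e^n\Aa(\tfrac\cdot\e)\varphi^n_{i_1\ldots i_n}(\tfrac\cdot\e)\nabla\nabla^n_{i_1\ldots i_n}\bar u^n_\e$, and replacing in the first term each $\Aa(\nabla\varphi^{k+1}_{i_1\ldots i_{k+1}}+\varphi^k_{i_1\ldots i_k}\ee_{i_{k+1}})$ by $q^{k+1}_{i_1\ldots i_{k+1}}+\sigma^k_{i_1\ldots i_k}\ee_{i_{k+1}}+\bar\Aa^{k+1}_{i_1\ldots i_k}\ee_{i_{k+1}}$ (Definition~\ref{def:cor}). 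This splits the field into three groups. The $\bar\Aa^{k+1}$-group reassembles the $n$th-order constitutive relation~\eqref{eq:higher-constitutive-rel} evaluated at $\bar u^n_\e$; by the defining cascade of Definition~\ref{def:homog-eqn} its divergence equals $-\nabla\cdot f$ up to a purely \emph{deterministic} remainder $\nabla\cdot R^{\mathrm{proxy}}_\e$ collecting the surplus powers $\e^{p-1}$ with $n\le p-1\le 2n-2$ (this is exactly where the ill-posedness of~\eqref{eq:eff-eqn-formal} for $n$ even is harmlessly discarded). The $q^{k+1}$-group and the $\sigma^k\ee$-group are each put in divergence form using $q^{m}=\nabla\cdot\sigma^m$ together with the skew-symmetry of $\sigma^m$, which annihilates the double-gradient contractions $[\sigma^m]_{ls}\nabla_s\nabla_l(\cdot)=0$; the crucial point is that the low-order terms so produced ($\sigma^m$ with coefficient $\e^m$, $1\le m\le n-1$, in the two groups) \emph{cancel pairwise} by skew-symmetry, so that after simplification only the top-order flux corrector survives. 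The outcome is the clean identity
\[
-\nabla\cdot\Aa(\tfrac\cdot\e)\nabla w_\e=\nabla\cdot h_\e,\qquad
h_\e=\e^n\big(\Aa(\tfrac\cdot\e)\varphi^n_{i_1\ldots i_n}(\tfrac\cdot\e)-\sigma^n_{i_1\ldots i_n}(\tfrac\cdot\e)\big)\nabla\nabla^n_{i_1\ldots i_n}\bar u^n_\e+R^{\mathrm{proxy}}_\e,
\]
the exact higher-order analogue of the classical first-order identity ($n=1$, where $R^{\mathrm{proxy}}_\e=0$ and $h_\e=\e(\varphi_i\Aa-\sigma^1_i)(\tfrac\cdot\e)\nabla\nabla_i\bar u$).

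Given this equation, I would estimate $w_\e$ deterministically. For $q=2$ the energy estimate gives $\|[\nabla w_\e]_2\|_{\Ld^2(\R^d)}\lesssim\|[h_\e]_2\|_{\Ld^2(\R^d)}$. For general $1<q<\infty$, by the quenched large-scale Calder\'on--Zygmund estimate of~\cite{Armstrong-Daniel-16,GNO-reg,AKM-book} (or, for our purposes, the annealed version of Section~\ref{sec:Lpreg}), there is a stationary random minimal radius $r_*$ with stretched-exponential moments such that $\|[\nabla w_\e]_2\|_{\Ld^q(\R^d)}\lesssim_q\big\|(1\vee\e\, r_*(\tfrac\cdot\e))^{C}\,[h_\e]_2\big\|_{\Ld^q(\R^d)}$. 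Taking $\Ld^p(\Omega)$-norms and using Minkowski's integral inequality --- which is precisely where the assumption $q\le p$ enters --- followed by H\"older's inequality in $\Omega$ to absorb the $r_*$-factor (its $\Ld^{p'}(\Omega)$-norm being an $x$-independent constant by stationarity), everything reduces to bounding $\big\|\expec{[h_\e]_2^{p'}(\cdot)}^{1/p'}\big\|_{\Ld^q(\R^d)}$ for arbitrary $p'<\infty$.

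It then remains to estimate $h_\e$ termwise. By stationarity, a change of variables in the $[\cdot]_2$-average, and the corrector moment bounds of Proposition~\ref{prop:cor} (which give $\expec{[\varphi^n]_2^{p'}(y)+[\sigma^n]_2^{p'}(y)}^{1/p'}\lesssim_{p'}\mu_{d,n}(y)$, with $\mu_{d,n}\equiv1$ for $n<\ell$), the corrector part of $[h_\e]_2(x)$ has $\Ld^{p'}(\Omega)$-norm $\lesssim_{p'}\e^n\mu_{d,n}(\langle x\rangle/\e)\,[\nabla^{n+1}\bar u^n_\e]_\infty(x)$, while $R^{\mathrm{proxy}}_\e$ contributes the deterministic $\lesssim\e^n\sum_{m\le 2n-2}[\nabla^m\bar u^n_\e]_\infty(x)$. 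Using the submultiplicativity $\mu_{d,n}(\langle x\rangle/\e)\lesssim\mu_{d,n}(\tfrac1\e)\,\mu_{d,n}(\langle x\rangle)$ and then deterministic weighted elliptic regularity applied along the constant-coefficient cascade of Definition~\ref{def:homog-eqn} --- noting that each $\nabla^{n+1}\tilde u^j$ ($1\le j\le n$, carried by the factor $\e^{n+j-1}$ in $h_\e$) is a Calder\'on--Zygmund transform of $\nabla^{n+j-1}f$ with $n+j-1\le 2n-1$, and likewise the terms of $R^{\mathrm{proxy}}_\e$ involve at most $\nabla^{2n-2}f$, so that $\e^{-n}h_\e$ is controlled in weighted $\Ld^q$ by $\mu_{d,n}\,[\langle\nabla\rangle^{2n-1}f]_\infty$, the weight $\mu_{d,n}^q$ being in the relevant Muckenhoupt class and the local sup-averages $[\cdot]_\infty$ supplying the room needed for $q$ close to $1$ --- one closes the estimate $\big\|\expec{[h_\e]_2^{p'}}^{1/p'}\big\|_{\Ld^q}\lesssim_{p',q}\e^n\mu_{d,n}(\tfrac1\e)\,\big\|\mu_{d,n}\,[\langle\nabla\rangle^{2n-1}f]_\infty\big\|_{\Ld^q}$, which is the claim.

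The only step requiring genuine care beyond bookkeeping is the derivation of the error equation, namely verifying from Definition~\ref{def:cor} that the flux-corrector substitutions and the homogenized cascade produce exactly the stated pairwise cancellations, so that $h_\e$ is $O(\e^n)$ with the clean $(\Aa\varphi^n-\sigma^n)\nabla\nabla^n\bar u^n_\e$ structure. The probabilistic and harmonic-analytic ingredients --- Proposition~\ref{prop:cor}, large-scale regularity, and weighted singular integrals --- are all off-the-shelf.
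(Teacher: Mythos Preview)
Your proposal is correct and follows essentially the same strategy as the paper: derive the exact error equation for $u_\e-F^n_\e[\bar u^n_\e]$ with right-hand side $\nabla\cdot h_\e$ where $h_\e=\e^n(\Aa\varphi^n-\sigma^n)(\tfrac\cdot\e)\nabla\nabla^n\bar u^n_\e+R^{\mathrm{proxy}}_\e$, then combine an $\Ld^q$ elliptic estimate for the heterogeneous operator with the corrector moment bounds of Proposition~\ref{prop:cor} and weighted Calder\'on--Zygmund theory for the constant-coefficient cascade of Definition~\ref{def:homog-eqn}.

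The only notable difference is in the derivation of the error equation. You proceed by direct substitution of $\Aa(\nabla\varphi^{k+1}+\varphi^k\ee)=q^{k+1}+\sigma^k\ee+\bar\Aa^{k+1}\ee$ into $\Aa E^n_\e[\nabla\bar u^n_\e]$ and then telescope using $q^m=\nabla\cdot\sigma^m$ and skew-symmetry; the paper instead establishes the identity~\eqref{eq:pre-dec-homog} for $\nabla\cdot\Aa\nabla F^n[\bar w]$ by a short induction on $n$, which packages the same cancellations more compactly. For the $\Ld^q$ step with $q\ne2$, you invoke the quenched large-scale Calder\'on--Zygmund estimate together with Minkowski in $\Ld^p(\Omega;\Ld^q(\R^d))$ and H\"older to absorb $r_*$ (this is where $q\le p$ is used), whereas the paper simply points to the annealed estimate of Theorem~\ref{th:CZ-ann}; both routes are valid and lead to the same bound.
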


\subsection{Proof of Lemma~\ref{lem:average-cor}}
The result is easily obtained based on Malliavin calculus and on the annealed Calder\'on-Zygmund theory of Section~\ref{sec:Lpreg}.
More precisely, for $0\le n\le\ell-1$, we start from Proposition~\ref{prop:Mall}(iii) together with~\eqref{eq:cov-L1-H} in the form
\begin{equation*}
\expec{\Big|\int_{\R^d}g\,(\nabla\varphi^{n+1},\varphi^n,\sigma^n)\Big|^p}^\frac1p
\lesssim_p\,\expec{\bigg(\int_{\R^d}\Big[\int_{\R^d}g\,(\nabla D\varphi^{n+1},D\varphi^{n},D\sigma^{n})\Big]_1^2\bigg)^\frac p2}^\frac1p,
\end{equation*}
and the conclusion follows after $\e$-rescaling from the estimates of Lemma~\ref{lem:key-estimates-decomp} on the Malliavin derivatives of higher-order correctors.
\qed

\subsection{Proof of Lemma~\ref{lem:sym-baran}}
As explained in Remark~\ref{rem:migration-BS}, the argument consists in letting the successive inverse operators $(-\nabla\cdot\Aa\nabla)^{-1}\nabla\cdot$ defining the correctors $(\nabla\varphi^n,\varphi^{n-1})$ migrate to the test function $\Aa$ in the definition of the homogenized coefficient
\[\bar\Aa^n_{i_1\ldots i_{n-1}}\ee_{i_n}=\expec{\Aa\big(\nabla\varphi^n_{i_1\ldots i_n}+\varphi^{n-1}_{i_1\ldots i_{n-1}}\ee_{i_n}\big)}.\]
Taking into account the particular algebraic structure of the correctors, a full migration precisely leads to the dual expression $\bar\Aa^{*,n}$ up to additional terms involving higher-order flux correctors, which disappear when taking the symmetric part.
We split the proof into two steps.

\medskip
\step1 Migration process: proof that for all $1\le m<n\le\ell$,
\begin{multline}\label{eq:main-ident-coeff0}
\expec{\nabla\varphi^{*,m}_{j_1\ldots j_m}\cdot\Aa\nabla\varphi^n_{i_1\ldots i_n}-\varphi_{j_1\ldots j_{m-1}}^{*,m-1}\ee_{j_m}\cdot\Aa\varphi_{i_1\ldots i_{n-1}}^{n-1}\ee_{i_{n}}}\\
\,=\,-\expec{\nabla\varphi^{*,m+1}_{j_1\ldots j_{m}i_n}\cdot\Aa\nabla\varphi_{i_1\ldots i_{n-1}}^{n-1}-\varphi^{*,m}_{j_1\ldots j_m}\ee_{i_n}\cdot\Aa\varphi^{n-2}_{i_1\ldots i_{n-2}}\ee_{i_{n-1}}}\\
-\expec{\ee_{i_{n}}\cdot\sigma_{j_1\ldots j_{m-1}}^{*,m-1}\ee_{j_m}\varphi_{i_1\ldots i_{n-1}}^{n-1}+\varphi^{*,m}_{j_1\ldots j_m}\ee_{i_n}\cdot\sigma^{n-2}_{i_1\ldots i_{n-2}}\ee_{i_{n-1}}}.
\end{multline}
We abundantly use the properties of higher-order correctors (cf.~Definition~\ref{def:cor}).
The equation for $\varphi^n$, in conjunction with the stationarity of all involved objects, and the skew-symmetry of $\sigma^{n-1}_{i_1\ldots i_{n-1}}$ yield
\begin{eqnarray*}
{\expec{\nabla\varphi^{*,m}_{j_1\ldots j_m}\cdot\Aa\nabla\varphi^n_{i_1\ldots i_n}}}
&=&\expec{-\nabla\varphi^{*,m}_{j_1\ldots j_m}\cdot\big(\Aa\varphi^{n-1}_{i_1\ldots i_{n-1}}-\sigma^{n-1}_{i_1\ldots i_{n-1}}\big)\,\ee_{i_n}}\\
&=&\expec{-\nabla\varphi^{*,m}_{j_1\ldots j_m}\cdot\Aa\varphi^{n-1}_{i_1\ldots i_{n-1}}\ee_{i_n}+\varphi^{*,m}_{j_1\ldots j_m}(\nabla\cdot\sigma^{n-1}_{i_1\ldots i_{n-1}})\cdot\ee_{i_n}},
\end{eqnarray*}
hence, using the equation $\nabla\cdot\sigma^{n-1}=q^{n-1}$ and the choice $\expec{\varphi^{*,m}}=0$,
\begin{multline*}
\expec{\nabla\varphi^{*,m}_{j_1\ldots j_m}\cdot\Aa\nabla\varphi^n_{i_1\ldots i_n}}\,=\,\E\Big[-\nabla\varphi^{*,m}_{j_1\ldots j_m}\cdot\Aa\varphi^{n-1}_{i_1\ldots i_{n-1}}\ee_{i_n}+\varphi^{*,m}_{j_1\ldots j_m}\ee_{i_n}\cdot\Aa\nabla\varphi^{n-1}_{i_1\ldots i_{n-1}}\\
+\varphi^{*,m}_{j_1\ldots j_m}\ee_{i_n}\cdot\Aa\varphi^{n-2}_{i_1\ldots i_{n-2}}\ee_{i_{n-1}}-\varphi^{*,m}_{j_1\ldots j_m}\ee_{i_n}\cdot\sigma^{n-2}_{i_1\ldots i_{n-2}}\ee_{i_{n-1}}\Big].
\end{multline*}
We focus on rewriting the second right-hand side term. Using the equation for $\varphi^{*,m+1}$ and the skew-symmetry of $\sigma^{*,m}_{j_1\ldots j_m}$, we find
\begin{multline*}
{\expec{\varphi^{*,m}_{j_1\ldots j_{m}}\ee_{i_{n}}\cdot\Aa\nabla\varphi_{i_1\ldots i_{n-1}}^{n-1}}\,=\,\expec{-\nabla\varphi^{*,m+1}_{j_1\ldots j_{m}i_n}\cdot\Aa\nabla\varphi_{i_1\ldots i_{n-1}}^{n-1}-\ee_{i_{n}}\cdot\sigma^{*,m}_{j_1\ldots j_{m}}\nabla\varphi_{i_1\ldots i_{n-1}}^{n-1}}}\\
\,=\,\expec{-\nabla\varphi^{*,m+1}_{j_1\ldots j_{m}i_n}\cdot\Aa\nabla\varphi_{i_1\ldots i_{n-1}}^{n-1}+(\nabla\cdot\sigma^{*,m}_{j_1\ldots j_{m}})\cdot\varphi_{i_1\ldots i_{n-1}}^{n-1}\ee_{i_{n}}},
\end{multline*}
hence, using the equation for $\nabla\cdot\sigma^{*,m}$ and the choice $\expec{\varphi^{n-1}}=0$,
\begin{multline*}
\expec{\varphi^{*,m}_{j_1\ldots j_{m}}\ee_{i_{n}}\cdot\Aa\nabla\varphi_{i_1\ldots i_{n-1}}^{n-1}}
\,=\,\E\Big[-\nabla\varphi^{*,m+1}_{j_1\ldots j_{m}i_n}\cdot\Aa\nabla\varphi_{i_1\ldots i_{n-1}}^{n-1}+\nabla\varphi_{j_1\ldots j_m}^{*,m}\cdot\Aa\varphi_{i_1\ldots i_{n-1}}^{n-1}\ee_{i_{n}}\\
+\varphi_{j_1\ldots j_{m-1}}^{*,m-1}\ee_{j_m}\cdot\Aa\varphi_{i_1\ldots i_{n-1}}^{n-1}\ee_{i_{n}}-\sigma_{j_1\ldots j_{m-1}}^{*,m-1}\ee_{j_m}\cdot\varphi_{i_1\ldots i_{n-1}}^{n-1}\ee_{i_{n}}\Big],
\end{multline*}
and the claim~\eqref{eq:main-ident-coeff0} follows.

\medskip
\step2 Conclusion.\\
For $1\le n\le\ell$, the definition of $\bar\Aa^n$ and the equation for $\varphi^{*,1}$ yield
\[\ee_j\cdot\bar\Aa^n_{i_1\ldots i_{n-1}}\ee_{i_n}\,=\,-\expec{\nabla\varphi_j^{*,1}\cdot\Aa\nabla\varphi_{i_1\ldots i_n}^n-\ee_j\cdot\Aa\varphi^{n-1}_{i_1\ldots i_{n-1}}\ee_{i_n}}.\]
Iterating identity~\eqref{eq:main-ident-coeff0} then leads to
\begin{multline*}
\ee_j\cdot\bar\Aa^n_{i_1\ldots i_{n-1}}\ee_{i_n}\,=\,(-1)^n\,\expec{\nabla\varphi^{*,n}_{ji_n\ldots i_2}\cdot\Aa\nabla\varphi^1_{i_1}-\varphi^{*,n-1}_{ji_n\ldots i_3}\ee_{i_2}\cdot\Aa\ee_{i_1}}\\
+\sum_{l=0}^{n-2}(-1)^l\,\expec{\ee_{i_{n-l}}\cdot\sigma^{*,l}_{ji_n\ldots i_{n-l+2}}\ee_{i_{n-l+1}}\varphi^{n-l-1}_{i_1\ldots i_{n-l-1}}+\varphi^{*,l+1}_{ji_n\ldots i_{n-l+1}}\ee_{i_{n-l}}\cdot\sigma_{i_1\ldots i_{n-l-2}}^{n-l-2}\ee_{i_{n-l-1}}}.
\end{multline*}
By skew-symmetry of $\sigma^{*,l}_{ji_n\ldots i_{n-l+2}}$ and $\sigma^{n-l-2}_{i_1\ldots i_{n-l-2}}$ for all $l$, we deduce
\begin{align*}
\sym_{i_1\ldots i_n}\ee_j\cdot\bar\Aa^n_{i_1\ldots i_{n-1}}\ee_{i_n}\,=\,(-1)^n\,\sym_{i_1\ldots i_n}\expec{\nabla\varphi^{*,n}_{ji_n\ldots i_2}\cdot\Aa\nabla\varphi^1_{i_1}-\varphi^{*,n-1}_{ji_n\ldots i_3}\ee_{i_2}\cdot\Aa\ee_{i_1}},
\end{align*}
hence, using the equation for $\varphi^{1}$ and the definition of $\bar\Aa^{*,n}$,
\begin{eqnarray*}
\sym_{i_1\ldots i_n}\ee_j\cdot\bar\Aa^n_{i_1\ldots i_{n-1}}\ee_{i_n}&=&(-1)^{n+1}\sym_{i_1\ldots i_n}\expec{\ee_{i_1}\cdot\Aa^*\big(\nabla\varphi^{*,n}_{ji_n\ldots i_2}+\varphi^{*,n-1}_{ji_n\ldots i_3}\ee_{i_2}\big)}\\
&=&(-1)^{n+1}\sym_{i_1\ldots i_n}\ee_{i_1}\cdot\bar\Aa^{*,n}_{ji_n\ldots i_3}\ee_{i_2},
\end{eqnarray*}
and the conclusion follows.\qed

\subsection{Proof of Proposition~\ref{prop:homog-err}}
We focus on the case $\e=1$ and drop it from all subscripts in the notation, while the final result is obtained after $\e$-rescaling.
We split the proof into two steps.

\nopagebreak\medskip
\step1 Equation for $F_n[\bar w]$: proof that for all $\bar w\in C^\infty_c(\R^d)$ and $n\ge0$,
\begin{multline}\label{eq:pre-dec-homog}
\nabla\cdot\Aa\nabla F^n[\bar w]
\,=\,\nabla\cdot\bigg(\sum_{k=1}^{n}\bar\Aa^k_{i_1\ldots i_{k-1}}\nabla\nabla^{k-1}_{i_1\ldots i_{k-1}}\bar w\bigg)\\
+\nabla\cdot\big((\Aa\varphi_{i_1\ldots i_n}^n-\sigma_{i_1\ldots i_n}^n)\nabla\nabla^n_{i_1\ldots i_n}\bar w\big).
\end{multline}
We argue by induction. The claim is obvious for $n=0$. Now, if it holds for some~$n\ge0$, we deduce
\begin{multline}\label{eq:pre-dec-homog-induc}
\nabla\cdot\Aa\nabla F^{n+1}[\bar w]
\,=\,\nabla\cdot\bigg(\sum_{k=1}^{n}\bar\Aa^k_{i_1\ldots i_{k-1}}\nabla\nabla^{k-1}_{i_1\ldots i_{k-1}}\bar w\bigg)\\
+\nabla\cdot\big((\Aa\varphi_{i_1\ldots i_n}^n-\sigma_{i_1\ldots i_n}^n)\nabla\nabla^n_{i_1\ldots i_n}\bar w\big)+\nabla\cdot\Aa\nabla\big(\varphi_{i_1\ldots i_{n+1}}^{n+1}\nabla_{i_1\ldots i_{n+1}}^{n+1}\bar w\big).
\end{multline}
The definition of $\sigma_{i_1\ldots i_{n+1}}^{n+1}$ (cf.~Definition~\ref{def:cor}) yields
\begin{multline*}
\nabla\cdot\big((\Aa\varphi_{i_1\ldots i_n}^n-\sigma_{i_1\ldots i_n}^n)\nabla\nabla^n_{i_1\ldots i_n}\bar w\big)
\,=\,\nabla\cdot\big((\nabla\cdot\sigma_{i_1\ldots i_{n+1}}^{n+1}) \nabla^{n+1}_{i_1\ldots i_{n+1}}\bar w\big)\\
-\nabla\cdot\big(\Aa\nabla\varphi_{i_1\ldots i_{n+1}}^{n+1}\nabla^{n+1}_{i_1\ldots i_{n+1}}\bar w\big)+\nabla\cdot\big(\bar\Aa_{i_1\ldots i_n}^{n+1}\nabla\nabla^{n}_{i_1\ldots i_{n}}\bar w\big).
\end{multline*}
Hence, using the skew-symmetry of $\sigma_{i_1\ldots i_{n+1}}^{n+1}$ and decomposing
\[\nabla\varphi_{i_1\ldots i_{n+1}}^{n+1}\nabla^{n+1}_{i_1\ldots i_{n+1}}\bar w=\nabla(\varphi_{i_1\ldots i_{n+1}}^{n+1}\nabla^{n+1}_{i_1\ldots i_{n+1}}\bar w)-\varphi_{i_1\ldots i_{n+1}}^{n+1}\nabla\nabla^{n+1}_{i_1\ldots i_{n+1}}\bar w,\]
we obtain
\begin{multline*}
\nabla\cdot\big((\Aa\varphi_{i_1\ldots i_n}^n-\sigma_{i_1\ldots i_n}^n)\nabla\nabla^n_{i_1\ldots i_n}\bar w\big)
\,=\,\nabla\cdot\big((\Aa\varphi_{i_1\ldots i_{n+1}}^{n+1}-\sigma_{i_1\ldots i_{n+1}}^{n+1})\nabla\nabla^{n+1}_{i_1\ldots i_{n+1}}\bar w\big)\\
-\nabla\cdot\Aa\nabla\big(\varphi_{i_1\ldots i_{n+1}}^{n+1}\nabla^{n+1}_{i_1\ldots i_{n+1}}\bar w\big)+\nabla\cdot\big(\bar\Aa_{i_1\ldots i_n}^{n+1}\nabla\nabla^{n}_{i_1\ldots i_{n}}\bar w\big).
\end{multline*}
Injecting this into~\eqref{eq:pre-dec-homog-induc} leads to the claim~\eqref{eq:pre-dec-homog} at level $n+1$.

\medskip
\step2 Conclusion.\\
Let $n\ge1$ be fixed.
Applying~\eqref{eq:pre-dec-homog} with $\bar w:=\bar u^{n}$ and subtracting the equation~\eqref{eq:ueps-veps} for $u$, we obtain
\begin{multline}\label{eq:pre-concl-dec-homog}
-\nabla\cdot\Aa\nabla\big(u-F^n[\bar u^{n}]\big)\\
\,=\,\nabla\cdot\bigg(f+\sum_{k=1}^{n}\bar\Aa^k_{i_1\ldots i_{k-1}}\nabla\nabla^{k-1}_{i_1\ldots i_{k-1}}\bar u^{n}\bigg)+\nabla\cdot\big((\Aa\varphi_{i_1\ldots i_n}^n-\sigma_{i_1\ldots i_n}^n)\nabla\nabla^n_{i_1\ldots i_n}\bar u^{n}\big).
\end{multline}
We now examine the equation satisfied by $\bar u^n=\sum_{k=1}^n\tilde u^k$: summing the defining equations for $(\tilde u^k)_{1\le k\le n}$ (cf.~Defintion~\ref{def:homog-eqn}), we find
\[-\nabla\cdot\bar\Aa^1\nabla\bar u^n=\nabla\cdot f+\nabla\cdot\Big(\sum_{k=2}^n\bar\Aa^k_{i_1\ldots i_{k-1}}\nabla\nabla^{k-1}_{i_1\ldots i_{k-1}}\sum_{l=k}^n\tilde u^{l+1-k}\Big),\]
or equivalently,
\begin{align*}
-\nabla\cdot\Big(\sum_{k=1}^n\bar\Aa^k_{i_1\ldots i_{k-1}}\nabla^{k-1}_{i_1\ldots i_{k-1}}\Big)\nabla\bar u^n=\nabla\cdot f
-\nabla\cdot\Big(\sum_{k=2}^n\bar\Aa^k_{i_1\ldots i_{k-1}}\nabla\nabla^{k-1}_{i_1\ldots i_{k-1}}\sum_{l=n+2-k}^{n}\tilde u^{l}\Big).
\end{align*}
(This is to be compared with the formal ill-posed equation~\eqref{eq:eff-eqn-formal}.)
Inserting this into~\eqref{eq:pre-concl-dec-homog} yields
\begin{multline}\label{eq:u-dev-scale}
-\nabla\cdot\Aa\nabla\big(u-F^n[\bar u^{n}]\big)\,=\,\nabla\cdot\Big(\sum_{k=2}^n\bar\Aa^k_{i_1\ldots i_{k-1}}\nabla\nabla^{k-1}_{i_1\ldots i_{k-1}}\sum_{l=n+2-k}^{n}\tilde u^{l}\Big)\\
+\nabla\cdot\big((\Aa\varphi_{i_1\ldots i_n}^n-\sigma_{i_1\ldots i_n}^n)\nabla\nabla^n_{i_1\ldots i_n}\bar u^{n}\big).
\end{multline}
An energy estimate and the corrector estimates of Proposition~\ref{prop:cor} then imply for all $p<\infty$,
\begin{eqnarray*}
\expec{\big\|\nabla\big(u-F^n[\bar u^{n}]\big)\big\|_{\Ld^2(\R^d)}^p}^\frac1p&\lesssim_p&\|\mu_{d,n}\nabla^{n+1}\bar u^{n}\|_{\Ld^2(\R^d)}+\sum_{k=2}^{n}~\sum_{l=n+2-k}^n\|\nabla^{k}\tilde u^{l}\|_{\Ld^2(\R^d)}\\
&\lesssim_p&\sum_{k=n}^{2n-1}\|\mu_{d,n}\nabla^{k}f\|_{\Ld^2(\R^d)},
\end{eqnarray*}
where the last estimate follows from the weighted Calder\'on-Zygmund theory for the constant-coefficient equations defining $(\tilde u^k)_{1\le k\le n}$.
The conclusion~\eqref{eq:concl-homog-err} in $\Ld^2(\R^d)$ follows after $\e$-rescaling. (Note that for the sake of shortness in the statement the norms of $\nabla^kf$ for $n\le k\le2n-1$ are replaced by the norm of $\langle\nabla\rangle^{2n-1}f$, which is no longer scale invariant.)
The corresponding result in $\Ld^q(\R^d)$ for general $1<q<\infty$ requires to replace the energy estimate for~\eqref{eq:u-dev-scale} by some $\Ld^q$ theory, as provided e.g.\@ by Theorem~\ref{th:CZ-ann} below; details are omitted.
\qed

\section{Higher-order homogenization commutators}\label{sec:commut}

As discovered in~\cite{AKM2,DGO1}, the homogenization commutator $\Xi_\e^1[\nabla u_\e]:=(\Aa(\tfrac\cdot\e)-\bar\Aa^1)\nabla u_\e$, which takes the form of a commutator between large-scale averaging and the field-flux constitutive relation, plays a key role in the study of fluctuations in stochastic homogenization.
Indeed, while the two-scale expansion of the solution $\nabla u_\e$ is not accurate in the fluctuation scaling~\cite{GuM}, the expansion of its commutator $\Xi_\e^1[\nabla u_\e]$ is accurate~\cite{DGO1}.
The leading-order fluctuations are then governed by the standard homogenization commutator, that is, the commutator of the corrector field, $\Xi_\e^{\circ,1}[\nabla\bar u^1]:=(\Aa(\tfrac\cdot\e)-\bar\Aa^1)(\nabla\varphi_i^1(\tfrac\cdot\e)+\ee_i)\nabla_i\bar u^1$.
For higher-order fluctuations, a suitable higher-order correction of the homogenization commutator $\Xi_\e^1[\nabla u_\e]$ is defined as follows.

\begin{defin}[Higher-order homogenization commutators]\label{def:Xin}
For $0\le n\le\ell$, given a random function $w$, its {$n$th-order homogenization commutator} $\Xi^n_\e[\nabla w]$ is the (distributional) random vector field
\[\Xi_{\e}^n[\nabla w]\,:=\,\Big(\Aa(\tfrac\cdot\e)-\sum_{k=1}^{n}\e^{k-1}\bar\Aa^k_{i_1\ldots i_{k-1}}\nabla^{k-1}_{i_1\ldots i_{k-1}}\Big)\nabla w.\qedhere\]
\end{defin}

This definition is natural in view of the higher-order effective field-flux constitutive relation~\eqref{eq:higher-constitutive-rel}.
Note that the symmetries of the homogenized coefficients (cf.~Lemma~\ref{lem:sym-baran}) and an integration by parts lead to the following duality relation for all smooth and compactly supported random functions $w,w'$,
\begin{align}\label{eq:sym-Xin}
\int_{\R^d}\nabla w'\cdot\Xi^n_\e[\nabla w]\,=\,\int_{\R^d}\Xi^{*,n}_\e[\nabla w']\cdot\nabla w.
\end{align}
Similarly as for the first-order commutator~\cite[(1.8)]{DGO1}, we state that the higher-order fluctuations of the field $\nabla u_\e$ are determined by those of the higher-order commutators. While at first order we have the exact relation $\int_{\R^d}g\cdot(\nabla u_\e-\nabla\bar u)=\int_{\R^d}\nabla\bar v\cdot\Xi_\e^1[\nabla u_\e]$, this does no longer hold at higher orders due to the choice of a proxy for solutions of the generically ill-posed higher-order homogenized equations~\eqref{eq:eff-eqn-formal} (cf.~\eqref{eq:reduction-commut} below).
A corresponding formula can be deduced for the flux $\Aa(\tfrac\cdot\e)\nabla u_\e$.
This key principle corresponds to item~(i) in Theorem~\ref{th:main}.

\begin{prop}[Reduction to commutators]\label{prop:reduc-nablau}
For all $1\le n\le\ell$ and $p<\infty$,
\[N_p\bigg(\e^{-\frac d2}\int_{\R^d}g\cdot\nabla u_\e-\e^{-\frac d2}\int_{\R^d}\nabla\bar v_\e^{n}\cdot\Xi_\e^n[\nabla u_\e]\bigg)
\,\lesssim_p\,\e^{n}\|\langle\nabla\rangle^{2(n-1)}g\|_{\Ld^4(\R^d)}\|f\|_{\Ld^4(\R^d)}.\]
where we recall the notation $N_p(X)=\expec{|X-\expec{X}|^p}^\frac1p$.
\end{prop}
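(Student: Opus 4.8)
The plan is to exploit the duality relation~\eqref{eq:sym-Xin} together with the accuracy of higher-order two-scale expansions from Proposition~\ref{prop:homog-err}, turning the difference of interest into an explicit expression controlled by corrector estimates.

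First, I would write the starting identity in a form that manifestly uses the dual equation. By definition of $\bar v$, for a smooth compactly supported random function $w$ one has $\int_{\R^d}g\cdot\nabla w=\int_{\R^d}\bar\Aa^*\nabla\bar v\cdot\nabla w$, hence the natural candidate identity $\int_{\R^d}g\cdot\nabla u_\e=\int_{\R^d}\nabla\bar v\cdot(\Aa(\tfrac\cdot\e)-\bar\Aa^1)\nabla u_\e+\int_{\R^d}\nabla\bar v\cdot\bar\Aa^1\nabla u_\e-\int_{\R^d}\bar\Aa^{*,1}\nabla\bar v\cdot\nabla u_\e$, where the last two terms cancel after an integration by parts using $\ee_j\cdot\bar\Aa^1\ee_i=\ee_i\cdot\bar\Aa^{*,1}\ee_j$. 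This recovers the first-order exact relation. At higher order, the same manipulation applied to $\bar v_\e^n$ and $\Xi_\e^n$ will \emph{not} close exactly: using~\eqref{eq:sym-Xin} one gets $\int_{\R^d}\nabla\bar v_\e^n\cdot\Xi_\e^n[\nabla u_\e]=\int_{\R^d}\Xi_\e^{*,n}[\nabla\bar v_\e^n]\cdot\nabla u_\e$, and the point is that $\Xi_\e^{*,n}[\nabla\bar v_\e^n]$ differs from $g$ precisely by a controlled error. Concretely, since $\nabla\bar v_\e^n$ solves the $n$th-order homogenized (dual) equation up to a remainder of the type appearing in~\eqref{eq:u-dev-scale}, I expect
\[\Xi_\e^{*,n}[\nabla\bar v_\e^n]=g+\e^n\,(\text{explicit remainder built from }\bar\Aa^{*,k}\text{ and }\tilde v^l),\]
in the sense of equality of the induced linear functionals against $\nabla u_\e$, after integration by parts. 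This is the analogue of the computation leading from~\eqref{eq:pre-concl-dec-homog} to~\eqref{eq:u-dev-scale}, carried out on the homogenized (constant-coefficient) side; it uses only the definition of $\bar v_\e^n$ in Definition~\ref{def:homog-eqn} and the symmetry relations of Lemma~\ref{lem:sym-baran} to pair $\bar\Aa^n$ with $\bar\Aa^{*,n}$ correctly. I would record this as the reference~\eqref{eq:reduction-commut} alluded to in the text.

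Having isolated the error as $\e^{-d/2}\int_{\R^d}(\text{remainder})\cdot\nabla u_\e$, it remains to bound its $N_p$-norm by $\e^n\|\langle\nabla\rangle^{2(n-1)}g\|_{\Ld^4}\|f\|_{\Ld^4}$. The remainder is a deterministic linear combination of $\bar\Aa^{*,k}\nabla\nabla^{k-1}\tilde v^l$ with $k\ge2$, localized to the support of $g$; by the weighted Calderón--Zygmund theory for the constant-coefficient operators defining $\tilde v^l$ (as in Step~2 of the proof of Proposition~\ref{prop:homog-err}) it is bounded in $\Ld^4(\R^d)$ by $\e^n\|\langle\nabla\rangle^{2(n-1)}g\|_{\Ld^4(\R^d)}$, after $\e$-rescaling accounts for the prefactor. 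Then I would apply Hölder's inequality in $x$ together with the deterministic energy estimate $\|\nabla u_\e\|_{\Ld^{4/3}}\lesssim\|f\|_{\Ld^{4/3}}$ — or rather the $\Ld^4$ version $\|\nabla u_\e\|_{\Ld^4}\lesssim\|f\|_{\Ld^4}$ available from the annealed Calderón--Zygmund theory of Section~\ref{sec:Lpreg}, which gives $\expec{\|\nabla u_\e\|_{\Ld^4}^p}^{1/p}\lesssim_p\|f\|_{\Ld^4}$ — to pass to the $\E$-norm; the contribution of $\expec{\nabla u_\e}$ cancels in $N_p$ since the remainder is deterministic, so the $N_p$ and $\Ld^p$-in-$\Omega$ bounds coincide.

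The main obstacle is the first step: verifying that the homogenized-side computation really produces exactly $g$ plus an $O(\e^n)$ remainder, with no uncontrolled $O(\e^k)$ terms for $k<n$ surviving. This is a purely algebraic bookkeeping exercise combining the telescoping structure of Definition~\ref{def:homog-eqn} (where $\bar u_\e^n=\sum_k\e^{k-1}\tilde u^k$ is designed so that the $n$th-order homogenized operator applied to it reproduces $\nabla\cdot f$ up to an $O(\e^{n})$ tail) with the non-obvious pairing $\bar\Aa^k\leftrightarrow\bar\Aa^{*,k}$ from Lemma~\ref{lem:sym-baran} — only the symmetrized coefficients agree, so one must check that the antisymmetric parts genuinely drop out after the integration by parts against $\nabla u_\e$ and $\nabla\bar v_\e^n$. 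Once that identity is pinned down, the rest is routine: deterministic elliptic regularity plus Hölder. The fact that this is item~(i) and labelled ``easy'' in the introduction is consistent with the analysis being entirely at the level of the constant-coefficient homogenized equations, with the only input from the random equation being the uniform (annealed) $\Ld^4$ bound on $\nabla u_\e$.
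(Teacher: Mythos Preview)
Your algebraic derivation of the identity (the paper's Step~1, identity~\eqref{eq:reduction-commut}) is essentially correct: testing the equations for $\bar v^1$ and $u$ against each other and resumming via the definition of $\bar v_\e^n$ and $\Xi_\e^n$ produces exactly a deterministic remainder of the form $\int\big(\sum_{k=2}^n\sum_{j=n+2-k}^n\bar\Aa^{*,k}\nabla\nabla^{k-1}\tilde v^j\big)\cdot\nabla u_\e$, carrying a prefactor~$\e^n$ from the combined $\e^{k-1}\e^{j-1}$ weights. The symmetry concern you flag is real but harmless: the identity~\eqref{eq:sym-Xin} uses Lemma~\ref{lem:sym-baran} precisely so that only symmetrized coefficients appear after integration by parts.

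The genuine gap is in the second step. You propose to bound $N_p\big(\int h_\e\cdot\nabla u_\e\big)$ via H\"older in~$x$ together with the annealed bound $\expec{\|\nabla u_\e\|_{\Ld^4}^p}^{1/p}\lesssim_p\|f\|_{\Ld^4}$. This gives at best
\[
N_p\Big(\e^{-\frac d2}\!\int h_\e\cdot\nabla u_\e\Big)\,\lesssim\,\e^{-\frac d2}\|h_\e\|_{\Ld^{4/3}}\,\expec{\|\nabla u_\e-\expec{\nabla u_\e}\|_{\Ld^4}^p}^{\frac1p}\,\lesssim_p\,\e^{-\frac d2}\cdot\e^n\cdot\|f\|_{\Ld^4},
\]
since $\|\nabla u_\e-\expec{\nabla u_\e}\|_{\Ld^4(\R^d)}$ is of order~$1$ (the leading contribution is $\nabla\varphi_i(\tfrac\cdot\e)\nabla_i\bar u$). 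You lose the factor~$\e^{d/2}$ and obtain only $\e^{n-d/2}$ instead of~$\e^n$. A pointwise-in-$\omega$ bound cannot capture the stochastic cancellation in the spatial average.

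What the paper actually uses (its Step~2) is the \emph{CLT scaling estimate}: for any deterministic test function~$h$,
\[
N_p\Big(\int_{\R^d}h\cdot\nabla u_\e\Big)\,\lesssim_p\,\e^{\frac d2}\|[h]_2\|_{\Ld^4(\R^d)}\|[f]_2\|_{\Ld^4(\R^d)},
\]
which is a genuine fluctuation bound obtained from the Poincar\'e inequality (Proposition~\ref{prop:Mall}(iii)) combined with the annealed Calder\'on--Zygmund theory (cf.~\eqref{eq:CLT-scaling-pr}). Applied with $h$ equal to each term $\bar\Aa^{*,k}\nabla\nabla^{k-1}\tilde v^j$ of the remainder, this supplies the missing $\e^{d/2}$ and closes the estimate. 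The ``easy'' label in the introduction refers to the fact that this CLT scaling is a standard, first-order input---not that it can be replaced by a deterministic energy bound.
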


Next, we wish to emphasize that the above definition of the higher-order commutator~$\Xi_\e^n$ leads to a higher-order Hill-Mandel relation (e.g.~\cite{TMB-12} in the mechanics literature).
First recall that the first-order commutator $\Xi^1_{\e}[\nabla u_\e]:=(\Aa-\bar\Aa)(\tfrac\cdot\e)\nabla u_\e$ was a natural quantity to consider in~\cite{AKM2,DGO1} since the very definition of $H$-convergence~\cite{MuratTartar} is equivalent to the weak convergence of $\Xi^1_\e[\nabla u_\e]$ to $0$, which is the mathematical formulation of the classical Hill-Mandel relation~\cite{H63,H72}.
Interestingly, this can be made quantitative: a direct computation for the solution of~\eqref{eq:ueps-veps} yields
\[\ee_j\cdot\Xi^1_{\e}[\nabla u_\e]
\,=\,-\,\e\,\nabla_k\Big(\big(\Aa^*\varphi_j^{*,1}-\sigma_j^{*,1}\big)(\tfrac\cdot\e)\,\ee_k\cdot\nabla u_\e+\varphi^{*,1}_j(\tfrac\cdot\e) f_k\Big)+\nabla\varphi^{*,1}_j(\tfrac\cdot\e)\cdot f,\]
where the right-hand side is checked to be of order $O(\e)$ in a weak norm if $d>2$.
This property extends to higher order as follows.

\begin{lem}[Higher-order Hill-Mandel relation]\label{lem:higher-Hill}
For all $f\in C^\infty_c(\R^d)^d$ and $1\le n\le\ell$,
\begin{multline}\label{eq:higher-Hill}
\ee_j\cdot\Xi_\e^n[\nabla u_\e]
\,=\,(-\e)^n\nabla^n_{i_1\ldots i_n}\Big(\big(\Aa^*\varphi^{*,n}_{ji_1\ldots i_{n-1}}-\sigma^{*,n}_{ji_1\ldots i_{n-1}}\big)(\tfrac\cdot\e)\ee_{i_n}\cdot\nabla u_\e+\varphi_{ji_1\ldots i_{n-1}}^{*,n}(\tfrac\cdot\e)\,f_{i_n}\Big)\\
+\sum_{k=0}^{n-1}(-\e)^{k}\nabla^{k}_{i_1\ldots i_{k}}\Big(\big(\nabla\varphi_{ji_1\ldots i_{k}}^{*,k+1}+\mathds1_{k>0}\varphi_{ji_1\ldots i_{k-1}}^{*,k}\ee_{i_k}\big)(\tfrac\cdot\e)\,\cdot f\Big),
\end{multline}
hence, for all $g\in C^\infty_c(\R^d)^d$ and $p<\infty$,
\[N_p\Big(\int_{\R^d}g\cdot\Xi_{\e}^n[\nabla u_\e]\Big)\,\lesssim_p\,\e^n\mu_{d,n}(\tfrac1\e)\Big(\|\mu_{d,n}\nabla^ng\|_{\Ld^2(\R^d)}+\|g\|_{H^\frac d2\cap\Ld^\infty(\R^d)}\Big)\|[f]_{2d}\|_{\Ld^2(\R^d)}.\qedhere\]
\end{lem}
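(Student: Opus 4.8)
The plan is to establish the pointwise identity~\eqref{eq:higher-Hill} by an induction on $n$ that mirrors the algebra of the corrector hierarchy, and then to deduce the stochastic estimate by combining it with the corrector bounds of Proposition~\ref{prop:cor} and a suitable averaging argument. For the identity, I would first fix $\e=1$ (the general case follows by rescaling), and write $\ee_j\cdot\Xi^n[\nabla u] = \ee_j\cdot(\Aa-\sum_{k=1}^n\bar\Aa^k_{i_1\ldots i_{k-1}}\nabla^{k-1}_{i_1\ldots i_{k-1}})\nabla u$. The starting point $n=1$ is exactly the displayed computation preceding the lemma, which is the base case: one uses the equation $-\nabla\cdot\Aa^*\nabla\varphi^{*,1}_j = \nabla\cdot(\Aa^*\ee_j)$ together with $\nabla\cdot\sigma^{*,1}_j = q^{*,1}_j = \Aa^*(\nabla\varphi^{*,1}_j+\ee_j)-\bar\Aa^{*,1}\ee_j$ and the equation for $u$ to rewrite $(\Aa\ee_j-\bar\Aa\ee_j)\cdot\nabla u = \ee_j\cdot\Xi^1[\nabla u]$ as a divergence plus the term $\nabla\varphi^{*,1}_j\cdot f$.

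\textbf{Induction step.} For the passage from $n$ to $n+1$, I would use the defining relation for the dual correctors: the difference $\Xi^{n+1}-\Xi^n = -\bar\Aa^{n+1}_{i_1\ldots i_n}\nabla^n_{i_1\ldots i_n}$ acting on $\nabla u$, and on the corrector side the recursion $-\nabla\cdot\Aa^*\nabla\varphi^{*,n+1}_{ji_1\ldots i_n} = \nabla\cdot((\Aa^*\varphi^{*,n}_{ji_1\ldots i_{n-1}}-\sigma^{*,n}_{ji_1\ldots i_{n-1}})\ee_{i_n})$ together with $\nabla\cdot\sigma^{*,n+1}_{ji_1\ldots i_n} = q^{*,n+1}_{ji_1\ldots i_n} = \Aa^*\nabla\varphi^{*,n+1}_{ji_1\ldots i_n} + (\Aa^*\varphi^{*,n}_{ji_1\ldots i_{n-1}}-\sigma^{*,n}_{ji_1\ldots i_{n-1}})\ee_{i_n} - \bar\Aa^{*,n+1}_{ji_1\ldots i_{n-1}}\ee_{i_n}$. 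Testing these against $\nabla u$ and $f$, integrating by parts, and invoking the higher-order symmetry of homogenized coefficients from Lemma~\ref{lem:sym-baran} (in the symmetrized form needed to convert $\bar\Aa^{*,n+1}$-contributions into $\bar\Aa^{n+1}$-contributions) should produce exactly the extra term $(-1)^{n+1}\nabla^{n+1}_{i_1\ldots i_{n+1}}((\Aa^*\varphi^{*,n+1}-\sigma^{*,n+1})\ee_{i_{n+1}}\cdot\nabla u + \varphi^{*,n+1}f_{i_{n+1}})$ replacing the old top-order term at level $n$, plus one more summand $(-1)^n\nabla^n((\nabla\varphi^{*,n+1}+\varphi^{*,n}\ee)\cdot f)$ in the lower sum, which is precisely the structure of~\eqref{eq:higher-Hill} at level $n+1$. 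The bookkeeping here — making sure the skew-symmetry of each $\sigma^{*,k}$ is used to kill the unwanted cross terms, and that the indices are contracted so that Lemma~\ref{lem:sym-baran} applies after symmetrization — is the delicate part; I expect this combinatorial matching of terms to be the main obstacle, though it is purely algebraic and deterministic.

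\textbf{From the identity to the estimate.} Once~\eqref{eq:higher-Hill} is in hand, I would test it against $g\in C^\infty_c(\R^d)^d$, integrate by parts to move the derivatives $\nabla^n_{i_1\ldots i_n}$ and $\nabla^k_{i_1\ldots i_k}$ onto $g$, and estimate each resulting term. The top-order block contributes $\e^n\int \nabla^ng\cdot((\Aa^*\varphi^{*,n}-\sigma^{*,n})(\tfrac\cdot\e)\nabla u_\e + \varphi^{*,n}(\tfrac\cdot\e)f)$; the factor $\nabla u_\e$ is controlled via the annealed $\Ld^p$-type energy bound (the a priori estimate $\|[\nabla u_\e]_{2d}\|\lesssim\|[f]_{2d}\|$ following from the deterministic energy estimate for~\eqref{eq:ueps-veps}), the correctors $\varphi^{*,n},\sigma^{*,n}$ are controlled by the moment bounds $\expec{[\varphi^n]_2^p+[\sigma^n]_2^p}^{1/p}\lesssim p^{c_n}\mu_{d,n}$ of Proposition~\ref{prop:cor}, and applying Hölder in probability and space together with the $\e$-rescaling gives the $\e^n\mu_{d,n}(\tfrac1\e)$ factor times $\|\mu_{d,n}\nabla^ng\|_{\Ld^2}\|[f]_{2d}\|_{\Ld^2}$. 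The lower-order blocks ($k\le n-1$) involve only the stationary, uniformly $\Ld^2$-bounded gradients $\nabla\varphi^{*,k+1}+\varphi^{*,k}\ee_{i_k}$ against $f$ (no $\nabla u_\e$), so after moving $k$ derivatives onto $g$ and using stationarity they are bounded by $\e^k\|\nabla^kg\|_{\Ld^2}\|[f]_2\|_{\Ld^2}$; since $k<n\le\ell$ these are stationary and the $\e^k$ with $k\le n-1$ is absorbed into $\e^n\mu_{d,n}(\tfrac1\e)$ only if one is careful — actually the $k=0$ term $\nabla\varphi^{*,1}\cdot f$ is $O(1)$ in $\e$ and must instead be handled by the fluctuation-scaling estimate of Lemma~\ref{lem:average-cor} applied to $\int \nabla g\,\varphi^{*,1}(\tfrac\cdot\e)f$-type quantities after one more integration by parts, which is why the final bound carries $\|g\|_{H^{d/2}\cap\Ld^\infty}$ rather than just $\|\nabla^ng\|_{\Ld^2}$. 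I would use $N_p$ (centering) throughout, noting that the deterministic $H$-convergence part is captured by the $O(\e^n\mu_{d,n})$ bound and only the genuinely random contributions need the Malliavin/corrector machinery.
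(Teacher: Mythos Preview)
Your plan is correct and follows essentially the same route as the paper: the identity~\eqref{eq:higher-Hill} is obtained by induction on $n$ using the dual corrector hierarchy, the skew-symmetry of $\sigma^{*,k}$, and Lemma~\ref{lem:sym-baran} to rewrite $\Xi^n$ in terms of the $\bar\Aa^{*,k}$'s; the estimate then follows by integrating against $g$, controlling the top-order block via the plain $\Ld^2$ energy estimate $\|\nabla u\|_{\Ld^2}\lesssim\|f\|_{\Ld^2}$ together with Proposition~\ref{prop:cor}, and controlling all lower-order blocks $0\le k\le n-1$ (not just $k=0$) via the fluctuation-scaling Lemma~\ref{lem:average-cor} applied to $\int(\nabla^k g\, f)\cdot(\nabla\varphi^{*,k+1}+\varphi^{*,k}\ee)(\tfrac\cdot\e)$, after which H\"older and Sobolev give the $\|g\|_{H^{d/2}\cap\Ld^\infty}$ norm. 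No extra integration by parts or annealed Calder\'on--Zygmund input is needed beyond this.
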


Finally, the relevant higher-order two-scale expansion of the higher-order commutator $\Xi^n_\e[\nabla u_\e]$ takes the form $\Xi^{\circ,n}_\e[\nabla\bar u^n_\e]$, where the so-called standard commutator $\Xi^{\circ,n}_\e[\nabla\cdot]$ is the $n$th-order differential operator with $\e$-rescaled (distributional) stationary random coefficients that is obtained by inserting the $n$th-order two-scale expansion $F_\e^n[\cdot]$ into the commutator $\Xi^n_\e[\nabla\cdot]$ and by truncating the obtained differential operator at order $n$. In other words, the standard commutator $\Xi^{\circ,n}_\e[\nabla\cdot]$ is the $n$th-order linear differential operator characterized by $\Xi^{\circ,n}_\e[\nabla\bar q]=\Xi^{n}_\e[\nabla F_\e^n[\bar q]]$ for all $n$th-order polynomials $\bar q$. This is efficiently expressed as follows.

\begin{defin}[Standard higher-order homogenization commutators]\label{def:Xin0}
For $0\le n\le\ell$, given a smooth deterministic function $\bar w$, we consider its $n$th-order Taylor polynomial with basepoint $x$,
\[T^n_{x}\bar w(y)\,:=\,\bar w(x)+\sum_{1\le|\alpha|\le n}\frac{(y-x)^\alpha}{\alpha!}\nabla^\alpha\bar w(x),\]
where we use standard multi-index notation,
and the {$n$th-order standard homogenization commutator} $\Xi^{\circ,n}_\e[\nabla\bar w]$ is then defined as the (distributional) random vector field
\[\Xi_{\e}^{\circ,n}[\nabla\bar w](x)\,:=\,\Xi_{\e}^{n}\big[\nabla F^n_\e[T^n_x\bar w]\big](x)\,\stackrel{\eqref{eq:link-E0E}}=\,\Xi_{\e}^{n}\big[E^n_\e[\nabla T^n_x\bar w]\big](x).\]
(Henceforth by a slight abuse of notation we similarly define $\Xi^n[H]$ as in Definition~\ref{def:Xin} even when $H$ is not a gradient field.)
\end{defin}

This definition ensures $\expec{\Xi_\e^{\circ,n}[\nabla\bar w]}=0$ for all $0\le n\le\ell$ and $\bar w\in C^\infty_c(\R^d)$.
Note that $\Xi_{\e}^{\circ,n}[\nabla\bar w]$ in general differs from $\Xi_{\e}^{n}[E_\e^{n}[\nabla\bar w]]$ whenever $n>1$, even though $E^n_\e[\nabla T^n_x\bar w](x)=E^n_\e[\nabla\bar w](x)$. An explicit formula is as follows.

\begin{lem}[Explicit formula for $\Xi^{\circ,n}_\e$]\label{lem:expl-form-Xi0}
For all $\bar w\in C^\infty_c(\R^d)$ and $1\le n\le\ell$,
\begin{align*}
&\Xi_{\e}^{\circ,n}[\nabla\bar w]\,=\,\Xi_{\e}^n[E^n_\e[\nabla\bar w]]\\
&\hspace{2cm}+\sum_{k=1}^{n-1}\e^{k}\big(\sym_{i_1\ldots i_{k}}\bar\Aa^{k+1}_{i_1\ldots i_{k}}\big)\sum_{s=0}^{n-1}\e^s\,\sum_{l=0}^{k+s-n}\binom{k}l\nabla^{k-l}_{i_{l+1}\ldots i_k}\nabla_{j_1\ldots j_{s+1}}^{s+1}\bar w\\
&\hspace{6cm}\times\nabla^l_{i_{1}\ldots i_l}\Big(\big(\nabla\varphi_{j_1\ldots j_{s+1}}^{s+1}+\varphi_{j_1\ldots j_s}^s\ee_{j_{s+1}}\big)(\tfrac\cdot\e)\Big).\qedhere
\end{align*}
\end{lem}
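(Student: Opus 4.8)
The strategy is a direct unfolding of the two definitions and a careful reorganization via the Leibniz rule, exactly as the informal description preceding the lemma suggests: $\Xi^{\circ,n}_\e[\nabla\bar w](x)$ is obtained by plugging the full two-scale expansion $F^n_\e[T^n_x\bar w]$ into $\Xi^n_\e[\nabla\cdot]$ and evaluating at $x$, and the point is to show that the ``Taylor-polynomial'' version differs from the naive substitution $\Xi^n_\e[E^n_\e[\nabla\bar w]]$ only by explicit lower-order commutator-free correction terms. First I would write out, using Definition~\ref{def:EnFn}, that
\[\nabla F^n_\e[T^n_x\bar w](y)\,=\,\sum_{k=0}^{n}\e^k\nabla\Big(\varphi^k_{i_1\ldots i_k}(\tfrac y\e)\,\nabla^k_{i_1\ldots i_k}T^n_x\bar w(y)\Big),\]
apply the Leibniz rule to each summand, and split each resulting term into the ``stationary'' part (where the gradient hits the corrector) plus the ``remainder'' part (where the gradient hits the polynomial factor). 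The stationary parts, when evaluated at $y=x$ and using $\nabla^k_{i_1\ldots i_k}T^n_x\bar w(x)=\nabla^k_{i_1\ldots i_k}\bar w(x)$ and $\nabla^{k+1}_{\ldots}T^n_x\bar w(x)=\nabla^{k+1}_{\ldots}\bar w(x)$ (as long as $k+1\le n$), reassemble exactly into $E^n_\e[\nabla T^n_x\bar w](x)=E^n_\e[\nabla\bar w](x)$, which enters $\Xi^n_\e[\cdot]$.

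The genuine work is then bookkeeping of the \emph{mismatch}. Two sources of mismatch appear. (a) In the top term $k=n$, the Leibniz rule produces $\e^n(\nabla\varphi^n)(\tfrac\cdot\e)\cdot\nabla^n_{\ldots}\bar w$ plus $\e^n\varphi^n(\tfrac\cdot\e)\nabla\nabla^n_{\ldots}\bar w$; the first is part of $E^n_\e$ but the second is the ``$\varphi^n$'' tail in \eqref{eq:link-E0E} — this is already absorbed because Definition~\ref{def:Xin0} is stated via $E^n_\e[\nabla T^n_x\bar w](x)$, and the Taylor polynomial kills exactly this $\nabla^{n+1}$ term at the basepoint. (b) The substantive correction comes from the operator $\Xi^n_\e$ itself: its coefficient is $\Aa(\tfrac\cdot\e)-\sum_{k=1}^n\e^{k-1}\bar\Aa^k_{i_1\ldots i_{k-1}}\nabla^{k-1}_{i_1\ldots i_{k-1}}$, so when the higher-order pieces $\e^{k-1}\bar\Aa^k_{\ldots}\nabla^{k-1}_{\ldots}$ act on the $\e$-rescaled stationary corrector coefficients inside $E^n_\e[\nabla T^n_x\bar w]$, the derivatives $\nabla^{k-1}$ distribute by Leibniz onto both the (rescaled) corrector factor and the polynomial factor $\nabla^{s+1}_{j_1\ldots j_{s+1}}T^n_x\bar w$. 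The terms where \emph{all} derivatives hit the polynomial reproduce, after evaluation at $x$, precisely $\Xi^n_\e[E^n_\e[\nabla\bar w]](x)$ (since $\nabla^m T^n_x\bar w(x)=\nabla^m\bar w(x)$ for $m\le n$). All remaining terms — those where at least one derivative of the $\nabla^{k-1}$ hits the rescaled corrector — are the explicit sum in the statement; one identifies $k$ (there renamed) with the order of the homogenized coefficient, $s$ with the order of the corrector it multiplies, $l$ with the number of derivatives landing on the corrector, the binomial $\binom kl$ with the Leibniz coefficient, and the symmetrization $\sym_{i_1\ldots i_k}\bar\Aa^{k+1}_{i_1\ldots i_k}$ from the symmetry of the $\nabla^k$ Taylor derivatives contracting $\bar\Aa^{k+1}$. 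The range constraint $l\le k+s-n$ arises because a term is \emph{absent} (it is part of the ``main'' $\Xi^n_\e[E^n_\e]$ piece) unless enough derivatives remain on the polynomial to avoid it vanishing under the Taylor truncation — equivalently, $T^n_x\bar w$ has degree $n$, the polynomial factor starts at total order $s+1$ (one derivative used to form $E^n_\e$) plus $k-l$ further derivatives from $\nabla^{k-1}$... the precise inequality is read off by demanding the surviving polynomial order not exceed $n$ while being nonzero after evaluation, and I would verify it by a short index count.

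The main obstacle is purely combinatorial/notational: keeping the multi-index labels consistent through three nested sums (over the order $k$ of the two-scale term, over the order of the homogenized coefficient in $\Xi^n_\e$, and over the Leibniz split $l$), and correctly tracking which symmetrizations are automatic (because Taylor derivatives $\nabla^m\bar w$ are symmetric) versus which must be imposed. I would organize the computation as a single induction-free expansion, writing $\Xi^n_\e[\nabla F^n_\e[T^n_x\bar w]](x)=\text{(I)}+\text{(II)}$ where (I) is the part in which every derivative coming from $\Xi^n_\e$ lands on a polynomial factor, showing (I) $=\Xi^n_\e[E^n_\e[\nabla\bar w]](x)$ by Definition~\ref{def:Xin} applied to the (now polynomial-coefficient) field $E^n_\e[\nabla\bar w]$, and (II) collects the rest; then a relabeling of indices in (II) matches it term-by-term with the displayed double sum. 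Since every ingredient — Leibniz rule, the identity \eqref{eq:link-E0E}, the definitions of $E^n_\e$, $\Xi^n_\e$, and $\Xi^{\circ,n}_\e$ — is already in place, no further analytic input is needed; the proof is a verification.
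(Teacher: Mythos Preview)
Your overall strategy --- expand $\Xi^n_\e[E^n_\e[\nabla T^n_x\bar w]](x)$ via Leibniz and compare with $\Xi^n_\e[E^n_\e[\nabla\bar w]](x)$ --- is exactly the paper's approach, and the identification of the range constraint $l\le k+s-n$ with the Taylor truncation is the right mechanism. However, your proposed split (I)/(II) is misidentified and the argument as written would not close.

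You claim (I) is the part where ``every derivative coming from $\Xi^n_\e$ lands on a polynomial factor'' (i.e.\ $l=0$) and that this equals $\Xi^n_\e[E^n_\e[\nabla\bar w]](x)$, while (II) collects the terms with $l\ge1$ and equals the displayed correction. Both assertions are false. In $\Xi^n_\e[E^n_\e[\nabla\bar w]]$ \emph{all} Leibniz terms $0\le l\le k$ are present --- the operator $\nabla^k$ acts on the full product, so the $l=0$ piece alone is strictly smaller. Conversely, the correction in the lemma is indexed by $0\le l\le k+s-n$, which in particular contains $l=0$ terms (whenever $k+s\ge n$) and excludes many $l\ge1$ terms. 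Your own sentence about the range constraint contradicts the (I)/(II) split a few lines earlier.

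The correct dichotomy is not by where the derivatives land but by the resulting order in $\bar w$: the Leibniz term with $l$ derivatives on the corrector has total order $k-l+s+1$ in $\bar w$. By the observation~\eqref{eq:op-diff-of-Taylor} (which is what makes Definition~\ref{def:Xin0} tick), $\Xi^{\circ,n}_\e[\nabla\bar w]$ is precisely the truncation of the differential operator $\Xi^n_\e[E^n_\e[\nabla\cdot]]$ at order $n$. Hence $\Xi^{\circ,n}_\e-\Xi^n_\e[E^n_\e[\nabla\bar w]]$ is \emph{minus} the terms of order $>n$, namely those with $k-l+s+1>n$, i.e.\ $l\le k+s-n$; the minus sign is absorbed by the $-\bar\Aa^{k+1}$ in $\Xi^n_\e$, yielding the stated $+$. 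This is exactly what the paper does: compute $\Xi^n_\e[E^n_\e[\nabla\bar w]]$ explicitly (Leibniz, cf.~\eqref{eq:form-Xi-E-comp}), apply~\eqref{eq:op-diff-of-Taylor} to obtain $\Xi^{\circ,n}_\e$, and subtract. Once you replace your (I)/(II) with this order-based split, the rest of your bookkeeping (binomials, symmetrization, index matching) goes through unchanged.
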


\begin{rem}\label{rem:fluc-cor+}
While Proposition~\ref{prop:reduc-nablau} reduces higher-order fluctuations of the field $\nabla u_\e$ to corresponding fluctuations of its higher-order homogenization commutator, we briefly argue that fluctuations of higher-order correctors can similarly be reduced to fluctuations of higher-order standard commutators.
A simple adaptation of the proof of Proposition~\ref{prop:reduc-nablau} yields for $0\le n\le\ell-1$,
\[N_p\bigg(\e^{-\frac d2}\int_{\R^d}g\cdot E_\e^n[\nabla\bar w]-\e^{-\frac d2}\int_{\R^d}\nabla\bar v_\e^{n}\cdot\Xi_\e^n[E_\e^n[\nabla\bar w]]\bigg)
\,\lesssim_{p,f,g}\,\e^{n}.\]
Inserting the formula of Lemma~\eqref{lem:expl-form-Xi0} and the definition of $E_\e^n$ (cf.~Definition~\ref{def:EnFn}), the expression on the left-hand side can, after straightforward computations, be rewritten as follows,
\begin{align}\label{eq:split-E-Xi}
&\quad\e^{-\frac d2}\int_{\R^d}g\cdot E_\e^n[\nabla\bar w]-\e^{-\frac d2}\int_{\R^d}\nabla\bar v_\e^{n}\cdot\Xi_\e^n[E_\e^n[\nabla\bar w]]\\
&=\sum_{s=0}^{n-1}\e^{s-\frac d2}\int_{\R^d}T_{\e;j_1\ldots j_{s+1}}^{n,s}(f,g)\cdot \big(\nabla\varphi^{s+1}_{j_1\ldots j_{s+1}}+\varphi^s_{j_1\ldots j_s}\ee_{j_{s+1}}\big)(\tfrac\cdot\e)
-\e^{-\frac d2}\int_{\R^d}\nabla\bar v_\e^{n}\cdot\Xi_{\e}^{\circ,n}[\nabla\bar w],\nonumber
\end{align}
in terms of
\begin{multline*}
T_{\e;j_1\ldots j_{s+1}}^{n,s}(f,g)\,:=\,g\,\nabla^{s+1}_{j_1\ldots j_{s+1}}\bar w+\sum_{k=1}^{n-1}\e^{k}(-1)^{k+s-n}\,\sum_{r=0}^{k+s-n}\binom{k}{r}\binom{k-1-r}{k+s-n-r}\\
\times\big(\sym_{i_1\ldots i_{k}}\bar\Aa^{k+1}_{i_1\ldots i_{k}}\big)^*\big(\nabla\nabla^r_{i_{1}\ldots i_r}\bar v_\e^{n}\big)\big(\nabla^{k-r}_{i_{r+1}\ldots i_k}\nabla_{j_1\ldots j_{s+1}}^{s+1}\bar w\big).
\end{multline*}
If we wish to describe only first-order fluctuations, we may replace $T_\e^{n,s}(f,g)$ by $g\nabla^{s+1}\bar w$ and $\nabla\bar v_\e^n$ by $\nabla\bar v$, and use Lemma~\ref{lem:average-cor} to estimate the remainder, to the effect of
\[N_p\bigg(\e^{-\frac d2}\int_{\R^d}g\cdot E_\e^n[\nabla\bar w]-\e^{-\frac d2}\int_{\R^d}\nabla\bar v\cdot\Xi_{\e}^{\circ,n}[\nabla\bar w]\bigg)
\,\lesssim_{p,g,\bar w}\,\e,\]
hence, recalling Definition~\ref{def:EnFn} and taking differences, for all $0\le n\le\ell-1$,
\begin{multline*}
N_p\bigg(\e^{n-1-\frac d2}\int_{\R^d}g\,\nabla^n_{j_1\ldots j_n}\bar w\cdot\big(\nabla\varphi^{n}_{j_1\ldots j_n}+\varphi^{n-1}_{j_1\ldots j_{n-1}}\ee_{j_n}\big)(\tfrac\cdot\e)\\
-\e^{-\frac d2}\int_{\R^d}\nabla\bar v\cdot\big(\Xi_{\e}^{\circ,n}[\nabla\bar w]-\Xi_{\e}^{\circ,n-1}[\nabla\bar w]\big)\bigg)
\,\lesssim_{p,g,\bar w}\,\e.
\end{multline*}
This characterizes the first-order fluctuations of $\sym_{j_1\ldots j_n}(\nabla\varphi^{n}_{j_1\ldots j_n}+\varphi^{n-1}_{j_1\ldots j_{n-1}}\ee_{j_n})$ in terms of (differences of) standard commutators. Note that fluctuations of the latter are easily jointly characterized by repeating the analysis of Sections~\ref{sec:cov-conv}--\ref{sec:normal}, and are seen to converge to suitable linear combinations of derivatives of Gaussian white noise.
From here we may inductively infer first-order fluctuations of $\sym_{j_1\ldots j_n}\varphi^{n}_{j_1\ldots j_n}$ for all $n\le\ell-1$, while corresponding higher-order fluctuations are similarly extracted from~\eqref{eq:split-E-Xi}.
\end{rem}

\subsection{Proof of Proposition~\ref{prop:reduc-nablau}}
We focus on the case $\e=1$ and drop it from all subscripts in the notation, while the final result is obtained after $\e$-rescaling.
We split the proof into three steps.

\medskip
\step1 Proof of
\begin{multline}\label{eq:reduction-commut}
\int_{\R^d}g\cdot\nabla u-\int_{\R^d}\nabla\bar v^{n}\cdot f\,=\,\int_{\R^d}\nabla\bar v^{n}\cdot \Xi^n[\nabla u]\\
+\int_{\R^d}\bigg(\sum_{k=2}^{n}\sum_{j=n+2-k}^n\bar\Aa_{i_1\ldots i_{k-1}}^{*,k}\nabla\nabla^{k-1}_{i_1\ldots i_{k-1}}\tilde v^{j}\bigg)\cdot\nabla u.
\end{multline}
Testing equation $-\nabla\cdot\bar\Aa^{*,1}\nabla\bar v^1=\nabla\cdot g$ with $u$ and testing equation $-\nabla\cdot\Aa\nabla u=\nabla\cdot f$ with~$\bar v^{n}$, we find
\begin{eqnarray*}
\int_{\R^d}g\cdot\nabla u-\int_{\R^d}\nabla\bar v^{n}\cdot f\,=\,\int_{\R^d}\big(\Aa^*\nabla\bar v^{n}-\bar\Aa^{*,1}\nabla\bar v^1\big)\cdot \nabla u.
\end{eqnarray*}
Noting that the definition of $\Xi^n[\nabla u]$ (cf.~Definition~\ref{def:Xin}) together with~\eqref{eq:sym-Xin} yields after integration by parts
\[\int_{\R^d}\nabla\bar v^{n}\cdot \Xi^n[\nabla u]\,=\,\int_{\R^d}\bigg(\Aa^*\nabla\bar v^{n}-\sum_{k=1}^{n}\bar\Aa_{i_1\ldots i_{k-1}}^{*,k}\nabla\nabla^{k-1}_{i_1\ldots i_{k-1}}\bar v^{n}\bigg)\cdot\nabla u,\]
we deduce
\begin{multline*}
\int_{\R^d}g\cdot\nabla u-\int_{\R^d}\nabla\bar v^{n}\cdot f\,=\,\int_{\R^d}\nabla\bar v^{n}\cdot \Xi^n[\nabla u]\\
+\int_{\R^d}\bigg(\sum_{k=1}^{n}\bar\Aa_{i_1\ldots i_{k-1}}^{*,k}\nabla\nabla^{k-1}_{i_1\ldots i_{k-1}}\bar v^{n}-\bar\Aa^{*,1}\nabla\bar v^1\bigg)\cdot\nabla u.
\end{multline*}
Decomposing $\bar v^{n}:=\sum_{j=1}^{n}\tilde v^j$ and using the defining equations for $(\tilde v^j)_{2\le j\le n}$ (cf.~Definition~\ref{def:homog-eqn}) in the resummed form
\[\nabla\cdot\sum_{k=1}^n\sum_{j=1}^{n+1-k}\bar\Aa^{*,k}_{i_1\ldots i_{k-1}}\nabla\nabla^{k-1}_{i_1\ldots i_{k-1}}\tilde v^{j}=\nabla\cdot\bar\Aa^{*,1}\nabla\bar v^1,\]
the claim follows.

\medskip
\step2 CLT scaling: for all $g\in C^\infty_c(\R^d)^d$,
\[\expec{\Big(\int_{\R^d}g\cdot(\nabla u-\expec{\nabla u})\Big)^p}^\frac1p\,\lesssim_p\,\|[g]_2\|_{\Ld^4(\R^d)}\|[f]_2\|_{\Ld^4(\R^d)}\,\le\,\|g\|_{\Ld^4(\R^d)}\|f\|_{\Ld^4(\R^d)}.\]
This result is classical~\cite{MaO,GNO-quant,DGO2}. A short proof based on Malliavin calculus and on the annealed Calder\'on-Zygmund theory of Section~\ref{sec:Lpreg} is easily obtained as in~\eqref{eq:CLT-scaling-pr}; details are omitted.

\medskip
\step3 Conclusion.\\
Since $\int_{\R^d}\nabla\bar v^n\cdot f$ is deterministic, the identity of Step~1 yields
\begin{multline*}
\expec{\Big(\int_{\R^d}g\cdot\big(\nabla u-\expec{\nabla u}\big)-\int_{\R^d}\nabla\bar v^{n}\cdot\big(\Xi^n[\nabla u]-\expec{\Xi^n[\nabla u]}\big)\Big)^p}^\frac1p\\
\,\lesssim\,\sum_{k=2}^{n}\sum_{j=n+2-k}^n\expec{\Big(\int_{\R^d}\bar\Aa_{i_1\ldots i_{k-1}}^{*,k}\nabla\nabla^{k-1}_{i_1\ldots i_{k-1}}\tilde v^{j}\cdot(\nabla u-\expec{\nabla u})\Big)^p}^\frac1p.
\end{multline*}
Applying the CLT scaling result of Step~2 and the Calder\'on-Zygmund theory for the constant-coefficient equations defining $(\tilde v^k)_{1\le k\le n}$, this expression is estimated by
\[\lesssim_p\,\sum_{k=n}^{2(n-1)}\|\nabla^kg\|_{\Ld^4(\R^d)}\|f\|_{\Ld^4(\R^d)},\]
and the conclusion follows after $\e$-rescaling.
\qed

\subsection{Proof of Lemma~\ref{lem:higher-Hill}}
We focus on the case $\e=1$ and drop it from all subscripts in the notation, while the final result is obtained after $\e$-rescaling.
We split the proof into two steps.

\medskip
\step1 Proof that for all $1\le n\le\ell$,
\begin{multline}\label{eq:rewrite-Xieps-en}
\ee_j\cdot\Xi^n[\nabla u]
\,=\,(-1)^n\nabla^n_{i_1\ldots i_n}\Big(\big(\Aa^*\varphi^{*,n}_{ji_1\ldots i_{n-1}}-\sigma^{*,n}_{ji_1\ldots i_{n-1}}\big)\ee_{i_n}\cdot\nabla u+\varphi_{ji_1\ldots i_{n-1}}^{*,n}\,f_{i_n}\Big)\\
+\sum_{k=0}^{n-1}(-1)^{k}\nabla^{k}_{i_1\ldots i_{k}}\Big(\big(\nabla\varphi_{ji_1\ldots i_{k}}^{*,k+1}+\mathds1_{k>0}\varphi_{ji_1\ldots i_{k-1}}^{*,k}\ee_{i_k}\big)\,\cdot f\Big),
\end{multline}
which coincides with~\eqref{eq:higher-Hill} after $\e$-rescaling.

\noindent
First, by Lemma~\ref{lem:sym-baran}, the $n$th-order commutator can alternatively be written as
\begin{align}\label{eq:redef-Xi}
\ee_j\cdot\Xi^n[\nabla u]\,=\,\Aa^*\ee_j\cdot\nabla u-\sum_{k=1}^{n}(-1)^{k-1}\nabla^{k-1}_{i_1\ldots i_{k-1}}\big(\bar\Aa^{*,k}_{ji_1\ldots i_{k-2}}\ee_{i_{k-1}}\cdot\nabla u\big).
\end{align}
It remains to show that for all $0\le n\le\ell$,
\begin{align}\label{eq:pre-dec-commut-00}
&\Aa^*\ee_j\cdot\nabla u=(-1)^n\nabla^n_{i_1\ldots i_n}\Big(\big(\Aa^*\varphi_{ji_1\ldots i_{n-1}}^{*,n}-\sigma^{*,n}_{ji_1\ldots i_{n-1}}\big)\ee_{i_n}\cdot\nabla u\Big)\\
&+\sum_{k=1}^n(-1)^{k-1}\nabla_{i_1\ldots i_{k-1}}^{k-1}\big(\varphi_{ji_1\ldots i_{k-1}}^{*,k}\nabla\cdot\Aa\nabla u\big)
+\sum_{k=1}^{n}(-1)^{k-1}\nabla^{k-1}_{i_1\ldots i_{k-1}}\big(\bar\Aa^{*,k}_{ji_1\ldots i_{k-2}}\ee_{i_{k-1}}\cdot\nabla u\big),\nonumber
\end{align}
while the result~\eqref{eq:rewrite-Xieps-en} indeed follows after injecting the equation $-\nabla\cdot\Aa\nabla u=\nabla\cdot f$ and writing $\varphi^{*,k}_{ji_1\ldots i_{k-1}}\nabla\cdot f=\nabla\cdot(\varphi^{*,k}_{ji_1\ldots i_{k-1}}f)-\nabla\varphi^{*,k}_{ji_1\ldots i_{k-1}}\cdot f$.
We prove~\eqref{eq:pre-dec-commut-00} by induction. It is obvious for $n=0$ (recall that for $n=0$ the notation $\Aa^*\varphi_{ji_1\ldots i_{n-1}}^{*,n}\ee_{i_n}$ stands for $\Aa^*\ee_j$).
Assume that it holds for some $n\ge0$. In order to deduce it at level $n+1$, it suffices to prove that
\begin{multline}\label{eq:pre-dec-commut-000}
\nabla^{n}_{i_1\ldots i_n}\Big(\big(\Aa^*\varphi_{ji_1\ldots i_{n-1}}^{*,n}-\sigma_{ji_1\ldots i_{n-1}}^{*,n}\big)\ee_{i_{n}}\cdot\nabla u\Big)
=-\nabla^{n+1}_{i_1\ldots i_nl}\Big(\big(\Aa^*\varphi_{ji_1\ldots i_n}^{*,n+1}-\sigma^{*,n+1}_{ji_1\ldots i_n}\big)\ee_l\cdot\nabla u\Big)\\
+\nabla^{n}_{i_1\ldots i_n}\big(\varphi_{ji_1\ldots i_n}^{*,n+1}\nabla\cdot\Aa\nabla u\big)
+\nabla^{n}_{i_1\ldots i_n}\big(\bar\Aa_{ji_1\ldots i_{n-1}}^{*,n+1}\ee_{i_n}\cdot\nabla u\big).
\end{multline}
The equation for $\nabla\cdot\sigma^{*,n+1}$ (cf.~Definition~\ref{def:cor}) yields
\begin{multline*}
\nabla^{n}_{i_1\ldots i_n}\Big(\big(\Aa^*\varphi_{ji_1\ldots i_{n-1}}^{*,n}-\sigma_{ji_1\ldots i_{n-1}}^{*,n}\big)\ee_{i_{n}}\cdot\nabla w\Big)=\nabla^{n}_{i_1\ldots i_n}\Big(\big(\nabla\cdot\sigma^{*,n+1}_{ji_1\ldots i_n}\big)\cdot\nabla w\Big)\\
-\nabla^{n}_{i_1\ldots i_n}\big(\nabla\varphi_{ji_1\ldots i_n}^{*,n+1}\cdot\Aa\nabla w\big)+\nabla^{n}_{i_1\ldots i_n}\big(\bar\Aa_{ji_1\ldots i_{n-1}}^{*,n+1}\ee_{i_n}\cdot\nabla w\big),
\end{multline*}
and the claim~\eqref{eq:pre-dec-commut-000} follows from the skew-symmetry of $\sigma^{*,n+1}_{ji_1\ldots i_n}$.

\medskip
\step2 Conclusion.

\noindent
For $g\in C^\infty_c(\R^d)^d$, combining~\eqref{eq:rewrite-Xieps-en} with the energy estimate $\|\nabla u\|_{\Ld^2(\R^d)}\lesssim\|f\|_{\Ld^2(\R^d)}$ and with the corrector estimates of Proposition~\ref{prop:cor}, we find for all $p<\infty$,
\begin{multline*}
\expec{\Big|\int_{\R^d}g\cdot\Xi^n[\nabla u]\Big|^p}^\frac1p
\,\lesssim_p\,\|\mu_{d,n}\nabla^ng\|_{\Ld^2(\R^d)}\|f\|_{\Ld^2(\R^d)}\\
+\sum_{k=0}^{n-1}\expec{\Big|\int_{\R^d}\big(\nabla\varphi_{ji_1\ldots i_{k}}^{*,k+1}+\mathds1_{k>0}\varphi_{ji_1\ldots i_{k-1}}^{*,k}\ee_{i_k}\big)\,\cdot f\nabla^{k}_{i_1\ldots i_{k}}g_j\Big|^p}^\frac1p.
\end{multline*}
In order to optimally estimate the last contribution, we must exploit stochastic cancellations in form of Lemma~\ref{lem:average-cor}, which yields after $\e$-rescaling, for all $1\le n\le\ell$,
\begin{multline*}
\expec{\Big|\int_{\R^d}g\cdot\Xi^n_\e[\nabla u_\e]\Big|^p}^\frac1p
\,\lesssim_p\,\e^n\mu_{d,n}(\tfrac1\e)\\
\times\bigg(\|\mu_{d,n}\nabla^ng\|_{\Ld^2(\R^d)}\|f\|_{\Ld^2(\R^d)}+\sum_{k=0}^{n-1}\|[f\nabla^{k}g]_2\|_{\Ld^{\frac{2d}{d+2k}}(\R^d)}\bigg).
\end{multline*}
Since $\ell-1\le\frac12(d-1)$, using Hölder's and Sobolev's inequalities in the form
\begin{eqnarray*}
\max_{0\le k\le\ell-1}\big\|[f\nabla^{k}g]_2\big\|_{\Ld^{\frac{2d}{d+2k}}(\R^d)}&\le&\max_{0\le k\le\ell-1}\big\|[f]_{\frac{2d}{d-2k}}\big\|_{\Ld^2(\R^d)}\|\nabla^{k}g\|_{\Ld^{\frac dk}(\R^d)}\\
&\lesssim&\|[f]_{2d}\|_{\Ld^2(\R^d)}\|g\|_{H^\frac d2\cap\Ld^\infty(\R^d)},
\end{eqnarray*}
the conclusion follows.
\qed

\subsection{Proof of Lemma~\ref{lem:expl-form-Xi0}}
First note that, given a linear differential operator $O$ of order~$m$, say $O[\bar w]=\sum_{k=0}^mP^k_{j_1\ldots j_k}(x)\nabla^k_{j_1\ldots j_k}\bar w$, the characterizing property of the Taylor polynomial yields
\begin{align}\label{eq:op-diff-of-Taylor}
O[T^n_x\bar w](x)=\sum_{k=0}^{m\wedge n}P^k_{j_1\ldots j_k}(x)\nabla_{j_1\ldots j_k}^k\bar w(x),
\end{align}
which amounts to truncating the differential operator $O$ at order $n$.
Now observe that $\Xi_{\e}^n[E_\e^n[\nabla\cdot]]$ is a linear differential operator of order $2n-1$ with (distributional) stationary random coefficients.
Composing the explicit definitions of $E_\e^n$ and $\Xi_{\e}^n$ (cf.~Definitions~\ref{def:EnFn} and~\ref{def:Xin}), we obtain
\begin{multline*}
\Xi_{\e}^n[E_\e^n[\nabla\bar w]]\,=\,(\Aa(\tfrac\cdot\e)-\bar\Aa^1)E_\e^n[\nabla\bar w]\\
-\sum_{k=2}^{n}\e^{k-1}\bar\Aa_{i_1\ldots i_{k-1}}^{k}\nabla^{k-1}_{i_1\ldots i_{k-1}}\sum_{s=0}^{n-1}\e^{s}\big(\nabla\varphi_{j_1\ldots j_{s+1}}^{s+1}+\varphi_{j_1\ldots j_s}^s\ee_{j_{s+1}}\big)(\tfrac\cdot\e)\,\nabla_{j_1\ldots j_{s+1}}^{s+1}\bar w.
\end{multline*}
Hence, relabelling $k-1$ as $k$ in the second right-hand side contribution, using the symmetry in $i_1,\ldots,i_{k}$, and expanding the multiple derivative $\nabla^{k}_{i_1\ldots i_{k}}$ by using the general Leibniz rule,
\begin{multline}\label{eq:form-Xi-E-comp}
\Xi_{\e}^n[E_\e^n[\nabla\bar w]]\,=\,(\Aa(\tfrac\cdot\e)-\bar\Aa^1) E_\e^n[\nabla\bar w]\\
-\sum_{k=1}^{n-1}\e^{k}\big(\sym_{i_1\ldots i_k}\bar\Aa_{i_1\ldots i_{k}}^{k+1}\big)\sum_{s=0}^{n}\e^{s}\sum_{l=0}^k\binom{k}{l}\nabla^{k-l}_{i_{l+1}\ldots i_{k}}\nabla_{j_1\ldots j_{s+1}}^{s+1}\bar w\\
\times\nabla^{l}_{i_1\ldots i_{l}}\Big(\big(\mathds1_{s\ne n}\nabla\varphi_{j_1\ldots j_{s+1}}^{s+1}+\varphi_{j_1\ldots j_s}^s\ee_{j_{s+1}}\big)(\tfrac\cdot\e)\Big).
\end{multline}
Now applying~\eqref{eq:op-diff-of-Taylor} to the definition of $\Xi_{\e}^{\circ,n}[\nabla\bar w]$ (cf.~Definition~\ref{def:Xin0}) together with the above formula for $\Xi_{\e}^n[E_\e^n[\nabla\bar w]]$, we are led to
\begin{multline*}
\Xi_{\e}^{\circ,n}[\nabla\bar w]\,=\,(\Aa(\tfrac\cdot\e)-\bar\Aa^1)E_\e^n[\nabla\bar w]\\
-\sum_{k=1}^{n-1}\e^{k}\big(\sym_{i_1\ldots i_k}\bar\Aa_{i_1\ldots i_{k}}^{k+1}\big)\sum_{s=0}^{n}\e^{s}\sum_{l=k+s+1-n}^k\binom{k}{l}\nabla^{k-l}_{i_{l+1}\ldots i_{k}}\nabla_{j_1\ldots j_{s+1}}^{s+1}\bar w\\
\times\nabla^{l}_{i_1\ldots i_{l}}\Big(\big(\mathds1_{s\ne n}\nabla\varphi_{j_1\ldots j_{s+1}}^{s+1}+\varphi_{j_1\ldots j_s}^s\ee_{j_{s+1}}\big)(\tfrac\cdot\e)\Big).
\end{multline*}
Comparing this with~\eqref{eq:form-Xi-E-comp} yields the conclusion.\qed

\section{Reminder on Malliavin calculus}\label{chap:Mall}

In this section, we recall some basic definitions of the Malliavin calculus with respect to the Gaussian field $G$ (e.g.~\cite{Malliavin-97,Nualart,NP-book} for details). For simplicity, we focus on the explicit description of the Malliavin derivative on a simple dense subspace of random variables (cf.~$\Rc$ hereafter), where it is seen to coincide with the formal $\Ld^2$-gradient with respect to $G$. Next, we give a precise statement of the different tools from Malliavin calculus that will be used in the sequel. We emphasize that the reader should not be intimidated by the somehow abstract appearance of the theory: for our application to stochastic homogenization all (averaged) quantities of interest happen to be Malliavin smooth, and their Malliavin derivatives are systematically computed by duality arguments so that in the end everything is expressed in terms of the (explicit) Malliavin derivative of the coefficient field~$\Aa$ itself.

\medskip
The Gaussian random field $G$ can be seen as a random Schwartz distribution, that is, as a random element in $\Sc'(\R^d)^\kappa$. Indeed, for all $\zeta_1,\zeta_2\in C^\infty_c(\R^d)^\kappa$, we define $G(\zeta_1)$, $G(\zeta_2)$ (or formally $\int_{\R^d} G\zeta_1$, $\int_{\R^d} G\zeta_2$) as centered Gaussian random variables with covariance
\[\cov{G(\zeta_1)}{G(\zeta_2)}\,:=\,\iint_{\R^d\times\R^d} \zeta_1(x)\cdot c(x-y)\,\zeta_2(y)\,dxdy.\]
We define $\Hf$ as the closure of $C^\infty_c(\R^d)^\kappa$ for the (semi)norm
\begin{equation}\label{eq:def-Hf-ps}
\|\zeta_1\|_{\Hf}^2:=\langle\zeta_1,\zeta_1\rangle_\Hf,\qquad\langle\zeta_1,\zeta_2\rangle_\Hf:=\iint_{\R^d\times\R^d} \zeta_1(x)\cdot c(x-y)\,\zeta_2(y)\,dxdy.
\end{equation}
The space $\Hf$ (up to taking the quotient with respect to the kernel of $\|\cdot\|_\Hf$) is a separable Hilbert space. Under the integrability condition~\eqref{eq:cov-L1}, it obviously contains $\Ld^2(\R^d)^\kappa$ and there holds
\begin{align}\label{eq:cov-L1-H}
\|\zeta\|_{\Hf}^2\,\lesssim\,\int_{\R^d}[\zeta]_1^2.
\end{align}
In view of the isometry relation $\cov{G(\zeta_1)}{G(\zeta_2)}=\langle\zeta_1,\zeta_2\rangle_\Hf$,
the random field $G$ is said to be an {isonormal Gaussian process} over $\Hf$.

Without loss of generality, we work under the minimality assumption $\F=\sigma(G)$,
which implies that the linear subspace
\[\Rc:=\Big\{g\big(G(\zeta_1),\ldots,G(\zeta_n)\big)\,:\,n\in\N,\,g\in C_c^\infty(\R^n),\,\zeta_1,\ldots,\zeta_n\in C_c^\infty(\R^d)^\kappa\Big\}\subset\Ld^2(\Omega)\]
is dense in $\Ld^2(\Omega)$. This allows to define operators and prove properties on the simpler subspace $\Rc$ before extending them to $\Ld^2(\Omega)$ by density. For $r\ge1$ we similarly define
\[\Rc(\Hf^{\otimes r}):=\Big\{\sum_{i=1}^n\psi_iX_i\,:\,n\in\N,\,X_1,\ldots, X_n\in\Rc,\,\psi_1,\ldots,\psi_n\in \Hf^{\otimes r}\Big\}\subset\Ld^2(\Omega;\Hf^{\otimes r}),\]
which is dense in $\Ld^2(\Omega;\Hf^{\otimes r})$.
For a random variable $X\in \Rc$, say $X=g(G(\zeta_1),\ldots,G(\zeta_n))$, we define its {Malliavin derivative} $DX\in \Ld^2(\Omega;\Hf)$ as
\begin{align}\label{eq:D-expl}
DX=\sum_{i=1}^n\zeta_i \,\partial_i g (G(\zeta_1),\ldots,G(\zeta_n)),
\end{align}
which coincides with the formal $\Ld^2$ gradient of $X$ with respect to $G$.
For an element $X\in \Rc(\Hf^{\otimes r})$ with $r\ge1$, say $X=\sum_{i=1}^n\psi_iX_i$, the Malliavin derivative $DX\in\Ld^2(\Omega;\Hf^{\otimes(r+1)})$ is defined as
$DX=\sum_{i=1}^n\psi_i\otimes DX_i$.
For $j\ge1$, we iteratively define the $j$th-order Malliavin derivative $D^j:\Rc(\Hf^{\otimes r})\to\Ld^2(\Omega;\Hf^{\otimes (r+j)})$ for all $r\ge0$.
For all $r,m\ge0$, we then set
\begin{gather*}
\|X\|_{\Dm^{m,2}(\Hf^{\otimes r})}^2:=\langle X,X\rangle_{\Dm^{m,2}(\Hf^{\otimes r})},\\
\langle X,Y\rangle_{\Dm^{m,2}(\Hf^{\otimes r})}:=\expec{\langle X,Y\rangle_{\Hf^{\otimes r}}}+\sum_{j=1}^m\expec{\langle D^jX,D^jY\rangle_{\Hf^{\otimes(r+j)}}},
\end{gather*}
we define the {Malliavin-Sobolev space} $\Dm^{m,2}(\Hf^{\otimes r})$ as the closure of $\Rc(\Hf^{\otimes r})$ for this norm, and we extend the Malliavin derivatives $D^j$ by density to these spaces.

Next, we define a {divergence operator} $D^*$ as the adjoint of the Malliavin derivative $D$ and we start with its domain: for $r\ge1$, we set
\begin{multline*}
\dom\,(D^*|_{\Ld^2(\Omega,\Hf^{\otimes r})})\,:=\,\Big\{X\in\Ld^2(\Omega,\Hf^{\otimes r})\,:\,\exists\, C<\infty\text{ such that }\\
\expec{\langle DY,X\rangle_{\Hf^{\otimes r}}}^2\le C\,\expecm{\|Y\|_{\Hf^{\otimes(r-1)}}^2}\text{ for all $Y\in\Dm^{1,2}(\Omega;\Hf^{\otimes (r-1)})$}\Big\},
\end{multline*}
and for all $X\in\dom\,(D^*|_{\Ld^2(\Omega;\Hf^{\otimes r})})$ we define the divergence $D^*X\in\Ld^2(\Omega;\Hf^{\otimes (r-1)})$ by duality via the relation
\begin{equation}\label{eq:def-Dst}
\expec{\langle Y,D^*X\rangle_{\Hf^{\otimes(r-1)}}}=\expec{\langle DY,X\rangle_{\Hf^{\otimes r}}} \qquad  \mbox{for all }Y\in\Dm^{1,2}(\Hf^{\otimes (r-1)}).
\end{equation}
One can show that $\dom\,(D^*|_{\Ld^2(\Omega;\Hf^{\otimes r})})\supset\Dm^{1,2}(\Hf^{\otimes r})$ (e.g.~\cite[Proposition~1.3.1]{Nualart}).
Combining the Malliavin derivative and the associated divergence, we construct the so-called {Ornstein-Uhlenbeck operator} (or infinite-dimensional Laplacian)
\[\Lc:=D^* D,\]
which is an essentially self-adjoint positive operator.
The explicit actions of $D^*$ and $\Lc$ on~$\Rc$ are easily computed (e.g.~\cite[p.34]{NP-book}): for $X\in\Rc(\Hf^{\otimes r})$, say $X=\sum_{i=1}^n\psi_iX_i$, we have
$\Lc X=\sum_{i=1}^n\psi_i\,\Lc X_i$, while for $X_i=g(G(\zeta_1),\ldots,G(\zeta_n))$,
\[\Lc X_i=\sum_{j=1}^nG(\zeta_j)\,\partial_jg (G(\zeta_1),\ldots,G(\zeta_n))-\sum_{j,k=1}^n\langle \zeta_j,\zeta_k\rangle_{\Hf}\,\partial^2_{jk} g (G(\zeta_1),\ldots,G(\zeta_n)).\]
In particular, a direct computation (e.g.~\cite[p.35]{NP-book}) leads to the crucial commutator relation
\begin{equation}\label{eq:commut-Mall}
D\Lc\,=\,(1+\Lc)D.
\end{equation}
In addition, one checks that $\Lc$ is shift-invariant.
While the positivity of $\Lc$ ensures that the inverse operator $(1+\Lc)^{-1}$ is contractive on $\Ld^2(\Omega)$, it is easily checked to be also contractive on $\Ld^p(\Omega)$ for all $p>1$ (cf.\@ e.g.~\cite[Proposition~3.2]{MO}).
Although we focus here on the explicit description of operators on their core $\Rc$, the action of $D,D^*,\Lc$ on general $\Ld^2$ random variables are naturally described in terms of their Wiener chaos expansion (e.g.~\cite{Nualart,NP-book}).

\medskip
Based on the above definitions, we state the following proposition collecting various useful results for the fine analysis of functionals of the Gaussian field $G$.
Item~(i) is classical~\cite{Nash-58,Chernoff-81,Houdre-PerezAbreu-95}, as well as item~(iii) (e.g.~\cite{Bakry-94} and references therein).
Item~(ii) is well-known in the discrete Gaussian setting~\cite{HS-94} (cf.\@ also~\cite{Sjostrand-96,MO}).
Item~(iv) in total variation distance (and similarly in $1$-Wasserstein distance) is a consequence of Stein's method: it was first obtained in the discrete setting by Chatterjee~\cite{C2}, while the present Malliavin analogue is due to~\cite{NP-08,NPR-09} (of which we give a slightly different formulation here).
The corresponding result in $2$-Wasserstein distance is different and is due to~\cite{LNP-15}.
Note the particular role of the so-called Stein kernel $\langle DX,(1+\Lc)^{-1}DX\rangle_{\Hf}$ in items~(ii) and~(iv).
A short proof is included in Appendix~\ref{app:Mall} for the reader's convenience.

\begin{prop}[\cite{Houdre-PerezAbreu-95,HS-94,C2,NP-08,NPR-09,LNP-15}]\label{prop:Mall}$ $
\begin{enumerate}[(i)]
\item \emph{First-order Poincaré inequality:} For all $X\in \Ld^2(\Omega)$,
\begin{align*}
\var{X}\,\le\,\expec{\|DX\|_{\Hf}^2}.
\end{align*}
\item \emph{Helffer-Sjöstrand identity:} For all $X,Y\in \Dm^{1,2}(\Omega)$,
\begin{align}\label{eq:HS}
\cov{X}{Y}\,=\,\expec{\langle DX,(1+\Lc)^{-1}DY\rangle_{\Hf}}.
\end{align}
\item \emph{Logarithmic Sobolev inequality:} For all $X\in\Ld^2(\Omega)$,
\[\ent{X^2}\,:=\,\expec{X^2\log\frac{X^2}{\expec{X^2}}}\,\le\,2\,\expec{\|DX\|_\Hf^2},\]
hence in particular, for all $p\ge1$,
\[\expec{|X-\expec{X}|^{2p}}\,\le\,(2p+1)^p\,\expecm{\|DX\|_\Hf^{2p}}.\]
\item \emph{Second-order Poincaré inequality:} For all $X\in \Ld^2(\Omega)$ with $\expec{X}=0$ and $\var{X}=1$,
\begin{eqnarray*}
\quad\dWW{X}{\Nc}\vee\dTV{X}{\Nc}&\le& 2\,\var{\langle DX,(1+\Lc)^{-1}DX\rangle_{\Hf}}^\frac12\\
&\le&3\,\expec{\|D^2X\|_{\op}^4}^\frac14\expec{\|DX\|_\Hf^4}^\frac14,
\end{eqnarray*}
where $\dWW\cdot\Nc$ and $\dTV\cdot\Nc$ denote the $2$-Wasserstein and the total variation distance to a standard Gaussian law, respectively, and where the operator norm of $D^2X$ is defined by
\begin{equation}\label{eq:def-op}
\|D^2X\|_{\op}\,:=\,\sup_{\zeta,\zeta'\in\Hf\atop\|\zeta\|_\Hf=\|\zeta'\|_\Hf=1}\langle D^2X,\zeta\otimes\zeta'\rangle_{\Hf^{\otimes2}}.
\qedhere
\end{equation}
\end{enumerate}
\end{prop}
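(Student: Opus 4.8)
The plan is to build everything on the Wiener chaos decomposition $\Ld^2(\Omega)=\bigoplus_{q\ge0}\Hf_q$ and on the associated Ornstein--Uhlenbeck semigroup $P_t:=e^{-t\Lc}$, which acts as multiplication by $e^{-qt}$ on $\Hf_q$ and admits Mehler's representation $P_tX=\expecp{X\big(e^{-t}G+\sqrt{1-e^{-2t}}\,G'\big)}$, where $X(\cdot)$ denotes the measurable functional representing $X$ and $G'$ is an independent copy of $G$ (checked first on the core $\Rc$, then extended by density). Two consequences will be used repeatedly: the intertwining $DP_t=e^{-t}P_tD$ (differentiate Mehler, or use the chaos grading together with~\eqref{eq:commut-Mall}), and, in finite dimensions hence by density, Nelson's hypercontractivity bound $\|P_tX\|_{\Ld^q(\Omega)}\le\|X\|_{\Ld^p(\Omega)}$ whenever $1<p\le q<\infty$ and $e^{2t}\ge\frac{q-1}{p-1}$.

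For item~(ii), differentiating $t\mapsto\expecm{X\,P_tY}$ gives $\tfrac{d}{dt}\expecm{X\,P_tY}=-\expecm{X\,\Lc P_tY}=-\expecm{\langle DX,DP_tY\rangle_\Hf}=-e^{-t}\expecm{\langle DX,P_tDY\rangle_\Hf}$; integrating over $t\in(0,\infty)$ and using $\int_0^\infty e^{-t}P_t\,dt=(1+\Lc)^{-1}$ yields the Helffer--Sj\"ostrand identity~\eqref{eq:HS}. Item~(i) follows by taking $X=Y$ in~\eqref{eq:HS}, since $\var X=\expecm{\langle DX,(1+\Lc)^{-1}DX\rangle_\Hf}\le\expecm{\|DX\|_\Hf^2}$ by Cauchy--Schwarz and the contractivity of $(1+\Lc)^{-1}$ on $\Ld^2(\Omega;\Hf)$ (equivalently, at the chaos level $\var X=\sum_{q\ge1}\|X_q\|_{\Ld^2}^2\le\sum_{q\ge1}q\|X_q\|_{\Ld^2}^2=\expecm{\|DX\|_\Hf^2}$). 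Item~(iii) is Gross' theorem: differentiating $t\mapsto\|P_tX\|_{\Ld^{q(t)}(\Omega)}$ at $t=0$ along the hypercontractive curve $q(t)=1+e^{2t}$ produces $\ent{X^2}\le2\,\expecm{\|DX\|_\Hf^2}$ (again by finite-dimensional reduction on $\Rc$ and density), and the stated $\Ld^{2p}$-moment bound is the classical consequence of the logarithmic Sobolev inequality, obtained for instance by applying it to powers of $X-\expec X$ together with Rothaus' lemma.

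The core of the matter is item~(iv). The first step is the Nourdin--Peccati integration-by-parts identity: combining~\eqref{eq:def-Dst} with the chaos grading gives, for $\expec X=0$ and any $h\in C^1_b(\R)$, $\expecm{X\,h(X)}=\expecm{h'(X)\,S}$ with Stein kernel $S:=\langle DX,(1+\Lc)^{-1}DX\rangle_\Hf$; moreover~\eqref{eq:HS} with $X=Y$ forces $\expec S=\var X=1$ under the normalization $\var X=1$. Feeding $S$ into Stein's method then yields $\dTV X\Nc\le\expecm{|1-S|}\le\var S^\frac12$ (the Malliavin form of Chatterjee's bound, see~\cite{C2,NPR-09}), while the parallel $2$-Wasserstein estimate $\dWW X\Nc\le2\,\var S^\frac12$ is the separate input of~\cite{LNP-15}; together these give the first inequality. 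For the second, I would apply item~(i) to the random variable $S$ itself, $\var S\le\expecm{\|DS\|_\Hf^2}$, and compute $DS$ by the Leibniz rule for the $\Hf$-pairing together with the commutation relation $D(1+\Lc)^{-1}=(2+\Lc)^{-1}D$ (a direct consequence of~\eqref{eq:commut-Mall}): this exhibits $DS$ as a sum of two terms, each of the schematic form ``$D^2X$ contracted against $DX$ through a contraction of $\Ld^2(\Omega;\Hf^{\otimes j})$''. Bounding the $\Hf$-norm of each by $\|D^2X\|_\op\|DX\|_\Hf$ (cf.~\eqref{eq:def-op}) and then using Cauchy--Schwarz on $\expecm{\|D^2X\|_\op^2\|DX\|_\Hf^2}$ gives $\var S\lesssim\expecm{\|D^2X\|_\op^4}^\frac12\expecm{\|DX\|_\Hf^4}^\frac12$, as desired.

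I expect the main obstacle to be precisely this last item: on the one hand one must correctly import the two Stein's-method inputs, the $2$-Wasserstein one being genuinely more delicate than and not reducible to the total-variation one; on the other hand, the passage from $\var S$ to the stated bound requires careful bookkeeping of how $(1+\Lc)^{-1}$ and $(2+\Lc)^{-1}$ act on $\Hf$- and $\Hf^{\otimes2}$-valued random variables, so that the operator-norm estimate $\|DS\|_\Hf\lesssim\|D^2X\|_\op\|DX\|_\Hf$ survives the contraction bounds; the chaos-level identities $\expecm{\langle D^jX,D^jY\rangle_{\Hf^{\otimes j}}}=(\text{polynomial in }q)\,\langle X_q,Y_q\rangle_{\Ld^2}$ are the mechanism that makes this go through. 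A minor point to dispatch along the way is that $\|\cdot\|_\Hf$ is only a seminorm, handled by passing to the quotient Hilbert space as in the text.
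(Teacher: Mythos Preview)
Your proposal is correct and follows essentially the same strategy as the paper: Ornstein--Uhlenbeck semigroup with Mehler's formula, the commutation relation $De^{-t\Lc}=e^{-t}e^{-t\Lc}D$, and for~(iv) the Nourdin--Peccati Stein-kernel computation combined with the separate $W_2$ input from~\cite{LNP-15}, followed by the first-order Poincar\'e inequality applied to the Stein kernel and the commutation $D(1+\Lc)^{-1}=(2+\Lc)^{-1}D$. The only organizational differences are that the paper proves~(i) directly first (via $\var X=2\int_0^\infty e^{-2t}\expecm{\|e^{-t\Lc}DX\|_\Hf^2}\,dt$) and then uses it to invert $\Lc$ on mean-zero functions for~(ii), whereas you prove~(ii) first via $\int_0^\infty e^{-t}P_t\,dt=(1+\Lc)^{-1}$ and deduce~(i) as a corollary; and for~(iii) the paper bypasses hypercontractivity and derives log-Sobolev directly by writing $\ent{X^2}=\int_0^\infty e^{-2t}\expecm{\|e^{-t\Lc}DX^2\|_\Hf^2/(e^{-t\Lc}X^2)}\,dt$ and applying Cauchy--Schwarz inside Mehler's representation, then integrates along $q\mapsto\expecm{|X|^{2q}}^{1/q}$ to get the moment bound, rather than invoking Gross' equivalence or Rothaus' lemma.
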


\begin{rem}
Malliavin-type calculus has also been successfully developed in some non-Gaussian settings, to which most of our conclusions could thus be extended.
For instance, in the case of an i.i.d.\@ discrete random field $G:=(G_x)_{x\in \Z^d}$, a corresponding calculus is based on the so-called Glauber derivative $D_xX=\expeC{X}{G_x}-X$ (e.g.~\cite{DGO1}).
Another important example is that of a Poisson point process $\xi$ on $\R^d$, in which case the Malliavin derivative is chosen as the difference operator $D_xX(\xi)=X(\xi\cup\{x\})-X(\xi\setminus\{x\})$ (e.g.~\cite{Peccati-Reitzner}).
\end{rem}

\section{Representation formulas}\label{sec:rep-form}

As recalled in Proposition~\ref{prop:Mall} above, tools from Malliavin calculus allow to linearize the dependence on the underlying Gaussian field $G$ and reduce various subtle questions about quantities of interest to the estimation of their Malliavin derivative, that is, of their local dependence with respect to $G$.
In this spirit, the key ingredient for items~(ii) and~(iii) of Theorem~\ref{th:main} consists in suitable representation formulas for the Malliavin derivative of higher-order commutators.
\begin{enumerate}[$\bullet$]
\item \emph{Locality of higher-order commutators:}\\
The key property of homogenization commutators for fluctuations is their improved locality with respect to the coefficient field. While this was first discovered at first order in~\cite{DGO1} (see also~\cite{GuM,AKM2,GO4}), formula~\eqref{eq:commut-Xin-1} below indicates that the higher-order commutator $\Xi^n_\e[\nabla u_\e]$ behaves like a local quantity at the level of its Malliavin derivative up to a higher-order error $O(\e^n)$. More precisely, the formula takes the form
\begin{multline*}
\hspace{0.2cm}\ee_j\cdot D\Xi^n_\e[\nabla u_\e]
\,=\,\sum_{k=0}^{n-1}(-1)^k\e^k\nabla^k_{i_1\ldots i_k}\Big(\big(\nabla\varphi_{ji_1\ldots i_k}^{*,k+1}+\varphi^{*,k}_{ji_1\ldots i_{k-1}}\ee_{i_k}\big)(\tfrac\cdot\e)\cdot D\Aa(\tfrac\cdot\e)\nabla u_\e\Big)\,+\,O(\e^n),
\end{multline*}
where the main right-hand side terms are indeed local in view of $D_z\Aa=a_0'(G(z))\,\delta(\cdot-z)$,
and where the error can be checked to be of order $O(\e^n)$ in $\Ld^2(\Omega;\Hf)$ when integrated with a smooth test function.
While the full formula~\eqref{eq:commut-Xin-1} below is given for simplicity for $\e=1$, the $\e$-scaling is transparent by counting the number of derivatives that fall on test functions.
The proof consists in exploiting the algebraic structure of higher-order correctors and commutators in order to make this clean local structure appear.

\smallskip\noindent
Since the standard commutator $\Xi^{\circ,n}_\e[\nabla\bar w]$ is obtained by applying $\Xi_\e^n[\nabla\cdot]$ to the two-scale expansion $F_\e^n[\nabla\bar w]$ whenever $\bar w$ is an $n$th-order polynomial, we may deduce a corresponding formula~\eqref{eq:commut-Xin-2} for its Malliavin derivative,
\begin{multline*}
\quad\ee_j\cdot D\Xi^{\circ,n}_\e[\nabla\bar w]\\
\,=\,\sum_{k=0}^{n-1}(-1)^k\e^k\nabla^k_{i_1\ldots i_k}\Big(\big(\nabla\varphi_{ji_1\ldots i_k}^{*,k+1}+\varphi_{ji_1\ldots i_{k-1}}^{*,k}\ee_{i_k}\big)(\tfrac\cdot\e)\cdot D\Aa(\tfrac\cdot\e) E^n_\e[\nabla\bar w]\Big)+O(\e^n).
\end{multline*}
In view of Proposition~\ref{prop:Mall}(ii), this improved locality property provides a simple access to the higher-order characterization of the asymptotic covariance structure of large-scale averages of commutators, cf.~Section~\ref{sec:cov-conv}.

\smallskip\item \emph{Accuracy of the two-scale expansion of higher-order commutators:}\\
Comparing the formulas for Malliavin derivatives of the commutator $\Xi^n_\e[\nabla u_\e]$ and of its standard version $\Xi^{\circ,n}_\e[\nabla\bar w]$, and choosing $\bar w=\bar u_\e^n$ the homogenized solution, we find for the difference,
\begin{multline*}
\quad\ee_j\cdot D\big(\Xi^n_\e[\nabla u_\e]-\Xi^{\circ,n}_\e[\nabla\bar u_\e^n]\big)
=O(\e^n)\\
+\sum_{k=0}^{n-1}(-1)^k\e^k\nabla^k_{i_1\ldots i_k}\Big(\big(\nabla\varphi_{ji_1\ldots i_k}^{*,k+1}+\varphi^{*,k}_{ji_1\ldots i_{k-1}}\ee_{i_k}\big)(\tfrac\cdot\e)
\cdot D\Aa(\tfrac\cdot\e)\big(\nabla u_\e-E_\e^n[\nabla\bar u_\e^n]\big)\Big),
\end{multline*}
where we exactly recognize the higher-order two-scale expansion error $\nabla u_\e-E_\e^n[\nabla\bar u_\e^n]$, which is known to be of small order $O(\e^n)$ in view of Proposition~\ref{prop:homog-err}.
This constitutes the core of the proof of the higher-order pathwise result, cf.~Section~\ref{sec:pathw}.
\end{enumerate}
We turn to the precise statement of the representation formulas.
These are understood in a weak sense: when integrated in space with a smooth test function, the quantities in consideration are indeed Malliavin smooth and the formulas make sense after integrations by parts (see also Lemmas~\ref{lem:key-estimates-decomp} and~\ref{lem:key-estimates-2nd}).

\begingroup\allowdisplaybreaks
\begin{prop}[Formulas for first Malliavin derivatives]\label{prop:1st-variation}
Let $\e=1$ and drop it from all subscripts in the notation.
For all $1\le n\le\ell$ there holds for any random function $w$ with $\nabla\cdot\Aa\nabla w$ deterministic,
\begin{multline}\label{eq:commut-Xin-1}
\ee_j\cdot D\Xi^{n}[\nabla w]
\,=\,\sum_{k=0}^{n}(-1)^{k}\nabla^k_{i_1\ldots i_k}\Big(\big(\mathds1_{k\ne n}\nabla\varphi_{ji_1\ldots i_k}^{*,k+1}+\varphi_{ji_1\ldots i_{k-1}}^{*,k}\ee_{i_k}\big)\cdot D\Aa\nabla w\Big)\\
+(-1)^{n}\nabla^{n}_{i_1\ldots i_{n}}\Big(\big(\Aa^*\varphi_{ji_1\ldots i_{n-1}}^{*,n}-\sigma_{ji_1\ldots i_{n-1}}^{*,n}\big)\ee_{i_{n}}\cdot\nabla D w\Big),
\end{multline}
and also, for any smooth deterministic function $\bar w$,
\begin{multline}\label{eq:commut-Xin-2}
\ee_j\cdot D \Xi^{\circ,n}[\nabla\bar w]
\,=\,\sum_{k=0}^{n}(-1)^{k}\nabla^k_{i_1\ldots i_k}\Big(\big(\mathds1_{k\ne n}\nabla\varphi_{ji_1\ldots i_k}^{*,k+1}+\varphi_{ji_1\ldots i_{k-1}}^{*,k}\ee_{i_k}\big)\cdot D\Aa\, E^{n}[\nabla\bar w]\Big)\\
\hspace{-4.5cm}-\sum_{k=0}^{n}(-1)^k\sum_{s=n-k}^{n-1}~\sum_{l=0}^{k+s-n}\binom{k}{l}\big(\nabla^{k-l}_{i_{l+1}\ldots i_k}\nabla^{s+1}_{j_1\ldots j_{s+1}}\bar w\big)~\nabla_{i_1\ldots i_l}^l\\
\quad\times\Big(\sym_{i_1\ldots i_k}\big(\mathds1_{k\ne n}\nabla\varphi_{ji_1\ldots i_k}^{*,k+1}+\varphi_{ji_1\ldots i_{k-1}}^{*,k}\ee_{i_k}\big)\cdot D\Aa\big(\nabla\varphi^{s+1}_{j_1\ldots j_{s+1}}+\varphi^{s}_{j_1\ldots j_{s}}\ee_{j_{s+1}}\big)\Big)\\
\hspace{-5.4cm}+(-1)^{n}\sum_{s=0}^{n-1}~\sum_{l=s+1}^{n}\binom{n}{l}\big(\nabla^{n-l}_{i_{l+1}\ldots i_{n}}\nabla^{s+1}_{j_1\ldots j_{s+1}}\bar w\big)~\nabla_{i_1\ldots i_l}^l\\
\times\Big(\sym_{i_1\ldots i_{n}}\big((\Aa^*\varphi_{ji_1\ldots i_{n-1}}^{*,n}-\sigma_{ji_1\ldots i_{n-1}}^{*,n})\ee_{i_{n}}\big)\cdot D\big(\nabla\varphi^{s+1}_{j_1\ldots j_{s+1}}+\varphi^{s}_{j_1\ldots j_{s}}\ee_{j_{s+1}}\big)\Big).
\end{multline}
In particular,
\begin{eqnarray}
\lefteqn{\hspace{-0.5cm}\ee_j\cdot D\big(\Xi^{n}[\nabla u]-\Xi^{\circ,n}[\nabla\bar u^{n}]\big)}\nonumber\\
&\hspace{-0.2cm}=&\sum_{k=0}^{n}(-1)^k\nabla^k_{i_1\ldots i_k}\Big(\big(\mathds1_{k\ne n}\nabla\varphi_{ji_1\ldots i_k}^{*,k+1}+\varphi_{ji_1\ldots i_{k-1}}^{*,k}\ee_{i_k}\big)\cdot D\Aa\big(\nabla u-E^{n}[\nabla\bar u^{n}]\big)\Big)\nonumber\\
&&\hspace{-0.5cm}+\sum_{k=0}^{n}(-1)^k\sum_{s=n-k}^{n-1}~\sum_{l=0}^{k+s-n}\binom{k}{l}\big(\nabla^{k-l}_{i_{l+1}\ldots i_k}\nabla^{s+1}_{j_1\ldots j_{s+1}}\bar u^{n}\big)~\nabla_{i_1\ldots i_l}^l\nonumber\\
&&\hspace{-0.5cm}\quad\times\Big(\sym_{i_1\ldots i_k}\big(\mathds1_{k\ne n}\nabla\varphi_{ji_1\ldots i_k}^{*,k+1}+\varphi_{ji_1\ldots i_{k-1}}^{*,k}\ee_{i_k}\big)\cdot D\Aa\big(\nabla\varphi^{s+1}_{j_1\ldots j_{s+1}}+\varphi^{s}_{j_1\ldots j_{s}}\ee_{j_{s+1}}\big)\Big)\nonumber\\
&&\hspace{-0.5cm}+(-1)^{n}\nabla^{n}_{i_1\ldots i_{n}}\Big(\big(\Aa^*\varphi_{ji_1\ldots i_{n-1}}^{*,n}-\sigma_{ji_1\ldots i_{n-1}}^{*,n}\big)\ee_{i_{n}}\cdot D\big(\nabla u-E^{n}[\nabla\bar u^{n}]\big)\Big)\nonumber\\
&&\hspace{-0.5cm}+(-1)^{n}\sum_{s=0}^{n-1}~\sum_{l=0}^{s}\binom{n}{l}\big(\nabla^{n-l}_{i_{l+1}\ldots i_{n}}\nabla^{s+1}_{j_1\ldots j_{s+1}}\bar u^{n}\big)~\nabla_{i_1\ldots i_l}^l\nonumber\\
&&\hspace{-0.5cm}\quad\times\Big(\sym_{i_1\ldots i_{n}}\big((\Aa^*\varphi_{ji_1\ldots i_{n-1}}^{*,n}-\sigma_{ji_1\ldots i_{n-1}}^{*,n})\ee_{i_{n}}\big)\cdot D\big(\nabla\varphi^{s+1}_{j_1\ldots j_{s+1}}+\varphi^{s}_{j_1\ldots j_{s}}\ee_{j_{s+1}}\big)\Big).\label{eq:commut-Xin-3}
\end{eqnarray}
\qedhere
\end{prop}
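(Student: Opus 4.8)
The plan is to establish the three displays in order, with \eqref{eq:commut-Xin-1} doing the real work: \eqref{eq:commut-Xin-2} will follow by applying \eqref{eq:commut-Xin-1} to a polynomially corrected field, and \eqref{eq:commut-Xin-3} by subtracting the two resulting formulas with the natural choices $w=u$ and $\bar w=\bar u^n$. Throughout I would argue in the weak sense announced before the statement --- every identity first tested against a fixed smooth compactly supported field, so that the quantities are genuinely Malliavin differentiable (by the a priori bounds of Lemmas~\ref{lem:key-estimates-decomp} and~\ref{lem:key-estimates-2nd}) and the spatial integrations by parts are licit --- and suppress this from the notation.

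For \eqref{eq:commut-Xin-1}, I would start from Definition~\ref{def:Xin} (with $\e=1$), $\ee_j\cdot\Xi^n[\nabla w]=\ee_j\cdot\Aa\nabla w-\sum_{k=1}^n\ee_j\cdot\bar\Aa^k_{i_1\ldots i_{k-1}}\nabla^{k-1}_{i_1\ldots i_{k-1}}\nabla w$. Since the homogenized tensors $\bar\Aa^k$ are deterministic and $D$ commutes with $\nabla$, applying the Malliavin derivative (a derivation) gives at once
\[\ee_j\cdot D\Xi^n[\nabla w]\,=\,\ee_j\cdot D\Aa\,\nabla w+\ee_j\cdot\Xi^n[\nabla Dw].\]
Because $\nabla\cdot\Aa\nabla w$ is deterministic, differentiating the relation $D(\nabla\cdot\Aa\nabla w)=0$ shows that $Dw$ solves the primal equation $-\nabla\cdot\Aa\nabla Dw=\nabla\cdot(D\Aa\,\nabla w)$, i.e.\ $Dw$ plays the role of $u$ with $f$ replaced by $D\Aa\,\nabla w$. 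As its proof makes plain, the higher-order Hill--Mandel identity \eqref{eq:rewrite-Xieps-en} holds for any solution of the primal equation; applying it with these substitutions rewrites $\ee_j\cdot\Xi^n[\nabla Dw]$ as $\sum_{k=0}^{n-1}(-1)^k\nabla^k_{i_1\ldots i_k}\big((\nabla\varphi^{*,k+1}_{ji_1\ldots i_k}+\mathds1_{k>0}\varphi^{*,k}_{ji_1\ldots i_{k-1}}\ee_{i_k})\cdot D\Aa\,\nabla w\big)$ plus $(-1)^n\nabla^n_{i_1\ldots i_n}\big(\varphi^{*,n}_{ji_1\ldots i_{n-1}}\ee_{i_n}\cdot D\Aa\,\nabla w\big)$ plus $(-1)^n\nabla^n_{i_1\ldots i_n}\big((\Aa^*\varphi^{*,n}_{ji_1\ldots i_{n-1}}-\sigma^{*,n}_{ji_1\ldots i_{n-1}})\ee_{i_n}\cdot\nabla Dw\big)$. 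Absorbing the leftover $\ee_j\cdot D\Aa\,\nabla w$ as the $k=0$ member of the first family (with $\varphi^{*,0}=1$) and recognizing $\varphi^{*,n}_{ji_1\ldots i_{n-1}}\ee_{i_n}\cdot D\Aa\,\nabla w$ as its $k=n$ member (the $\nabla\varphi^{*,n+1}$ part being switched off precisely by $\mathds1_{k\ne n}$) then yields \eqref{eq:commut-Xin-1}.

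For \eqref{eq:commut-Xin-2}, I would fix the basepoint $x$, set $\bar q:=T^n_x\bar w$ and $w:=F^n[\bar q]$. Since $\bar q$ has degree $n$, $\nabla\nabla^n_{i_1\ldots i_n}\bar q=0$, so \eqref{eq:pre-dec-homog} collapses to $\nabla\cdot\Aa\nabla w=\nabla\cdot\big(\sum_{k=1}^n\bar\Aa^k_{i_1\ldots i_{k-1}}\nabla\nabla^{k-1}_{i_1\ldots i_{k-1}}\bar q\big)$, which is deterministic; hence \eqref{eq:commut-Xin-1} applies to this $w$, and by Definition~\ref{def:Xin0} its value at the basepoint equals $\Xi^n[\nabla w](x)=\Xi^{\circ,n}[\nabla\bar w](x)$. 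It then remains to evaluate the right-hand side of \eqref{eq:commut-Xin-1} at $x$, using $\nabla w=\nabla F^n[\bar q]\stackrel{\eqref{eq:link-E0E}}{=}E^n[\nabla\bar q]$ and $\nabla Dw=D\,E^n[\nabla\bar q]=\sum_{s=0}^{n-1}D\big(\nabla\varphi^{s+1}_{j_1\ldots j_{s+1}}+\varphi^s_{j_1\ldots j_s}\ee_{j_{s+1}}\big)\,\nabla^{s+1}_{j_1\ldots j_{s+1}}\bar q$. Expanding each multiple derivative $\nabla^k$ of a product by the general Leibniz rule, a derivative landing on a polynomial factor $\nabla^\bullet_\bullet\bar q$ turns, at $x$, into the corresponding derivative of $\bar w$ and vanishes once its total order exceeds $n$; rewriting the outcome in terms of the genuine $\bar w$ --- whose full Leibniz expansion would retain all such terms --- and restoring the missing high-order ones with the opposite sign (the very reorganization carried out in the proof of Lemma~\ref{lem:expl-form-Xi0}) produces \eqref{eq:commut-Xin-2}, the binomial weights $\binom{k}{l}$ and the index ranges $\sum_{s=n-k}^{n-1}\sum_{l=0}^{k+s-n}$ and $\sum_{s=0}^{n-1}\sum_{l=s+1}^{n}$ arising exactly this way.

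Finally, \eqref{eq:commut-Xin-3} is just \eqref{eq:commut-Xin-1} with $w=u$ --- legitimate since $\nabla\cdot\Aa\nabla u=-\nabla\cdot f$ is deterministic, whence $-\nabla\cdot\Aa\nabla Du=\nabla\cdot(D\Aa\,\nabla u)$ --- minus \eqref{eq:commut-Xin-2} with $\bar w=\bar u^n$: I would split $\nabla u=(\nabla u-E^n[\nabla\bar u^n])+E^n[\nabla\bar u^n]$ inside the $D\Aa$-term and $\nabla Du=D(\nabla u-E^n[\nabla\bar u^n])+D\,E^n[\nabla\bar u^n]$ inside the remaining term, with $D\,E^n[\nabla\bar u^n]=\sum_{s=0}^{n-1}D\big(\nabla\varphi^{s+1}_{j_1\ldots j_{s+1}}+\varphi^s_{j_1\ldots j_s}\ee_{j_{s+1}}\big)\,\nabla^{s+1}_{j_1\ldots j_{s+1}}\bar u^n$; the $E^n[\nabla\bar u^n]$-pieces partly cancel the matching terms of \eqref{eq:commut-Xin-2} (those carrying $l\ge s+1$) and partly survive (those with $l\le s$), which is precisely the two-scale-error structure in \eqref{eq:commut-Xin-3}. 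The conceptual content sits entirely in \eqref{eq:commut-Xin-1}; the step I expect to be the main obstacle is the combinatorial bookkeeping in the passage to \eqref{eq:commut-Xin-2} --- keeping exact track of which derivatives land on Taylor-polynomial factors and reconciling the resulting binomial coefficients and summation ranges with the conventions of Lemma~\ref{lem:expl-form-Xi0} --- while the only genuine technical point, the rigorous meaning of the Malliavin derivatives and of the spatial integrations by parts, is dispatched by testing against smooth compactly supported fields and invoking Lemmas~\ref{lem:key-estimates-decomp} and~\ref{lem:key-estimates-2nd}.
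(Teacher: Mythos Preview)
Your proposal is correct, and your route to \eqref{eq:commut-Xin-1} is genuinely different from the paper's. You observe that $D\Xi^n[\nabla w]=D\Aa\,\nabla w+\Xi^n[\nabla Dw]$ and that $Dw$ solves the primal equation with right-hand side $D\Aa\,\nabla w$, so the already-established Hill--Mandel identity \eqref{eq:rewrite-Xieps-en} can be invoked as a black box. The paper instead reproves the induction underlying \eqref{eq:rewrite-Xieps-en} from scratch (its Step~1, identity~\eqref{eq:pre-dec-commut}), which duplicates the algebra of Lemma~\ref{lem:higher-Hill}; your shortcut is cleaner and makes the parallel between the two results explicit. For \eqref{eq:commut-Xin-2} and \eqref{eq:commut-Xin-3} the two approaches are closer in spirit: both exploit polynomial test functions and the Leibniz rule, but the paper proceeds by an induction on a parameter $m$ that yields the auxiliary identity~\eqref{eq:dec-commut+} for $\Xi^n[\nabla w-E^m[\nabla\bar w]]$, whereas you work directly with the single Taylor polynomial $T^n_x\bar w$ and the truncation mechanism of Lemma~\ref{lem:expl-form-Xi0}. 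The paper's detour through \eqref{eq:dec-commut+} buys a slightly more general intermediate formula (arbitrary $m$), but for the stated proposition your direct argument is shorter and equally rigorous; the combinatorial bookkeeping you flag as the main obstacle is indeed the only place where care is needed, and your description of how the index ranges and binomial weights arise is accurate.
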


In view of the approximate normality result of Theorem~\ref{th:main}(iv),
the second-order Poincaré inequality of Proposition~\ref{prop:Mall}(iv) requires to further estimate the second Malliavin derivative of higher-order standard commutators, and we establish the following formula.
Although some of the terms are proportional to $D^2\Aa$, hence display an exact locality, some others unavoidably take the form $D\Aa D\nabla\varphi^1$, hence are only approximately local. This is due to the nonlinear dependence of the solution operator on the coefficient field, and is the reason why some logarithms are lost in the convergence rate to normality in Theorem~\ref{th:main}(iv), cf.~Section~\ref{sec:normal}.

\begin{prop}[Formulas for second Malliavin derivatives]\label{prop:2nd-variation}
Let $\e=1$ and drop it from all subscripts in the notation.
For all $1\le n\le\ell$, there holds for any smooth deterministic function $\bar w$,
\begin{align*}
&\ee_j\cdot D^2 \Xi^{\circ,n}[\nabla\bar w]
\,=\,
\sum_{k=0}^{n}(-1)^{k}\nabla^k_{i_1\ldots i_k}\Big(\big(\mathds1_{k\ne n}\nabla\varphi_{ji_1\ldots i_k}^{*,k+1}+\varphi_{ji_1\ldots i_{k-1}}^{*,k}\ee_{i_k}\big)\cdot D^2\Aa\, E^{n}[\nabla\bar w]\Big)\nonumber\\
&\hspace{0.5cm}-\sum_{k=0}^{n}(-1)^k\sum_{s=n-k}^{n-1}~\sum_{l=0}^{k+s-n}\binom{k}{l}\big(\nabla^{k-l}_{i_{l+1}\ldots i_k}\nabla^{s+1}_{j_1\ldots j_{s+1}}\bar w\big)~\nabla_{i_1\ldots i_l}^l\nonumber\\
&\hspace{0.5cm}\quad\times\Big(\sym_{i_1\ldots i_k}\big(\mathds1_{k\ne n}\nabla\varphi_{ji_1\ldots i_k}^{*,k+1}+\varphi_{ji_1\ldots i_{k-1}}^{*,k}\ee_{i_k}\big)\cdot D^2\Aa\big(\nabla\varphi^{s+1}_{j_1\ldots j_{s+1}}+\varphi^{s}_{j_1\ldots j_{s}}\ee_{j_{s+1}}\big)\Big)\nonumber\\
&\hspace{0.5cm}+2\sum_{k=0}^{n}(-1)^{k}\nabla^k_{i_1\ldots i_k}\Big(\big(\mathds1_{k\ne n}\nabla\varphi_{ji_1\ldots i_k}^{*,k+1}+\varphi_{ji_1\ldots i_{k-1}}^{*,k}\ee_{i_k}\big)\cdot D\Aa\, D E^{n}[\nabla\bar w]\Big)\nonumber\\
&\hspace{0.5cm}-2\sum_{k=0}^{n}(-1)^k\sum_{s=n-k}^{n-1}~\sum_{l=0}^{k+s-n}\binom{k}{l}\big(\nabla^{k-l}_{i_{l+1}\ldots i_k}\nabla^{s+1}_{j_1\ldots j_{s+1}}\bar w\big)~\nabla_{i_1\ldots i_l}^l\nonumber\\
&\hspace{0.5cm}\quad\times\Big(\sym_{i_1\ldots i_k}\big(\mathds1_{k\ne n}\nabla\varphi_{ji_1\ldots i_k}^{*,k+1}+\varphi_{ji_1\ldots i_{k-1}}^{*,k}\ee_{i_k}\big)\cdot D\Aa\,D\big(\nabla\varphi^{s+1}_{j_1\ldots j_{s+1}}+\varphi^{s}_{j_1\ldots j_{s}}\ee_{j_{s+1}}\big)\Big)\nonumber\\
&\hspace{0.5cm}+(-1)^{n}\sum_{s=0}^{n-1}~\sum_{l=s+1}^{n}\binom{n}{l}\big(\nabla^{n-l}_{i_{l+1}\ldots i_{n}}\nabla^{s+1}_{j_1\ldots j_{s+1}}\bar w\big)~\nabla_{i_1\ldots i_l}^l\nonumber\\
&\hspace{0.5cm}\quad\times\Big(\sym_{i_1\ldots i_{n}}\big((\Aa^*\varphi_{ji_1\ldots i_{n-1}}^{*,n}-\sigma_{ji_1\ldots i_{n-1}}^{*,n})\ee_{i_{n}}\big)\cdot D^2\big(\nabla\varphi^{s+1}_{j_1\ldots j_{s+1}}+\varphi^{s}_{j_1\ldots j_{s}}\ee_{j_{s+1}}\big)\Big).
\qedhere
\end{align*}
\end{prop}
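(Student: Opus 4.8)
The plan is to obtain this formula by applying the Malliavin derivative $D$ once more to the first-order representation~\eqref{eq:commut-Xin-2} of Proposition~\ref{prop:1st-variation}, which amounts to re-running that derivation one differentiation order higher. As for Proposition~\ref{prop:1st-variation}, all identities are understood in the weak sense (tested against a smooth compactly supported function), which legitimizes the integrations by parts; the averaged quantities at play are indeed Malliavin-smooth in view of the estimates recorded in Lemmas~\ref{lem:key-estimates-decomp} and~\ref{lem:key-estimates-2nd}.

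First I would assemble the two elementary inputs. On the one hand, by~\eqref{eq:def-A} the coefficient field is an \emph{exactly} local function of $G$, namely $D_z\Aa=a_0'(G(z))\,\delta(\cdot-z)$, hence $D_{z'}D_z\Aa=a_0''(G(z))\,\delta(\cdot-z)\,\delta(z'-z)$; this produces the first two groups of the claimed formula, which are nothing but the corresponding terms of~\eqref{eq:commut-Xin-2} with $D\Aa$ replaced by $D^2\Aa$. On the other hand, differentiating the corrector, flux, and flux-corrector relations of Definition~\ref{def:cor} shows that $D\varphi^k,D\sigma^k$ and $D^2\varphi^k,D^2\sigma^k$ solve the same elliptic and Poisson equations, now with right-hand sides built from $D\Aa$ (resp.\ from $D^2\Aa$ and from a product $D\Aa\,D\nabla\varphi^{\bullet}$) and from lower-order objects --- this is exactly what is recorded in the auxiliary lemmas of this section, and it is the source of the ``only approximately local'' terms flagged before the statement. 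With these at hand, I would differentiate each term of~\eqref{eq:commut-Xin-2} by the Leibniz rule. The only genuinely new combinatorial feature relative to the first-order formula is the second-order product rule $D^2(XY)=(D^2X)\,Y+(DX)\otimes(DY)+(DY)\otimes(DX)+X\,(D^2Y)$: upon symmetrizing the two $\Hf$-slots (legitimate since $D^2$ is symmetric), the two cross terms coalesce, which accounts for the factor $2$ in front of the $D\Aa\,DE^{n}[\nabla\bar w]$ and $D\Aa\,D(\nabla\varphi^{s+1}_{j_1\ldots j_{s+1}}+\varphi^{s}_{j_1\ldots j_{s}}\ee_{j_{s+1}})$ groups, while the term $\Aa^*\cdot D^2(\nabla\varphi^{s+1}_{j_1\ldots j_{s+1}}+\varphi^{s}_{j_1\ldots j_{s}}\ee_{j_{s+1}})$ is the source of the last group.

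The heart of the argument --- and the main obstacle --- is that this naive expansion also generates many ``off-diagonal'' terms that do not appear in the statement: terms in which an \emph{outer} factor $\nabla\varphi^{*,k+1}_{j\ldots}$, $\varphi^{*,k}_{j\ldots}$, $\bar\Aa^{k}$ or $\sigma^{*,n}_{j\ldots}$ is itself hit by a Malliavin derivative. These must be shown to recombine or to cancel, and this is handled exactly as in the proofs of Lemma~\ref{lem:sym-baran} and of~\eqref{eq:commut-Xin-1}--\eqref{eq:commut-Xin-2}: one repeatedly migrates the inverse operators $(-\nabla\cdot\Aa\nabla)^{-1}\nabla\cdot$ defining the successive correctors onto the neighbouring factor, the head factor $\Aa^*\ee_j$ migrating through $n$ layers to produce precisely $\big(\Aa^*\varphi^{*,n}_{ji_1\ldots i_{n-1}}-\sigma^{*,n}_{ji_1\ldots i_{n-1}}\big)\ee_{i_n}$; along the way the flux-corrector contributions are skew-symmetric in the contracted indices and vanish under the symmetrization $\sym_{i_1\ldots i_k}$, while the homogenized-coefficient contributions are reabsorbed via Lemma~\ref{lem:sym-baran}. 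Tracking the binomial coefficients and the index relabellings through these migrations --- the tedious bookkeeping --- then returns exactly the five groups of terms in the statement.
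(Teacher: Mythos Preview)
Your parenthetical --- ``which amounts to re-running that derivation one differentiation order higher'' --- is exactly the paper's proof, and it is the cleaner of the two routes you sketch. The paper's argument is one sentence: repeat the induction of Step~1 in the proof of Proposition~\ref{prop:1st-variation} with $D$ replaced by $D^2$, expanding $D^2(\Aa\nabla w)=\Aa\nabla D^2w+2D\Aa\nabla Dw+D^2\Aa\,\nabla w$ at the single place where the relation $D(\nabla\cdot\Aa\nabla w)=0$ was used (namely in establishing~\eqref{eq:pre-dec-commut-0}). The crucial point is that in that induction the dual objects $\varphi^{*,k}$, $\sigma^{*,n}$, $\bar\Aa^{*,k}$ enter through the \emph{algebraic} relation $\nabla\cdot\sigma^{*,n+1}=q^{*,n+1}$ of Definition~\ref{def:cor} and are never touched by any Malliavin derivative; replacing $D$ by $D^2$ therefore leaves them intact, no ``off-diagonal'' terms are ever produced, and the factor~$2$ falls out directly from the middle term of the product rule above. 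Step~2 of that proof (the substitution $w=F^{m+1}[\bar q]$ and the bookkeeping leading to~\eqref{eq:commut-Xin-2}) then carries over verbatim.

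By contrast, your primary route --- applying $D$ to the finished identity~\eqref{eq:commut-Xin-2} --- does generate $D\varphi^{*,k}$ and $D\sigma^{*,n}$ contributions, and these do \emph{not} simply cancel: a bare Leibniz expansion of~\eqref{eq:commut-Xin-2} produces only \emph{half} of the mixed groups (for instance $D_z\Aa\,D_{z'}E^n[\nabla\bar w]$ but not its $z\leftrightarrow z'$ partner), and it is precisely the off-diagonal terms that must be reorganized, via the dual corrector hierarchy, to supply the missing half. This can certainly be carried out --- the symmetry of $D^2$ in its two $\Hf$-slots guarantees it \emph{a posteriori} --- but it is a genuine detour that the paper's route avoids entirely.
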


\subsection{Proof of Proposition~\ref{prop:1st-variation}}
We split the proof into three steps.

\medskip
\step1 Proof that for any random function $w$ with $\nabla\cdot\Aa\nabla w$ deterministic there holds for all $0\le n\le\ell$,
\begin{multline}\label{eq:pre-dec-commut}
\ee_j\cdot D\Xi^{n}[\nabla w]\,=\,
\sum_{k=0}^{n}(-1)^{k}\nabla^k_{i_1\ldots i_k}\Big(\big(\mathds1_{k\ne n}\nabla\varphi_{ji_1\ldots i_k}^{*,k+1}+\varphi_{ji_1\ldots i_{k-1}}^{*,k}\ee_{i_k}\big)\cdot D\Aa\nabla w\Big)\\
+(-1)^{n}\nabla^{n}_{i_1\ldots i_n}\Big(\big(\Aa^*\varphi_{ji_1\ldots i_{n-1}}^{*,n}-\sigma_{ji_1\ldots i_{n-1}}^{*,n}\big)\ee_{i_{n}}\cdot\nabla D w\Big).
\end{multline}
We argue by induction.
The result is obvious for $n=0$ (recall~$\Xi^0[\nabla w]=\Aa\nabla w$, cf.~Definition~\ref{def:Xin}). Assume that it holds for some $n\ge0$.
In order to deduce it at level $n+1$, we appeal to the alternative definition~\eqref{eq:redef-Xi} of the commutator, so that it suffices to prove
\begin{multline}\label{eq:pre-dec-commut-0}
\nabla^{n}_{i_1\ldots i_n}\big((\Aa^*\varphi_{ji_1\ldots i_{n-1}}^{*,n}-\sigma_{ji_1\ldots i_{n-1}}^{*,n})\ee_{i_{n}}\cdot\nabla D w\big)\\
=-\nabla^{n+1}_{i_1\ldots i_nl}\big((\Aa^*\varphi_{ji_1\ldots i_n}^{*,n+1}-\sigma^{*,n+1}_{ji_1\ldots i_n})\ee_l\cdot\nabla D w\big)-\nabla^{n+1}_{i_1\ldots i_nl}\big(\varphi_{ji_1\ldots i_n}^{*,n+1}\ee_l\cdot D\Aa\nabla w\big)\\
+\nabla^{n}_{i_1\ldots i_n}\big(\nabla\varphi_{ji_1\ldots i_n}^{*,n+1}\cdot D\Aa\nabla w\big)+\nabla^{n}_{i_1\ldots i_n}\big(\bar\Aa_{ji_1\ldots i_{n-1}}^{*,n+1}\ee_{i_n}\cdot\nabla D w\big).
\end{multline}
The definition of $\sigma^{*,n+1}$ (cf.~Definition~\ref{def:cor}) yields
\begin{multline*}
\nabla^{n}_{i_1\ldots i_n}\big((\Aa^*\varphi_{ji_1\ldots i_{n-1}}^{*,n}-\sigma_{ji_1\ldots i_{n-1}}^{*,n})\ee_{i_{n}}\cdot\nabla D w\big)=\nabla^{n}_{i_1\ldots i_n}\big((\nabla\cdot\sigma^{*,n+1}_{ji_1\ldots i_n})\cdot\nabla D w\big)\\
-\nabla^{n}_{i_1\ldots i_n}\big(\nabla\varphi_{ji_1\ldots i_n}^{*,n+1}\cdot\Aa\nabla D w\big)+\nabla^{n}_{i_1\ldots i_n}\big(\bar\Aa_{ji_1\ldots i_{n-1}}^{*,n+1}\ee_{i_n}\cdot\nabla D w\big).
\end{multline*}
Then using the skew-symmetry of $\sigma^{*,n+1}_{ji_1\ldots i_n}$ and the equation $D(\nabla\cdot\Aa\nabla w)=0$ in the form
\begin{eqnarray*}
\nabla\varphi_{ji_1\ldots i_n}^{*,n+1}\cdot\Aa\nabla D w&=&\nabla\cdot\big(\varphi_{ji_1\ldots i_n}^{*,n+1}\Aa\nabla D w\big)+\varphi_{ji_1\ldots i_n}^{*,n+1}\nabla\cdot D\Aa\nabla w\\
&=&\nabla\cdot\big(\varphi_{ji_1\ldots i_n}^{*,n+1}\Aa\nabla D w\big)+\nabla\cdot\big(\varphi_{ji_1\ldots i_n}^{*,n+1}D\Aa\nabla w\big)-\nabla\varphi_{ji_1\ldots i_n}^{*,n+1}\cdot D\Aa\nabla w,
\end{eqnarray*}
the claim~\eqref{eq:pre-dec-commut-0} follows.

\medskip

\step2 Proof that for any random function $w$ with $\nabla\cdot\Aa\nabla w$ deterministic and for any smooth deterministic function $\bar w$ there holds for all $0\le n,m\le\ell$,
\begin{eqnarray}\label{eq:dec-commut+}
\lefteqn{\ee_j\cdot D\Xi^{n}\big[\nabla w-E^m[\nabla\bar w]\big]}\nonumber\\
&=&\sum_{k=0}^{n}(-1)^k\nabla^k_{i_1\ldots i_k}\Big(\big(\mathds1_{k\ne n}\nabla\varphi_{ji_1\ldots i_k}^{*,k+1}+\varphi_{ji_1\ldots i_{k-1}}^{*,k}\ee_{i_k}\big)\cdot D\Aa\big(\nabla w-E^m[\nabla\bar w]\big)\Big)\nonumber\\
&&+\sum_{k=0}^{n}(-1)^k\sum_{s=1}^m~\sum_{l=0}^{k+s-m-1}\binom{k}{l}\big(\nabla^{k-l}_{i_{l+1}\ldots i_k}\nabla^s_{j_1\ldots j_s}\bar w\big)~\nabla_{i_1\ldots i_l}^l\nonumber\\
&&\quad\times\Big(\sym_{i_1\ldots i_k}\big(\mathds1_{k\ne n}\nabla\varphi_{ji_1\ldots i_k}^{*,k+1}+\varphi_{ji_1\ldots i_{k-1}}^{*,k}\ee_{i_k}\big)\cdot D\Aa\big(\nabla\varphi^s_{j_1\ldots j_s}+\varphi^{s-1}_{j_1\ldots j_{s-1}}\ee_{j_s}\big)\Big)\nonumber\\
&&+(-1)^{n}\nabla^{n}_{i_1\ldots i_n}\Big(\big(\Aa^*\varphi_{ji_1\ldots i_{n-1}}^{*,n}-\sigma_{ji_1\ldots i_{n-1}}^{*,n}\big)\ee_{i_{n}}\cdot D\big(\nabla w-E^m[\nabla\bar w]\big)\Big)\nonumber\\
&&+(-1)^n\sum_{s=1}^m~\sum_{l=0}^{n+s-m-1}\binom{n}{l}\big(\nabla^{n-l}_{i_{l+1}\ldots i_n}\nabla^s_{j_1\ldots j_s}\bar w\big)~\nabla_{i_1\ldots i_l}^l\nonumber\\
&&\quad\times\Big(\sym_{i_1\ldots i_n}\big((\Aa^*\varphi_{ji_1\ldots i_{n-1}}^{*,n}-\sigma_{ji_1\ldots i_{n-1}}^{*,n})\ee_{i_{n}}\big)\cdot D\big(\nabla\varphi^s_{j_1\ldots j_s}+\varphi^{s-1}_{j_1\ldots j_{s-1}}\ee_{j_s}\big)\Big)\nonumber\\
&&+\sum_{k=1}^{n-1}(-1)^k\sum_{s=1}^m~\sum_{l=0}^{k+s-m-1}\binom{k}{l}\big(\nabla_{i_{l+1}\ldots i_k}^{k-l}\nabla^s_{j_1\ldots j_s}\bar w\big)~\nabla^l_{i_1\ldots i_l}\nonumber\\
&&\quad\times\Big(\sym_{i_1\ldots i_k}\big(\bar\Aa^{*,k+1}_{ji_1\ldots i_{k-1}}\ee_{i_k}\big)\cdot D\big(\nabla\varphi_{j_1\ldots j_s}^s+\varphi_{j_1\ldots j_{s-1}}^{s-1}\ee_{j_s}\big)\Big),
\end{eqnarray}
where we recall that by a slight abuse of notation we similarly define $\Xi^n[H]$ as in Definition~\ref{def:Xin} even if $H$ is not a gradient field.

\medskip\noindent
Let $n\ge0$ be fixed. We argue by induction on $m$. For $m=0$, the result~\eqref{eq:dec-commut+} coincides with~\eqref{eq:pre-dec-commut} (recall~$E^0[\nabla\bar w]=0$, cf.~Definition~\ref{def:EnFn}). Now assume that~\eqref{eq:dec-commut+} holds for some $m\ge0$ and let us deduce it at the level~$m+1$.
Given a polynomial $\bar q$ of order $m+1$, noting that $\nabla^{m+2}\bar q$ vanishes, identity~\eqref{eq:pre-dec-homog} ensures that $\nabla\cdot\Aa\nabla F^{m+1}[\bar q]$ is deterministic. We then apply~\eqref{eq:dec-commut+} with $w=F^{m+1}[\bar q]$ and $\bar w=\bar q$.
Abundantly using that $\nabla^{m+1}\bar q$ is constant and noting that identity~\eqref{eq:link-E0E} together with the definition of $E^m$ yields
\begin{eqnarray*}
\nabla F^{m+1}[\bar q]-E^{m}[\nabla\bar q]&=&E^{m+1}[\nabla\bar q]-E^m[\nabla\bar q]\\
&=&\big(\nabla\varphi_{j_1\ldots j_{m+1}}^{m+1}+\varphi_{j_1\ldots j_m}^m\ee_{j_{m+1}}\big)\nabla^{m+1}_{j_1\ldots j_{m+1}}\bar q,
\end{eqnarray*}
this leads to
\begin{eqnarray*}
\lefteqn{\ee_j\cdot D\Xi^{n}\big[\nabla\varphi_{j_1\ldots j_{m+1}}^{m+1}+\varphi_{j_1\ldots j_m}^m\ee_{j_{m+1}}\big]\nabla_{j_1\ldots j_{m+1}}^{m+1}\bar q}\\
&=&\sum_{k=0}^{n}(-1)^k\big(\nabla_{j_1\ldots j_{m+1}}^{m+1}\bar q\big)~\nabla^k_{i_1\ldots i_k}\\
&&\quad\times\Big(\big(\mathds1_{k\ne n}\nabla\varphi_{ji_1\ldots i_k}^{*,k+1}+\varphi_{ji_1\ldots i_{k-1}}^{*,k}\ee_{i_k}\big)\cdot D\Aa\big(\nabla\varphi_{j_1\ldots j_{m+1}}^{m+1}+\varphi_{j_1\ldots j_m}^m\ee_{j_{m+1}}\big)\Big)\\
&&\hspace{-0.6cm}+\sum_{k=0}^{n}(-1)^k\sum_{s=1}^m\binom{k}{k+s-m-1}\big(\nabla^{m+1-s}_{i_{k+s-m}\ldots i_k}\nabla^s_{j_1\ldots j_s}\bar q\big)~\nabla_{i_1\ldots i_{k+s-m-1}}^{k+s-m-1}\\
&&\quad\times\Big(\sym_{i_1\ldots i_k}\big(\mathds1_{k\ne n}\nabla\varphi_{ji_1\ldots i_k}^{*,k+1}+\varphi_{ji_1\ldots i_{k-1}}^{*,k}\ee_{i_k}\big)\cdot D\Aa\big(\nabla\varphi^s_{j_1\ldots j_s}+\varphi^{s-1}_{j_1\ldots j_{s-1}}\ee_{j_s}\big)\Big)\\
&&\hspace{-0.6cm}+(-1)^{n}\big(\nabla^{m+1}_{j_1\ldots j_{m+1}}\bar q\big)~\nabla^{n}_{i_1\ldots i_n}\\
&&\quad\times\Big(\big(\Aa^*\varphi_{ji_1\ldots i_{n-1}}^{*,n}-\sigma_{ji_1\ldots i_{n-1}}^{*,n}\big)\ee_{i_{n}}\cdot D\big(\nabla\varphi_{j_1\ldots j_{m+1}}^{m+1}+\varphi_{j_1\ldots j_m}^m\ee_{j_{m+1}}\big)\Big)\\
&&\hspace{-0.6cm}+(-1)^n\sum_{s=1}^m\binom{n}{n+s-m-1}\big(\nabla^{m+1-s}_{i_{n+s-m}\ldots i_n}\nabla^s_{j_1\ldots j_s}\bar q\big)~\nabla_{i_1\ldots i_{n+s-m-1}}^{n+s-m-1}\\
&&\quad\times\Big(\sym_{i_1\ldots i_n}\big(\Aa^*\varphi_{ji_1\ldots i_{n-1}}^{*,n}-\sigma_{ji_1\ldots i_{n-1}}^{*,n}\big)\ee_{i_{n}}\cdot D\big(\nabla\varphi^s_{j_1\ldots j_s}+\varphi^{s-1}_{j_1\ldots j_{s-1}}\ee_{j_s}\big)\Big)\\
&&\hspace{-0.6cm}+ \sum_{k=1}^{n-1}(-1)^k\sum_{s=1}^m\binom{k}{k+s-m-1}\big(\nabla_{i_{k+s-m}\ldots i_k}^{m+1-s}\nabla^s_{j_1\ldots j_s}\bar q\big)~\nabla^{k+s-m-1}_{i_1\ldots i_{k+s-m-1}}\\
&&\quad\times\Big(\sym_{i_1\ldots i_k}\big(\bar\Aa^{*,k+1}_{ji_1\ldots i_{k-1}}\ee_{i_k}\big)\cdot D\big(\nabla\varphi_{j_1\ldots j_s}^s+\varphi_{j_1\ldots j_{s-1}}^{s-1}\ee_{j_s}\big)\Big).
\end{eqnarray*}
Note that both sides of this identity depend linearly on the (constant and deterministic) symmetric tensor $\nabla^{m+1}\bar q$. Hence, we may replace it by the (deterministic) symmetric tensor $\nabla^{m+1}\bar w$, to the effect of
\begin{eqnarray}\label{eq:dec-commut-ord-m-seul}
\lefteqn{\ee_j\cdot D\Xi^{n}\big[\nabla\varphi_{j_1\ldots j_{m+1}}^{m+1}+\varphi_{j_1\ldots j_m}^m\ee_{j_{m+1}}\big]\nabla_{j_1\ldots j_{m+1}}^{m+1}\bar w}\nonumber\\
&=&\sum_{k=0}^{n}(-1)^k\big(\nabla_{j_1\ldots j_{m+1}}^{m+1}\bar w\big)~\nabla^k_{i_1\ldots i_k}\nonumber\\
&&\quad\times\Big(\big(\mathds1_{k\ne n}\nabla\varphi_{ji_1\ldots i_k}^{*,k+1}+\varphi_{ji_1\ldots i_{k-1}}^{*,k}\ee_{i_k}\big)\cdot D\Aa\big(\nabla\varphi_{j_1\ldots j_{m+1}}^{m+1}+\varphi_{j_1\ldots j_m}^m\ee_{j_{m+1}}\big)\Big)\nonumber\\
&&\hspace{-0.6cm}+\sum_{k=0}^{n}(-1)^k\sum_{s=1}^m\binom{k}{k+s-m-1}\big(\nabla^{m+1-s}_{i_{k+s-m}\ldots i_k}\nabla^s_{j_1\ldots j_s}\bar w\big)~\nabla_{i_1\ldots i_{k+s-m-1}}^{k+s-m-1}\nonumber\\
&&\quad\times\Big(\sym_{i_1\ldots i_k}\big(\mathds1_{k\ne n}\nabla\varphi_{ji_1\ldots i_k}^{*,k+1}+\varphi_{ji_1\ldots i_{k-1}}^{*,k}\ee_{i_k}\big)\cdot D\Aa\big(\nabla\varphi^s_{j_1\ldots j_s}+\varphi^{s-1}_{j_1\ldots j_{s-1}}\ee_{j_s}\big)\Big)\nonumber\\
&&\hspace{-0.6cm}+(-1)^{n}\big(\nabla^{m+1}_{j_1\ldots j_{m+1}}\bar w\big)~\nabla^{n}_{i_1\ldots i_n}\nonumber\\
&&\quad\times\Big(\big(\Aa^*\varphi_{ji_1\ldots i_{n-1}}^{*,n}-\sigma_{ji_1\ldots i_{n-1}}^{*,n}\big)\ee_{i_{n}}\cdot D\big(\nabla\varphi_{j_1\ldots j_{m+1}}^{m+1}+\varphi_{j_1\ldots j_m}^m\ee_{j_{m+1}}\big)\Big)\nonumber\\
&&\hspace{-0.6cm}+(-1)^n\sum_{s=1}^m\binom{n}{n+s-m-1}\big(\nabla^{m+1-s}_{i_{n+s-m}\ldots i_n}\nabla^s_{j_1\ldots j_s}\bar w\big)~\nabla_{i_1\ldots i_{n+s-m-1}}^{n+s-m-1}\nonumber\\
&&\quad\times\Big(\sym_{i_1\ldots i_n}\big((\Aa^*\varphi_{ji_1\ldots i_{n-1}}^{*,n}-\sigma_{ji_1\ldots i_{n-1}}^{*,n})\ee_{i_{n}}\big)\cdot D\big(\nabla\varphi^s_{j_1\ldots j_s}+\varphi^{s-1}_{j_1\ldots j_{s-1}}\ee_{j_s}\big)\Big)\nonumber\\
&&\hspace{-0.6cm}+\sum_{k=1}^{n-1}(-1)^k\sum_{s=1}^m\binom{k}{k+s-m-1}\big(\nabla_{i_{k+s-m}\ldots i_k}^{m+1-s}\nabla^s_{j_1\ldots j_s}\bar w\big)~\nabla^{k+s-m-1}_{i_1\ldots i_{k+s-m-1}}\nonumber\\
&&\quad\times\Big(\sym_{i_1\ldots i_k}\big(\bar\Aa^{*,k+1}_{ji_1\ldots i_{k-1}}\ee_{i_k}\big)\cdot D\big(\nabla\varphi_{j_1\ldots j_s}^s+\varphi_{j_1\ldots j_{s-1}}^{s-1}\ee_{j_s}\big)\Big).
\end{eqnarray}
By definition of $E^m$ and $\Xi^{n}$ (cf.~Definitions~\ref{def:EnFn} and~\ref{def:Xin}) together with Lemma~\ref{lem:sym-baran}, using the general Leibniz rule, we find
\begin{eqnarray*}
\lefteqn{\ee_j\cdot D\Xi^{n}\big[\nabla w-E^{m+1}[\nabla\bar w]\big]}\\
&=&\ee_j\cdot D\Xi^{n}\big[\nabla w-E^m[\nabla\bar w]\big]-\ee_j\cdot D\Xi^{n}\Big[\big(\nabla\varphi_{j_1\ldots j_{m+1}}^{m+1}+\varphi_{j_1\ldots j_m}^m\ee_{j_{m+1}}\big)\nabla_{j_1\ldots j_{m+1}}^{m+1}\bar w\Big]\\
&=&\ee_j\cdot D\Xi^{n}\big[\nabla w-E^m[\nabla\bar w]\big]-\ee_j\cdot D\Xi^{n}\big[\nabla\varphi_{j_1\ldots j_{m+1}}^{m+1}+\varphi_{j_1\ldots j_m}^m\ee_{j_{m+1}}\big]\nabla_{j_1\ldots j_{m+1}}^{m+1}\bar w\\
&&+\sum_{k=1}^{n-1}(-1)^{k}\sum_{l=0}^{k-1}\binom{k}{l}\big(\nabla_{i_{l+1}\ldots i_k}^{k-l}\nabla_{j_1\ldots j_{m+1}}^{m+1}\bar w\big)~\nabla^l_{i_1\ldots i_l}\\
&&\hspace{2cm}\times\Big(\sym_{i_1\ldots i_k}\big(\bar\Aa_{ji_1\ldots i_{k-1}}^{*,k+1}\ee_{i_k}\big)\cdot D\big(\nabla\varphi_{j_1\ldots j_{m+1}}^{m+1}+\varphi_{j_1\ldots j_m}^m\ee_{j_{m+1}}\big)\Big).
\end{eqnarray*}
Injecting identities~\eqref{eq:dec-commut+} and~\eqref{eq:dec-commut-ord-m-seul} into this equality, using again the definition of $E^m$ (cf.~Definition~\ref{def:EnFn}) and the general Leibniz rule in the forms
\begin{eqnarray*}
\lefteqn{\sum_{k=0}^{n}(-1)^k\nabla^k_{i_1\ldots i_k}\Big(\big(\mathds1_{k\ne n}\nabla\varphi_{ji_1\ldots i_k}^{*,k+1}+\varphi_{ji_1\ldots i_{k-1}}^{*,k}\ee_{i_k}\big)\cdot D\Aa\big(\nabla w-E^m[\nabla\bar w]\big)\Big)}\\
&=&\sum_{k=0}^{n}(-1)^k\nabla^k_{i_1\ldots i_k}\Big(\big(\mathds1_{k\ne n}\nabla\varphi_{ji_1\ldots i_k}^{*,k+1}+\varphi_{ji_1\ldots i_{k-1}}^{*,k}\ee_{i_k}\big)\cdot D\Aa\big(\nabla w-E^{m+1}[\nabla\bar w]\big)\Big)\\
&&+\sum_{k=0}^{n}(-1)^k\sum_{l=0}^k\binom{k}{l}\big(\nabla_{i_{l+1}\ldots i_k}^{k-l}\nabla^{m+1}_{j_1\ldots j_{m+1}}\bar w\big)~\nabla^l_{i_1\ldots i_l}\\
&&\qquad\times\Big(\sym_{i_1\ldots i_k}\big(\mathds1_{k\ne n}\nabla\varphi_{ji_1\ldots i_k}^{*,k+1}+\varphi_{ji_1\ldots i_{k-1}}^{*,k}\ee_{i_k}\big)\cdot D\Aa\big(\nabla\varphi_{j_1\ldots j_{m+1}}^{m+1}+\varphi_{j_1\ldots j_m}^{m}\ee_{j_{m+1}}\big)\Big),
\end{eqnarray*}
and
\begin{eqnarray}\label{eq:folike}
\lefteqn{(-1)^{n}\nabla^{n}_{i_1\ldots i_n}\Big(\big(\Aa^*\varphi_{ji_1\ldots i_{n-1}}^{*,n}-\sigma_{ji_1\ldots i_{n-1}}^{*,n}\big)\ee_{i_{n}}\cdot D\big(\nabla w-E^{m}[\nabla\bar w]\big)\Big)}\nonumber\\
&=&(-1)^{n}\nabla^{n}_{i_1\ldots i_n}\Big(\big(\Aa^*\varphi_{ji_1\ldots i_{n-1}}^{*,n}-\sigma_{ji_1\ldots i_{n-1}}^{*,n}\big)\ee_{i_{n}}\cdot D\big(\nabla w-E^{m+1}[\nabla\bar w]\big)\Big)\nonumber\\
&&+(-1)^{n}\sum_{l=0}^n\binom{n}l\big(\nabla_{i_{l+1}\ldots i_k}^{n-l}\nabla_{j_1\ldots j_{m+1}}^{m+1}\bar w\big)~\nabla^{l}_{i_1\ldots i_l}\\
&&\qquad\times\Big(\sym_{i_1\ldots i_n}\big((\Aa^*\varphi_{ji_1\ldots i_{n-1}}^{*,n}-\sigma_{ji_1\ldots i_{n-1}}^{*,n})\ee_{i_{n}}\big)\cdot D\big(\nabla\varphi_{j_1\ldots j_{m+1}}^{m+1}+\varphi_{j_1\ldots j_{m}}^{m}\ee_{j_{m+1}}\big)\Big),\nonumber
\end{eqnarray}
and recombining the terms, the result~\eqref{eq:dec-commut+} follows at level $m+1$.
For instance, the increment of the first triple sum over $(k,s,l)$ splits into three contributions,
{\small\begin{multline*}
\sum_{k=0}^n\sum_{s=1}^{m+1}\sum_{l=0}^{k+s-(m+1)-1}-\sum_{k=0}^n\sum_{s=1}^{m}\sum_{l=0}^{k+s-m-1}
\,=\,\sum_{k=0}^n\sum_{l=0}^{k}\mathds1_{s=m+1}-\sum_{k=0}^n\mathds1_{s=m+1\atop l=k}-\sum_{k=0}^n\sum_{s=1}^m\mathds1_{l=k+s-m-1}.
\end{multline*}}

\medskip
\step3 Conclusion.\\
Applying~\eqref{eq:pre-dec-commut} yields~\eqref{eq:commut-Xin-1}.
We now turn to~\eqref{eq:commut-Xin-3} and first note that by definition of $\Xi^{\circ,n}$ (cf.~Definition~\ref{def:Xin0}) and by~\eqref{eq:link-E0E} we have
\[\big(\Xi^n\big[E^n[\nabla\bar u^n]\big]-\Xi^{\circ,n}[\nabla\bar u^n]\big)(x)\,=\,\Xi^n\big[E^n[\nabla(\bar u^n-T^n_x\bar u^n)]\big](x),\]
so that by definition of $E^n$ (cf.~Definition~\ref{def:EnFn}) and by~\eqref{eq:redef-Xi} we obtain
\begin{multline*}
\ee_j\cdot\big(\Xi^n\big[E^n[\nabla\bar u^n]\big]-\Xi^{\circ,n}[\nabla\bar u^n]\big)\\
=\,-\sum_{k=1}^{n-1}(-1)^k\sum_{s=0}^{n-1}~\sum_{l=0}^{k+s-n}\binom{k}{l}\big(\nabla_{i_{l+1}\ldots i_k}^{k-l}\nabla^{s+1}_{j_1\ldots j_{s+1}}\bar u^{n}\big)~\nabla^l_{i_1\ldots i_l}\\
\times\Big(\sym_{i_1\ldots i_k}\big(\bar\Aa^{*,k+1}_{ji_1\ldots i_{k-1}}\ee_{i_k}\big)\cdot\big(\nabla\varphi_{j_1\ldots j_{s+1}}^{s+1}+\varphi_{j_1\ldots j_{s}}^{s}\ee_{j_{s+1}}\big)\Big).
\end{multline*}
We now apply $D$ to this identity, add~\eqref{eq:dec-commut+} with $m=n$, with $(w,\bar w)$ replaced by $(u,\bar u^n)$, and with $s$ replaced by $s+1$, and so obtain~\eqref{eq:commut-Xin-3}.
Finally, \eqref{eq:commut-Xin-2} follows by subtracting~\eqref{eq:commut-Xin-3} (with $(u,\bar u^n)=(w,\bar w)$) from~\eqref{eq:commut-Xin-1} and inserting the formula for $DE^n[\nabla\bar u^n]$ already used in~\eqref{eq:folike} for $m=n$.
\qed

\subsection{Proof of Proposition~\ref{prop:2nd-variation}}
The conclusion follows by repeating the proof of Proposition~\ref{prop:1st-variation}, taking advantage of the same algebraic identities, but now further keeping track of the mixed term in $D^2(\Aa\nabla w)=\Aa\nabla D^2w+2D\Aa\nabla D w+D^2\Aa\,\nabla w$.
\qed
\endgroup

\section{Annealed Calder\'on-Zygmund theory}\label{sec:Lpreg}

In this section, we establish the following new annealed Calder\'on-Zygmund estimate for linear elliptic equations in divergence form with random coefficients.
This constitutes a useful upgrade of the quenched large-scale Calder\'on-Zygmund estimates of~\cite{Armstrong-Daniel-16,AKM-book,GNO-reg}.
The merit of~\eqref{eq:CZ}--\eqref{eq:UTb} below is that the stochastic $\Ld^q$ norm is \emph{inside} the spatial $\Ld^p$ norm; while there is a tiny loss in stochastic integrability, the spatial integrability is as in the constant-coefficient case.

\begin{theor}\label{th:CZ-ann}
Consider the Helmholtz projection $T:=\nabla(\nabla\cdot\Aa\nabla)^{-1}\nabla\cdot\Aa$.
There exists a $\frac18$-Lipschitz stationary field $r_*\ge1$ on $\R^d$ such that for all $h\in C^\infty_c(\R^d;\Ld^\infty(\Omega))^d$ and $1<q\le p<\infty$ there holds
\begin{align}\label{eq:pre-res-CZann-st}
\Bigg(\int_{\R^d}\E\bigg[\Big(\fint_{B_*(x)}|Th|^2\Big)^\frac{q}2\bigg]^\frac{p}{q}dx\Bigg)^\frac1p\,\lesssim_{p,q}\,\Bigg(\int_{\R^d}\E\bigg[\Big(\fint_{B_*(x)}|h|^2\Big)^\frac{q}2\bigg]^\frac{p}{q}dx\Bigg)^\frac1p,
\end{align}
where $B_*(x):=B_{r_*(x)}(x)$. In addition, in the Gaussian setting with integrable correlations~\eqref{eq:cov-L1}, the stationary random field $r_*$ satisfies $\expec{\exp(\frac1Cr_*^d)}\le2$ for some $C\simeq1$.
In particular, for all $1<p,q<\infty$ and $0<\delta\le\frac12$,
\begin{equation}\label{eq:CZ}
\|[Th]_2\|_{\Ld^p(\R^d;\Ld^{q}(\Omega))}\,\lesssim_{p,q}\,
\delta^{-(\frac1{p\wedge q\wedge2}-\frac1{p\vee q\vee2})}\,|\!\log\delta|^{2|\frac1{q}-\frac1p|}\,
\|[h]_2\|_{\Ld^p(\R^d;\Ld^{q+\delta}(\Omega))}.\qedhere
\end{equation}
\end{theor}

We shall also make use of the following consequence of this annealed estimate for the corresponding Riesz potentials. As opposed to the above, this can alternatively be deduced from the annealed Green's function estimates in~\cite{MaO}.

\begin{cor}\label{cor:UT}
Consider the Riesz potentials $U_{\ee,\circ}:=\nabla\triangle^{-1}\ee\cdot$ and $U_\ee:=\nabla(\nabla\cdot\Aa\nabla)^{-1}\ee\cdot\Aa$ with $|\ee|=1$. For all $h\in C^\infty_c(\R^d;\Ld^\infty(\Omega))^d$ and $1<p,q<\infty$ with $\frac{dp}{d+p}>1$,
\begin{equation}\label{eq:UT}
\|[U_{\ee,\circ} h]_2\|_{\Ld^p(\R^d;\Ld^q(\Omega))}\,\lesssim_{p}\,\|[h]_2\|_{\Ld^\frac{dp}{d+p}(\R^d;\Ld^{q}(\Omega))},
\end{equation}
and for all $0<\delta\le\frac12$,
\begin{equation}\label{eq:UTb}
\|[U_\ee h]_2\|_{\Ld^p(\R^d;\Ld^q(\Omega))}\,\lesssim_{p,q}\,\delta^{-(\frac1{p\wedge q\wedge2}-\frac1{p\vee q\vee2})}\,|\!\log\delta|^{2|\frac1{q}-\frac1p|}\,\|[h]_2\|_{\Ld^\frac{dp}{d+p}(\R^d;\Ld^{q+\delta}(\Omega))}.\qedhere
\end{equation}
\end{cor}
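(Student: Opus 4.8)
The plan is to derive~\eqref{eq:UT} directly from the Hardy--Littlewood--Sobolev inequality --- the constant-coefficient Riesz potential being deterministic, hence commuting with the stochastic norm --- and to obtain~\eqref{eq:UTb} by reducing it to the annealed Calder\'on--Zygmund estimate~\eqref{eq:CZ} for the Helmholtz projection $T=\nabla(\nabla\cdot\Aa\nabla)^{-1}\nabla\cdot\Aa$. The key point is that the zeroth-order factor $\ee\cdot$ can be absorbed into a deterministic Riesz potential of order one, and this is precisely what produces the spatial gain from exponent $\frac{dp}{d+p}$ to $p$.

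\emph{Step 1: the order-one Riesz operator.} I would set $R:=-\nabla(-\triangle)^{-1}$, so that $\nabla\cdot R=\Id$ and $R$ is convolution against a kernel $K$ with $|K(z)|\lesssim|z|^{1-d}$. A standard near/far decomposition of $K$ (the bounded part $K\mathds1_{B_3}\in\Ld^1$ handled by Young's inequality and absorbed using that the inner $\Ld^2$-average tames the singularity, the tail handled by its pointwise majorant $\langle\cdot\rangle^{1-d}$) gives, for any Banach space $X$ and $g\in C^\infty_c(\R^d;X)$, the pointwise-in-$X$ bound $[Rg]_2(x)\lesssim\int_{\R^d}\langle x-y\rangle^{1-d}[g]_2(y)\,dy$. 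Since this kernel is nonnegative and $X$-independent, Minkowski's integral inequality in $X$ and then the Hardy--Littlewood--Sobolev inequality in $x$ (legitimate since $1<\tfrac{dp}{d+p}<d$ under the hypothesis, with $\tfrac1p=\tfrac{d+p}{dp}-\tfrac1d$) would yield
\[
\big\|[Rg]_2\big\|_{\Ld^p(\R^d;X)}\,\lesssim_p\,\big\|[g]_2\big\|_{\Ld^{\frac{dp}{d+p}}(\R^d;X)}.
\]
Taking $X=\Ld^q(\Omega)$ and $g=\ee\cdot h$ (so $|g|\le|h|$) and observing that $U_{\ee,\circ}h=\nabla\triangle^{-1}(\ee\cdot h)=R(\ee\cdot h)$, this gives~\eqref{eq:UT}.

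\emph{Step 2: reduction of $U_\ee$ to $T$.} By~\eqref{eq:elliptic} the matrix $\Aa(x)$ is invertible with $\|\Aa^{-1}\|_{\Ld^\infty}\le\tfrac1\lambda$. Using the identity $\ee\cdot\Aa h=\nabla\cdot\big(R(\ee\cdot\Aa h)\big)$ I would rewrite
\[
U_\ee h\,=\,\nabla(\nabla\cdot\Aa\nabla)^{-1}(\ee\cdot\Aa h)\,=\,\nabla(\nabla\cdot\Aa\nabla)^{-1}\nabla\cdot\Aa\,\big(\Aa^{-1}R(\ee\cdot\Aa h)\big)\,=\,T\big(\Aa^{-1}R(\ee\cdot\Aa h)\big),
\]
which is also the sense in which $U_\ee$ is to be read on $\Ld^{\frac{dp}{d+p}}$ data (the argument of $(\nabla\cdot\Aa\nabla)^{-1}$ being now in divergence form). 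Feeding in the annealed estimate~\eqref{eq:CZ} for $T$, then $\|\Aa^{-1}\|_{\Ld^\infty}\le\tfrac1\lambda$, then Step~1 with $X=\Ld^{q+\delta}(\Omega)$ and $g=\ee\cdot\Aa h$, and finally $|\Aa|\le1$, I would obtain, for all $0<\delta\le\tfrac12$,
\[
\big\|[U_\ee h]_2\big\|_{\Ld^p(\R^d;\Ld^q(\Omega))}\,\lesssim_{p,q}\,\delta^{-(\frac1{p\wedge q\wedge2}-\frac1{p\vee q\vee2})}\,|\!\log\delta|^{2|\frac1q-\frac1p|}\,\big\|[h]_2\big\|_{\Ld^{\frac{dp}{d+p}}(\R^d;\Ld^{q+\delta}(\Omega))},
\]
which is~\eqref{eq:UTb}. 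As noted above, one could alternatively read~\eqref{eq:UTb} off the annealed Green's function bounds of~\cite{MaO} by a direct convolution estimate.

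\emph{Main obstacle.} There is no genuine difficulty beyond Section~\ref{sec:Lpreg}, since everything reduces to~\eqref{eq:CZ}. The two points needing care are (i) the kernel near/far split behind the pointwise bound on $[Rg]_2$ in Step~1 --- the familiar fact that convolution against $|z|^{1-d}$ essentially commutes with the local moving average $[\cdot]_2$, the inner $\Ld^2$-average being what absorbs the diagonal singularity --- and (ii) keeping track of the $\delta$-loss of stochastic integrability, which is simply inherited from~\eqref{eq:CZ}, as $R$ is deterministic and needs no such loss.
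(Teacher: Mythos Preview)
Your proposal is correct and follows essentially the same approach as the paper: the paper also deduces~\eqref{eq:UT} from the kernel representation of $\nabla\triangle^{-1}$ via the triangle (Minkowski) inequality in $\Ld^q(\Omega)$ followed by Hardy--Littlewood--Sobolev, and then obtains~\eqref{eq:UTb} from the factorization $U_\ee=T\Aa^{-1}U_{\ee,\circ}\Aa$ combined with~\eqref{eq:CZ} --- which is exactly your identity $U_\ee h=T(\Aa^{-1}R(\ee\cdot\Aa h))$. Your near/far split of the kernel is a slightly more explicit version of what the paper compresses into a single ``triangle inequality'' line, but the substance is identical.
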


\subsection{Proof of Theorem~\ref{th:CZ-ann}}\label{sec:pr-th61}

Our argument is based on a multiple use of the following version of the Calder\'on-Zygmund lemma due to Shen~\cite[Theorem~2.1]{Shen-07} (cf.\@ also~\cite[Theorem~2.4]{Shen-12}), based on ideas by Caffarelli and Peral~\cite{CP-98}.
For a ball $B$, we set $B=B(x_B,r_B)$ and abusively write $\alpha B:=B(x_B,\alpha r_B)$ for $\alpha>0$.

\begin{lem}[\cite{Shen-07,CP-98}]\label{lem:shen}
Let $1\le p_0<p_1\le\infty$, $C_0>0$, and $f,g\in\Ld^{p_0}\cap\Ld^{p_1}(\R^d)$. Assume that for all balls $B\subset \R^d$ there exist measurable functions $f_{B,0}$ and $f_{B,1}$ such that $f= f_{B,0}+f_{B,1}$ on $B$ and
\begin{eqnarray*}
\Big(\fint_{B}|f_{B,0}|^{p_0}\Big)^\frac1{p_0}&\le&C_0\Big(\fint_{C_0B}|g|^{p_0}\Big)^\frac1{p_0},\\
\Big(\fint_{\frac1{C_0}B}|f_{B,1}|^{p_1}\Big)^\frac1{p_1}&\le&C_0\Big(\fint_{B}|f_{B,1}|^{p_0}\Big)^\frac1{p_0}.
\end{eqnarray*}
Then,
for all $p_0<q<p_1$,
\[\Big(\int_{\R^d}|f|^q\Big)^\frac1q\,\lesssim_{C_0,p_0,q,p_1}\,\Big(\int_{\R^d}|g|^q\Big)^\frac1q.\qedhere\]
\end{lem}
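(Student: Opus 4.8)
The plan is to give the classical real‑variable argument of Caffarelli--Peral and Shen, based on a good‑$\lambda$ inequality for the uncentred Hardy--Littlewood maximal operator $\mathcal M$. Set $\tilde w:=\mathcal M(|f|^{p_0})$ and $\tilde h:=\mathcal M(|g|^{p_0})$. First I would make the preliminary reductions: since $f,g\in\Ld^{p_0}\cap\Ld^{p_1}$ and $p_0<q<p_1$, interpolation of $\Ld^p$ norms gives $f,g\in\Ld^q$; if $\int|g|^q=\infty$ there is nothing to prove, so we may assume it finite, and then $\tilde w,\tilde h\in\Ld^{q/p_0}(\R^d)$ by the strong $(q/p_0,q/p_0)$ bound for $\mathcal M$ (legitimate because $q/p_0>1$). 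By Lebesgue differentiation $|f|^{p_0}\le\tilde w$ a.e., so $\int_{\R^d}|f|^q\le\|\tilde w\|_{\Ld^{q/p_0}}^{q/p_0}$, and it suffices to prove $\|\tilde w\|_{\Ld^{q/p_0}}^{q/p_0}\lesssim_{C_0,p_0,q,p_1}\int_{\R^d}|g|^q$, i.e.\ a purely distributional comparison of the level sets of $\tilde w$ with those of $\tilde h$, fed by the two hypotheses on the decomposition $f=f_{B,0}+f_{B,1}$.

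The heart is the good‑$\lambda$ step. Fix $\lambda>0$ and consider $E_\lambda:=\{\tilde w>\lambda\}$, which is open and, by the weak‑$(1,1)$ bound applied to $|f|^{p_0}\in\Ld^1$, of finite measure; in particular $E_\lambda\neq\R^d$, so it admits a Whitney/Calder\'on--Zygmund decomposition $E_\lambda=\bigcup_iQ_i$ into essentially disjoint dyadic cubes whose fixed dilate $\hat Q_i$ meets $\R^d\setminus E_\lambda$, providing $y_i\in\hat Q_i$ with $\tilde w(y_i)\le\lambda$. From this one obtains, for a large dimensional dilate $Q_i^*\supset Q_i$ and $x\in Q_i$, the localization $\tilde w(x)\le\max\big(\mathcal M(|f|^{p_0}\mathds1_{Q_i^*})(x),\,C\lambda\big)$ (balls not contained in $Q_i^*$ are comparable to balls centred at $y_i$), together with $\fint_{CQ_i^*}|f|^{p_0}\le C\lambda$. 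Then I would split $\{Q_i\}$ into \emph{bad} cubes, where $\tilde h>\e\lambda$ throughout $Q_i$ (total measure $\le|\{\tilde h>\e\lambda\}|$), and \emph{good} cubes, where $\tilde h(z_i)\le\e\lambda$ for some $z_i\in Q_i$. On a good cube, choose the ball $\tfrac1{C_0}B_i$ to circumscribe $Q_i^*$, so that $B_i$, $C_0B_i$ and $\tfrac1{C_0}B_i$ are all comparable to $Q_i$ with constants depending only on $d$ and $C_0$, and apply $f=f_{B_i,0}+f_{B_i,1}$: the $f_{B_i,0}$‑part is controlled by weak‑$(1,1)$ plus $\fint_{Q_i^*}|f_{B_i,0}|^{p_0}\le C_0^{p_0}\fint_{C_0B_i}|g|^{p_0}\le C\e\lambda$ (using $z_i$), giving a set of measure $\le CN^{-1}\e|Q_i|$; the $f_{B_i,1}$‑part is controlled by the strong $(p_1/p_0,p_1/p_0)$ bound for $\mathcal M$ — or directly in $\Ld^\infty$ when $p_1=\infty$ — plus the reverse‑H\"older hypothesis $\big(\fint_{\frac1{C_0}B_i}|f_{B_i,1}|^{p_1}\big)^{p_0/p_1}\le C_0^{p_0}\fint_{B_i}|f_{B_i,1}|^{p_0}\le C\lambda$, giving a set of measure $\le CN^{-p_1/p_0}|Q_i|$. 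Since $\{\tilde w>N\lambda\}\cap Q_i$ lies (up to a null set) in the union of these two sets once $N$ exceeds the localization constant, summing over $i$ and using $\sum_i|Q_i|=|E_\lambda|$ yields, for all $N>1$ and $\e\in(0,1)$,
\[\big|\{\tilde w>N\lambda\}\big|\,\le\,C\Big(\tfrac\e N+\tfrac1{N^{p_1/p_0}}\Big)\big|\{\tilde w>\lambda\}\big|+\big|\{\tilde h>\e\lambda\}\big|,\qquad C=C(d,C_0).\]

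To conclude I would multiply this by $\lambda^{q/p_0-1}$ and integrate over $\lambda\in(0,\infty)$; after the changes of variables $N\lambda\mapsto\mu$, $\e\lambda\mapsto\mu$ and the strong $(q/p_0,q/p_0)$ bound for $\mathcal M$ on the $\tilde h$‑term, this reads, with $A:=\|\tilde w\|_{\Ld^{q/p_0}}^{q/p_0}$ (finite by the preliminary step),
\[N^{-q/p_0}A\,\le\,C\Big(\tfrac\e N+\tfrac1{N^{p_1/p_0}}\Big)A+C_{p_0,q}\,\e^{-q/p_0}\!\int_{\R^d}|g|^q.\]
Choosing first $N$ large so that $CN^{-p_1/p_0}\le\tfrac14N^{-q/p_0}$ (possible since $p_1/p_0>q/p_0$) and then $\e$ small so that $C\e N^{-1}\le\tfrac14N^{-q/p_0}$, the first right‑hand term absorbs half of $N^{-q/p_0}A$; since $A<\infty$ we get $A\lesssim_{C_0,p_0,q,p_1}\int_{\R^d}|g|^q$, hence $\int_{\R^d}|f|^q\le A\lesssim\int_{\R^d}|g|^q$, which is the claim.

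The main obstacle is the good‑$\lambda$ step itself: the careful geometric bookkeeping that makes all the enlarged objects ($\hat Q_i$, $Q_i^*$, $B_i$, $C_0B_i$, $\tfrac1{C_0}B_i$) mutually comparable with constants depending only on $d$ and $C_0$, the localization of $\mathcal M(|f|^{p_0})$ on each Whitney cube, and the separate (easier) treatment of the endpoint $p_1=\infty$; once these are in place, the integration and absorption are routine.
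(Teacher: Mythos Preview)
The paper does not prove this lemma; it is quoted from Shen~\cite{Shen-07} (building on Caffarelli--Peral~\cite{CP-98}) and used as a black box. Your outline is precisely the classical good-$\lambda$ argument of those references and is correct, including the finiteness of $\|\tilde w\|_{\Ld^{q/p_0}}$ needed for the absorption step (which is where the a~priori assumption $f\in\Ld^{p_0}\cap\Ld^{p_1}$ enters).
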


Combining this lemma with the quenched large-scale Lipschitz regularity theory due to~\cite{AS,Armstrong-Mourrat-16,GNO-reg} leads to the following quenched large-scale Calder\'on-Zygmund estimate. This slightly shortens the proof of~\cite{Armstrong-Daniel-16} and~\cite[Corollary~4]{GNO-reg} (cf.\@ also~\cite[Section~7]{AKM-book}).

\begin{prop}[Quenched large-scale Calder\'on-Zygmund estimate]\label{prop:CZ-loc}
There exists a stationary random field $r_*$ as in the statement of Theorem~\ref{th:CZ-ann} such that for all $h\in C^\infty_c(\R^d;\Ld^\infty(\Omega))^d$ and $1<p<\infty$,
\begin{eqnarray*}
\bigg(\int_{\R^d}\Big(\fint_{B_*(x)}|Th|^2\Big)^\frac p2dx\bigg)^\frac1p\,\lesssim_p\,\bigg(\int_{\R^d}\Big(\fint_{B_*(x)}|h|^2\Big)^\frac p2dx\bigg)^\frac1p,
\end{eqnarray*}
where we recall the notation $B_*(x):=B_{r_*(x)}(x)$.
\end{prop}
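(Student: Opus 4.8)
\emph{Overall strategy.} The estimate is purely quenched, hence a deterministic statement once a realization of $\Aa$ is fixed, and I would deduce it from three ingredients: the $\Ld^2(\R^d)$-boundedness of the Helmholtz projection $T$ (immediate from the Lax--Milgram energy estimate and the ellipticity~\eqref{eq:elliptic}), the quenched large-scale Lipschitz regularity for $\Aa$-harmonic functions, and Shen's Calder\'on--Zygmund lemma (Lemma~\ref{lem:shen}). Recall from~\cite{AS,Armstrong-Mourrat-16,GNO-reg} that there is a $\tfrac18$-Lipschitz stationary field $r_*\ge1$ — the minimal radius — such that every $\Aa$-harmonic function $u$ on a ball $B_R(x_0)$ with $R\ge r_*(x_0)$ obeys the large-scale mean-value inequality
\[\fint_{B_\rho(x_0)}|\nabla u|^2\,\lesssim\,\fint_{B_R(x_0)}|\nabla u|^2\qquad\text{for all }r_*(x_0)\le\rho\le R,\]
and, in the Gaussian setting with~\eqref{eq:cov-L1}, that $r_*$ satisfies the stretched-exponential moment bound asserted in Theorem~\ref{th:CZ-ann}. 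Replacing $r_*$ by $r_*^{\Aa}\vee r_*^{\Aa^*}$ I may assume this holds simultaneously for $\Aa$ and for its pointwise transpose $\Aa^*$; this, and the moment bound, are the only inputs seeing the random structure, everything else being deterministic. I would also record the soft consequences of $r_*\ge1$ being $\tfrac18$-Lipschitz: the balls $\{B_*(x)\}_x$ have bounded overlap ($\int_{\R^d}|B_*(x)|^{-1}\mathds1_{y\in B_*(x)}\,dx\lesssim1$), so that the coarsening map $\phi\mapsto\big(\fint_{B_*(\cdot)}|\phi|^2\big)^{1/2}$ is $\Ld^2(\R^d)$-bounded, and for balls of radius $\gtrsim r_*$ the operations ``coarsen'' and ``average'' commute up to constants.

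\emph{The range $p\ge2$.} Set $F:=\big(\fint_{B_*(\cdot)}|Th|^2\big)^{1/2}$ and $G:=\big(\fint_{B_*(\cdot)}|h|^2\big)^{1/2}$; from $|B_*(x)|\ge|B_1|$ and $\|Th\|_{\Ld^2(\R^d)}\lesssim\|h\|_{\Ld^2(\R^d)}$ together with bounded overlap one gets $F,G\in\Ld^2\cap\Ld^\infty(\R^d)$, and the asserted estimate reads $\|F\|_{\Ld^p(\R^d)}\lesssim_p\|G\|_{\Ld^p(\R^d)}$. I would obtain it from Lemma~\ref{lem:shen} applied with $p_0=2$, $p_1$ an arbitrarily large exponent, $f:=F$, and $g$ a suitable large-scale maximal majorant of $G$ (which is in turn controlled by $G$ in $\Ld^p$ for $p>2$ via the $\Ld^{p/2}$-boundedness of the Hardy--Littlewood maximal operator). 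For a ball $B=B(x_B,r_B)$, split $h=h\mathds1_{\lambda B^\sharp}+h\mathds1_{(\lambda B^\sharp)^c}=:h_{B,0}+h_{B,1}$, where $B^\sharp$ is concentric with $B$ of radius $r_B+\sup_{2B}r_*$ and $\lambda\ge1$ is a large fixed constant, and write $F\le f_{B,0}+f_{B,1}$ on $B$ with $f_{B,i}:=\big(\fint_{B_*(\cdot)}|Th_{B,i}|^2\big)^{1/2}$. The near part $f_{B,0}$ is handled by bounded overlap together with the energy estimate for $T$, which gives $\big(\fint_B f_{B,0}^2\big)^{1/2}\lesssim\big(\fint_{\lambda B^\sharp}|h|^2\big)^{1/2}\lesssim\big(\fint_{C_0B}g^2\big)^{1/2}$, i.e.\ hypothesis~(i). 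For the far part, $h_{B,1}$ vanishes on $\lambda B^\sharp$, so $Th_{B,1}=\nabla w_{B,1}$ with $w_{B,1}$ being $\Aa$-harmonic on $\lambda B^\sharp$; applying the large-scale Lipschitz inequality at each point of $\tfrac1{C_0}B$, with inner radius the local value of $r_*$ and an outer radius comparable to $r_B+\sup_{2B}r_*$ yet contained in $B$, upgrades it — after distinguishing whether $r_B$ is large or small compared with $\sup_{2B}r_*$ and invoking the slow variation of $r_*$ — to a coarsened reverse-H\"older inequality of the type demanded by hypothesis~(ii) with any finite $p_1>2$. Lemma~\ref{lem:shen} then yields $\|F\|_{\Ld^q(\R^d)}\lesssim_q\|G\|_{\Ld^q(\R^d)}$ for all $2<q<\infty$, and $q=2$ is merely the energy estimate together with bounded overlap.

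\emph{The range $1<p<2$, and the main obstacle.} For $1<p<2$ I would argue by duality, as in the constant-coefficient case: using the relation between $T$ and the Helmholtz projection $\Aa^*\nabla(\nabla\cdot\Aa^*\nabla)^{-1}\nabla\cdot$ for the transpose field, and the near-self-adjointness of the coarsening (a consequence of the $\tfrac18$-Lipschitz continuity of $r_*$), one reduces the bound for $F$ in $\Ld^p$ to the already-proven estimate for the coarsened quantity built from $\Aa^*$ in $\Ld^{p'}$ with $p'>2$, the relevant minimal radius being the common $r_*$ fixed above. The one genuinely technical point in the argument is the middle step of the previous paragraph: making the large-scale Lipschitz inequality — which only operates on balls of radius $\ge r_*$ — fit the scale-invariant ball structure of Shen's lemma uniformly over \emph{all} balls $B$, including those much smaller than the local value of $r_*$ (where the coarsening balls $B_*(x)$, $x\in B$, essentially coincide, so the far part is slowly varying) and those of comparable size (where $r_*$ may vary by a constant fraction of $r_B$ across $B$). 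Reconciling these regimes in a single estimate is exactly what the $\tfrac18$-Lipschitz continuity of $r_*$ is designed for, and constitutes the bulk of the work; this is also where the present route mildly shortcuts those of~\cite{Armstrong-Daniel-16} and~\cite[Corollary~4]{GNO-reg}.
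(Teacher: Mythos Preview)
Your overall strategy---energy estimate for the near part, quenched large-scale Lipschitz for the far part, Shen's lemma to interpolate, and duality for $1<p<2$---is exactly the paper's. The implementation differs in one respect worth noting. You split $h$ at an $r_*$-adapted enlarged ball $\lambda B^\sharp$ of radius $r_B+\sup_{2B}r_*$, which is not a fixed dilate of $B$; this forces you to take $g$ in Lemma~\ref{lem:shen} to be a Hardy--Littlewood maximal majorant of $G$ and to invoke the maximal theorem at the end. The paper instead splits simply at $B$ itself, so that $w_{B,1}$ is $\Aa$-harmonic on $B$, and absorbs the varying scale $r_*$ into a separate averaging lemma (Lemma~\ref{lem:aver-Bstar}): for $r_B\gtrsim r_*(x_B)$ one may exchange $\fint_B$ and $\fint_{2B}\fint_{B_*(\cdot)}$, while for $r_B\lesssim r_*(x_0)$ the single average $\fint_{B_*(x_0)}$ is already controlled by $\fint_{5B}\fint_{B_*(\cdot)}$. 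With this in hand, both Shen hypotheses follow by a short dichotomy on $r_B$ versus $r_*$, and one can take $g=G$ directly---no maximal function needed. This is the ``mild shortcut'' alluded to in the paper.

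One small slip: you write that the outer ball in the far-part Lipschitz step has radius ``comparable to $r_B+\sup_{2B}r_*$ yet contained in $B$''; when $\sup_{2B}r_*\gg r_B$ this is impossible. What actually happens (and what you correctly flag at the end as the crux) is a dichotomy: if $r_B\lesssim r_*(x_0)$, all the balls $B_*(x)$ for $x\in B$ are comparable and the reverse H\"older bound is trivial; if $r_B\gtrsim r_*(x_0)$, large-scale Lipschitz runs from scale $r_*(x_0)$ up to a scale $\simeq r_B$ inside $\tfrac12 B$, and Lemma~\ref{lem:aver-Bstar}(i) converts the resulting plain average over $\tfrac12 B$ back into the coarsened average over $B$. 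So your sketch is right in spirit, but the clean way to execute it is through the averaging lemma rather than through an adapted splitting ball.
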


Before proceeding to the proof, we state the following useful properties of averages on the family of balls $\{B_*(x)\}_{x\in\R^d}$.
The point is that these balls have a radius that varies with their center $x$.

\begin{samepage}
\begin{lem}\label{lem:aver-Bstar}
For any measurable function $f$ on $\R^d$, the following estimates hold.
\begin{enumerate}[(i)]
\item For all balls $B\subset\R^d$ with $r_B\ge\frac14r_*(x_B)$, we have
\begin{equation}
\fint_{B}|f|\,\lesssim\,\fint_{2B}\Big(\fint_{B_*(x)}|f|\Big)\,dx,\label{eq:equiv-2}
\end{equation}
hence, in particular,
\begin{equation}
\int_{\R^d}|f|\,\simeq\,\int_{\R^d}\Big(\fint_{B_*(x)}|f|\Big)\,dx.\label{eq:equiv-1}
\end{equation}
\item For all balls $B\subset\R^d$ and $x_0\in B$ with $r_B\le\frac14r_*(x_0)$, we have
\begin{equation}\label{eq:equiv-3b}
\Big(\fint_{B_*(x_0)}|f|\Big)^\frac12\,\lesssim\,\fint_{5B}\Big(\fint_{B_*(x)}|f|\Big)^\frac12\,dx,
\end{equation}
hence, in particular,
\begin{equation}\label{eq:equiv-3}
\fint_{B_*(x_0)}|f|\,\lesssim\,\fint_{5B}\Big(\fint_{B_*(x)}|f|\Big)\,dx.\qedhere
\end{equation}
\end{enumerate}
\end{lem}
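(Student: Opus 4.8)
Both parts rest on the same elementary fact: since $r_*$ is $\tfrac18$-Lipschitz and $r_*\ge1$, the field $r_*$ varies slowly, so that whenever $|x-y|\le\alpha\,r_*(y)$ with $\alpha$ not too large one has $r_*(x)\simeq r_*(y)$, hence $|B_*(x)|\simeq|B_*(y)|$; moreover $y\in B_*(x)$ as soon as $|x-y|\le\tfrac16 r_*(y)$ (then $r_*(x)\ge(1-\tfrac1{48})r_*(y)>\tfrac16 r_*(y)$). With this in hand each estimate is a Fubini computation: writing $\fint_A\big(\fint_{B_*(x)}|f|\big)dx=\frac1{|A|}\int_{\R^d}|f(y)|\,w_A(y)\,dy$ with $w_A(y):=\int_A\frac{\mathds1_{y\in B_*(x)}}{|B_*(x)|}dx$, everything reduces to bounds on the weight $w_A$.

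For \eqref{eq:equiv-2}, let $B=B(x_B,r_B)$ with $r_B\ge\tfrac14 r_*(x_B)$. For $y\in B$ the Lipschitz bound gives $r_*(y)\le r_*(x_B)+\tfrac18 r_B\lesssim r_B$, so $B_{r_*(y)/6}(y)\subset2B$ and, on it, $y\in B_*(x)$ with $|B_*(x)|\simeq r_*(y)^d$; hence $w_{2B}(y)\gtrsim|B_{r_*(y)/6}(y)|/r_*(y)^d\simeq1$, and \eqref{eq:equiv-2} follows from $\frac1{|2B|}\int_{\R^d}|f|\,w_{2B}\gtrsim\frac1{|2B|}\int_B|f|\simeq\fint_B|f|$. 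For \eqref{eq:equiv-1}: the upper bound is immediate from $w_{\R^d}(y)\lesssim1$ (if $y\in B_*(x)$ then $|x-y|<r_*(x)\le\tfrac87 r_*(y)$ and $|B_*(x)|\gtrsim r_*(y)^d$), while the lower bound either follows the same way (via $B_{r_*(y)/6}(y)$, no containment needed) or by summing \eqref{eq:equiv-2} over a Whitney-type covering of $\R^d$ by balls $B_i$ with $r_{B_i}\simeq r_*(x_{B_i})$ and $\{2B_i\}$ of bounded overlap.

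For part (ii), note first that \eqref{eq:equiv-3} follows from \eqref{eq:equiv-3b} by Cauchy--Schwarz, $\big(\fint_{5B}(\fint_{B_*(x)}|f|)^{1/2}\big)^2\le\fint_{5B}\fint_{B_*(x)}|f|$, so it suffices to prove \eqref{eq:equiv-3b}. Since $r_B\le\tfrac14 r_*(x_0)$, every $x\in5B$ satisfies $|x-x_0|<6r_B\le\tfrac32 r_*(x_0)$, hence $r_*(x)\simeq r_*(x_0)$ and $|B_*(x)|\simeq|B_*(x_0)|$ \emph{uniformly} on $5B$; this is the one place the smallness of $r_B$ enters. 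The key device is a truncation turning the square root into an advantage: normalising $\int_{B_*(x_0)}|f|=1$ and setting $g:=|f|\,\mathds1_{B_*(x_0)}\le|f|$, one has $\int_{B_*(x)}g\le\int g=1$ for \emph{every} $x$, so $\big(\int_{B_*(x)}g\big)^{1/2}\ge\int_{B_*(x)}g$. Therefore
\begin{align*}
\fint_{5B}\Big(\fint_{B_*(x)}|f|\Big)^{1/2}dx
&\ge\fint_{5B}\Big(\fint_{B_*(x)}g\Big)^{1/2}dx
\,\gtrsim\,r_*(x_0)^{-d/2}\fint_{5B}\Big(\int_{B_*(x)}g\Big)^{1/2}dx\\
&\ge r_*(x_0)^{-d/2}\fint_{5B}\int_{B_*(x)}g\,dx
=r_*(x_0)^{-d/2}\,\frac1{|5B|}\int_{\R^d}g(y)\,|S_y|\,dy,
\end{align*}
with $S_y:=\{x\in5B:\,y\in B_*(x)\}$. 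A short construction — for $y\in B_*(x_0)$, walk from $x_0$ a distance $2r_B$ towards $y$ to a point $x^\ast$ with $r_*(x^\ast)-|x^\ast-y|\gtrsim r_B$, then $B_{r_B/2}(x^\ast)\subset5B$ lies in $S_y$ — gives $|S_y|\gtrsim r_B^d\simeq|5B|$ for all $y\in B_*(x_0)$. Since $g$ is supported in $B_*(x_0)$, the last display is $\gtrsim r_*(x_0)^{-d/2}\int g=r_*(x_0)^{-d/2}\simeq\big(\fint_{B_*(x_0)}|f|\big)^{1/2}$, which is \eqref{eq:equiv-3b}.

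The routine but delicate part is the bookkeeping of Lipschitz constants: one must check that all the enlargements ($B_{r_*(y)/6}(y)\subset2B$, $B_{r_B/2}(x^\ast)\subset5B$, the radius comparisons) go through with exactly the constants $2B$, $5B$, $\tfrac14$ in the statement, which is precisely why the Lipschitz constant of $r_*$ is taken $\le\tfrac18$. Conceptually the only non-obvious step is the truncation trick in (ii): a priori the square roots on both sides look like an obstruction (Jensen points the wrong way) and a Vitali covering of $B_*(x_0)$ by $\{B_*(x)\}_{x\in5B}$ introduces unwanted dilations; replacing $|f|$ by $|f|\mathds1_{B_*(x_0)}$ so that the inner mass never exceeds the target mass is what makes the plain Fubini argument close.
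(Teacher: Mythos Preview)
Your proof of part~(i) is essentially the paper's argument: both express the double average by Fubini as $\int |f(y)|\,w(y)\,dy$ and bound the weight using that $|x-y|\le c\,r_*(y)$ forces $r_*(x)\simeq r_*(y)$ via the $\tfrac18$-Lipschitz property. The constants differ (you use $B_{r_*(y)/6}(y)$ where the paper uses $\tfrac12 B_*(z)$), but the mechanism is identical.

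Your proof of part~(ii), however, takes a genuinely different route. The paper argues by a finite covering: it notes $B_*(x_0)\subset B(x,r_*(x_0)+2r_B)$ for every $x\in B$, covers $B_{r_*(x_0)+2r_B}$ by $N\simeq1$ translates $B(z_i,r_*(x_0)-r_B)$ of a slightly smaller ball, uses the \emph{subadditivity} of the square root to split the sum, and then observes that for $x\in5B$ one has $r_*(x_0)-r_B\le r_*(x)$ so each piece is dominated by $\fint_{B_*(x)}|f|$. You instead use a truncation/normalisation trick: setting $g=|f|\mathds1_{B_*(x_0)}$ with $\int g=1$ turns the square root to your advantage via $\sqrt t\ge t$ for $t\le1$, after which a straight Fubini computation and a lower bound $|S_y|\gtrsim|5B|$ close the argument. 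Your construction of $x^\ast$ (walking from $x_0$ a distance $2r_B$ towards $y$) does give $r_*(x^\ast)-|x^\ast-y|\ge\tfrac74 r_B$ and $B_{r_B/2}(x^\ast)\subset B(x_B,\tfrac72 r_B)\subset5B$, with the trivial modification $x^\ast=x_0$ when $|x_0-y|<2r_B$; the numerics check out. Your approach avoids the covering lemma entirely and is arguably more elementary; the paper's approach is perhaps more transparent about \emph{why} the square root causes no loss (subadditivity over $N\simeq1$ pieces is harmless), whereas in yours the key insight is hidden in the normalisation step.
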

\end{samepage}

We may now proceed to the proof of Proposition~\ref{prop:CZ-loc}, which amounts to combining the quenched large-scale Lipschitz regularity theory of~\cite{AS,Armstrong-Mourrat-16,GNO-reg} together with Lemma~\ref{lem:shen}.

\begin{proof}[Proof of Proposition~\ref{prop:CZ-loc}]
As in~\cite[Step~1 of the proof of Corollary~4]{GNO-reg}, the random field~$r_*$ is chosen as the largest $\frac18$-Lipschitz lower bound on the so-called minimal radius defined in~\cite{GNO-reg} (what is denoted here by $r_*$ thus corresponds to the notation $\underline{r_*}$ in~\cite{GNO-reg}).
We split the proof into two steps.

\medskip
\step1 For all balls $B\subset\R^d$, decomposing $Th=\nabla w_{B,0}+\nabla w_{B,1}$ with
\begin{equation}\label{eq:decomp-T}
-\nabla\cdot\Aa\nabla w_{B,0}=\nabla\cdot(\Aa h\mathds1_{B}),\qquad-\nabla\cdot\Aa\nabla w_{B,1}=\nabla\cdot(\Aa h\mathds1_{\R^d\setminus B}),
\end{equation}
we prove for all $2\le p\le\infty$,
\begin{eqnarray}
\bigg(\fint_{B}\Big(\fint_{B_*(x)}|\nabla w_{B,0}|^2\Big)dx\bigg)^\frac12&\lesssim&\bigg(\fint_{2B}\Big(\fint_{B_*(x)}|h|^2\Big)dx\bigg)^\frac12,\label{eq:wB1}\\
\bigg(\fint_{\frac1{12}B}\Big(\fint_{B_*(x)}|\nabla w_{B,1}|^2\Big)^\frac p2dx\bigg)^\frac1p&\lesssim&\bigg(\fint_{B}\Big(\fint_{B_*(x)}|\nabla w_{B,1}|^2\Big)dx\bigg)^\frac12,\label{eq:wB2}
\end{eqnarray}
We start with the proof of~\eqref{eq:wB1}, and we first consider the case $r_B\ge\frac14r_*(x_B)$. An energy estimate on the equation for $w_{B,0}$ yields
\begin{align}\label{eq:en-est-wB1}
\int_{\R^d}|\nabla w_{B,0}|^2\,\lesssim\,\int_{B}|h|^2,
\end{align}
hence, combined with~\eqref{eq:equiv-2} and~\eqref{eq:equiv-1},
\begin{multline*}
\fint_{B}\Big(\fint_{B_*(x)}|\nabla w_{B,0}|^2\Big)dx\,\le\,|B|^{-1}\int_{\R^d}\Big(\fint_{B_*(x)}|\nabla w_{B,0}|^2\Big)dx\\
\,\lesssim\,\fint_{B}|h|^2\,\lesssim\,\fint_{2B}\Big(\fint_{B_*(x)}|h|^2\Big)dx,
\end{multline*}
which proves~\eqref{eq:wB1}.
Next, we consider the case $r_B\le\frac14r_*(x_B)$. For $x_0\in B$,
using the $\frac18$-Lipschitz property of $r_*$ in form of $|B_*(x_0)|\simeq|B_*(x_B)|$,
using the energy estimate~\eqref{eq:en-est-wB1}, and applying~\eqref{eq:equiv-3} (with $x_0$ and $B$ replaced by $x_B$ and $\frac1{10}B$), we find
\begin{align}\label{eq:wB1-point}
\fint_{B_*(x_0)}|\nabla w_{B,0}|^2\,\lesssim\,|B_*(x_B)|^{-1}\int_B|h|^2\,\lesssim\,\fint_{B_*(x_B)}|h|^2\,\lesssim\,\fint_{\frac12B}\Big(\fint_{B_*(x)}|h|^2\Big)dx.
\end{align}
(The choice of $\frac12B$ in the right-hand side does not matter here and is only made for later reference.)
Integrating this estimate over $x_0\in B$, the conclusion~\eqref{eq:wB1} follows.

\medskip\noindent
We turn to the proof of~\eqref{eq:wB2}, which is a consequence of quenched large-scale Lipschitz regularity theory, noting that $w_{B,1}$ is indeed $\Aa$-harmonic in~$B$. It is obviously sufficient to prove for all $x_0\in\frac1{12}B$,
\begin{align}
\fint_{B_*(x_0)}|\nabla w_{B,1}|^2\,\lesssim\,\fint_{B}\Big(\fint_{B_*(x)}|\nabla w_{B,1}|^2\Big)dx.\label{eq:wB2-bis}
\end{align}
If $r_*(x_0)\ge \frac13r_B$, this follows from~\eqref{eq:equiv-3} (with $B$ replaced by $\frac1{12}B$).
Let now $x_0\in\frac1{12}B$ be fixed with $r_*(x_0)\le\frac13r_B$.
Since the ball $B(x_0,r_*(x_0)+\frac1{12}r_B)$ is contained in $\frac12B$, where $w_{B,1}$ is $\Aa$-harmonic, the quenched large-scale Lipschitz regularity theory (e.g.~\cite[Theorem~1]{GNO-reg}) implies
\begin{align*}
\fint_{B_*(x_0)}|\nabla w_{B,1}|^2\,\lesssim\,\fint_{B(x_0,r_*(x_0)+\frac1{12}r_B)}|\nabla w_{B,1}|^2\,\lesssim\,\fint_{\frac12B}|\nabla w_{B,1}|^2.
\end{align*}
Since for $r_*(x_0)\le\frac13r_B$ the $\frac18$-Lipschitz property of $r_*$ yields $r_*(x_B)\le r_*(x_0)+\frac1{96}r_B\le2r_B$, the conclusion~\eqref{eq:wB2-bis} follows from~\eqref{eq:equiv-2} (with $B$ replaced by $\frac12B$).

\medskip
\step2 Conclusion.\\
Combining~\eqref{eq:wB1} and~\eqref{eq:wB2}, and applying Lemma~\ref{lem:shen} with
\begin{gather*}
p_0=2\le p_1\le\infty,\qquad f(x)=\Big(\fint_{B_*(x)}|Th|^2\Big)^\frac12,\qquad g(x)=\Big(\fint_{B_*(x)}|h|^2\Big)^\frac12,\\
f_{B,0}(x)=\Big(\fint_{B_*(x)}|\nabla w_{B,0}|^2\Big)^\frac12,\qquad f_{B,1}(x)=\Big(\fint_{B_*(x)}|\nabla w_{B,1}|^2\Big)^\frac12,
\end{gather*}
we conclude for all $2\le p<\infty$,
\begin{eqnarray*}
\bigg(\int_{\R^d}\Big(\fint_{B_*(x)}|Th|^2\Big)^\frac p2dx\bigg)^\frac1p\,\lesssim_p\,\bigg(\int_{\R^d}\Big(\fint_{B_*(x)}|h|^2\Big)^\frac p2dx\bigg)^\frac1p.
\end{eqnarray*}
A standard duality argument allows to deduce the corresponding result for $1<p\le2$ (e.g.~\cite[Step~7 of the proof of Corollary~4]{GNO-reg}).
\end{proof}

We need two additional ingredients for the proof of Theorem~\ref{th:CZ-ann}.
The first is a deterministic regularity result for $\Aa$-harmonic functions. It follows e.g.\@ from~\cite[proof of Lemma~4]{BGO-17},
but a short proof is included below for the reader's convenience.

\begin{lem}\label{lem:Bella}
If $w$ is $\Aa$-harmonic on a ball $B$, there holds
\[\Big(\fint_{\frac12B}|\nabla w|^2\Big)^\frac12\,\lesssim\,\fint_{B}|\nabla w|.\qedhere\]
\end{lem}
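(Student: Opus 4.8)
The statement is the classical improvement from an $L^1$ to an $L^2$ Caccioppoli-type estimate for $\Aa$-harmonic functions, and the natural route is to combine the standard Caccioppoli inequality with a Sobolev/Poincar\'e inequality and then interpolate. First I would recall that, since $w$ is $\Aa$-harmonic on $B$, the Caccioppoli inequality gives, for any two concentric balls $B'\subset B''\subset B$ of comparable radii,
\[
\Big(\fint_{B'}|\nabla w|^2\Big)^{\frac12}\,\lesssim\,\frac{1}{r_{B''}}\Big(\fint_{B''}|w-c|^2\Big)^{\frac12}
\]
for every constant $c$, where the implicit constant depends only on $d$ and $\lambda$ through the ellipticity~\eqref{eq:elliptic}. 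Choosing $c=\fint_{B''}w$ and applying the Sobolev--Poincar\'e inequality with exponent $\frac{2d}{d+2}<2$ (valid since $d\ge1$; for $d=1,2$ one uses instead the ordinary Poincar\'e inequality, which is even stronger), one obtains
\[
\Big(\fint_{B'}|\nabla w|^2\Big)^{\frac12}\,\lesssim\,\Big(\fint_{B''}|\nabla w|^{\frac{2d}{d+2}}\Big)^{\frac{d+2}{2d}}.
\]

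The remaining point is to upgrade the right-hand side from the exponent $\frac{2d}{d+2}$ down to the exponent $1$. For this I would use the standard interpolation trick on a dyadic chain of balls: set $p_j$ with $\frac1{p_{j+1}}=\frac1{p_j}+\frac1d$ starting from $p_0=1$, note that $p_j\to\infty$, and apply the estimate above (with exponent $p_{j+1}$ on the left controlled by exponent $\frac{d p_{j+1}}{d+p_{j+1}}=p_j$ on the right) on a nested family of balls $\frac12 B=B_{k}\subset B_{k-1}\subset\cdots\subset B_0=B$ with radii interpolating between $\tfrac12 r_B$ and $r_B$, where $k$ is the finitely many steps needed to reach $p_k\ge 2$. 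Iterating and absorbing the finitely many $d,\lambda$-dependent constants yields
\[
\Big(\fint_{\frac12B}|\nabla w|^2\Big)^{\frac12}\,\lesssim\,\fint_{B}|\nabla w|,
\]
which is the claim. An even quicker alternative, if one prefers, is to invoke the well-known reverse H\"older inequality for $\Aa$-harmonic functions (Meyers' estimate) to get the bound with some exponent $p>1$ on the right, and then interpolate that single reverse-H\"older inequality against the trivial bound to descend to $p=1$; but the dyadic Sobolev iteration above is self-contained.

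The only genuine subtlety—hardly an obstacle—is that the Sobolev--Poincar\'e step and the resulting iteration must be set up so that all balls involved stay inside $B$ and have radii comparable to $r_B$, so that only finitely many applications (a number depending solely on $d$) are needed and the accumulated constant depends only on $d$ and $\lambda$; this is routine. Everything here is purely deterministic and uses only~\eqref{eq:elliptic}, so no probabilistic input is required.
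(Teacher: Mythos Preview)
Your strategy is sound and your first step (Caccioppoli followed by Sobolev--Poincar\'e to get $\big(\fint_{B'}|\nabla w|^2\big)^{1/2}\lesssim\big(\fint_{B''}|\nabla w|^{2d/(d+2)}\big)^{(d+2)/2d}$) is correct; for $d\le2$ this already gives the conclusion. For $d\ge3$, however, the iteration you describe to descend from exponent $\tfrac{2d}{d+2}$ to $1$ does not work as written. First a sign slip: the recurrence $\tfrac{1}{p_{j+1}}=\tfrac{1}{p_j}+\tfrac1d$ with $p_0=1$ makes $p_j\to0$; you mean $-\tfrac1d$. More importantly, the step ``$L^{p_{j+1}}$ on the left controlled by $L^{p_j}$ on the right'' requires, via your route, either a Caccioppoli inequality in $L^{p_{j+1}}$ or that Sobolev--Poincar\'e with source exponent $p_j$ lands in $L^2$ for $w$. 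Starting from $p_0=1$ in $d\ge3$ gives $1^*=\tfrac{d}{d-1}<2$, so the very first step fails; Caccioppoli is fundamentally $L^2$ for bounded measurable $\Aa$, and you cannot climb the Sobolev ladder from below as you suggest.

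The standard remedy is an interpolation/absorption argument on the \emph{radii} rather than on the exponents: interpolate $L^{2d/(d+2)}$ between $L^1$ and $L^2$, apply Young's inequality to obtain
\[
\Big(\fint_{B_\rho}|\nabla w|^2\Big)^{\!\frac12}\le\epsilon\Big(\fint_{B_R}|\nabla w|^2\Big)^{\!\frac12}+C_\epsilon\Big(\tfrac{R}{R-\rho}\Big)^{d/2}\fint_{B}|\nabla w|
\]
for all $\tfrac12 r_B\le\rho<R\le r_B$, and invoke Giaquinta's iteration lemma. The paper's proof is different and avoids iteration altogether: it works at the level of $w$ rather than $\nabla w$, chooses a cutoff $\chi=\xi^q$ with $q=\tfrac{2(p-1)}{p-2}$ tuned to the Sobolev exponent $p>2$, and uses the factorization $|\xi^{q-1}w|^2=|\chi w|^{p/(p-1)}|w|^{(p-2)/(p-1)}$ together with H\"older to absorb $(\int|\nabla\chi|^2 w^2)^{1/2}$ in a single step, leaving $\int_B|w|\lesssim\int_B|\nabla w|$ by Poincar\'e.
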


The second ingredient is needed to pass from statements on averages at the scale~$r_*$ to corresponding statements on unit scale.
This can be done at no loss in the spatial, but a small controlled loss in the stochastic integrability.

\begin{lem}\label{lem:postproc-r*1}
Let the stationary random field $r_*$ satisfy $\expec{\exp(\frac1Cr_*^d)}\le2$ for some $C\simeq1$.
For all $f\in C^\infty_c(\R^d;\Ld^\infty(\Omega))$ and $1\le q\le p<\infty$, there hold
\begin{enumerate}[(i)]
\item for all $r>q$,
\begin{multline*}
\qquad\bigg(\int_{\R^d}\E\bigg[\Big(\fint_{B_*(x)}|f|^2\Big)^\frac{q}2\bigg]^\frac{p}{q}dx\bigg)^\frac1p\\
\,\lesssim_{p,q}\,\Big(\frac1{q}-\frac1r\Big)^{-(\frac{1}{q}-\frac{1}2)_+}L\Big(\frac1{q}-\frac1r\Big)^{\frac{1}{q}-\frac1p}\Big(\int_{\R^d}\expec{[f]_2^{r}}^\frac{p}{r}\Big)^\frac1p;
\end{multline*}
\item for all $r<q$,
\begin{multline*}
\qquad\bigg(\int_{\R^d}\E\bigg[\Big(\fint_{B_*(x)}|f|^2\Big)^\frac{q}2\bigg]^\frac{p}{q}dx\bigg)^\frac1p\\
\,\gtrsim_{p,q}\,\Big(\frac1r-\frac1{q}\Big)^{(\frac12-\frac1{p})_+}L\Big(\frac1r-\frac1{q}\Big)^{-(\frac1{q}-\frac1p)}\Big(\int_{\R^d}\expec{[f]_2^r}^\frac pr\Big)^\frac1p;
\end{multline*}
\end{enumerate}
where we have set $L(t):=\log(2+\frac1t)$.
\end{lem}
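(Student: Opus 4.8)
The plan is to deduce both parts from elementary pointwise comparisons between the random-radius average $\fint_{B_*(x)}|f|^2$ and the unit-scale average $[f]_2^2(x)=\fint_{B(x)}|f|^2$, combined with the exponential integrability of $r_*^d$ through a dyadic decomposition of the probability space. Since $r_*\ge1$, a one--unit-ball Fubini argument gives, pointwise in $\omega$, $\fint_{B_R(x)}|f|^2\lesssim\fint_{B_{R+1}(x)}[f]_2^2$ for every $R\ge1$; taking $R=r_*(x)$ yields $\fint_{B_*(x)}|f|^2\lesssim\fint_{B_{2r_*(x)}(x)}[f]_2^2$, which is the starting point for (i). Conversely, from $B(x)\subset B_*(x)$ one has the crude lower bound $\fint_{B_*(x)}|f|^2\ge r_*(x)^{-d}[f]_2^2(x)$, but for (ii) I would use instead the sharper comparison $\fint_{B_*(x)}|f|^2\gtrsim\fint_{B_{r_*(x)/2\vee1}(x)}[f]_2^2$ (again by a covering/Fubini argument), since the extra spatial averaging on the right is precisely what makes the $p$-dependent exponent appear. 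In both cases, after rearrangement, the two statements have the same shape: a mixed norm (spatial $\Ld^p_x$ of stochastic $\Ld^s_\Omega$) of a random-radius average is compared to the same mixed norm of $[f]_2$ with a slightly different stochastic exponent.

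Then I would run the common machinery. Decompose $\Omega$ into the events $A_j(x)=\{2^j\le r_*(x)<2^{j+1}\}$, $j\ge0$; by Markov's inequality and $\expec{\exp(\tfrac1Cr_*^d)}\le2$ one has $\pr{A_j(x)}\lesssim\exp(-2^{jd}/C)$. On $A_j(x)$ the averaging ball has radius $\simeq2^j$, so $\fint_{B_*(x)}|f|^2$ is controlled (above for (i), below for (ii)) by the deterministic-radius average $g_j(x):=\fint_{B_{2^{j\pm2}}(x)}[f]_2^2$. Raising to the power $s/2$, taking expectations and using disjointness of the $A_j(x)$, I would apply H\"older in $\Omega$ to trade stochastic integrability: for (i) ($r>q$) this produces the factor $\pr{A_j(x)}^{1-q/r}\lesssim\exp(-\tfrac{r-q}{Cr}2^{jd})$; for (ii) ($r<q$) a reverse H\"older turns a negative power of $r_*$ into a moment $\expec{(r_*^d)^M}\lesssim\Gamma(M+1)C^M$ with $M\simeq(\tfrac1r-\tfrac1q)^{-1}$. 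It then remains a purely deterministic estimate: bounding $\big(\int_{\R^d}\expec{g_j^{r/2}}^{p/r}\big)^{1/p}$ in terms of $\big(\int_{\R^d}\expec{[f]_2^r}^{p/r}\big)^{1/p}$, which --- by Minkowski's inequality, a splitting of the ball of radius $2^j$ into $O(2^{jd})$ unit cubes, and $\ell^2$--$\ell^{p\wedge2}$ / $\ell^2$--$\ell^{r\wedge2}$ embeddings --- costs at most a power $2^{jd\gamma}$ with $\gamma\ge0$ vanishing in the ``good'' range ($p,r\ge2$) and otherwise expressible through the positive parts $(\tfrac1p-\tfrac12)_+$, $(\tfrac1r-\tfrac12)_+$.

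Finally I would sum over $j$: a series of the shape $\sum_j\exp(-\tfrac{r-q}{Cr}2^{jd})\,2^{jd\beta}$ (the $q/p$ exponent entering from the triangle inequality in $\Ld^{p/q}_x$, legitimate since $p\ge q$) behaves like $\Gamma(\beta)\big(\tfrac{r}{r-q}\big)^{\beta}$ for $\beta\gtrsim1$ and like a power of $\log(2+\tfrac{r}{r-q})$ in the threshold regime $\beta\to0$; using $\tfrac{r}{r-q}=\big(q(\tfrac1q-\tfrac1r)\big)^{-1}\simeq_q(\tfrac1q-\tfrac1r)^{-1}$, this converts into the announced prefactor $(\tfrac1q-\tfrac1r)^{-(\frac1q-\frac12)_+}L(\tfrac1q-\tfrac1r)^{\frac1q-\frac1p}$ for (i), and symmetrically into $(\tfrac1r-\tfrac1q)^{(\frac12-\frac1p)_+}L(\tfrac1r-\tfrac1q)^{-(\frac1q-\frac1p)}$ for (ii). I expect the main obstacle to be precisely this bookkeeping: matching the exact exponents requires choosing carefully which averages to carry (so as not to lose more than $2^{jd\gamma}$ with the sharp $\gamma$) across the regimes $p\lessgtr2$, $r\lessgtr2$, $q\lessgtr2$, and then optimizing the geometric-plus-polynomial sum in $j$; in particular the $p$-dependence in (ii) is genuinely lost by the crude bound $r_*(x)^{-d}[f]_2^2(x)$ --- which only yields the weaker exponent $\tfrac12$ --- and is recovered solely thanks to the sharper comparison $\fint_{B_*(x)}|f|^2\gtrsim\fint_{B_{r_*(x)/2\vee1}(x)}[f]_2^2$, which lets the dyadic scheme borrow integrability from the spatial norm.
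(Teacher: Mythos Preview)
Your plan is essentially the paper's own: condition on dyadic values of $r_*(x)$, compare the random-radius average to a fixed-radius one via a deterministic estimate (the paper records these as~\eqref{eq:(i)-determ} and~\eqref{eq:(ii)-determ}), and trade stochastic integrability through H\"older and the exponential moment of $r_*^d$. Two technical points are worth flagging.

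First, the paper additionally exploits the $\tfrac18$-Lipschitz property of $r_*$ to push the indicator $\mathds1_{\{r_*(x)\sim 2^j\}}$ \emph{inside} the spatial average before applying the deterministic estimate at exponent~$q$, and only afterwards uses H\"older to pass to exponent~$r$. Your ordering (H\"older first, then the deterministic estimate at exponent~$r$) legitimately bypasses the Lipschitz step, and in fact produces the power $(\tfrac1r-\tfrac12)_+\le(\tfrac1q-\tfrac12)_+$, so no harm there.

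Second --- and this is exactly the obstacle you anticipated --- the plain triangle inequality in $\Ld^{p/q}_x$ on the dyadic sum yields a logarithmic exponent $\tfrac1q$, not the announced $\tfrac1q-\tfrac1p$: with $\theta\simeq\tfrac1q-\tfrac1r$ you end up estimating $\big(\sum_j e^{-\theta 2^{jd}}2^{jd\alpha}\big)^{1/q}$, which for $\alpha=0$ behaves like $(\log\tfrac1\theta)^{1/q}$. The paper sharpens this via a weighted Jensen step: write $\sum_j a_j=\sum_j 2^{-jd\delta}(2^{jd\delta}a_j)$ with $\sum_j 2^{-jd\delta}\simeq\delta^{-1}$, push the power $p/q$ through, and then optimize $\delta^{-1}\theta^{-\delta}$ over $0<\delta\le1$ at $\delta=(\log\tfrac1\theta)^{-1}$. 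This ``$\delta$-trick'' is precisely what converts the loss from $\tfrac1q$ to $\tfrac1q-\tfrac1p$ and is the missing ingredient in your sketch if you want the stated prefactors verbatim.
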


With these ingredients at hand, we turn the proof of Theorem~\ref{th:CZ-ann}. The argument is based on a second application of Lemma~\ref{lem:shen} starting with the quenched large-scale Calder\'on-Zygmund estimate of Proposition~\ref{prop:CZ-loc}.

\begin{proof}[Proof of Theorem~\ref{th:CZ-ann}]
Let $r_*$ be chosen as in the proof of Proposition~\ref{prop:CZ-loc} above.
We split the proof into two steps.

\medskip
\step1 For all balls $B\subset\R^d$, decomposing $Th=\nabla w_{B,0}+\nabla w_{B,1}$ as in~\eqref{eq:decomp-T},
we prove for all $1< q<\infty$ and $1\le p_1\le \infty$,
\begin{gather}
\Bigg(\fint_{B}\E\bigg[\Big(\fint_{B_*(x)}|\nabla w_{B,0}|^2\Big)^\frac{q}2\bigg]dx\Bigg)^\frac1{q}\,\lesssim_{q}\,\Bigg(\fint_{6B}\E\bigg[\Big(\fint_{B_*(x)}|h|^2\Big)^\frac{q}2\bigg]dx\Bigg)^\frac1{q},\label{eq:wB1+}\\
\Bigg(\fint_{\frac1{24}B}\E\bigg[\Big(\fint_{B_*(x)}|\nabla w_{B,1}|^2\Big)^\frac{q}2\bigg]^\frac{p_1}{q}dx\Bigg)^\frac1{p_1}\,\lesssim_{q}\,\bigg(\fint_{B}\E\bigg[\Big(\fint_{B_*(x)}|\nabla w_{B,1}|^2\Big)^\frac{q}2\bigg]dx\bigg)^\frac1{q}.\label{eq:wB2+}
\end{gather}
We start with the proof of~\eqref{eq:wB1+}.
It suffices to show
\begin{align}\label{eq:rewr-wB1+}
\fint_{B}\Big(\fint_{B_*(x)}|\nabla w_{B,0}|^2\Big)^\frac{q}2dx\,\lesssim_{q}\,\fint_{6B}\Big(\fint_{B_*(x)}|h|^2\Big)^\frac{q}2dx,
\end{align}
since the conclusion~\eqref{eq:wB1+} then follows by taking the expectation.
First consider the case $r_B\le\frac14r_*(x_B)$.
Integrating~\eqref{eq:wB1-point} over $x_0\in B$ yields
\begin{align}\label{eq:bound01N}
\fint_{B}\Big(\fint_{B_*(x)}|\nabla w_{B,0}|^2\Big)^\frac q2dx\,\lesssim_q\,\bigg(\fint_{\frac12B}\Big(\fint_{B_*(x)}|h|^2\Big)dx\bigg)^\frac q2.
\end{align}
The claim~\eqref{eq:rewr-wB1+} follows from this estimate together with Jensen's inequality if $q\ge2$, but a different argument is needed if $q<2$.
For $x_0\in\frac12B$, since the condition $r_B\le\frac14r_*(x_B)$ and the $\frac18$-Lipschitz property of $r_*$ imply $r_B\le\frac14r_*(x_0)+\frac1{64}r_B$, hence $r_B\le\frac12 r_*(x_0)$, it follows from~\eqref{eq:equiv-3b} (with $B$ replaced by $\frac12B$) that
\begin{align*}
\Big(\fint_{B_*(x_0)}|h|^2\Big)^\frac12
\,\lesssim\,\fint_{\frac52B}\Big(\fint_{B_*(x)}|h|^2\Big)^{\frac12}dx.
\end{align*}
Combined with~\eqref{eq:bound01N}, this leads to~\eqref{eq:rewr-wB1+} by Jensen's inequality.
It remains to consider the case $r_B\ge\frac14r_*(x_B)$.
Applying Proposition~\ref{prop:CZ-loc} (with $q$ playing the role of $p$) to the equation for $w_{B,0}$ yields
\begin{align}\label{eq:rewr-wB1-appl-CZ}
\int_{\R^d}\Big(\fint_{B_*(x)}|\nabla w_{B,0}|^2\Big)^\frac{q}2dx\,\lesssim_{q}\,\int_{\R^d}\Big(\fint_{B_*(x)}\mathds1_{B}|h|^2\Big)^\frac{q}2dx.
\end{align}
For $x\in B$, the balls $B_*(x)$ and $B$ are disjoint whenever $|x-x_B|\ge r_*(x)+r_B$. Since the condition $r_B\ge\frac14r_*(x_B)$ and the $\frac18$-Lipschitz property of $r_*$ imply
\[r_*(x)+r_B\le r_*(x_B)+r_B+\tfrac18|x-x_B|\le5r_B+\tfrac18|x-x_B|,\]
we deduce that $B_*(x)$ and $B$ are disjoint whenever $|x-x_B|\ge5r_B+\frac18|x-x_B|$, that is, whenever $|x-x_B|\ge6r_B$. This implies
\[\int_{\R^d}\Big(\fint_{B_*(x)}\mathds1_{B}|h|^2\Big)^\frac{q}2dx\,\le\,\int_{6B}\Big(\fint_{B_*(x)}|h|^2\Big)^\frac{q}2dx,\]
so that the claim~\eqref{eq:rewr-wB1+} follows from~\eqref{eq:rewr-wB1-appl-CZ}.

\medskip\noindent
We turn to the proof of~\eqref{eq:wB2+}.
For $2\le q\le\infty$, taking the $\frac q2$th power of~\eqref{eq:wB2-bis}, using Jensen's inequality, and taking the expectation, we easily obtain~\eqref{eq:wB2+}.
In order to conclude for all values $1\le q\le \infty$, it suffices to establish the following improved version of~\eqref{eq:wB2-bis}: for all $x_0\in\frac1{24}B$,
\begin{align}\label{eq:claim-L1}
\Big(\fint_{B_*(x_0)}|\nabla w_{B,1}|^2\Big)^\frac12\,\lesssim\,\fint_{B}\Big(\fint_{B_*(x)}|\nabla w_{B,1}|^2\Big)^\frac12dx.
\end{align}
If $r_*(x_0)\ge \frac1{6}r_B$, this already follows from~\eqref{eq:equiv-3b} (with $B$ replaced by $\frac1{24}B$).
Let now $x_0\in\frac1{24}B$ be fixed with $r_*(x_0)\le\frac16r_B$.
Since $w_{B,1}$ is $\Aa$-harmonic on $B$ and since the ball $B(x_0,r_*(x_0)+\frac1{24}r_B)$ is contained in $\frac14B$, the quenched large-scale Lipschitz regularity theory (e.g.~\cite[Theorem~1]{GNO-reg}) implies
\begin{align*}
\Big(\fint_{B_*(x_0)}|\nabla w_{B,1}|^2\Big)^\frac12\,\lesssim\,\Big(\fint_{B(x_0,r_*(x_0)+\frac1{24}r_B)}|\nabla w_{B,1}|^2\Big)^\frac12\,\lesssim\,\Big(\fint_{\frac14B}|\nabla w_{B,1}|^2\Big)^\frac12,
\end{align*}
and Lemma~\ref{lem:Bella} (with $B$ replaced by $\frac12B$) then leads to
\begin{align*}
\Big(\fint_{B_*(x_0)}|\nabla w_{B,1}|^2\Big)^\frac12\,\lesssim\,\fint_{\frac12B}|\nabla w_{B,1}|.
\end{align*}
Since for $r_*(x_0)\le\frac16r_B$ the $\frac18$-Lipschitz property of $r_*$ yields $r_*(x_B)\le r_*(x_0)+\frac1{192}r_B\le2r_B$, the conclusion~\eqref{eq:claim-L1} follows from~\eqref{eq:equiv-2} (with $B$ replaced by $\frac12B$) and from Jensen's inequality.

\medskip
\step2 Conclusion.\\
Combining~\eqref{eq:wB1+} and~\eqref{eq:wB2+}, and applying Lemma~\ref{lem:shen} with
\begin{gather*}
1<p_0=q\le p_1<\infty,\quad f(x)=\E\bigg[\Big(\fint_{B_*(x)}|\nabla w|^2\Big)^\frac{q}2\bigg]^\frac1q,\quad g(x)=\E\bigg[\Big(\fint_{B_*(x)}|h|^2\Big)^\frac{q}2\bigg]^\frac1{q},\\
f_{B,0}(x)=\E\bigg[\Big(\fint_{B_*(x)}|\nabla w_{B,0}|^2\Big)^\frac{q}2\bigg]^\frac1{q},\quad f_{B,1}(x)=\E\bigg[\Big(\fint_{B_*(x)}|\nabla w_{B,1}|^2\Big)^\frac{q}2\bigg]^\frac1{q},
\end{gather*}
we deduce for all $1<q\le p<\infty$,
\begin{align*}
\Bigg(\int_{\R^d}\E\bigg[\Big(\fint_{B_*(x)}|\nabla w|^2\Big)^\frac{q}2\bigg]^\frac{p}{q}dx\Bigg)^\frac1p\,\lesssim_{p,q}\,\Bigg(\int_{\R^d}\E\bigg[\Big(\fint_{B_*(x)}|h|^2\Big)^\frac{q}2\bigg]^\frac{p}{q}dx\Bigg)^\frac1p,
\end{align*}
which is the main stated result~\eqref{eq:pre-res-CZann-st}.
It remains to post-process this result by rewriting both sides of the inequality:
combining~\eqref{eq:pre-res-CZann-st} with the different bounds stated in Lemma~\ref{lem:postproc-r*1},
and noting that for $q\le p$ there holds $(\frac1q-\frac12)_++(\frac12-\frac1p)_+=\frac1{q\wedge2}-\frac1{p\vee2}$,
the conclusion~\eqref{eq:CZ} follows for $q\le p$, while the corresponding result for $p\le q$ is deduced by duality.
\end{proof}

\subsection{Proof of Corollary~\ref{cor:UT}}\label{sec:pr-cor62}
For $d>1$, appealing to the integral representation
\[U_{\ee,\circ} g(x)\,=\,C\int_{\R^d}\frac{y}{|y|^{d}}\,\ee\cdot g(x-y)\,dy,\]
the triangle inequality yields
\[\|[U_{\ee,\circ} g]_2\|_{\Ld^p(\R^d;\Ld^q(\Omega))}\lesssim\bigg(\int_{\R^d}\Big(\int_{\R^d}\frac{\expecm{[g]_2^q(x-y)}^\frac{1}{q}}{|y|^{d-1}}dy\Big)^pdx\bigg)^\frac1p,\]
and the result~\eqref{eq:UT} follows from the Hardy-Littlewood-Sobolev inequality.
Next, writing $U_\ee=T\Aa^{-1}U_{\ee,\circ}\Aa$, the estimate~\eqref{eq:UTb} follows from the combination of~\eqref{eq:CZ} and~\eqref{eq:UT}.
\qed

\subsection{Proof of Lemma~\ref{lem:aver-Bstar}}
We start with the proof of~(i),
and thus let $r_B\ge\frac14r_*(x_B)$.
For $|x-z|\le \frac12 r_*(z)$, the $\frac18$-Lipschitz property of $r_*$ implies $|x-z|\le \frac12r_*(x)+\frac1{16}|x-z|$, hence $|x-z|\le r_*(x)$, and similarly $|B_*(x)|\simeq|B_*(z)|$. These observations lead to the lower bound
\begin{multline}\label{eq:lower-bound-int-Bstar}
\int_{2B}\Big(\fint_{B_*(x)}|f|\Big)dx\,=\,\int_{\R^d}|f(z)|\bigg(\int_{2B}\frac{\mathds1_{|x-z|\le r_*(x)}}{|B_*(x)|}dx\bigg)dz\\
\,\ge\,\int_{\R^d}|f(z)|\bigg(\int_{2B}\frac{\mathds1_{|x-z|\le \frac12 r_*(z)}}{|B_*(x)|}dx\bigg)dz\,\gtrsim\,\int_{\R^d}|f(z)|\frac{|2B\cap\frac12B_*(z)|}{|B_*(z)|}dz.
\end{multline}
Since $r_*(x_B)\le4r_B$, we note that for all $z\in B$ the ball $\frac15B_*(z)$ is included in
\[B\big(x_B,r_B+\tfrac15r_*(z)\big)\subset B\big(x_B,r_B+\tfrac15r_*(x_B)+\tfrac1{40}r_B\big)\subset 2B,\]
and~\eqref{eq:equiv-2} follows.
The upper bound in~\eqref{eq:equiv-1} is a consequence~\eqref{eq:equiv-2} with $r_B\uparrow\infty$, so that it remains to establish the lower bound.
For that purpose, we write again
\[\int_{\R^d}\Big(\fint_{B_*(x)}|f|\Big)dx\,=\,\int_{\R^d}|f(z)|\Big(\int_{\R^d}\frac{\mathds1_{|x-z|\le r_*(x)}}{|B_*(x)|}\,dx\Big)dz.\]
Since for $|x-z|\le r_*(x)$ the $\frac18$-Lipschitz property of $r_*$ implies $r_*(x)\simeq r_*(z)$, the last integral in bracket is $\lesssim1$, and the desired lower bound follows.

\medskip\noindent
We turn to the proof of~(ii).
By Jensen's inequality, it suffices to establish~\eqref{eq:equiv-3b}.
Let $x_0\in B$ with $r_B\le\frac14 r_*(x_0)$. Noting that for all $x\in B$ the ball $B_*(x_0)$ is contained in $B(x,r_*(x_0)+2r_B)$, we find
\begin{eqnarray*}
\Big(\fint_{B_*(x_0)}|f|\Big)^\frac12 &\lesssim& \fint_{B}\Big(\fint_{B(x,r_*(x_0)+2r_B)}|f|\Big)^\frac12dx.
\end{eqnarray*}
The ball $B_{4r_B}$ at the origin can be covered by $N\simeq1$ balls of radius $r_B$. Denote by $(z_i)_{i=1}^N\subset B_{4r_B}$ their centers. The collection $(B(z_i,r_*(x_0)-r_B))_{i=1}^N$ then covers $B_{r_*(x_0)+2r_B}$. By subadditivity of the square root, we may decompose
\begin{eqnarray*}
\Big(\fint_{B_*(x_0)}|f|\Big)^\frac12 &\lesssim&\fint_B\Big(\sum_{i=1}^N\fint_{B(x+z_i,r_*(x_0)-r_B)}|f|\Big)^\frac12dx\\
&\lesssim&\sum_{i=1}^N \fint_{B}\Big(\fint_{B(x+z_i,r_*(x_0)-r_B)}|f|\Big)^\frac12dx,
\end{eqnarray*}
hence, noting that $z_i+B\subset5B$ for all $i$,
\begin{eqnarray*}
\Big(\fint_{B_*(x_0)}|f|\Big)^\frac12&\lesssim&\fint_{5B}\Big(\fint_{B(x,r_*(x_0)-r_B)}|f|\Big)^\frac12dx.
\end{eqnarray*}
For $x\in5B$, the $\frac18$-Lipschitz property of $r_*$ yields
\[r_*(x_0)-r_B\le r_*(x)+\tfrac18|x-x_0|-r_B\le r_*(x)+\tfrac{3}{4} r_B-r_B\le r_*(x),\]
and~(iii) follows.\qed

\subsection{Proof of Lemma~\ref{lem:Bella}}
By scaling and translation invariance, we may assume that $B$ is the unit ball at the origin and that $\fint_Bw=0$.
Choose a smooth cut-off function $\chi$ with $\chi=1$ in $\frac12B$ and $\chi=0$ outside $B$. Caccioppoli's inequality yields
\begin{align}\label{eq:Cacc-bella}
\Big(\int_{\R^d}|\nabla(\chi w)|^2\Big)^\frac12\,\le\,\Big(\int_{\R^d}\chi^2|\nabla w|^2\Big)^\frac12+\Big(\int_{\R^d}|\nabla\chi|^2|w|^2\Big)^\frac12\,\lesssim\,\Big(\int_{\R^d}|\nabla\chi|^2|w|^2\Big)^\frac12.
\end{align}
Sobolev's inequality combined with Poincaré's inequality (with vanishing boundary condition) then implies for some suitable $p=p(d)>2$,
\begin{multline}\label{eq:Cacc-bella-bis}
\Big(\int_{\R^d}|\chi w|^p\Big)^\frac1p\,\lesssim\,\Big(\int_{\R^d}|\nabla(\chi w)|^2+|\chi w|^2\Big)^\frac12\,\lesssim\,\Big(\int_{\R^d}|\nabla(\chi w)|^2\Big)^\frac12\\
\,\stackrel{\eqref{eq:Cacc-bella}}\lesssim\,\Big(\int_{\R^d}|\nabla\chi|^2|w|^2\Big)^\frac12.
\end{multline}
Choosing $q:=2\frac{p-1}{p-2}$ and writing $\chi=\xi^q$, we deduce by Hölder's inequality,
\begin{multline*}
\Big(\int_{\R^d}|\nabla\chi|^2|w|^2\Big)^\frac12\,\lesssim\,\Big(\int_{\R^d}|\xi^{q-1} w|^2\Big)^\frac12\,=\,\Big(\int_{\R^d}|\xi^{q}w|^{\frac{p}{p-1}}|w|^{\frac{p-2}{p-1}}\Big)^\frac12\\
\,\le\,\Big(\int_{\R^d}|\chi w|^p\Big)^\frac1{2(p-1)}\Big(\int_{B}|w|\Big)^{\frac1q}\,\stackrel{\eqref{eq:Cacc-bella-bis}}\lesssim\,\Big(\int_{\R^d}|\nabla\chi|^2|w|^2\Big)^\frac{q-1}{2q}\Big(\int_{B}|w|\Big)^{\frac1q},
\end{multline*}
hence, by Young's inequality,
\[\Big(\int_{\R^d}|\nabla\chi|^2|w|^2\Big)^\frac1{2}\,\lesssim\,\int_{B}|w|.\]
Injecting this into~\eqref{eq:Cacc-bella} and using Poincaré's inequality (with vanishing mean-value), we conclude
\[\Big(\int_{\frac12B}|\nabla w|^2\Big)^\frac12\,\le\,\Big(\int_{\R^d}|\nabla(\chi w)|^2\Big)^\frac12\,\lesssim\,\int_{B}|w|\,\lesssim\,\int_{B}|\nabla w|,\]
as desired.\qed

\subsection{Proof of Lemma~\ref{lem:postproc-r*1}}
$ $

\medskip
\step1 Proof of the upper bound~(i).\\
Let $1\le q\le p<\infty$.
We start with the following deterministic version of the claimed upper bound: for all $R\ge1$ we have
\begin{align}\label{eq:(i)-determ}
\bigg(\int_{\R^d}\E\Big[\Big(\fint_{B_R(x)}|f|^2\Big)^\frac{q}2\Big]^\frac{p}{q}dx\bigg)^\frac1p\,\lesssim\,R^{d(\frac1{q}-\frac{1}2)_+}\bigg(\int_{\R^d}\expec{[f]_2^{q}}^\frac p{q}\bigg)^\frac1p.
\end{align}
If $q\ge2$, Jensen's inequality on $t\mapsto t^{\frac q2}$ yields
\begin{align}\label{eq:p0large}
\Big(\fint_{B_R(x)}|f|^2\Big)^{\frac{q}2}\,\lesssim\,\Big(\fint_{B_R(x)}[f]_2^2\Big)^{\frac{q}2}\,\lesssim\,\fint_{B_R(x)}[f]_2^{q}.
\end{align}
If $q\le2$, covering the ball $B_R$ by $N\simeq R^d$ balls of radius $\frac12$ with centers $(z_i)_{i=1}^N\subset B_{R-\frac14}$ such that $\min_{i\ne j}|z_i-z_j|\ge\frac12$, it follows from the discrete $\ell^{2}$--$\ell^{q}$ inequality that
\begin{multline*}
\Big(\int_{B_R(x)}|f|^2\Big)^{\frac{q}2}\,\le\,\Big(\sum_{i=1}^N\int_{B_{\frac12}(x+z_i)}|f|^2\Big)^{\frac{q}2}\,\le\,\sum_{i=1}^N\Big(\int_{B_{\frac12}(x+z_i)}|f|^2\Big)^{\frac{q}2}\\
\,\le\,\sum_{i=1}^N\fint_{B_{\frac14}(x+z_i)}\Big(\int_{B(y)}|f|^2\Big)^{\frac{q}2}dy\,\lesssim\,\sum_{i=1}^N\int_{B_{\frac14}(x+z_i)}[f]^{q}_2,
\end{multline*}
and hence, since the balls $(B_{\frac14}(x+z_i))_{i=1}^N$ are disjoint and included in $B_{R}(x)$,
\begin{align}\label{eq:p0small}
\Big(\int_{B_R(x)}|f|^2\Big)^{\frac{q}2}\,\lesssim\,\int_{B_{R}(x)}[f]_2^{q}.
\end{align}
These two estimates~\eqref{eq:p0large} and~\eqref{eq:p0small} lead to
\begin{align*}
\bigg(\int_{\R^d}\E\Big[\Big(\fint_{B_R(x)}|f|^2\Big)^\frac{q}2\Big]^\frac{p}{q}dx\bigg)^\frac1p\,\lesssim\,R^{d(\frac1{q}-\frac{1}2)_+}\bigg(\int_{\R^d}\E\Big[\fint_{B_R(x)}[f]_2^{q}\Big]^\frac{p}{q}dx\bigg)^\frac1p,
\end{align*}
and the claim~\eqref{eq:(i)-determ} follows from Jensen's inequality on $t\mapsto t^{\frac pq}$.

\medskip\noindent
We turn to the proof of~(i).
Conditioning with respect to the value of $r_*(x)$ on dyadic scale and using Jensen's inequality on $t\mapsto t^{\frac pq}$ with $\sum_{n=0}^\infty2^{-nd\delta}\simeq\delta^{-1}$,
we find for $0<\delta\le1$,
\begin{eqnarray*}
\lefteqn{\int_{\R^d}\E\bigg[\Big(\fint_{B_*(x)}|f|^2\Big)^{\frac{q}2}\bigg]^\frac{p}{q}dx\,\lesssim\,\int_{\R^d}\E\bigg[\sum_{n=0}^\infty\mathds1_{2^n-1<r_*(x)\le2^{n+1}-1}\Big(\fint_{B_{2^{n+1}}(x)}|f|^2\Big)^{\frac{q}2}\bigg]^\frac{p}{q}dx}\\
&\lesssim&\int_{\R^d}\E\bigg[\sum_{n=0}^\infty2^{-nd\delta}r_*(x)^{d\delta}\mathds1_{2^n-1<r_*(x)\le2^{n+1}-1}\Big(\fint_{B_{2^{n+1}}(x)}|f|^2\Big)^{\frac{q}2}\bigg]^\frac{p}{q}dx\\
&\lesssim&\delta^{1-\frac p{q}}\sum_{n=0}^\infty2^{nd\delta(\frac p{q}-1)}\int_{\R^d}\E\bigg[\mathds1_{2^n-1< r_*(x)\le2^{n+1}-1}\Big(\fint_{B_{2^{n+1}}(x)}|f|^2\Big)^{\frac{q}2}\bigg]^\frac{p}{q}dx.
\end{eqnarray*}
Taking advantage of the $\frac18$-Lipschitz property of~$r_*$, we can rewrite
\begin{multline*}
\int_{\R^d}\E\bigg[\Big(\fint_{B_*(x)}|f|^2\Big)^{\frac{q}2}\bigg]^\frac{p}{q}dx\\
\,\lesssim\,\delta^{1-\frac p{q}}\sum_{n=0}^\infty2^{nd\delta(\frac p{q}-1)}\int_{\R^d}\E\bigg[\Big(\fint_{B_{2^{n+1}}(x)}\mathds1_{2^{n-1}-1< r_*\le2^{n+2}-1}|f|^2\Big)^{\frac{q}2}\bigg]^\frac{p}{q}dx,
\end{multline*}
and applying~\eqref{eq:(i)-determ} with $R=2^{n+1}$ then leads to
\[{\int_{\R^d}\E\bigg[\Big(\fint_{B_*(x)}|f|^2\Big)^{\frac{q}2}\bigg]^\frac{p}{q}dx}
\,\lesssim\,\delta^{1-\frac p{q}}\sum_{n=0}^\infty(2^{ndp})^{(\frac1{q}-\frac12)_++\delta(\frac1{q}-\frac1p)}\int_{\R^d}\expec{\mathds1_{2^{n-1}-2\le r_*\le2^{n+2}}[f]_2^{q}}^\frac{p}{q}.\]
Hence, for $r>q$, using Hölder's inequality and the moment bounds on the stationary $r_*$,
\begin{eqnarray*}
\lefteqn{\int_{\R^d}\E\bigg[\Big(\fint_{B_*(x)}|f|^2\Big)^{\frac{q}2}\bigg]^\frac{p}{q}dx}\\
&\lesssim&\delta^{1-\frac p{q}}\sum_{n=0}^\infty(2^{ndp})^{(\frac1{q}-\frac12)_++\delta(\frac1{q}-\frac1p)}\,\pr{r_*\ge2^{n-1}-2}^{p\frac{r-q}{rq}}\int_{\R^d}\expec{[f]_2^{r}}^\frac{p}{r}\\
&\lesssim_{q,p}&\delta^{1-\frac p{q}}\Big(\frac{rq}{r-q}\Big)^{p\big((\frac1{q}-\frac12)_++\delta(\frac1{q}-\frac1p)\big)}\int_{\R^d}\expec{[f]_2^{r}}^\frac{p}{r},
\end{eqnarray*}
where in the last estimate we used $\sum_n(2^{nd})^\alpha e^{-2^{nd}\theta}\lesssim_{\alpha,d}\theta^{-\alpha}$.
Optimizing $\frac1{\delta}\theta^{-\delta}$ in $0<\delta\le1$ through $\delta=(\log\frac1\theta)^{-1}$ yielding $\sim\log\frac1\theta$, the desired upper bound follows.

\medskip
\step2 Proof of the lower bound~(ii).\\
Let $1\le r\le p\le\infty$. We start with the following deterministic version of the claimed lower bound: for all $R\ge1$ we have
\begin{align}\label{eq:(ii)-determ}
\bigg(\int_{\R^d}\E\Big[{\Big(\fint_{B_R(x)}|f|^{2}\Big)^\frac{r}2}\Big]^\frac{p}{r}dx\bigg)^\frac1{p}
\,\gtrsim\,R^{-d(\frac12-\frac1p)_+}\Big(\int_{\R^d}\expec{[f]_2^{r}}^\frac{p}{r}\Big)^\frac1p.
\end{align}
If $2\le r\le p$, using the discrete $\ell^{r}$--$\ell^2$ inequality and the discrete $\ell^p$--$\ell^r$ inequality,
we indeed find
\begin{multline*}
\int_{\R^d}\E\Big[\Big(\fint_{B_R(x)}|f|^2\Big)^\frac{r}2\Big]^\frac{p}{r}dx\,\gtrsim\,R^{-dp(\frac{1}2-\frac1{r})}\int_{\R^d}\E\Big[\fint_{B_R(x)}[f]_2^{r}\Big]^\frac{p}{r}dx\\
\,\gtrsim\,R^{-dp(\frac1{2}-\frac1{p})}\int_{\R^d}\expec{[f]_2^{r}}^\frac{p}{r}.
\end{multline*}
If $r\le2\le p$, the triangle inequality in $\Ld^{\frac2r}(B_R(x))$ and the discrete $\ell^p$--$\ell^2$ inequality lead to
\begin{align*}
\int_{\R^d}\E\Big[{\Big(\fint_{B_R(x)}|f|^{2}\Big)^\frac{r}2}\Big]^\frac{p}{r}dx
\,\gtrsim\,\int_{\R^d}\Big(\fint_{B_R(x)}\expec{[f]_2^{r}}^\frac{2}{r}\Big)^\frac{p}2dx
\,\gtrsim\,R^{-dp(\frac12-\frac1p)}\int_{\R^d}\expec{[f]_2^{r}}^\frac{p}{r}.
\end{align*}
Finally, if $r\le p\le2$, using again the triangle inequality in $\Ld^{\frac2r}(B_R(x))$ and Jensen's inequality on $t\mapsto t^\frac2p$, we obtain
\begin{align*}
\int_{\R^d}\E\Big[{\Big(\fint_{B_R(x)}|f|^{2}\Big)^\frac{r}2}\Big]^\frac{p}{r}dx
\,\gtrsim\,\int_{\R^d}\Big(\fint_{B_R(x)}\expec{[f]_2^{r}}^\frac{2}{r}\Big)^\frac{p}2dx\,\gtrsim\,\int_{\R^d}\expec{[f]_2^{r}}^\frac{p}{r}.
\end{align*}
This proves the claim~\eqref{eq:(ii)-determ} in all cases.

\medskip\noindent
We turn to the proof of~(ii). Conditioning with respect to the value of $r_*(x)$ on dyadic scale and using Jensen's inequality on $t\mapsto t^\frac pr$ with $\sum_{n=0}^\infty2^{-nd\delta}\simeq\delta^{-1}$, we find for $0<\delta\le1$,
\begin{eqnarray*}
\int_{\R^d}\expec{[f]_2^r}^\frac pr&\lesssim&\int_{\R^d}\E\bigg[{\sum_{n=0}^\infty2^{-nd\delta}r_*(x)^{d\delta}\mathds1_{2^n-1< r_*(x)\le2^{n+1}-1}[f]_2^r(x)}\bigg]^\frac prdx\\
&\lesssim&\delta^{1-\frac pr}\sum_{n=0}^\infty2^{nd\delta(\frac pr-1)}\int_{\R^d}\expec{\mathds1_{2^n-1< r_*(x)\le2^{n+1}-1}[f]_2^r(x)}^\frac prdx.
\end{eqnarray*}
Applying~\eqref{eq:(ii)-determ} with $R=(2^{n}-2)\vee1$ and taking advantage of the $\frac18$-Lipschitz property of~$r_*$, we deduce
\begin{multline*}
\int_{\R^d}\expec{[f]_2^r}^\frac pr\\
\,\lesssim\,\delta^{1-\frac pr}\sum_{n=0}^\infty(2^{ndp})^{(\frac12-\frac1p)_++\delta(\frac1r-\frac1p)}\int_{\R^d}\E\Big[\mathds1_{2^{n-1}-2\le r_*(x)\le2^{n+2}}\Big(\fint_{B_*(x)}|f|^2\Big)^\frac{r}2\Big]^\frac prdx.
\end{multline*}
Hence, for $q>r$, using Hölder's inequality and the moment bounds on $r_*$, and proceeding as for~(i),
\begin{eqnarray*}
\lefteqn{\int_{\R^d}\expec{[f]_2^r}^\frac pr}\\
&\lesssim&\delta^{1-\frac pr}\sum_{n=0}^\infty(2^{ndp})^{(\frac12-\frac1p)_++\delta(\frac1r-\frac1p)}\pr{r_*\ge2^{n-1}-2}^{p\frac{q-r}{qr}}\int_{\R^d}\E\Big[\Big(\fint_{B_*(x)}|f|^2\Big)^\frac{q}2\Big]^\frac p{q}dx\\
&\lesssim_{q,p}&\delta^{1-\frac pr}\Big(\frac{qr}{q-r}\Big)^{p\big((\frac12-\frac1p)_++\delta(\frac1r-\frac1p)\big)}\int_{\R^d}\E\Big[\Big(\fint_{B_*(x)}|f|^2\Big)^\frac{q}2\Big]^\frac p{q}dx.
\end{eqnarray*}
Optimizing in $0<\delta\le1$ yields the desired lower bound.
\qed

\section{Proof of the higher-order pathwise result}\label{sec:pathw}
In order to establish the accuracy of the two-scale expansion of higher-order commutators (hence, the higher-order pathwise structure), we appeal to the representation formula~\eqref{eq:commut-Xin-3} established in Proposition~\ref{prop:1st-variation} and the conclusion easily follows from the following estimates on the Malliavin derivative of higher-order correctors.

\begin{lem}\label{lem:key-estimates-decomp}
Let $T$, $U_{\ee}$, and $U_{\ee,\circ}$ denote the operators defined in Theorem~\ref{th:CZ-ann} and Corollary~\ref{cor:UT} with $\Aa$ replaced by its adjoint $\Aa^*$.
For all $g\in C^\infty_c(\R^d;\Ld^\infty(\Omega))^d$ and $0\le s\le\ell-1$,
\begin{multline}\label{eq:decomp-Js}
J_s(g)^2\,:=\,\int_{\R^d}\Big[\int_{\R^d}g\cdot\Aa\nabla D\varphi^{s+1}\Big]_1^2
+\int_{\R^d}\Big[\int_{\R^d}g\cdot\Aa D\varphi^{s}\Big]_1^2+\int_{\R^d}\Big[\int_{\R^d}g\cdot D\sigma^{s}\Big]_1^2\\
\lesssim\sum_{k=0}^s\sum_{\beta\in\{0,1\}}\big\|[U_\cdot^k T^\beta g]_2\big([\nabla\varphi^{s+1-k}]_2+[\varphi^{s-k}]_2\big)\big\|_{\Ld^2(\R^d)}^2,
\end{multline}
where $U_\cdot^k$ denotes any product of the form $U_1\ldots U_k$ with $U_j\in\{U_{\ee_i},U_{\ee_i,\circ}\}_{1\le i\le d}$ for all $j$, and where we implicitly sum over all possible such products.
Furthermore, for
all $\delta>0$ and $2\le p<\infty$,
\begin{equation}\label{eq:bound-E-Js}
\expec{J_s(g)^p}^\frac1p~\lesssim_{p,\delta}~\|[g]_2\|_{\Ld^{\frac{2d}{d+2s}}(\R^d;\Ld^{p+\delta}(\Omega))}.
\qedhere
\end{equation}
\end{lem}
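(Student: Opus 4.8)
The lemma consists of a pathwise bound --- the estimate~\eqref{eq:decomp-Js} expressing $J_s(g)$ through local $\Ld^2$-averages of Riesz potentials of $g$ weighted by correctors --- and a stochastic bound~\eqref{eq:bound-E-Js} deduced from it by the annealed Calder\'on--Zygmund theory of this section. My plan is to prove these in turn. The pathwise part is algebraic: it amounts to unravelling the Malliavin derivative of $(\nabla\varphi^{s+1},\varphi^s,\sigma^s)$ in terms of the coefficient field, using the locality $D_z\Aa=a_0'(G(z))\,\delta(\cdot-z)$ and the algebra of higher-order correctors from Definition~\ref{def:cor}. The stochastic part then follows by combining Theorem~\ref{th:CZ-ann}, Corollary~\ref{cor:UT}, the corrector moment bounds of Proposition~\ref{prop:cor}, and H\"older's inequality in probability.

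\emph{Step 1: the pathwise estimate~\eqref{eq:decomp-Js}.} Differentiating the defining relations of Definition~\ref{def:cor} --- $-\nabla\cdot\Aa\nabla\varphi^{n}=\nabla\cdot\big((\Aa\varphi^{n-1}-\sigma^{n-1})\ee\big)$ and $-\triangle\sigma^n=\nabla\times q^n$ with $q^n$ built from $\varphi^{n},\varphi^{n-1},\sigma^{n-1}$ --- yields equations for $D\varphi^n$ and $D\sigma^n$ whose right-hand sides split into (i) a genuinely local term $D\Aa\,(\nabla\varphi^{n}+\varphi^{n-1}\ee)$, and (ii) the lower-order Malliavin derivatives $D\varphi^{n-1}$, $D\sigma^{n-1}$ re-injected through a single solution operator of order one --- the Helmholtz projection $\nabla(\nabla\cdot\Aa\nabla)^{-1}\nabla\cdot$ for the corrector, and $\triangle^{-1}\nabla\times$ for the flux corrector. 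Iterating this descent from level $s+1$ down to each level $s+1-k$, $0\le k\le s$, I would express the Malliavin derivative of $(\nabla\varphi^{s+1},\varphi^s,\sigma^s)$ as a finite sum of terms of the form $\mathcal O_1\cdots\mathcal O_k\big[D\Aa\,(\nabla\varphi^{s+1-k}+\varphi^{s-k}\ee)\big]$, each $\mathcal O_j$ being an order-one solution operator of the above type. Testing against $g$ and integrating by parts so that the $\mathcal O_j$'s, together with the outermost operations in $J_s(g)$, migrate onto the smooth test function $g$ --- with adjoints precisely $T$, $U_\ee$, $U_{\ee,\circ}$ built with $\Aa^*$ --- and using the locality of $D\Aa$ to collapse the spatial integration, each such term becomes $\int_{\R^d}(U_\cdot^kT^\beta g)(z)\cdot a_0'(G(z))\,(\nabla\varphi^{s+1-k}+\varphi^{s-k}\ee)(z)\,dz$, with $\beta\in\{0,1\}$ according to whether the outermost layer is of Helmholtz or of Riesz type. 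Passing to the $[\cdot]_1$-averages in the definition of $J_s(g)$ and applying Cauchy--Schwarz then produces exactly the claimed bound by $\sum_{k,\beta}\big\|[U_\cdot^kT^\beta g]_2\,([\nabla\varphi^{s+1-k}]_2+[\varphi^{s-k}]_2)\big\|_{\Ld^2(\R^d)}^2$. I would run this by induction on $s$, in the same spirit as the proof of Proposition~\ref{prop:1st-variation}.

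\emph{Step 2: the stochastic estimate~\eqref{eq:bound-E-Js}.} Taking $\Ld^p(\Omega)$-norms in~\eqref{eq:decomp-Js} and using $\ell^2\hookrightarrow\ell^1$ together with the triangle inequality, it suffices to bound, for each $k\le s$ and $\beta\in\{0,1\}$, the quantity $\big\|\,\|[U_\cdot^kT^\beta g]_2\,([\nabla\varphi^{s+1-k}]_2+[\varphi^{s-k}]_2)\|_{\Ld^2(\R^d)}\,\big\|_{\Ld^p(\Omega)}$. By Minkowski's inequality ($p\ge2$) one exchanges $\Ld^2(\R^d)$ and $\Ld^p(\Omega)$; then, pointwise in space, H\"older in probability with $\tfrac1p=\tfrac1{p_1}+\tfrac1{p_2}$ and the uniform corrector bounds of Proposition~\ref{prop:cor} --- applicable since $s-k\le\ell-1$ and since \emph{gradients} of correctors have bounded moments up to order $\ell$ --- reduce the estimate to $\|[U_\cdot^kT^\beta g]_2\|_{\Ld^2(\R^d;\Ld^{p_1}(\Omega))}$. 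Finally I would apply the annealed Calder\'on--Zygmund estimate~\eqref{eq:CZ} for the $T^\beta$ factor and its Riesz-potential consequence~\eqref{eq:UTb} $k$ times, each application lowering the spatial exponent by one Sobolev step $\big(\tfrac1q\mapsto\tfrac1q-\tfrac1d\big)$ at the price of an arbitrarily small loss in stochastic integrability; choosing $p_1=p+\tfrac\delta2$ and distributing a total stochastic budget $\tfrac\delta2$ over the at most $\ell+1$ operator applications, the deepest descent $k=s$ produces precisely the exponent $\tfrac{2d}{d+2s}$, while the finitely many lower-order terms $k<s$ involve fewer Riesz potentials and are estimated by the same chain (their spatial integrability being non-restrictive for $g\in C^\infty_c$, since $[g]_2\in\Ld^1\cap\Ld^\infty(\R^d)$). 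This gives~\eqref{eq:bound-E-Js}.

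\emph{Main obstacle.} The substantive point is Step~1: organizing the Malliavin-differentiated corrector hierarchy so that every contribution appears either as a bona fide local term $D\Aa\cdot(\text{corrector})$ or as such a term dressed by exactly one order-one solution operator per descent, with no surviving higher-order flux-corrector remainders and with the evaluation-at-$z$ of the non-local pieces genuinely dominated by the stated local averages --- this is where the skew-symmetry of the $\sigma^n$'s and the precise algebra of Definition~\ref{def:cor} must be exploited, exactly as in Proposition~\ref{prop:1st-variation}, the novelty being only that one needs this \emph{factorized} form tailored to the subsequent $\Ld^p$-analysis rather than a distributional identity. Once~\eqref{eq:decomp-Js} is in hand, Step~2 is routine bookkeeping with the tools of this section.
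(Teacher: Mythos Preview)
Your proposal is correct and follows essentially the same approach as the paper. The paper organizes Step~1 by splitting $J_s$ into its three constituents and proving the recursive inequalities $J_{s,1}(g)\lesssim J_{s,2}(Tg)+J_{s,3}(Tg)+\text{(local)}$, $J_{s,2}(g)=J_{s-1,1}(Ug)$, $J_{s,3}(g)\lesssim J_{s-1}(U_\circ g)+\text{(local)}$, then iterates --- invoking $TU_\cdot=U_\cdot$ so that at most one $T$ survives --- which is exactly your ``migration and descent'' written out explicitly; Step~2 in the paper is identical to yours.
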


\begin{proof}
Denote by $J_{s,1}(g)^2$, $J_{s,2}(g)^2$, and $J_{s,3}(g)^2$ the three left-hand side terms in~\eqref{eq:decomp-Js}, hence $J_s(g)^2=J_{s,1}(g)^2+J_{s,2}(g)^2+J_{s,3}(g)^2$.
We split the proof into two steps.

\medskip
\step1 Proof that for all $g\in C^\infty_c(\R^d;\Ld^\infty(\Omega))^d$ and $0\le s\le\ell-1$,
\begin{eqnarray}
\hspace{-0.5cm}J_{s,1}(g)&\lesssim&J_{s,2}(Tg)+J_{s,3}(Tg)+\big\|[Tg]_2\big([\nabla\varphi^{s+1}]_2+[\varphi^s]_2\big)\big\|_{\Ld^2(\R^d)},\label{eq:S1-path-est}\\
\hspace{-0.5cm}J_{s,2}(g)&=&J_{s-1,1}(Ug),\label{eq:S2-path-est}\\
\hspace{-0.5cm}J_{s,3}(g)&\lesssim&J_{s-1}(U_\circ g)
+\big\|[U_\circ g]_2\big([\nabla\varphi^s]_2+[\varphi^{s-1}]_2\big)\big\|_{\Ld^2(\R^d)}.\label{eq:S3-path-est}
\end{eqnarray}
We start with~\eqref{eq:S1-path-est}.
Taking the Malliavin derivative of the equation for $\varphi^{s+1}$ (cf.~Definition~\ref{def:cor}) and testing it with $(\nabla\cdot\Aa^*\nabla)^{-1}\nabla\cdot(\Aa^*g)$, we obtain by definition of $Tg=\nabla(\nabla\cdot\Aa^*\nabla)^{-1}\nabla\cdot(\Aa^*g)$,
\begin{eqnarray*}
\Big|\int_{\R^d}g\cdot\Aa\nabla D_z\varphi^{s+1}_{i_1\ldots i_{s+1}}\Big|&=&\Big|\int_{\R^d}Tg\cdot\Aa\nabla D_z\varphi^{s+1}_{i_1\ldots i_{s+1}}\Big|\\
&\lesssim&|Tg(z)|\big(|\nabla\varphi^{s+1}_{i_1\ldots i_{s+1}}(z)|+|\varphi_{i_1\ldots i_s}^s(z)|\big)\\
&&+\Big|\int_{\R^d}Tg\cdot \Aa\,D_z\varphi_{i_1\ldots i_s}^s\ee_{i_{s+1}}\Big|+\Big|\int_{\R^d}Tg\cdot D_z\sigma_{i_1\ldots i_s}^s\ee_{i_{s+1}}\Big|,
\end{eqnarray*}
where for the first right-hand side term we used that by definition~\eqref{eq:def-A} the Malliavin derivative of $\Aa$ takes the form
\begin{align}\label{eq:Da}
D_z\Aa=a_0'(G(z))\,\delta(\cdot-z).
\end{align}
The claim~\eqref{eq:S1-path-est} follows by applying $(\int_{\R^d}(\int_{B(x)}\cdot\,dz)^2dx)^\frac12$ to both sides of the estimate.
Next, by definition of $U_{\ee_{i_{s+1}}}g=\nabla(\nabla\cdot\Aa^*\nabla)^{-1}(\ee_{i_{s+1}}\cdot\Aa^*g)$,
we find
\[\Big|\int_{\R^d}g\cdot\Aa D_z\varphi^{s}_{i_1\ldots i_s}\ee_{i_{s+1}}\Big|=\Big|\int_{\R^d}U_{\ee_{i_{s+1}}}g\cdot\Aa \nabla D_z\varphi^{s}_{i_1\ldots i_s}\Big|,\]
so that~\eqref{eq:S2-path-est} follows directly.
We turn to~\eqref{eq:S3-path-est}. Writing $g=\triangle\triangle^{-1}g$, integrating by parts, and using the Malliavin derivative of the equation for $\triangle\sigma^s$ (cf.~Definition~\ref{def:cor}), we find
\begin{multline*}
{\Big|\int_{\R^d}g\cdot D_z\sigma^{s}_{i_1\ldots i_{s}}\ee_{i_{s+1}}\Big|}
\,\le\,\Big|\int_{\R^d}(\nabla_i\triangle^{-1}g_i)\,\ee_{i_{s+1}}\cdot D_zq^{s}_{i_1\ldots i_{s}}\Big|\\
+\Big|\int_{\R^d}(\nabla_{i_{s+1}}\triangle^{-1}g_{i})\,\ee_i\cdot D_zq^{s}_{i_1\ldots i_{s}}\,\Big|,
\end{multline*}
and the claim~\eqref{eq:S3-path-est} follows from taking the Malliavin derivative of the definition of $q^s$ and writing $\nabla\triangle^{-1}g_i=U_{\ee_i,\circ}g$.

\medskip
\step2 Conclusion.\\
Using the notation introduced in the statement, Step~1 yields for all $g\in C_c^\infty(\R^d;\Ld^\infty(\Omega))^d$,
\begin{multline*}
J_{s}(g)\lesssim_pJ_{s-1}(U_\cdot g)+J_{s-1}(U_\cdot Tg)
+\big\|[Tg]_2\big([\nabla\varphi^{s+1}]_2+[\varphi^{s}]_2\big)\big\|_{\Ld^2(\R^d)}\\
+\big\|\big([U_\circ g]_2+[U_\circ Tg]_2\big)\big([\nabla\varphi^{s}]_2+[\varphi^{s-1}]_2\big)\big\|_{\Ld^2(\R^d)}.
\end{multline*}
Since by definition $TU=U$ and $TU_\circ=U_\circ$, the decomposition~\eqref{eq:decomp-Js} follows by induction.
Next, combining~\eqref{eq:decomp-Js} with the corrector estimates of Proposition~\ref{prop:cor}, we find for all $g\in C^\infty_c(\R^d;\Ld^\infty(\Omega))^d$, $0\le s\le\ell-1$, $2\le p<\infty$, and $\delta>0$,
\begin{align*}
\expec{J_s(g)^p}^\frac1p\,\lesssim_{p,\delta}\,\sum_{k=0}^s\sum_{\beta\in\{0,1\}}\big\|[U_\cdot^k T^\beta g]_2\big\|_{\Ld^2(\R^d;\Ld^{p+\delta}(\Omega))},
\end{align*}
where the anchoring in $s=0$ follows from~\eqref{eq:S1-path-est} because of $J_{0,2}= J_{0,3}=0$ in view of $\varphi^0=1$ and $\sigma^0=0$. Here we have used that for a stationary $\psi$ we have by the triangle inequality in $\Ld^\frac p2(\Omega)$ and Hölder's inequality,
\[\expec{\|\psi g\|_{\Ld^2(\R^d)}^p}^\frac1p\,\le\,\|\psi g\|_{\Ld^2(\R^d;\Ld^p(\Omega))}\,\le\,\expec{|\psi|^{\frac{p(p+\delta)}\delta}}^\frac\delta{p(p+\delta)}\|g\|_{\Ld^2(\R^d;\Ld^{p+\delta}(\Omega))}.\]
Now, for $0\le s\le\ell-1$, $2\le p<\infty$, $0\le k\le s$, $\beta\in\{0,1\}$, and $\delta>0$,
we apply the bound of Corollary~\ref{cor:UT} $k$ times and the one of Theorem~\ref{th:CZ-ann} once, increasing at every application the stochastic integrability by $\frac\delta{k+1}$, to the effect of
\[\big\|[U_\cdot^k T^\beta g]_2\big\|_{\Ld^2(\R^d;\Ld^{p+\delta}(\Omega))}\,\lesssim_{p,\delta}\,\|[g]_2\|_{\Ld^{\frac{2d}{d+2k}}(\R^d;\Ld^{p+2\delta}(\Omega))},\]
given that the restriction $k\le s\le\ell-1$ indeed ensures $\frac{2d}{d+2k}>1$, and the conclusion follows.
\end{proof}

With these estimates at hand, we may now turn to the proof of Theorem~\ref{th:main}(ii), that is, of the higher-order pathwise result.

\begin{proof}[Proof of Theorem~\ref{th:main}(ii)]
Let $1\le p<\infty$. The starting point is the $\Ld^p$ inequality of Proposition~\ref{prop:Mall}(iii) together with~\eqref{eq:cov-L1-H}, in the form
\begin{multline*}
Q\,:=\,\expec{\Big(\int_{\R^d} g\cdot\big(\Xi^{n}[\nabla u]-\Xi^{\circ,n}[\nabla\bar u^{n}]\big)-\int_{\R^d} g\cdot\expec{\Xi^{n}[\nabla u]-\Xi^{\circ,n}[\nabla\bar u^{n}]}\Big)^{2p}}^\frac1{2p}\\
\,\lesssim_p\, \expec{\bigg(\int_{\R^d}\Big[\int_{\R^d}g\cdot D\big(\Xi^{n}[\nabla u]-\Xi^{\circ,n}[\nabla\bar u^{n}]\big)\Big]_1^2\bigg)^p}^\frac1{2p}.
\end{multline*}
Inserting identity~\eqref{eq:commut-Xin-3} of Proposition~\ref{prop:1st-variation}, integrating by parts in the $z$-integral to put derivatives on $g$ and $\bar u^n$, using~\eqref{eq:Da}, and using the corrector estimates of Proposition~\ref{prop:cor}, we deduce for all $1\le n\le\ell$,
\begin{multline}\label{eq:decomp-SG-(ii)}
Q\,\lesssim_p\,\sum_{k=0}^{n}\big\|\mu_{d,k}[\nabla^kg]_\infty\big\|_{\Ld^4(\R^d)}\big\|\big[\nabla u-E^{n}[\nabla\bar u^{n}]\big]_2\big\|_{\Ld^4(\R^d;\Ld^{4p}(\Omega))}\\
+\sum_{k=0}^{n}\sum_{s=n-k}^{n-1}\sum_{l=0}^{k+s-n}\big\|\mu_{d,k}\big[\nabla^l\big(g\nabla^{k+s+1-l}\bar u^{n}\big)\big]_\infty\big\|_{\Ld^2(\R^d)}\\
+\expec{\bigg(\int_{\R^d}\Big[\int_{\R^d}S^{1}\cdot\nabla Du\Big]_1^2\bigg)^p}^\frac1{2p}+\expec{\bigg(\int_{\R^d}\Big[\int_{\R^d}S^{1}\cdot DE^{n}[\nabla\bar u^{n}]\Big]_1^2\bigg)^p}^\frac1{2p}\\
+\sum_{s=0}^{n-1}\sum_{l=0}^{s}\expec{\bigg(\int_{\R^d}\Big[\int_{\R^d}S^{2;l,s}_{j_1\ldots j_{s+1}}\cdot D\big(\nabla\varphi^{s+1}_{j_1\ldots j_{s+1}}+\varphi^{s}_{j_1\ldots j_{s}}\ee_{j_{s+1}}\big)\Big]_1^2\bigg)^p}^\frac1{2p},
\end{multline}
where we have set
\begin{eqnarray*}
S^{1}&:=&\nabla^{n}_{i_1\ldots i_{n}}g_j\,\big(\Aa^*\varphi_{ji_1\ldots i_{n-1}}^{*,n}-\sigma_{ji_1\ldots i_{n-1}}^{*,n}\big)\ee_{i_{n}},\\
S^{2;l,s}_{j_1\ldots j_{s+1}}&:=&\nabla_{i_1\ldots i_l}^l\big(g_j\nabla^{n-l}_{i_{l+1}\ldots i_{n}}\nabla^{s+1}_{j_1\ldots j_{s+1}}\bar u^{n}\big)
\sym_{i_1\ldots i_{n}}\big((\Aa^*\varphi_{ji_1\ldots i_{n-1}}^{*,n}-\sigma_{ji_1\ldots i_{n-1}}^{*,n})\ee_{i_{n}}\big).
\end{eqnarray*}
We separately treat the different right-hand side terms in~\eqref{eq:decomp-SG-(ii)}.
First, from equation~\eqref{eq:u-dev-scale}, formula~\eqref{eq:link-E0E}, the annealed Calder\'on-Zygmund estimate of Theorem~\ref{th:CZ-ann}, and the corrector estimates in Proposition~\ref{prop:cor}, we obtain
\begin{multline}\label{eq:bound-nabu-E-err}
\big\|\big[\nabla u-E^n[\nabla\bar u^{n}]\big]_2\big\|_{\Ld^4(\R^d;\Ld^{4p}(\Omega))}\\
\,\lesssim_p\,\|\mu_{d,n}[\nabla^{n+1}\bar u^{n}]_\infty\|_{\Ld^4(\R^d)}+\sum_{k=2}^{n}~\sum_{l=n+2-k}^{n}\|\nabla^{k}\tilde u^{l}\|_{\Ld^4(\R^d)}.
\end{multline}
Second, recalling the notation $T=\nabla(\nabla\cdot\Aa^*\nabla)^{-1}\nabla\cdot\Aa^*$ from Theorem~\ref{th:CZ-ann} and using the equation for $u$ in the form $-\nabla\cdot\Aa \nabla D_zu=\nabla\cdot D_z\Aa\nabla u$, we find
\begin{eqnarray*}
\expec{\bigg(\int_{\R^d}\Big[\int_{\R^d}S^{1}\cdot \nabla Du\Big]_1^2\bigg)^p}^\frac1{2p}
&=&\expec{\bigg(\int_{\R^d}\Big[\int_{\R^d}T((\Aa^*)^{-1} S^{1})\cdot \Aa\nabla Du\Big]_1^2\bigg)^p}^\frac1{2p}\\
&\lesssim&\expec{\bigg(\int_{\R^d}[T((\Aa^*)^{-1} S^{1})]_2^2\,[\nabla u]_2^2\bigg)^p}^\frac1{2p}\\
&\le&\big\|[T((\Aa^*)^{-1} S^{1})]_2\big\|_{\Ld^4(\R^d;\Ld^{4p}(\Omega))}\big\|[\nabla u]_2\big\|_{\Ld^4(\R^d;\Ld^{4p}(\Omega))},
\end{eqnarray*}
hence, using Theorem~\ref{th:CZ-ann} to bound the operator $T$ and to estimate $\nabla u$, and applying the corrector estimates of Proposition~\ref{prop:cor}, we deduce
\begin{multline}\label{eq:CLT-scaling-pr}
\expec{\bigg(\int_{\R^d}\Big[\int_{\R^d}S^{1}\cdot \nabla Du\Big]_1^2\bigg)^p}^\frac1{2p}
\,\lesssim_{p,\delta}\,\|[S^{1}]_2\|_{\Ld^4(\R^d;\Ld^{4p+\delta}(\Omega))}\|[f]_2\|_{\Ld^4(\R^d)}\\
\,\lesssim\,\|\mu_{d,n}[\nabla^ng]_\infty\|_{\Ld^4(\R^d)}\|[f]_2\|_{\Ld^4(\R^d)}.
\end{multline}
Third, decomposing
\[D_zE^n[\nabla\bar u^{n}]=\sum_{k=0}^{n-1}\big(D_z\nabla\varphi_{i_1\ldots i_{k+1}}^{k+1}+D_z\varphi_{i_1\ldots i_k}^k\ee_{i_{k+1}}\big)\nabla_{i_1\ldots i_{k+1}}^{k+1}\bar u^{n},\]
applying~\eqref{eq:bound-E-Js}, and using the corrector estimates of Proposition~\ref{prop:cor},
\begin{multline*}
\expec{\bigg(\int_{\R^d}\Big[\int_{\R^d}S^{1}\cdot DE^{n}[\nabla\bar u^{n}]\Big]_1^2\bigg)^p}^\frac1{2p}
\,\lesssim_{p,\delta}\,\sum_{k=0}^{n-1}\big\|[S^{1}\nabla^{k+1}\bar u^n]_2\big\|_{\Ld^{\frac{2d}{d+2k}}(\R^d;\Ld^{2p+\delta}(\Omega))}\\
\,\lesssim_{p,\delta}\,\sum_{k=0}^{n-1}\big\|\mu_{d,n}[\nabla^ng\nabla^{k+1}\bar u^n]_\infty\big\|_{\Ld^{\frac{2d}{d+2k}}(\R^d)}.
\end{multline*}
Fourth, applying~\eqref{eq:bound-E-Js} and using the corrector estimates of Proposition~\ref{prop:cor},
\begin{multline*}
\sum_{s=0}^{n-1}\sum_{l=0}^{s}\expec{\bigg(\int_{\R^d}\Big[\int_{\R^d}S^{2;l,s}_{j_1\ldots j_{s+1}}\cdot D\big(\nabla\varphi^{s+1}_{j_1\ldots j_{s+1}}+\varphi^{s}_{j_1\ldots j_{s}}\ee_{j_{s+1}}\big)\Big]_1^2\bigg)^p}^\frac1{2p}\\
\,\lesssim_{p,\delta}\,\sum_{s=0}^{n-1}\sum_{l=0}^{s}\big\|[S^{2;l,s}]_2\big\|_{\Ld^{\frac{2d}{d+2s}}(\R^d;\Ld^{2p+\delta}(\Omega))}\\
\,\lesssim_{p,\delta}\,\sum_{s=0}^{n-1}\sum_{l=0}^{s}\big\|\mu_{d,n}\big[\nabla^l(g\nabla^{n+s+1-l}\bar u^{n})\big]_\infty\big\|_{\Ld^\frac{2d}{d+2s}(\R^d)}.
\end{multline*}
Collecting the above estimates on the right-hand side terms of~\eqref{eq:decomp-SG-(ii)}, we see that all of them are bilinear in $g$ and $f$. Crucial for us is their scaling in the underlying lengthscale, which has three origins: the (total) number of derivatives (to be counted negatively), the spatial integrability exponent~$p$ (to be counted in form of $\frac dp$), and the weights $\mu_{d,n}$ (cf.~Proposition~\ref{prop:cor}). For instance, in view of~\eqref{eq:bound-nabu-E-err}, the first right-hand side term in~\eqref{eq:decomp-SG-(ii)} has scaling $\le L^{-n+\frac d4+\frac d4}\mu_{d,n}(L)=L^{-n+\frac d2}\mu_{d,n}(L)$, where $L$ stands for the lengthscale, because $\nabla\tilde u^l$ has the same scaling as $\nabla^{l-1}f$ (cf.~Definition~\ref{def:homog-eqn}). Let us take the last term as a second example: the scaling of its $(s,l)$-contribution is $L^{-(n+s)+\frac{d+2s}2}\mu_{d,n}(L)=L^{-n+\frac{d}2}\mu_{d,n}(L)$. In fact, all terms have a scaling at most $L^{-n+\frac d2}\mu_{d,n}(L)$, which after $\e$-rescaling ($L=\e^{-1}$) turns into the desired estimate.
\end{proof}

\section{Proof of the convergence of the covariance structure}\label{sec:cov-conv}
Combining the representation formula~\eqref{eq:commut-Xin-2} of Proposition~\ref{prop:1st-variation} with the Helffer-Sjöstrand identity of Proposition~\ref{prop:Mall}(ii), we establish Theorem~\ref{th:main}(iii), that is, the convergence of the covariance structure of the higher-order standard homogenization commutator.

\begin{proof}[Proof of Theorem~\ref{th:main}(iii)]
Combining the Helffer-Sjöstrand identity of Proposition~\ref{prop:Mall}(ii) with the representation formula~\eqref{eq:commut-Xin-2} tested with $g\in C^\infty_c(\R^d)^d$, we obtain, using~\eqref{eq:Da} and recalling that $(1+\Lc)^{-1}$ has operator norm $\le1$,
\begin{multline}\label{eq:decomp-SL1S+T}
\bigg|\var{\int_{\R^d}g\cdot\Xi^{\circ,n}[\nabla\bar w]}-\expec{\langle S,(1+\Lc)^{-1}S\rangle_\Hf}\bigg|\\
\le\,2\,\expec{\|S\|_\Hf^2}^\frac12\expec{\|R\|_\Hf^2}^\frac12+\expec{\|R\|_\Hf^2},
\end{multline}
where we have set
\begin{align*}
S(z)\,:=\,\sum_{k=0}^{n}\nabla^k_{i_1\ldots i_k}g_j(z)\,\big(\mathds1_{k\ne n}\nabla\varphi_{ji_1\ldots i_k}^{*,k+1}+\varphi_{ji_1\ldots i_{k-1}}^{*,k}\ee_{i_k}\big)(z)\cdot a_0'(G(z))\,E^{n}[\nabla\bar w](z),
\end{align*}
and
\begingroup\allowdisplaybreaks
\begin{multline*}
R(z):=-\sum_{k=0}^{n}(-1)^{k}\sum_{s=n-k}^{n-1}~\sum_{l=0}^{k+s-n}(-1)^l\binom{k}{l}\nabla_{i_1\ldots i_l}^l\big(g_j\nabla^{k-l}_{i_{l+1}\ldots i_k}\nabla^{s+1}_{j_1\ldots j_{s+1}}\bar w\big)(z)\\
\times\sym_{i_1\ldots i_k}\big(\mathds1_{k\ne n}\nabla\varphi_{ji_1\ldots i_k}^{*,k+1}+\varphi_{ji_1\ldots i_{k-1}}^{*,k}\ee_{i_k}\big)(z)\cdot a_0'(G(z))\big(\nabla\varphi^{s+1}_{j_1\ldots j_{s+1}}+\varphi^{s}_{j_1\ldots j_{s}}\ee_{j_{s+1}}\big)(z)\\
+(-1)^{n}\sum_{s=0}^{n-1}~\sum_{l=s+1}^{n}(-1)^l\binom{n}{l}\int_{\R^d}\nabla_{i_1\ldots i_l}^l\big(g_j\nabla^{n-l}_{i_{l+1}\ldots i_{n}}\nabla^{s+1}_{j_1\ldots j_{s+1}}\bar w\big)\\
\times\sym_{i_1\ldots i_{n}}\big((\Aa^*\varphi_{ji_1\ldots i_{n-1}}^{*,n}-\sigma_{ji_1\ldots i_{n-1}}^{*,n})\ee_{i_{n}}\big)\cdot D_z\big(\nabla\varphi^{s+1}_{j_1\ldots j_{s+1}}+\varphi^{s}_{j_1\ldots j_{s}}\ee_{j_{s+1}}\big).
\end{multline*}
\endgroup
Appealing to the definition of $E^n[\nabla\bar w]$ (cf.~Definition~\ref{def:EnFn}), to~\eqref{eq:cov-L1-H}, and to the corrector estimates of Proposition~\ref{prop:cor}, we find
\[\expec{\|S\|_\Hf^2}^\frac12\,\lesssim\,\sum_{k=0}^n\sum_{l=0}^{n-1}\big\|\mu_{d,n}[\nabla^kg]_\infty[\nabla^{l+1}\bar w]_\infty\big\|_{\Ld^2(\R^d)},\]
and similarly,
\begin{multline*}
\expec{\|R\|_\Hf^2}^\frac12\,\lesssim\,
\sum_{k=0}^{n}\sum_{s=n-k}^{n-1}~\sum_{l=0}^{k+s-n}\big\|\mu_{d,k}[\nabla^l(g\nabla^{k+s+1-l}\bar w)]_\infty\big\|_{\Ld^2(\R^d)}\\
+\sum_{s=0}^{n-1}~\sum_{l=s+1}^{n}\expec{\int_{\R^d}\Big[\int_{\R^d}S^{2;l,s}_{j_1\ldots j_{s+1}}\cdot D_z\big(\nabla\varphi^{s+1}_{j_1\ldots j_{s+1}}+\varphi^{s}_{j_1\ldots j_{s}}\ee_{j_{s+1}}\big)\Big]_1^2}^\frac12,
\end{multline*}
in terms of
\[S^{2;l,s}_{j_1\ldots j_{s+1}}\,:=\,\nabla_{i_1\ldots i_l}^l\big(g_j\nabla^{n-l}_{i_{l+1}\ldots i_{n}}\nabla^{s+1}_{j_1\ldots j_{s+1}}\bar w\big)
\sym_{i_1\ldots i_{n}}\big((\Aa^*\varphi_{ji_1\ldots i_{n-1}}^{*,n}-\sigma_{ji_1\ldots i_{n-1}}^{*,n})\ee_{i_{n}}\big).\]
Then applying~\eqref{eq:bound-E-Js} and again using the corrector estimates of Proposition~\ref{prop:cor}, we deduce for all $\delta>0$,
\begin{eqnarray*}
\expec{\|R\|_\Hf^2}^\frac12&\lesssim_\delta&
\sum_{k=0}^{n}\sum_{s=n-k}^{n-1}~\sum_{l=0}^{k+s-n}\big\|\mu_{d,k}[\nabla^l(g\nabla^{k+s+1-l}\bar w)]_\infty\big\|_{\Ld^2(\R^d)}\\
&&\qquad\qquad+\sum_{s=0}^{n-1}~\sum_{l=s+1}^{n}\|[S^{2;l,s}]_2\|_{\Ld^\frac{2d}{d+2s}(\R^d;\Ld^{2+\delta}(\Omega))}\\
&\lesssim_{\delta}&\sum_{k=0}^{n}\sum_{s=n-k}^{n-1}~\sum_{l=0}^{k+s-n}\big\|\mu_{d,k}[\nabla^l(g\nabla^{k+s+1-l}\bar w)]_\infty\big\|_{\Ld^2(\R^d)}\\
&&\qquad\qquad+\sum_{s=0}^{n-1}~\sum_{l=s+1}^{n}\big\|\mu_{d,n}\big[\nabla^l(g\nabla^{n+s+1-l}\bar w)\big]_\infty\big\|_{\Ld^{\frac{2d}{d+2s}}(\R^d)}.
\end{eqnarray*}
As in the proof of Theorem~\ref{th:main}(ii) (cf.~Section~\ref{sec:pathw}), it is easily checked that these expressions in $(g,\nabla\bar w)$ have the desired length scaling, hence lead to the claimed error estimate after $\e$-rescaling.

\medskip\noindent
Now that the right-hand side of~\eqref{eq:decomp-SL1S+T} is properly estimated, we turn to a suitable decomposition of $\expec{\langle S,(1+\Lc)^{-1}S\rangle_\Hf}$.
Inserting the definition of $E^n[\nabla\bar w]$ (cf.~Definition~\ref{def:EnFn}) into $S$, using the definition~\eqref{eq:def-Hf-ps} of the scalar product in $\Hf$, expanding the product, and focusing on terms of order $\e^k$ with $k\le n-1$, we write
\begin{multline*}
\bigg|\,\expec{\langle S,(1+\Lc)^{-1}S\rangle_\Hf}-\sum_{l,l',k,k'=0}^{n-1}\mathds1_{l+l'+k+k'\le n-1}\\
\times\iint_{\R^d\times\R^d}c(z'-z)\,\nabla^{l'}_{i_1'\ldots i_{l'}'}g_{j'}(z')\,\nabla_{j_1'\ldots j_{k'+1}'}^{k'+1}\bar w(z')\,\nabla^l_{i_1\ldots i_l}g_j(z)\,\nabla_{j_1\ldots j_{k+1}}^{k+1}\bar w(z)\\
\hspace{3cm}\times\expec{T^{l',k'}_{j'i_1'\ldots i_{l'}'j_1'\ldots j_{k'+1}'}(z')\,(1+\Lc)^{-1}T^{l,k}_{ji_1\ldots i_lj_1\ldots j_{k+1}}(z)}dzdz'\,\bigg|\\
\lesssim\sum_{l,l'=0}^{n}\sum_{k,k'=0}^{n-1}\mathds1_{l+l'+k+k'\ge n}\big\|\mu_{d,l'}[\nabla^{l'}g]_\infty[\nabla^{k'+1}\bar w]_\infty\big\|_{\Ld^2(\R^d)}\big\|\mu_{d,l}[\nabla^{l}g]_\infty[\nabla^{k+1}\bar w]_\infty\big\|_{\Ld^2(\R^d)},
\end{multline*}
where we have set
\begin{equation*}
T^{l,k}_{ji_1\ldots i_lj_1\ldots j_{k+1}}:=
\big(\mathds1_{l\ne n}\nabla\varphi_{ji_1\ldots i_l}^{*,l+1}+\varphi_{ji_1\ldots i_{l-1}}^{*,l}\ee_{i_l}\big)
\cdot a_0'(G)\,
\big(\nabla\varphi_{j_1\ldots j_{k+1}}^{k+1}+\varphi^{k}_{j_1\ldots j_k}\ee_{j_{k+1}}\big).
\end{equation*}
Combining this with~\eqref{eq:decomp-SL1S+T} and with the above estimates on $\expec{\|S\|_\Hf^2}^\frac12$ and $\expec{\|R\|_\Hf^2}^\frac12$ and appealing to the stationarity of $T^{l,k}_{ji_1\ldots i_lj_1\ldots j_{k+1}}$ (recalling that $\Lc$ commutes with shifts), we deduce after $\e$-rescaling,
\begin{multline*}
\bigg|\var{\e^{-\frac d2}\int_{\R^d}g\cdot\Xi_{\e}^{\circ,n}[\nabla\bar w]}-\sum_{m=0}^{n-1}\e^m\sum_{l,l',k,k'\ge0\atop l+l'+k+k'=m}\\
\times\iint_{\R^d\times\R^d}c(y)\,\nabla^{l'}_{i'_1\ldots i'_{l'}}g_{j'}(z+\e y)\,\nabla^{k'+1}_{j'_1\ldots j'_{k'+1}}\bar w(z+\e y)\,\nabla^{l}_{i_1\ldots i_l}g_j(z)\,\nabla^{k+1}_{j_1\ldots j_{k+1}}\bar w(z)\\
\times\expec{T^{l',k'}_{j'i_1'\ldots i_{l'}'j_1'\ldots j_{k'+1}'}(y)\,(1+\Lc)^{-1}T^{l,k}_{ji_1\ldots i_lj_1\ldots j_{k+1}}(0)}dydz\bigg|~~
\lesssim_{n,f,g}\, \e^n\mu_{d,n}(\tfrac1\e).
\end{multline*}
Appealing to the boundedness of $(1+\Lc)^{-1}$ and to the corrector estimates of Proposition~\ref{prop:cor} in the form
\[\big|\expecm{T^{l',k'}_{j'i_1'\ldots i_{l'}'j_1'\ldots j_{k'+1}'}(y)\,(1+\Lc)^{-1}T^{l,k}_{ji_1\ldots i_lj_1\ldots j_{k+1}}(0)}\big|\lesssim1,\]
the conclusion follows after Taylor-expanding $\nabla^{l'}g_{j'}(z+\e y)\nabla^{k'+1}\bar w(z+\e y)$ to order $\e^{n-m}$ with the integrability assumption $\int_{\R^d}|y|^{n}|c(y)|dy\le1$.
\end{proof}

\section{Proof of the normal approximation result}\label{sec:normal}

In order to establish the asymptotic normality of the higher-order standard homogenization commutators, we combine the representation formula of Proposition~\ref{prop:2nd-variation} with the second-order Poincaré inequality of Proposition~\ref{prop:Mall}(iv).
We then need the following version of Lemma~\ref{lem:key-estimates-decomp} for the estimation of second Malliavin derivatives of higher-order correctors.

\begingroup\allowdisplaybreaks
\begin{lem}\label{lem:key-estimates-2nd}
For all $\zeta\in C^\infty_c(\R^d)$, $g\in C^\infty_c(\R^d)^d$, $0\le s\le\ell-1$, and $p\ge1$,
\begin{multline*}
K_s(g,\zeta):=\bigg|\int_{\R^d\times\R^d}\zeta(x)\zeta(y)\Big(\int_{\R^d}g\cdot\Aa\nabla D_{xy}^2\varphi^{s+1}\Big)\,dxdy\bigg|\\
+\bigg|\int_{\R^d\times\R^d}\zeta(x)\zeta(y)\Big(\int_{\R^d}g\cdot\Aa D^2_{xy}\varphi^{s}\Big)\,dxdy\bigg|
+\bigg|\int_{\R^d\times\R^d}\zeta(x)\zeta(y)\Big(\int_{\R^d}g\cdot D^2_{xy}\sigma^{s}\Big)\,dxdy\bigg|\\
\,\lesssim\,\|[\zeta]_\infty\|_{\Ld^\frac{2p}{p-1}(\R^d)}^2\sum_{k=0}^{s}\sum_{\beta\in\{0,1\}}\big\|[U_\cdot^kT^\beta g]_2\big([\nabla\varphi^{s+1-k}]_2+[\varphi^{s-k}]_2\big)\big\|_{\Ld^p(\R^d)}\\
+\|[\zeta]_\infty\|_{\Ld^2(\R^d)}\sum_{k=0}^{s}\sum_{\beta\in\{0,1\}}J_{s-k}\big(\zeta(\Aa^*)^{-1}a_0' (G)^*U_\cdot^kT^\beta g\big),
\end{multline*}
where the notation is
taken from
Lemma~\ref{lem:key-estimates-decomp}.
\end{lem}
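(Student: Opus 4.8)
The plan is to follow closely the two-step induction used in the proof of Lemma~\ref{lem:key-estimates-decomp}, the only genuinely new feature being that a \emph{second} Malliavin differentiation of a product $\Aa\nabla w$ produces, besides the exactly local term $D^2\Aa\,\nabla w$ and the genuine second derivative $\Aa\nabla D^2w$, the \emph{mixed} contribution $2\,D\Aa\,\nabla Dw$ (the factor $2$ arising after symmetrizing the two Malliavin variables against $\zeta\otimes\zeta$). The double term is, by~\eqref{eq:Da}, supported on the diagonal with density $a_0''(G)$, hence is exactly local and will feed the $\|[\zeta]_\infty\|_{\Ld^{2p/(p-1)}}^2$ contributions; the mixed term carries only one Dirac mass, so once that mass is absorbed into the test function it reduces to a \emph{first-order} Malliavin quantity already controlled by Lemma~\ref{lem:key-estimates-decomp}, which accounts for the $\|[\zeta]_\infty\|_{\Ld^2}\,J_\bullet$ contributions.

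\emph{Step~1: recursion.} Write $K_s(g,\zeta)=K_{s,1}(g,\zeta)+K_{s,2}(g,\zeta)+K_{s,3}(g,\zeta)$ for the three terms in the definition. Differentiating twice the equations for $\varphi^{s+1}$, $\varphi^s$, $\sigma^s$ (cf.~Definition~\ref{def:cor}), using~\eqref{eq:Da}, and testing respectively against the $\Aa^*$-Helmholtz potential $T\cdot$, the Riesz potential $U_\ee\cdot$, and $U_{\ee,\circ}\cdot$, exactly as in the proofs of~\eqref{eq:S1-path-est}--\eqref{eq:S3-path-est}, I expect to reach
\begin{align*}
K_{s,1}(g,\zeta)&\lesssim\|[\zeta]_\infty\|_{\Ld^{\frac{2p}{p-1}}(\R^d)}^2\,\big\|[Tg]_2\big([\nabla\varphi^{s+1}]_2+[\varphi^s]_2\big)\big\|_{\Ld^p(\R^d)}\\
&\qquad+\|[\zeta]_\infty\|_{\Ld^2(\R^d)}\,J_{s}\big(\zeta(\Aa^*)^{-1}a_0'(G)^*Tg\big)+K_{s,2}(Tg,\zeta)+K_{s,3}(Tg,\zeta),\\
K_{s,2}(g,\zeta)&=K_{s-1,1}(U_\ee g,\zeta),\\
K_{s,3}(g,\zeta)&\lesssim\|[\zeta]_\infty\|_{\Ld^{\frac{2p}{p-1}}(\R^d)}^2\,\big\|[U_{\ee,\circ}g]_2\big([\nabla\varphi^{s}]_2+[\varphi^{s-1}]_2\big)\big\|_{\Ld^p(\R^d)}\\
&\qquad+\|[\zeta]_\infty\|_{\Ld^2(\R^d)}\,J_{s-1}\big(\zeta(\Aa^*)^{-1}a_0'(G)^*U_{\ee,\circ}g\big)+K_{s-1}(U_{\ee,\circ}g,\zeta).
\end{align*}
Here the first (double) terms come from $D^2\Aa$ being a product of two Dirac masses, which after contraction against $\zeta(x)\zeta(y)$ yields $\zeta^2$ and then H\"older's inequality with exponents $(\tfrac{2p}{p-1},\tfrac{2p}{p-1},p)$; the second (mixed) terms come from $D\Aa\,\nabla Dw$, where one Dirac mass is absorbed into the test function (producing the weight $\zeta(\Aa^*)^{-1}a_0'(G)^*\cdot$) while the remaining Malliavin variable is paired against $\zeta$ via~\eqref{eq:cov-L1-H}, leaving precisely a quantity of the type estimated in Lemma~\ref{lem:key-estimates-decomp} (bounded by $J_s$, resp.~$J_{s-1}$, which contains the relevant $J_{\bullet,1}$, $J_{\bullet,2}$, $J_{\bullet,3}$); and the last terms come from the genuine second derivatives of the lower-order correctors and flux correctors on the right-hand sides of the corrector equations, handled by the operators $U_\ee$, $U_{\ee,\circ}$ exactly as there.

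\emph{Step~2: induction on $s$, and conclusion.} Iterating the three inequalities of Step~1, collapsing repeated Helmholtz projections via $TU_\ee=U_\ee$ and $TU_{\ee,\circ}=U_{\ee,\circ}$ (so that $T$ is applied at most once to $g$ along any branch), and terminating the recursion at $s=0$ thanks to $\varphi^0=1$ and $\sigma^0=0$ (which force $K_{0,2}=K_{0,3}=0$), one obtains exactly the claimed sum over $0\le k\le s$ and $\beta\in\{0,1\}$, the operator string $U_\cdot^kT^\beta$ recording the accumulated Riesz potentials and the optional Helmholtz projection. The corrector estimates of Proposition~\ref{prop:cor} are not needed to close the recursion itself; they enter, as in~\eqref{eq:bound-E-Js}, only if one wishes to post-process this into an $\Ld^p(\Omega)$-bound via Theorem~\ref{th:CZ-ann} and Corollary~\ref{cor:UT}.

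The step I expect to be most delicate is purely organizational: tracking the mixed terms through the full recursion and checking that one produced after $k$ reduction steps is indeed of the form $J_{s-k}\big(\zeta(\Aa^*)^{-1}a_0'(G)^*U_\cdot^kT^\beta g\big)$, together with the localization bookkeeping in passing from the pointwise Dirac-mass representation~\eqref{eq:Da} to the unit-scale averages $[\cdot]_2$ and $[\zeta]_\infty$ (where convolutions with the covariance $c$ get absorbed by~\eqref{eq:cov-L1} and Young's inequality), and in correctly identifying the $\Hf^{\otimes2}$-pairing underlying the contraction $\int_{\R^d\times\R^d}\zeta(x)\zeta(y)\,D^2_{xy}(\cdot)\,dx\,dy$ so that Cauchy--Schwarz in $\Hf$ applies as in Lemma~\ref{lem:key-estimates-decomp}.
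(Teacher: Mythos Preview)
Your proposal is correct and follows essentially the same approach as the paper: the paper proves exactly the three recursive inequalities you state (labelled \eqref{eq:S1-path-est-2}--\eqref{eq:S3-path-est-2} there), obtained by twice differentiating the corrector equations, absorbing the $D^2\Aa$ diagonal term via H\"older with exponents $(\tfrac{2p}{p-1},\tfrac{2p}{p-1},p)$, absorbing one Dirac in the mixed $D\Aa\,\nabla Dw$ term to reduce it to a $J_\bullet$-quantity, and feeding the genuine $D^2$ term into the recursion. The conclusion is reached by the same induction on $s$ using $TU=U$ and $TU_\circ=U_\circ$, with termination at $s=0$ from $\varphi^0=1$, $\sigma^0=0$.
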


\begin{proof}
Denote by $K_{s,1}(g,\zeta)$, $K_{s,2}(g,\zeta)$, and $K_{s,3}(g,\zeta)$ the three left-hand side terms in the claimed estimate, hence $K_s(g,\zeta)=K_{s,1}(g,\zeta)+K_{s,2}(g,\zeta)+K_{s,3}(g,\zeta)$.
We prove that for all $g\in C^\infty_c(\R^d;\Ld^\infty(\R^d))^d$, $\zeta\in C^\infty_c(\R^d)$, $0\le s\le\ell-1$, and $p\ge1$,
\begin{eqnarray}
\hspace{-0.5cm}K_{s,1}(g,\zeta)&\lesssim&K_{s,2}(Tg,\zeta)+K_{s,3}(Tg,\zeta)+\|[\zeta]_\infty\|_{\Ld^2(\R^d)}J_{s}\big(\zeta(\Aa^*)^{-1} a_0'(G)^*Tg\big)\nonumber\\
\hspace{-0.5cm}&&\hspace{2cm}+\|[\zeta]_\infty\|_{\Ld^{\frac{2p}{p-1}}(\R^d)}^2\big\|\big([\nabla \varphi^{s+1}]_2+[\varphi^{s}]_2\big)[Tg]_2\big\|_{\Ld^p(\R^d)},\label{eq:S1-path-est-2}\\
\hspace{-0.5cm}K_{s,2}(g,\zeta)&=&K_{s-1,1}(Ug,\zeta),\label{eq:S2-path-est-2}\\
\hspace{-0.5cm}K_{s,3}(g,\zeta)&\lesssim&K_{s-1}(U_\circ g,\zeta)+\|[\zeta]_\infty\|_{\Ld^2(\R^d)}J_{s-1}\big(\zeta(\Aa^*)^{-1}a_0'(G)^*U_\circ g\big)\nonumber\\
\hspace{-0.5cm}&&\hspace{2cm}+\|[\zeta]_\infty\|_{\Ld^\frac{2p}{p-1}(\R^d)}^2\big\|\big([\nabla\varphi^s]_2+[\varphi^{s-1}]_2\big)[U_\circ g]_2\big\|_{\Ld^p(\R^d)}.\label{eq:S3-path-est-2}
\end{eqnarray}
These estimates combine to
\begin{multline*}
K_s(g,\zeta)\,\lesssim\,\sum_{\beta\in\{0,1\}}\bigg(K_{s-1}\big(U_\cdot T^\beta g,\zeta\big)\\
+\|[\zeta]_\infty\|_{\Ld^\frac{2p}{p-1}(\R^d)}\Big\|\big([\nabla\varphi^{s+1}]_2+[\varphi^s]_2\big)[Tg]_2+\big([\nabla\varphi^s]_2+[\varphi^{s-1}]_2\big)[U_\circ T^\beta g]_2\Big\|_{\Ld^p(\R^d)}\\
+\|[\zeta]_\infty\|_{\Ld^2(\R^d)}\Big(J_s\big(\zeta(\Aa^*)^{-1} a_0'(G)^*Tg\big)+J_{s-1}\big(\zeta(\Aa^*)^{-1}a_0'(G)^* U_\circ T^\beta g\big)\Big)\bigg).
\end{multline*}
Noting as in the proof of Lemma~\ref{lem:key-estimates-decomp} that $TU=U$ and $TU_\circ=U_\circ$, the conclusion follows by induction.

\medskip\noindent
Let $0\le s\le \ell-1$. We start with the proof of~\eqref{eq:S1-path-est-2}.
Taking the second Malliavin derivative of the equation for $\varphi^{s+1}$ (cf.~Definition~\ref{def:cor}), we find
\begin{multline*}
-\nabla\cdot\Aa\nabla D^2_{xy}\varphi^{s+1}_{i_1\ldots i_{s+1}}\,=\,
\nabla\cdot\big((\Aa D^2_{xy}\varphi^{s}_{i_1\ldots i_{s}}-D^2_{xy}\sigma^{s}_{i_1\ldots i_{s}})\,\ee_{i_{s+1}}\big)\\
+\nabla\cdot\big(D_x\Aa (\nabla D_y\varphi^{s+1}_{i_1\ldots i_{s+1}}+D_y\varphi^{s}_{i_1\ldots i_{s}}\,\ee_{i_{s+1}})\big)
+\nabla\cdot\big(D_y\Aa (\nabla D_x\varphi^{s+1}_{i_1\ldots i_{s+1}}+D_x\varphi^{s}_{i_1\ldots i_{s}}\,\ee_{i_{s+1}})\big)\\
+\nabla\cdot \big(D_{xy}^2\Aa(\nabla \varphi^{s+1}_{i_1\ldots i_{s+1}}+\varphi^{s}_{i_1\ldots i_{s}}\,\ee_{i_{s+1}})\big).
\end{multline*}
Testing with $(\nabla\cdot\Aa^*\nabla)^{-1}\nabla\cdot(\Aa^*g)$, recalling the definition of $T$, and using~\eqref{eq:Da} and
\begin{equation}\label{eq:D2a}
D_{xy}^2\Aa=a_0''(G(x))\,\delta(\cdot-x)\,\delta(x-y),
\end{equation}
we find after integration against $\zeta\otimes\zeta$,
\begin{multline}\label{eq:foaddit}
\bigg|\int_{\R^d\times\R^d}\zeta(x)\zeta(y)\Big(\int_{\R^d}g\cdot\Aa\nabla D^2_{xy}\varphi^{s+1}_{i_1\ldots i_{s+1}}\Big)\,dxdy\bigg|\\
\lesssim\,\bigg|\int_{\R^d\times\R^d}\zeta(x)\zeta(y)\Big(\int_{\R^d}Tg\cdot\big(\Aa D^2_{xy}\varphi^{s}_{i_1\ldots i_{s}}-D^2_{xy}\sigma^{s}_{i_1\ldots i_{s}}\big)\ee_{i_{s+1}}\Big)\,dxdy\bigg|\\
+\int_{\R^d}|\zeta(y)|\,\bigg|\int_{\R^d}\zeta\,Tg\cdot a_0'(G)\big(\nabla D_y\varphi_{i_1\ldots i_{s+1}}^{s+1}+D_y\varphi_{i_1\ldots i_{s}}^{s}\ee_{i_{s+1}}\big)\bigg|\,dy\\
+\int_{\R^d}[\zeta]_\infty^2[Tg]_2\big([\nabla \varphi^{s+1}]_2+[\varphi^{s}]_2\big),
\end{multline}
and the claim~\eqref{eq:S1-path-est-2} follows from Hölder's inequality.
Next, \eqref{eq:S2-path-est-2} follows from the definition of $U$.
We turn to~\eqref{eq:S3-path-est-2}. Writing $g=\triangle\triangle^{-1}g$, integrating by parts, and using the equation for $\triangle\sigma^s$ (cf.~Definition~\ref{def:cor}), we find
\begin{multline*}
\bigg|\int_{\R^d\times\R^d}\zeta(x)\zeta(y)\Big(\int_{\R^d}g\cdot D^2_{xy}\sigma^{s}_{i_1\ldots i_s}\ee_{i_{s+1}}\Big)\,dxdy\bigg|\\
\le\,\bigg|\int_{\R^d\times\R^d}\zeta(x)\zeta(y)\Big(\int_{\R^d}(\nabla_i\triangle^{-1}g_i)\,\ee_{i_{s+1}}\cdot D^2_{xy} q^{s}_{i_1\ldots i_s}\Big)\,dxdy\bigg|\\
+\bigg|\int_{\R^d\times\R^d}\zeta(x)\zeta(y)\Big(\int_{\R^d}(\nabla_{i_{s+1}}\triangle^{-1}g_i)\,\ee_i\cdot D^2_{xy} q^{s}_{i_1\ldots i_s}\Big)\,dxdy\bigg|.
\end{multline*}
Hence, by the definition of $q^s$ to which we apply $D^2_{xy}$, we obtain an analogous structure as in~\eqref{eq:foaddit},
and the claim~\eqref{eq:S3-path-est-2} follows.
\end{proof}
\endgroup

With these estimates at hand, we now turn to the proof of Theorem~\ref{th:main}(iv), that is, of the normal approximation result for the higher-order standard homogenization commutators.

\begin{proof}[Proof of Theorem~\ref{th:main}(iv)]
We split the proof into two steps.

\nopagebreak
\medskip
\step1 Proof that for all $8\le p,q<\infty$ and $1\le n\le\ell$,
\begin{multline*}
\expec{\Big\|D^2\int_{\R^d}g\cdot\Xi^{\circ,n}[\nabla\bar w]\Big\|_{\op}^4}^\frac14\\
\,\lesssim\,
\sum_{k=0}^{n}\sum_{s=0}^{n-1}p^{c_k+c_s}\Big(\big\|\mu_{d,k}[\nabla^kg\nabla^{s+1}\bar w]_\infty\big\|_{\Ld^p(\R^d)}+\sum_{l=0}^{k+s-n}\big\|\mu_{d,k}[\nabla^l(g\nabla^{k-l+s+1}\bar w)]_\infty\big\|_{\Ld^p(\R^d)}\Big)\\
+\sum_{k=0}^n\sum_{s=0}^{n-1}\sum_{m=0}^sp^{c_k+c_{s-m}}\big(p\log^2 p\big)^{\frac{m(m+1)}{2d}}
\bigg(\big\|\mu_{d,k}[\nabla^kg\nabla^{s+1}\bar w]_\infty\big\|_{\Ld^{\frac{dp}{2d+mp}}(\R^d)}\\
\hspace{6cm}+\sum_{l=0}^{k+s-n}\big\|\mu_{d,k}[\nabla^l(g\nabla^{k-l+s+1}\bar w)]_\infty\big\|_{\Ld^{\frac{dp}{2d+mp}}(\R^d)}\bigg)\\
+C_q\sum_{s=0}^{n-1}
\sum_{l=s+1}^{n}\big\|\mu_{d,n}\big[\nabla^l\big(g\nabla^{n-l+s+1}\bar w\big)\big]_\infty\big\|_{\Ld^\frac{dq}{2d+sq}(\R^d)}.
\end{multline*}
In order to not overestimate the logarithmic correction in the final result, we have to be specific here on the $p$-dependence; however, this is only necessary in the leading-order terms (that is, in the first two terms).
Let $1\le n\le\ell$.
Applying the representation formula of Proposition~\ref{prop:2nd-variation}, using~\eqref{eq:Da} and~\eqref{eq:D2a}, inserting the definition of $E_\e^n[\nabla\bar w]$ (cf.~Definition~\ref{def:EnFn}), and ordering terms by the number of Malliavin derivatives on $\Aa$, we find
\begin{align}\label{eq:decomp-D2X}
D^2\int_{\R^d}g\cdot\Xi^{\circ,n}[\nabla\bar w]\,=\,U_1+U_2+U_3,
\end{align}
in terms of
\begin{eqnarray*}
U_1(x,y)&:=&\delta(x-y)\,S^{1}(x),\\
U_2(x,y)&:=&\tilde U_2(x,y)+\tilde U_2(y,x),\\
\tilde U_2(x,y)&:=&\sum_{s=0}^{n-1}S^{2;s}_{j_1\ldots j_{s+1}}(x)\cdot D_y\big(\nabla\varphi^{s+1}_{j_1\ldots j_{s+1}}+\varphi^{s}_{j_1\ldots j_{s}}\ee_{j_{s+1}}\big)(x),\\
U_3(x,y)&:=&\sum_{s=0}^{n-1}\int_{\R^d}S^{3;s}_{j_1\ldots j_{s+1}}\cdot D_{xy}^2\big(\nabla\varphi^{s+1}_{j_1\ldots j_{s+1}}+\varphi^{s}_{j_1\ldots j_{s}}\ee_{j_{s+1}}\big),
\end{eqnarray*}
where we identify the operators $U_i$ with their kernels and we have set
\begingroup\allowdisplaybreaks
\begin{eqnarray*}
S^{1}&:=&\sum_{k=0}^{n}\sum_{s=0}^{n-1}\bigg(\big(\nabla^k_{i_1\ldots i_k}g_j\nabla_{j_1\ldots j_{s+1}}^{s+1}\bar w\big)\\
&&\hspace{3cm}-\sum_{l=0}^{k+s-n}\binom{k}{l}(-1)^{k-l}\,\nabla_{i_1\ldots i_l}^l\big(g_j\nabla^{k-l}_{i_{l+1}\ldots i_k}\nabla^{s+1}_{j_1\ldots j_{s+1}}\bar w\big)\bigg)\\
&&\hspace{-0.8cm}\times\sym_{i_1\ldots i_k}\big(\mathds1_{k\ne n}\nabla\varphi_{ji_1\ldots i_k}^{*,k+1}+\varphi_{ji_1\ldots i_{k-1}}^{*,k}\ee_{i_k}\big)
\cdot a_0''(G)\big(\nabla\varphi^{s+1}_{j_1\ldots j_{s+1}}+\varphi^{s}_{j_1\ldots j_{s}}\ee_{j_{s+1}}\big),\\
S^{2;s}_{j_1\ldots j_{s+1}}&:=&\sum_{k=0}^{n}\bigg(\big(\nabla^k_{i_1\ldots i_k}g_j\nabla_{j_1\ldots j_{s+1}}^{s+1}\bar w\big)\\
&&\hspace{2cm}-\sum_{l=0}^{k+s-n}\binom{k}{l}(-1)^{k+l}\nabla_{i_1\ldots i_l}^l\big(g_j\nabla^{k-l}_{i_{l+1}\ldots i_k}\nabla^{s+1}_{j_1\ldots j_{s+1}}\bar w\big)\bigg)\\
&&\hspace{1cm}\times\sym_{i_1\ldots i_k}\big(\mathds1_{k\ne n}\nabla\varphi_{ji_1\ldots i_k}^{*,k+1}+\varphi_{ji_1\ldots i_{k-1}}^{*,k}\ee_{i_k}\big) a_0'(G),\\
S^{3;s}_{j_1\ldots j_{s+1}}&:=&\sum_{l=s+1}^{n}\binom{n}{l}(-1)^{n+l}\nabla_{i_1\ldots i_l}^l\big(g_j\nabla^{n-l}_{i_{l+1}\ldots i_{n}}\nabla^{s+1}_{j_1\ldots j_{s+1}}\bar w\big)\\
&&\hspace{3cm}\times\sym_{i_1\ldots i_{n}}\big((\Aa^*\varphi_{ji_1\ldots i_{n-1}}^{*,n}-\sigma_{ji_1\ldots i_{n-1}}^{*,n})\ee_{i_{n}}\big),
\end{eqnarray*}
\endgroup
with the implicit understanding that the contribution of $\sum_{l=0}^{k+s-n}$ vanishes if \mbox{$k<n-s$}.
We analyze the three right-hand side terms in~\eqref{eq:decomp-D2X} separately and we start with $U_1$.
Decomposing the covariance function as $c=c_0\ast c_0$ so that $\|\zeta\|_\Hf=\|c_0\ast\zeta\|_{\Ld^2(\R^d)}$, the definition~\eqref{eq:def-op} of the norm $\|\cdot\|_\op$ is equivalent to
\begin{equation}\label{eq:rewrite-norm}
\|U_1\|_{\op}\,=\,\sup_{\|\zeta\|_{\Ld^2(\R^d)}=\|\zeta'\|_{\Ld^2(\R^d)}=1}\Big|\int_{\R^d\times\R^d}(c_0\ast\zeta)(x)\,U_1(x,y)\,(c_0\ast\zeta')(y)\,dxdy\Big|.
\end{equation}
For $p\ge4$, inserting the special form of $U_1(x,y)=\delta(x-y)\,S^1(x)$ and noting that the discrete $\ell^{\frac{2p}{p-2}}$--$\ell^2$ inequality and the assumption~\eqref{eq:cov-L1} imply
\begin{equation}\label{eq:rewrite-norm-seq}
\|[c_0\ast\zeta]_\infty\|_{\Ld^{\frac{2p}{p-2}}(\R^d)}\,\lesssim\,\|[c_0\ast\zeta]_\infty\|_{\Ld^2(\R^d)}\,\le\,\|\zeta\|_{\Ld^2(\R^d)},
\end{equation}
we get
\[\|U_1\|_\op\,\lesssim\,\|[S^1]_1\|_{\Ld^p(\R^d)},\]
hence,
\[\expec{\|U_1\|_\op^4}^\frac14\,\le\,\expec{\|U_1\|_\op^p}^\frac1p\,\lesssim\,\|[S^1]_1\|_{\Ld^p(\R^d;\Ld^p(\Omega))}.\]
By definition of $S^1$, appealing to the corrector estimates of Proposition~\ref{prop:cor}, we deduce
\begin{multline*}
\expec{\|U_1\|_{\op}^4}^\frac14\,\lesssim\,\sum_{k=0}^{n}\sum_{s=0}^{n-1}p^{c_k+c_s}\Big(\big\|\mu_{d,k}[\nabla^kg\nabla^{s+1}\bar w]_\infty\big\|_{\Ld^p(\R^d)}\\
+\sum_{l=0}^{k+s-n}\big\|\mu_{d,k}[\nabla^l(g\nabla^{k-l+s+1}\bar w)]_\infty\big\|_{\Ld^p(\R^d)}\Big).
\end{multline*}
We turn to $U_2$. By symmetry, it suffices to estimate the norm of $\tilde U^2$. From~\eqref{eq:rewrite-norm} and $\|[c_0\ast\zeta]_\infty\|_{\Ld^2(\R^d)}\le\|\zeta\|_{\Ld^2(\R^d)}$, we see
\[\|\tilde U_2\|_{\op}^2\,\lesssim\,\sup_{\|[\zeta]_\infty\|_{\Ld^2(\R^d)}=1}\int_{\R^d}\Big[\int_{\R^d}\tilde U_2(x,\cdot)\zeta(x)\,dx\Big]_1^2.\]
Taking the expectation and arguing by duality, we find
\begin{equation}\label{eq:op-bound-D2X}
\expec{\|\tilde U_2\|_{\op}^4}^\frac14
\,\lesssim\,\sup_{\|[\zeta]_\infty\|_{\Ld^4(\Omega;\Ld^2(\R^d))}=1}M(\zeta),
\end{equation}
where
\[M(\zeta)\,:=\,\E\bigg[{\int_{\R^d}\Big[\int_{\R^d}\tilde U_2(x,\cdot) \zeta(x)dx\Big]_1^2}\bigg]^\frac12.\]
Let $\zeta\in C^\infty_c(\R^d;\Ld^\infty(\Omega))$ be fixed with $\|[\zeta]_\infty\|_{\Ld^4(\Omega;\Ld^2(\R^d))}=1$.
Noting that Jensen's inequality yields
\[\|[\zeta]_\infty\|_{\Ld^2(\R^d;\Ld^2(\Omega))}\,=\,\|[\zeta]_\infty\|_{\Ld^2(\Omega;\Ld^2(\R^d))}\,\le\,\|[\zeta]_\infty\|_{\Ld^4(\Omega;\Ld^2(\R^d))},\]
and that the discrete $\ell^4$--$\ell^2$ inequality gives
\[\|[\zeta]_\infty\|_{\Ld^4(\R^d;\Ld^4(\Omega))}\,\lesssim\,\|[\zeta]_\infty\|_{\Ld^4(\Omega;\Ld^2(\R^d))},\]
we deduce by interpolation, for all $2\le r\le4$,
\begin{align}\label{eq:xi-LrLr}
\|[\zeta]_\infty\|_{\Ld^r(\R^d;\Ld^r(\Omega))}\,\lesssim\,\|[\zeta]_\infty\|_{\Ld^4(\Omega;\Ld^2(\R^d))}=1.
\end{align}
By definition of $\tilde U_2$ and by Lemma~\ref{lem:key-estimates-decomp} in the form~\eqref{eq:decomp-Js}, we get
\[M(\zeta)\,\lesssim\,\sum_{s=0}^{n-1}\sum_{m=0}^s\sum_{\beta\in\{0,1\}}\big\|\big[U_\cdot^m T^\beta\big((\Aa^*)^{-1}\zeta S^{2;s}\big)\big]_2\big([\nabla\varphi^{s+1-m}]_2+[\varphi^{s-m}]_2\big)\big\|_{\Ld^2(\R^d;\Ld^2(\Omega))},\]
and the corrector estimates of Proposition~\ref{prop:cor} yield for all $p\ge1$,
\begin{equation}\label{eq:start-est-Mxi}
M(\zeta)
\,\lesssim\,\sum_{s=0}^{n-1}\sum_{m=0}^sp^{c_{s-m}}\sum_{\beta\in\{0,1\}}\big\|\big[U_\cdot^m T^\beta\big((\Aa^*)^{-1}\zeta S^{2;s}\big)\big]_2\big\|_{\Ld^2(\R^d;\Ld^{\frac{2p}{p-1}}(\Omega))}.
\end{equation}
For $p\ge2$, applying $m$ times the bound of Corollary~\ref{cor:UT} and once the one of Theorem~\ref{th:CZ-ann} with $\delta=\frac1{m+1}\frac{2p}{(p-1)(p-2)}\sim\frac1p$ in order to pass from the stochastic integrability $\frac{2p}{p-1}$ to $\frac{2p}{p-2}$, we obtain the following,
\begin{align*}
M(\zeta)
\,\lesssim\,\sum_{s=0}^{n-1}\sum_{m=0}^sp^{c_{s-m}}\big(p\log^2 p\big)^{\frac{m(m+1)}{2d}}\|[\zeta S^{2;s}]_2\|_{\Ld^{\frac{2d}{d+2m}}(\R^d;\Ld^{\frac{2p}{p-2}}(\Omega))},
\end{align*}
where the exponent $\frac{m(m+1)}d$ of $p\log^2p$ comes from the large prefactors in Corollary~\ref{cor:UT} and Theorem~\ref{th:CZ-ann} and is due to the successive deviations of spatial integrability from~$2$.
Hölder's inequality together with~\eqref{eq:xi-LrLr} yields for $p\ge8$,
\begin{multline*}
\|[\zeta S^{2;s}]_2\|_{\Ld^{\frac{2d}{d+2m}}(\R^d;\Ld^{\frac{2p}{p-2}}(\Omega))}
\,\le\,\|[\zeta]_\infty\|_{\Ld^\frac{2p}{p-4}(\R^d;\Ld^\frac{2p}{p-4}(\Omega))}\|[S^{2;s}]_2\|_{\Ld^\frac{dp}{2d+mp}(\R^d;\Ld^p(\Omega))}\\
\,\lesssim\,\|[S^{2;s}]_2\|_{\Ld^\frac{dp}{2d+mp}(\R^d;\Ld^p(\Omega))}.
\end{multline*}
By definition of $S^{2;s}$,
using the corrector estimates of Proposition~\ref{prop:cor}, we conclude
\begin{multline}\label{eq:proof-bound-U2}
\expec{\|\tilde U_2\|_{\op}^4}^\frac14
\,\lesssim\,\sum_{s=0}^{n-1}\sum_{m=0}^sp^{c_{s-m}}\big(p\log^2 p\big)^{\frac{m(m+1)}{2d}}
\sum_{k=0}^np^{c_k}\bigg(\|\mu_{d,k}[\nabla^kg\nabla^{s+1}\bar w]_\infty\|_{\Ld^{\frac{dp}{2d+mp}}(\R^d)}\\
+\sum_{l=0}^{k+s-n}\|\mu_{d,k}[\nabla^l(g\nabla^{k-l+s+1}\bar w)]_\infty\|_{\Ld^{\frac{dp}{2d+mp}}(\R^d)}\bigg).
\end{multline}
It remains to analyze $U_3$.
For $p\ge2$, we first appeal to the definition of $U_3$ and to Lemma~\ref{lem:key-estimates-2nd} (with $p$ replaced by $\frac p2$) in the form
\begin{multline*}
\Big|\int_{\R^d\times\R^d}\zeta(x)\,U_3(x,y)\,\zeta(y)\,dxdy\Big|\\
\,\lesssim\|[\zeta]_\infty\|_{\Ld^{\frac{2p}{p-2}}(\R^d)}^2\sum_{s=0}^{n-1}\sum_{k=0}^s\sum_{\beta\in\{0,1\}}\big\|\big[U_\cdot^kT^\beta\big((\Aa^*)^{-1}S^{3;s}\big)\big]_2\big([\nabla\varphi^{s+1-k}]_2+[\varphi^{s-k}]_2\big)\big\|_{\Ld^\frac p2(\R^d)}\\
+\|[\zeta]_\infty\|_{\Ld^2(\R^d)}\sum_{s=0}^{n-1}\sum_{k=0}^s\sum_{\beta\in\{0,1\}}J_{s-k}\big(\zeta(\Aa^*)^{-1}a_0'(G)^* U_\cdot^k T^\beta \big((\Aa^*)^{-1}S^{3;s}\big)\big).
\end{multline*}
From~\eqref{eq:rewrite-norm} and~\eqref{eq:rewrite-norm-seq}, we deduce
\begin{multline*}
\|U_3\|_{\op}
\,\lesssim\,\sum_{s=0}^{n-1}\sum_{k=0}^{s}\sum_{\beta\in\{0,1\}}\big\|\big[U_\cdot^kT^\beta\big((\Aa^*)^{-1}S^{3;s}\big)\big]_2\big([\nabla\varphi^{s+1-k}]_2+[\varphi^{s-k}]_2\big)\big\|_{\Ld^\frac p2(\R^d)}\\
+\sum_{s=0}^{n-1}\sum_{k=0}^{s}\sum_{\beta\in\{0,1\}}\sup_{\|[\zeta]_\infty\|_{\Ld^2(\R^d)}=1}J_{s-k}\big(\zeta(\Aa^*)^{-1}a_0'(G)^* U_\cdot^k T^\beta \big((\Aa^*)^{-1}S^{3;s}\big)\big).
\end{multline*}
Taking the $\Ld^4(\Omega)$ norm and using the corrector estimates of Proposition~\ref{prop:cor} in the first right-hand side term, and arguing by duality as in~\eqref{eq:op-bound-D2X} and repeating the proof of~\eqref{eq:proof-bound-U2} for the second right-hand side term, we obtain for all $8\le p<\infty$,
\begin{multline*}
\expec{\|U_3\|_{\op}^4}^\frac14
\,\lesssim_p\,\sum_{s=0}^{n-1}\sum_{k=0}^{s}\sum_{\beta\in\{0,1\}}\big\|\big[U_\cdot^kT^\beta \big((\Aa^*)^{-1}S^{3;s}\big)\big]_2\big\|_{\Ld^\frac p2(\R^d;\Ld^{p}(\Omega))}\\
+\sum_{s=0}^{n-1}\sum_{k=0}^{s}\sum_{\beta\in\{0,1\}}\sum_{m=0}^{s-k}\big\|\big[U_\cdot^kT^\beta\big((\Aa^*)^{-1}S^{3;s}\big)\big]_2\big\|_{\Ld^\frac{dp}{2d+mp}(\R^d;\Ld^p(\Omega))}.
\end{multline*}
A multiple use of the annealed bounds of Theorem~\ref{th:CZ-ann} and Corollary~\ref{cor:UT} then leads to
\[\expec{\|U_3\|_{\op}^4}^\frac14
\lesssim_p\sum_{s=0}^{n-1}\sum_{k=0}^{s}
\sum_{m=0}^{s-k}\|[S^{3;s}]_2\|_{\Ld^\frac{dp}{2d+(k+m)p}(\R^d;\Ld^{2p}(\Omega))}
\!\!\lesssim_p\sum_{s=0}^{n-1}\|[S^{3;s}]_2\|_{\Ld^\frac{dp}{2d+sp}(\R^d;\Ld^{2p}(\Omega))}.\]
By definition of $S^{3;s}$, using the corrector estimates of Proposition~\ref{prop:cor}, we conclude
\begin{align*}
\expec{\|U_3\|_{\op}^4}^\frac14
\,\lesssim_p\,\sum_{s=0}^{n-1}
\sum_{l=s+1}^{n}\big\|\mu_{d,n}\big[\nabla^l\big(g\nabla^{n-l+s+1}\bar w\big)\big]_\infty\big\|_{\Ld^\frac{dp}{2d+sp}(\R^d)}.
\end{align*}
Gathering the above estimates and replacing $p$ by $q$ in the estimate for $U^3$, the claim follows.

\medskip
\step2 Conclusion.\\
By scaling, the conclusion of Step~1 yields for all $8\le p,q<\infty$ and $1\le n\le\ell$,
\begin{multline*}
\e^{-\frac d2}\,\expec{\Big\|D^2\int_{\R^d}g\cdot\Xi_\e^{\circ,n}[\nabla\bar w]\Big\|_{\op}^4}^\frac14
\,\lesssim_{g,\bar w}\,
\e^{\frac d2-\frac dp}\sum_{k=0}^{n}\sum_{s=0}^{n-1}\mu_{d,k}(\tfrac1\e)\e^{k+s}p^{c_k+c_s}\\
+\e^{\frac d2-\frac{2d}p}\sum_{k=0}^n\sum_{s=0}^{n-1}\sum_{m=0}^s\mu_{d,k}(\tfrac1\e)\e^{k+s-m}p^{c_k+c_{s-m}}\big(p\log^2 p\big)^{\frac{m(m+1)}{2d}}
+C_q\e^{\frac d2-\frac{2d}q}\mu_{d,n}(\tfrac1\e)\e^{n},
\end{multline*}
hence, choosing $p=\Log$ and $q=8\vee4d$, and recalling that $c_0=\frac12$,
\begin{align*}
\e^{-\frac d2}\,\expec{\Big\|D^2\int_{\R^d}g\cdot\Xi_\e^{\circ,n}[\nabla\bar w]\Big\|_{\op}^4}^\frac14
\,\lesssim_{f,g}\,\e^{\frac d2}\Log\big(\Log\log^2\Log\big)^{\frac1{2d}n(n-1)}.
\end{align*}
In terms of $X_\e^n:=\e^{-\frac d2}\int_{\R^d}g\cdot\Xi_\e^{\circ,n}[\nabla\bar w]$, we now apply Proposition~\ref{prop:Mall}(iv) in the form
\begin{align*}
\quad\operatorname{d}_\Nc(X_\e^n)
\,\lesssim\,\frac{\expec{\|D^2X_\e^n\|_{\op}^4}^\frac14\expec{\|DX_\e^n\|_\Hf^4}^\frac14}{\var{X_\e^n}}
\end{align*}
Inserting the above estimate on $\|D^2X_\e^n\|_{\op}$ and noting that the proof of Theorem~\ref{th:main}(iii) yields
$\expec{\|DX_\e^n\|_\Hf^4}\,\lesssim_{f,g}\,1$,
the conclusion follows.
\end{proof}

\appendix
\section{More Malliavin calculus}\label{app:Mall}

In this appendix, we include for completeness a short, essentially self-contained proof of Proposition~\ref{prop:Mall}.

\begin{proof}[Proof of Proposition~\ref{prop:Mall}]
We split the proof into four steps.

\medskip
\step1 Proof of~(i).\\
In terms of the Ornstein-Uhlenbeck semigroup $e^{-t\Lc}$ (e.g.~\cite[Section~1.4]{Nualart}), we may write
\[\var{X}=-\int_0^\infty\partial_t\expec{(e^{-t\Lc}X)^2}\,dt\]
where Mehler's formula (e.g.~\cite[(1.67)]{Nualart}) indeed ensures that $\expec{(e^{-t\Lc}X)^2}\to\expec{X}^2$ as $t\uparrow\infty$. Computing the derivative in $t$ yields
\[\var{X}\,=\,2\int_0^\infty\expec{(e^{-t\Lc}X)\Lc(e^{-t\Lc}X)}\,dt\,=\,2\int_0^\infty\expec{\|De^{-t\Lc}X\|_\Hf^2}\,dt,\]
hence, appealing to the commutation relation~\eqref{eq:commut-Mall} in the form $De^{-t\Lc}=e^{-t}e^{-t\Lc}D$,
\[\var{X}\,=\,2\int_0^\infty e^{-2t}\,\expec{\|e^{-t\Lc}DX\|_\Hf^2}\,dt,\]
and the positivity of $\Lc$ leads to the conclusion,
\[\var{X}\,\le\,2\int_0^\infty e^{-2t}\,\expec{\|DX\|_\Hf^2}\,dt\,=\,\expec{\|DX\|_\Hf^2}.\]

\medskip
\step2 Proof of~(ii).\\
Let $X,Y\in\Dm^{1,2}$ with $\expec{X}=\expec{Y}=0$.
First note that the Poincaré inequality~(i) implies that the restriction of $\Lc$ to $\Ld^2(\Omega)/\C:=\{U\in\Ld^2(\Omega):\expec{U}=0\}$ is invertible.
In particular, there exists $Z\in\Ld^2(\Omega)$ such that $Y=\Lc Z$.
By definition of the adjoint $D^*$ (cf.~\eqref{eq:def-Dst}), we then find
\begin{align}\label{eq:pre-formHS}
\expec{XY}=\expec{X\Lc Z}=\expec{XD^* D Z}=\expec{\langle DX,DZ\rangle_{\Hf}}.
\end{align}
Appealing to the commutator relation~\eqref{eq:commut-Mall}
in the form $DY=D\Lc Z=(1+\Lc)DZ$,
we conclude
\[\expec{XY}=\expec{\langle DX,(1+\Lc)^{-1}DY\rangle_{\Hf}}.\]

\medskip
\step3 Proof of~(iii).
\nopagebreak

\noindent
Similarly as in Step~1, in terms of the Ornstein-Uhlenbeck semigroup $e^{-t\Lc}$, we may write
\begin{multline*}
\ent{X^2}\,=\,-\int_0^\infty\partial_t\,\expec{(e^{-t\Lc}X^2)\log(e^{-t\Lc}X^2)}\,dt\\
\,=\,\int_0^\infty\expec{(\Lc e^{-t\Lc}X^2)\log(e^{-t\Lc}X^2)}\,dt
\,=\,\int_0^\infty\expec{\frac{\|De^{-t\Lc}X^2\|_{\Hf}^2}{e^{-t\Lc}X^2}}\,dt,
\end{multline*}
hence, by the commutation relation~\eqref{eq:commut-Mall} in the form $De^{-t\Lc}=e^{-t}e^{-t\Lc}D$,
\begin{eqnarray*}
\ent{X^2}&=&\int_0^\infty e^{-2t}\,\expec{\frac{\|e^{-t\Lc}DX^2\|_{\Hf}^2}{e^{-t\Lc}X^2}}\,dt.
\end{eqnarray*}
Appealing to Mehler's formula for the Ornstein-Uhlenbeck semigroup $e^{-t\Lc}$ (e.g.~\cite[(1.67)]{Nualart}), the Cauchy-Schwarz inequality leads to
\[\|e^{-t\Lc}DX^2\|_{\Hf}^2\,=\,4\,\|e^{-t\Lc}(XDX)\|_{\Hf}^2\,\le\, 4\,\big(e^{-t\Lc}X^2\big)\big(e^{-t\Lc}\|DX\|_{\Hf}^2\big),\]
so that the above becomes
\begin{eqnarray*}
\ent{X^2}\,\le\,4\int_0^\infty e^{-2t}\,\expec{e^{-t\Lc}\|DX\|_{\Hf}^2}\,dt
\,\le\,2\,\expec{\|DX\|_\Hf^2},
\end{eqnarray*}
which proves the logarithmic Sobolev inequality.
Next, higher integrability is deduced by integration as e.g.\@ in~\cite[Theorem~3.4]{Aida-Stroock-94}: we write
\[\expec{|X|^{2p}}^\frac1p-\expec{X^2}=\int_1^p\frac1{q^2}\expec{|X|^{2q}}^{\frac1q-1}\ent{|X|^{2q}}dq,\]
so that the logarithmic Sobolev inequality implies
\begin{eqnarray*}
\expec{|X|^{2p}}^\frac1p-\expec{X^2}&\le&2\int_1^p\frac1{q^2}\expec{|X|^{2q}}^{\frac1q-1}\expec{\|D|X|^q\|_\Hf^2}dq\\
&=&2\int_1^p\expec{|X|^{2q}}^{\frac1q-1}\expec{|X|^{2(q-1)}\|DX\|_\Hf^2}dq\\
&\le&2\int_1^p\expec{\|DX\|_\Hf^{2q}}^\frac1qdq~\le~2p\,\expec{\|DX\|_\Hf^{2p}}^\frac1p,
\end{eqnarray*}
and the conclusion follows from the Poincaré inequality~(i).

\medskip
\step4 Proof of~(iv).
\nopagebreak

\noindent
Let
$X\in\Ld^2(\Omega)$ with $\expec{X}=0$ and $\var X=1$.      
For $h\in \Ld^\infty(\R)$, we define its Stein transform $S_h$ as the solution of Stein's equation
\[S_h'(x)-xS_h(x)=h(x)-\expec{h(\Nc)}.\]
As in Step~2, there exists $Z\in\Ld^2(\Omega)$ such that $Y=\Lc Z$.
We then compute
\[\expec{h(X)}-\expec{h(\Nc)}=\expec{S_h'(X)-XS_h(X)}=\expec{S_h'(X)-(D^*DZ)S_h(X)},\]
and hence, integrating by parts and using~\eqref{eq:pre-formHS} in the form $\expec{\langle DZ,DX\rangle_\Hf}=\var X=1$,
\begin{eqnarray}\label{eq:IPP-Stein}
\big|\expec{h(X)}-\expec{h(\Nc)}\!\big|&=&\big|\expec{S_h'(X)\big(1-\langle DZ,DX\rangle_\Hf\big)}\!\big|\nonumber\\
&\le&\|S_h'\|_{\Ld^\infty}\var{\langle DZ,DX\rangle_\Hf}^\frac12.
\end{eqnarray}
Noting that $\|S_h'\|_{\Ld^\infty}\le2\|h\|_{\Ld^\infty}$ (e.g.~\cite[Theorem~3.3.1]{NP-book}) and taking the supremum over all $h\in\Ld^\infty(\R)$, we deduce
\begin{eqnarray*}
\dTV{X}{\Nc}\,\le\,2\,\var{\langle DZ,DX\rangle_\Hf}^\frac12.
\end{eqnarray*}
Noting that for $h$ Lipschitz-continuous there holds $\|S_h'\|_{\Ld^\infty}\le\sqrt{\frac2\pi}\|h'\|_{\Ld^\infty}$ (e.g.~\cite[Proposition~3.5.1]{NP-book}), we can deduce a similar bound on the $1$-Wasserstein distance.
The corresponding bound on the $2$-Wasserstein distance takes the form
\[W_2(X;\Nc)\le\expec{|\langle DZ,DX\rangle_\Hf-1|^2}^\frac12=\var{\langle DZ,DX\rangle_\Hf}^\frac12;\]
its proof is of a different nature and is based on an optimal transport argument in density space (cf.~\cite[Proposition~3.1]{LNP-15}).

\medskip\noindent
It remains to estimate the variance $\var{\langle DZ,DX\rangle_\Hf}$.
For that purpose, we apply the first-order Poincaré inequality~(i) in the form
\begin{eqnarray*}
\var{\langle DZ,DX\rangle_\Hf}&\le&\expec{\|\langle D^2Z,DX\rangle_\Hf+\langle DZ,D^2X\rangle_\Hf\|_{\Hf}^2}.
\end{eqnarray*}
Noting that the commutation relation~\eqref{eq:commut-Mall} leads to $DX=(1+\Lc)DZ$ and $D^2X=(2+\Lc)D^2Z$, we deduce
\begin{eqnarray*}
\var{\langle DZ,DX\rangle_\Hf}&\le&\expec{\big\|\big\langle D^2X,\big((1+\Lc)^{-1}+(2+\Lc)^{-1}\big)DX\big\rangle_\Hf\big\|_{\Hf}^2}\\
&\le&\expec{\|D^2X\|_\op^4}^\frac14\expec{\big\|\big((1+\Lc)^{-1}+(2+\Lc)^{-1}\big)DX\big\|_\Hf^4}^\frac14,
\end{eqnarray*}
Noting as in~\cite[Proposition~3.2]{MO} that $(1+\Lc)^{-1}$ and $(2+\Lc)^{-1}$ have operator norms on~$\Ld^4(\Omega)$ bounded by $1$ and $\frac12$, respectively, the conclusion follows.
\end{proof}

\section*{Acknowledgements}
\noindent
The work of MD was supported by F.R.S.-FNRS and by the CNRS-Momentum program.

\bibliographystyle{plain}
\bibliography{biblio}

\end{document}